\newcommand{\N}{\mathbb{N}}
\newcommand{\Z}{\mathbb{Z}}
\newcommand{\Q}{\mathbb{Q}}
\newcommand{\R}{\mathbb{R}}
\newcommand{\C}{\mathbb{C}}
\newcommand{\F}{\mathbb{F}}
\newcommand{\cA}{\mathcal{A}}
\newcommand{\cB}{\mathcal{B}}
\newcommand{\cF}{\mathcal{F}}
\newcommand{\sF}{\mathscr{F}}
\newcommand{\sH}{\mathscr{H}}
\newcommand{\cI}{\mathcal{I}}
\newcommand{\cK}{\mathcal{K}}
\newcommand{\cM}{\mathcal{M}}
\newcommand{\cN}{\mathcal{N}}
\newcommand{\cS}{\mathcal{S}}
\newcommand{\e}{\varepsilon}
\newcommand{\Om}{\Omega} 
\newcommand{\om}{\omega}
\newcommand{\blambda}{\boldsymbol{\lambda}}
\DeclareMathOperator{\Tr}{Tr}
\DeclareMathOperator{\spn}{span}
\DeclareMathOperator{\id}{id}
\DeclareMathOperator{\im}{im}
\newcommand{\sni}{\unlhd_{\mathrm{s}}}
\newcommand{\dom}{\mathrm{dom}}
\newcommand{\la}{\langle}
\newcommand{\ra}{\rangle}
\newcommand{\sh}{\text{\smaller $\#$}}
\DeclareMathOperator{\sa}{sa}
\newcommand{\vertiii}[1]{{\left\vert\kern-0.25ex\left\vert\kern-0.25ex\left\vert #1 
    \right\vert\kern-0.25ex\right\vert\kern-0.25ex\right\vert}}
\DeclareMathOperator{\supp}{supp}
\newcommand{\iotimes}{\hat{\otimes}_i}
\newcommand{\wh}{\widehat}
\newcommand\pprec{\prec\mkern-5mu\prec}
\newcommand\wch[1]{%
\savestack{\tmpbox}{\stretchto{%
  \scaleto{%
    \scalerel*[\widthof{\ensuremath{#1}}]{\kern-.6pt\bigwedge\kern-.6pt}%
    {\rule[-\textheight/2]{1ex}{\textheight}}
  }{\textheight}%
}{0.5ex}}%
\stackon[1pt]{#1}{\scalebox{-1}{\tmpbox}}%
}
\newcommand{\aff}{%
  \mathrel{\raisebox{0.75pt}{\text{\scalebox{0.85}{$\eta$}}}}%
}
\newcommand\numberthis{\addtocounter{equation}{1}\tag{\theequation}}
\newcommand{\oset}[3][0ex]{%
  \mathrel{\mathop{#3}\limits^{
    \vbox to#1{\kern-2\ex@
    \hbox{$\scriptstyle#2$}\vss}}}}
\theoremstyle{plain}
\newtheorem{prop}{Proposition}[subsection]
\theoremstyle{plain}
\theoremstyle{plain}
\newtheorem{lem}[prop]{Lemma}
\theoremstyle{plain}
\theoremstyle{plain}
\newtheorem{thm}[prop]{Theorem}
\theoremstyle{plain}
\theoremstyle{plain}
\newtheorem{cor}[prop]{Corollary}
\theoremstyle{plain}
\theoremstyle{plain}
\newtheorem{fact}[prop]{Fact}
\theoremstyle{plain}
\theoremstyle{definition}
\newtheorem{defi}[prop]{Definition}
\theoremstyle{definition}
\theoremstyle{definition}
\newtheorem{nota}[prop]{Notation}
\theoremstyle{definition}
\newtheorem*{notan}{Notation}
\theoremstyle{definition}
\newtheorem{conv}[prop]{Convention}
\theoremstyle{definition}
\theoremstyle{definition}
\newtheorem{ex}[prop]{Example}
\theoremstyle{definition}
\theoremstyle{definition}
\theoremstyle{definition}
\theoremstyle{definition}
\newtheorem{rem}[prop]{Remark}
\theoremstyle{definition}
\theoremstyle{definition}
\theoremstyle{definition}
\theoremstyle{definition}
\newtheorem*{ack}{Acknowledgements}
\renewenvironment{proof}[1][\proofname]{%
  \par\pushQED{\qed}\normalfont%
  \topsep6\p@\@plus6\p@\relax
  \trivlist\item[\hskip\labelsep\bfseries#1\@addpunct{.}]%
  \ignorespaces
}{%
  \popQED\endtrivlist\@endpefalse
}
\begin{document}

\title{Higher derivatives of operator functions \\
in ideals of von Neumann algebras}
\author{Evangelos A. Nikitopoulos\thanks{Supported by NSF grant DGE 2038238}}
\affil{Department of Mathematics, University of California San Diego\protect\\
\noindent 9500 Gilman Drive, La Jolla, CA 92093-0112 (USA)\protect\\
Email: {\tt \href{mailto:enikitop@ucsd.edu}{enikitop@ucsd.edu}}}
\date{\vspace{-5ex}}

\maketitle

\vspace{-1.2mm}\begin{abstract}
Let $\mathcal{M}$ be a von Neumann algebra and $a$ be a self-adjoint operator affiliated with $\mathcal{M}$.
We define the notion of an ``integral symmetrically normed ideal" of $\mathcal{M}$ and introduce a space $OC^{[k]}(\mathbb{R}) \subseteq C^k(\mathbb{R})$ of functions $\mathbb{R} \to \mathbb{C}$ such that the following result holds:
for any integral symmetrically normed ideal $\mathcal{I}$ of $\mathcal{M}$ and any $f \in OC^{[k]}(\mathbb{R})$, the operator function $\mathcal{I}_{\mathrm{sa}} \ni b \mapsto f(a+b)-f(a) \in \mathcal{I}$ is $k$-times continuously Fr\'{e}chet differentiable, and the formula for its derivatives may be written in terms of multiple operator integrals.
Moreover, we prove that if $f \in \dot{B}_1^{1,\infty}(\mathbb{R}) \cap \dot{B}_1^{k,\infty}(\mathbb{R})$ and $f'$ is bounded, then $f \in OC^{[k]}(\mathbb{R})$.
Finally, we prove that all of the following ideals are integral symmetrically normed:
$\mathcal{M}$ itself, separable symmetrically normed ideals, Schatten $p$-ideals, the ideal of compact operators, and --- when $\mathcal{M}$ is semifinite --- ideals induced by fully symmetric spaces of measurable operators.

$\,$\vspace{-0.5mm}

\noindent \textbf{Keywords:} functional calculus, Fr\'{e}chet derivative, multiple operator integral, symmetrically normed ideal, noncommutative Banach function space, measurable operator

$\,$\vspace{-0.5mm}

\noindent \textbf{MSC (2020):} 47A55, 47A56, 47L20, 47B10
\end{abstract}
\tableofcontents
\clearpage

\section{Introduction}\label{sec.intro}

\begin{notan}
Let $X$ be a topological space, $V,W$ be normed vector spaces, and $H$ be a complex Hilbert space.
\begin{enumerate}[label=(\alph*),leftmargin=2\parindent]
    \item $\cB_X$ is the Borel $\sigma$-algebra on $X$.
    \item $B(V;W)$ is the space of bounded linear maps $V \to W$ with operator norm $\|\cdot\| = \|\cdot\|_{V \to W}$. Also, $B(V) \coloneqq B(V;V)$. Finally, $B(H)_{\sa} \coloneqq \{A \in B(H) : A^*=A\}$.
    \item If $A$ is a (possibly unbounded) self-adjoint operator on $H$, then $P^A \colon \cB_{\sigma(A)} \to B(H)$ is its projection-valued spectral measure. If $f \colon \R \to \C$ is Borel measurable, then we define $f(A) \coloneqq \int_{\sigma(A)} f(\lambda)\,P^A(d\lambda)$. (Please see Section \ref{sec.SpecTh}.)
\end{enumerate}
\end{notan}

\subsection{Known results}\label{sec.knownres}

Let $H$ be a complex Hilbert space.
Given an appropriately regular scalar function $f \colon \R \to \C$, one of the goals of perturbation theory is to ``Taylor expand" the \textit{operator function} that takes a self-adjoint operator $A$ on $H$ and maps it to the operator $f(A)$.
This delicate problem has its beginnings in the work of Yu. L. Daletskii and S. G. Krein.
In their seminal paper \cite{daletskiikrein}, they proved that if $f \colon \R \to \C$ is $2k$-times continuously differentiable and $A,B \in B(H)_{\sa}$, then the curve $\R \ni t \mapsto f(A+tB) \in B(H)$ is $k$-times differentiable in the operator norm and
\[
\frac{d^k}{dt^k}\Big|_{t=0} f(A+tB) = k!\underbrace{\int_{\sigma(A)}\cdots\int_{\sigma(A)}}_{k+1 \; \mathrm{times}} f^{[k]}(\lambda_1,\ldots,\lambda_{k+1})\,P^A(d\lambda_1)\,B\cdots P^A(d\lambda_k)\,B\,P^A(d\lambda_{k+1}), \numberthis\label{eq.formalopder}
\]
where $f^{[k]} \colon \R^{k+1} \to \C$ is the $k^{\text{th}}$ \textit{divided difference} (Section \ref{sec.divdiffandpert}) of $f$, defined recursively as
\[
f^{[0]} \coloneqq f \; \text{ and } \; f^{[k]}(\lambda_1,\ldots,\lambda_{k+1}) \coloneqq \frac{f^{[k-1]}(\lambda_1,\ldots,\lambda_k)-f^{[k-1]}(\lambda_1,\ldots,\lambda_{k-1},\lambda_{k+1})}{\lambda_k-\lambda_{k+1}} \numberthis\label{eq.divdiffintro}
\]
for $(\lambda_1,\ldots,\lambda_{k+1}) \in \R^{k+1}$.
The reader might be (rightly) puzzled by the multiple integral in \eqref{eq.formalopder}, since standard projection-valued measure theory only allows for the integration of \textit{scalar-valued} functions.
Indeed, while the innermost integral $\int_{\sigma(A)} f^{[k]}(\lambda_1,\cdots,\lambda_{k+1})\,P^A(d\lambda_1)$ makes sense using standard theory, it is already unclear how to integrate the map
\[
\lambda_2 \mapsto \int_{\sigma(A)} f^{[k]}(\lambda_1,\lambda_2,\ldots,\lambda_{k+1})\,P^A(d\lambda_1)\,B
\]
with respect to $P^A$.
Daletskii and Krein dealt with this by using a Riemann--Stieltjes-type construction to define $\int_s^t \Phi(r) \,P^A(dr)$ for certain \textit{operator-valued} functions $\Phi \colon [s,t] \to  B(H)$, where $\sigma(A) \subseteq [s,t]$.
This approach, which requires rather stringent regularity assumptions on $\Phi$, allows one to make sense of the right hand side of \eqref{eq.formalopder} as an iterated operator-valued integral --- in other words, a multiple operator integral.

Now, for $j \in \{1,\ldots,k+1\}$, let $(\Om_j,\sF_j)$ be a measurable space and $P_j \colon \sF_j \to B(H)$ be a projection-valued measure.
Emerging naturally from the formula \eqref{eq.formalopder} is the general problem of making sense of
\[
\big(I^{P_1,\ldots,P_{k+1}} \varphi \big)[b_1,\ldots,b_k] = \int_{\Om_{k+1}}\cdots\int_{\Om_1} \varphi(\om_1,\ldots,\om_{k+1}) \, P_1(d\om_1) \, b_1 \cdots P_k(d\om_k) \, b_k \,P_{k+1}(d\om_{k+1}) \numberthis\label{eq.formalMOI}
\]
for certain functions $\varphi \colon \Om_1 \times \cdots \times \Om_{k+1} \to \C$ and operators $b_1 , \ldots, b_k \in B(H)$.
An object successfully doing so is called a \textit{multiple operator integral} (MOI).
Under the assumption that $H$ is separable, these have been studied and applied to various branches of noncommutative analysis extensively.
Please see A. Skripka and A. Tomskova's book \cite{skripka} for an excellent survey of the MOI literature and its applications.

In this paper, we shall make use of the ``separation of variables" approach to defining \eqref{eq.formalMOI}.
For separable $H$, this approach was developed by V. V. Peller \cite{peller1,peller2} and N. A. Azamov, A. L. Carey, P. G. Dodds, and F. A. Sukochev \cite{azamovetal} in order to differentiate operator functions at unbounded operators.
The present author extended the approach to the case of a non-separable Hilbert space in \cite{nikitopoulosMOI}.
We review the relevant definitions and results in Sections \ref{sec.wstarint} and \ref{sec.MOIsinI}.
Henceforth, any MOI expression we write or reference is to be interpreted in accordance with Section \ref{sec.MOIsinI} (specifically, Theorem \ref{thm.MOIsinM}).
\pagebreak

Now, we quote the best known general results on higher derivatives of operator functions.
If $\dot{B}_q^{s,p}(\R^m)$ is the homogeneous Besov space (Definition \ref{def.Besov}), then we write
\[
PB^k(\R) \coloneqq \dot{B}_1^{k,\infty}(\R) \cap \big\{f \in C^k(\R) : f^{(k)} \text{ is bounded}\big\} \numberthis\label{eq.PBspace}
\]
for the \textbf{$\boldsymbol{k^{\text{\textbf{th}}}}$ Peller--Besov space}.
It turns out that $PB^1(\R) \cap PB^k(\R) = PB^1(\R) \cap \dot{B}_1^{k,\infty}(\R)$.
(Please see the paragraph containing Equation \eqref{eq.f'series} at the end of Section \ref{sec.PellerII}.)

\begin{thm}[Peller \cite{peller1}]\label{thm.pellerder}
Let $H$ be a separable complex Hilbert space, $A$ be a self-adjoint operator on $H$, and $B \in B(H)_{\sa}$.
If $f \in PB^1(\R) \cap PB^k(\R)$, then the map $\R \ni t \mapsto f(A+tB)-f(A) \in B(H)$ is $k$-times differentiable in the operator norm, and the formula \eqref{eq.formalopder} holds.
\end{thm}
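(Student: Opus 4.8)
The plan is to recognize the statement as the special case $\cI = \cM = B(H)$ of the main result announced in the abstract, combined with the inclusion $PB^1(\R) \cap PB^k(\R) \subseteq OC^{[k]}(\R)$ (the second main result): once one knows $f \in OC^{[k]}(\R)$, the map $B(H)_{\sa} \ni b \mapsto f(A+b)-f(A) \in B(H)$ is $k$-times continuously Fr\'echet differentiable with the MOI formula, and restricting to the line $b = tB$ recovers the asserted one-variable statement. To indicate the substance behind that inclusion I would recast everything in terms of the separation-of-variables multiple operator integrals of Sections~\ref{sec.wstarint}--\ref{sec.MOIsinI}, induct on $k$, and reduce to a band-limited $f$ via a Littlewood--Paley decomposition.

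\emph{Band-limited functions.} Suppose first that $\wh f$ is supported in a bounded set, so $f$ is entire of exponential type and $f(\lambda) = \tfrac1{2\pi}\int \wh f(\xi)\,e^{i\xi\lambda}\,d\xi$. Let $\Sigma_k \subset [0,\infty)^{k+1}$ be the standard simplex $\{(s_0,\ldots,s_k): \sum_{j=0}^k s_j = 1\}$, of volume $1/k!$. The elementary identity
\[
\bigl(e^{i\xi\,\cdot}\bigr)^{[k]}(\lambda_1,\ldots,\lambda_{k+1}) = (i\xi)^k \int_{\Sigma_k} \exp\!\Bigl(i\xi\bigl(s_0\lambda_1 + s_1\lambda_2 + \cdots + s_k\lambda_{k+1}\bigr)\Bigr)\,ds
\]
exhibits $f^{[k]}$ as an integral of elementary tensors, hence places it in the integral projective tensor product for which $I^{P_1,\ldots,P_{k+1}}f^{[k]}$ is defined, with $\|I^{P_1,\ldots,P_{k+1}}f^{[k]}\|_{B(H)^k \to B(H)} \le \tfrac{1}{2\pi\,k!}\int |\xi|^k |\wh f(\xi)|\,d\xi$ for every choice of the $P_j$. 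The base case $k=1$ is then the Daletskii--Krein formula: Duhamel's identity $e^{i\xi(A+B)}-e^{i\xi A} = i\xi\int_0^1 e^{is\xi(A+B)}\,B\,e^{i(1-s)\xi A}\,ds$, integrated against $\wh f$, gives $f(A+B)-f(A) = (I^{P^{A+B},P^A}f^{[1]})(B)$; replacing $B$ by $tB$, dividing by $t$, and letting $t \to 0$ (using strong-operator continuity of $t \mapsto e^{is\xi(A+tB)}$, uniform for $\xi$ in the support of $\wh f$, plus dominated convergence) yields $\frac{d}{dt}\big|_{t=0}f(A+tB) = (I^{P^A,P^A}f^{[1]})(B)$, and the same argument gives continuity of the right side in $t$.

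\emph{Induction.} Assume, for band-limited $f$, that $\frac{d^{k-1}}{dt^{k-1}}f(A+tB) = (k-1)!\,(I^{P^{A_t},\ldots,P^{A_t}}f^{[k-1]})(B,\ldots,B)$ with $A_t \coloneqq A+tB$ (here $k$ spectral factors and $k-1$ copies of $B$), the right side being $t$-continuous. To differentiate once more, expand each of the $k$ factors $P^{A_{t+h}}(d\lambda_j)$ against $P^{A_t}(d\lambda_j)$ by the identity of the previous paragraph applied with $hB$ in place of $B$; to first order in $h$ this produces, for each slot $j$, a new $(k+1)$-fold MOI in which $\lambda_j$ is split into two variables, $B$ is inserted in the $j$-th operator position, and the integrand is the divided difference of $f^{[k-1]}$ in its $j$-th argument, which by the recursion~\eqref{eq.divdiffintro} and the symmetry of divided differences equals $f^{[k]}$ after relabeling. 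Thus all $k$ slots contribute the same term and $\frac{d^{k}}{dt^{k}}f(A+tB) = (k-1)!\cdot k\cdot (I^{P^{A_t},\ldots,P^{A_t}}f^{[k]})(B,\ldots,B) = k!\,(I^{P^{A_t},\ldots,P^{A_t}}f^{[k]})(B,\ldots,B)$, which is~\eqref{eq.formalopder}; continuity of this expression in $t$ (again via the exponentials) gives the ``$k$-times \emph{continuously} differentiable'' conclusion.

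\emph{General $f$, and the main obstacle.} For arbitrary $f \in PB^1(\R) \cap PB^k(\R)$ take a Littlewood--Paley decomposition $f = \sum_{n\in\Z} f_n$ with $\wh{f_n}$ supported in $\{2^{n-1}\le|\xi|\le 2^{n+1}\}$, apply the band-limited case to each $f_n$, and sum. The step I expect to be hardest is showing that $\sum_n k!\,(I^{P^{A_t},\ldots,P^{A_t}}f_n^{[k]})(B,\ldots,B)$ converges in $B(H)$, uniformly on compact $t$-intervals, to $\frac{d^k}{dt^k}f(A+tB)$: the per-block bound from the band-limited case, of the size of the $n$-th Littlewood--Paley coefficient in the $\dot B_1^{k,\infty}$-norm, is not summable from that norm alone, so one must split the frequency range, controlling high frequencies by the $\dot B_1^{k,\infty}$-bound and low frequencies by operator-Lipschitz estimates coming from $f \in \dot B_1^{1,\infty}$ together with boundedness of $f^{(k)}$ -- this is precisely the role of intersecting with $PB^1(\R)$ (and recall $PB^1\cap PB^k = PB^1\cap\dot B_1^{k,\infty}$). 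The second delicate point, present already in the band-limited case, is that $t\mapsto P^{A_t}$ is only strongly, not norm, continuous, so one cannot differentiate the spectral measures directly; the factorization through exponentials reduces every estimate to strong-operator continuity and differentiability of $t\mapsto e^{i\xi(A+tB)}$ uniform for $\xi$ in a bounded set, after which a routine $3\e$-argument transfers the limit through the uniformly convergent sum over $n$.
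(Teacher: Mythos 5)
The paper's own ``proof'' of this statement is a bare citation to Peller \cite{peller1}; what the paper actually establishes (and what you correctly identify as the intended route) is that the theorem is the special case $\cI=\cM=B(H)$ of Theorem~\ref{thm.mainopder}, which in turn combines Theorem~\ref{thm.deropfunc} with the inclusion $PB^1(\R)\cap PB^k(\R)\subseteq OC^{[k]}(\R)$ (Corollary~\ref{cor.PBkinOCk}). Your high-level plan and the architecture of your sketch -- band-limited reduction, Duhamel/perturbation telescoping for the induction, Littlewood--Paley summation -- are therefore conceptually aligned with the paper's approach to the general theorem.

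The sketch has, however, one genuine gap, and it is exactly where the heavy lifting lives. Your band-limited estimate, obtained from the Hermite--Genocchi formula for $(e^{i\xi\,\cdot})^{[k]}$ and Duhamel, bounds the MOI by $\tfrac{1}{2\pi k!}\int|\xi|^k|\widehat f(\xi)|\,d\xi$. Applied to a Littlewood--Paley block $f_n=\wch{\varphi}_n\ast f$, this gives $\|I^{\ldots}f_n^{[k]}\|\lesssim 2^{nk}\|\widehat{f_n}\|_{L^1}$, whereas the Besov hypothesis only controls $\sum_n 2^{nk}\|f_n\|_{L^\infty}=\|f\|_{\dot B_1^{k,\infty}}$. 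There is no Bernstein-type inequality $\|\widehat{f_n}\|_{L^1}\lesssim\|f_n\|_{L^\infty}$ (no reverse Hausdorff--Young), and the requirement $\sum_n 2^{nk}\|\widehat{f_n}\|_{L^1}<\infty$ is essentially the Wiener condition $f\in W_k(\R)$, which the paper explicitly points out is a \emph{strict} subset of $PB^1(\R)\cap PB^k(\R)$. So as written, your argument recovers only the Azamov--Carey--Dodds--Sukochev-strength statement (Theorem~\ref{thm.azamovetalder}), not Peller's. The crux that you are missing is the decomposition of $f^{[k]}$ for band-limited $f$ in terms of convolutions $f\ast\mu_{|\vec u|}$ (Theorem~\ref{thm.Pellerexp}); since $\|f\ast\mu_u\|_{\ell^\infty}\lesssim\|f\|_{\ell^\infty}$ and the integration is over $\{|\vec u|\leq\sigma\}$, one gets the sharp per-block bound $\lesssim\sigma^k\|f\|_{\ell^\infty}$ (Proposition~\ref{prop.Pellerprecursor}), which is precisely what sums against $\|\cdot\|_{\dot B_1^{k,\infty}}$. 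This $\mu_u$-trick of Peller's is not an optimization of your bound; it is the reason $\dot B_1^{k,\infty}$ rather than $W_k$ is the right class. A smaller issue: strong-operator continuity of $t\mapsto e^{is\xi(A+tB)}$ together with dominated convergence only gives SOT convergence of the integral; for \emph{operator-norm} differentiability you need the quantitative estimate $\|e^{is\xi(A+tB)}-e^{is\xi A}\|\leq|s\xi t|\,\|B\|$ (or an appeal to the Continuous Perturbation Lemma, Lemma~\ref{lem.Icont}, which is what the paper actually uses, and whose proof is where the notion of operator continuity of the factors $f\ast\mu_u$ is indispensable).
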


This is Theorem 5.6 in \cite{peller1}.
To quote the relevant result from \cite{azamovetal}, we need some additional terminology.
First, recall that if $H$ is a complex Hilbert space, then $\cM \subseteq B(H)$ is a \textbf{von Neumann algebra} if it is a unital $\ast$-subalgebra that is closed in the weak operator topology (WOT).
Second, suppose $\cI \subseteq \cM$ is a $\ast$-ideal with another norm $\|\cdot\|_{\cI}$ on it.
We call $\cI$ an \textbf{invariant operator ideal} if $(\cI,\|\cdot\|_{\cI})$ is a Banach space, $\|r\| \leq \|r\|_{\cI} = \|r^*\|_{\cI}$ for $r \in \cI$, and $\cI$ is \textbf{symmetrically normed}, i.e., $\|arb\|_{\cI} \leq \|a\|\,\|r\|_{\cI}\|b\|$ for $r \in \cI$ and $a,b \in \cM$.
Third, an invariant operator ideal $\cI$ has \textbf{property (F)} if whenever $(a_j)_{j \in J}$ is a net in $\cI$ such that $\sup_{j \in J}\|a_j\|_{\cI} < \infty$ and $a_j \to a \in \cM$ in the strong$^*$ operator topology (S$^*$OT), we get $a \in \cI$ and $\|a\|_{\cI} \leq \sup_{j \in J}\|a_j\|_{\cI}$.
Finally, we write $W_k(\R)$ for the $k^{\text{\textit{th}}}$ \textit{Wiener space} (Definition \ref{def.Wk}) of functions $f \colon \R \to \C$ that are Fourier transforms of complex measures with finite $k^{\text{th}}$ moment.

\begin{thm}[Azamov--Carey--Dodds--Sukochev \cite{azamovetal}]\label{thm.azamovetalder}
\hspace{-0.5mm}Let \hspace{-0.3mm}$H$\hspace{-0.4mm} be \hspace{-0.4mm}a\hspace{-0.4mm} separable \hspace{-0.4mm}complex\hspace{-0.4mm} Hilbert \hspace{-0.4mm}space,\hspace{-0.4mm} $\cM \hspace{-0.4mm}\subseteq\hspace{-0.4mm} B(H)$ be a von Neumann algebra, and $a$ be a self-adjoint operator on $H$ affiliated with $\cM$ (Definition \ref{def.aff}).
If $\cI \subseteq \cM$ is an invariant operator ideal with property (F), $\cI_{\sa} \coloneqq \{b \in \cI : b^*=b\}$, and $f \in W_{k+1}(\R)$, then the map $\cI_{\sa} \ni b \mapsto f_a(b)\coloneqq f(a+b)-f(a) \in \cI$ is $k$-times Fr\'{e}chet differentiable (Definition \ref{def.frechder}) in the $\cI$-norm $\|\cdot\|_{\cI}$ and
\[
\partial_{b_1}\cdots\partial_{b_k}f_a(0) = \sum_{\pi \in S_k}\underbrace{\int_{\sigma(a)}\cdots\int_{\sigma(a)}}_{k+1 \; \mathrm{times}} f^{[k]}(\lambda_1,\ldots,\lambda_{k+1})\,P^a(d\lambda_1)\,b_{\pi(1)}\cdots P^a(d\lambda_k)\,b_{\pi(k)}\,P^a(d\lambda_{k+1}),
\]
for all $b_1,\ldots,b_k \in \cI_{\sa}$, where $S_k$ is the symmetric group on $k$ letters.
\end{thm}

This is Theorem 5.7 in \cite{azamovetal}.
As is noted in \cite{azamovetal}, the motivating example of an invariant operator ideal with property (F) comes from the theory of \textit{symmetric operator spaces}.
Indeed, if $(E,\|\cdot\|_E)$ is a symmetric Banach function space with the Fatou property, $(\cM,\tau)$ is a semifinite von Neumann algebra (Definition \ref{def.trace}), and $(E(\tau),\|\cdot\|_{E(\tau)})$ is the symmetric space of $\tau$-measurable operators induced by $E$, then
\[
(\cI,\|\cdot\|_{\cI}) \coloneqq (E(\tau) \cap \cM, \|\cdot\|_{E(\tau) \cap \cM}) = (E(\tau) \cap \cM, \max\{\|\cdot\|_{E(\tau)},\|\cdot\|_{\cM}\}) \numberthis\label{eq.symmopideal}
\]
is an invariant operator ideal with property (F).
(Please see Section \ref{sec.symmopsp} for the meanings of the preceding terms.)
Though Theorem \ref{thm.azamovetalder} applies to this interesting general setting, the result demands much more regularity of the scalar function $f$ than Theorem \ref{thm.pellerder}.
(Indeed, $W_k(\R) \subsetneq PB^1(\R) \cap PB^k(\R)$.)
It has remained an open problem (Problem 5.3.22 in \cite{skripka}) to find less restrictive conditions for higher Fr\'{e}chet differentiability of operator functions in the symmetric operator space ideals described above.
The present paper makes substantial progress on this problem:
a corollary of our main results is that if $E$ is fully symmetric (a weaker condition than the Fatou property), then the result of Theorem \ref{thm.azamovetalder} holds for $(\cI,\|\cdot\|_{\cI})$ as in \eqref{eq.symmopideal} with $f \in PB^1(\R) \cap PB^k(\R)$.
In other words, we are able to close the regularity gap between Theorems \ref{thm.pellerder} and \ref{thm.azamovetalder} in the (fully) symmetric operator space context.
Moreover, we are able, for the first time in the literature on higher derivatives of operator functions, to remove the separability assumption on $H$ by using the MOI development from \cite{nikitopoulosMOI}.

\begin{rem}[Other related work]
The Schatten $p$-ideals have property (F), so Theorem \ref{thm.azamovetalder} applies to them when the underlying Hilbert space is separable.
There are, however, much sharper results known about differentiability of operator functions in the Schatten $p$-ideals (again, when the underlying Hilbert space is separable).
Please see \cite{lemerdyskripka,lemerdymcdonald}.
Also, there is a seminal paper of de Pagter and Sukochev \cite{depagtersukochev} that studies once Gateaux differentiability of operator functions in certain symmetric operator spaces at measurable operators;
we discuss its relation to the results in this paper in Remark \ref{rem.dePS}.
\end{rem}

\subsection{Main results}\label{sec.mainres}

Let $H$ be a complex Hilbert space, $\cM \subseteq B(H)$ be a von Neumann algebra, and $a$ be a self-adjoint operator affiliated with $\cM$ (Definition \ref{def.aff}).
We recall from the previous section that our goal is to differentiate the operator function $\cI_{\sa} \ni b \mapsto f(a+b)-f(a) \in \cI$, where $(\cI,\|\cdot\|_{\cI})$ is some normed ideal of $\cM$ and $f \in PB^1(\R) \cap PB^k(\R)$.
(Please see \eqref{eq.PBspace}.)
The ideals we consider are the \textit{integral symmetrically normed ideals} (ISNIs).
The definition of integral symmetrically normed is an ``integrated" version of the symmetrically normed condition $\|arb\|_{\cI} \leq \|a\|\,\|r\|_{\cI}\|b\|$.
Loosely speaking, $(\cI,\|\cdot\|_{\cI})$ is integral symmetrically normed if
\[
\Bigg\|\int_{\Sigma} A(\sigma) \, r \,B(\sigma) \,\rho(d\sigma)\Bigg\|_{\cI} \leq \|r\|_{\cI} \int_{\Sigma}\|A\|\,\|B\|\,d\rho, \; r \in \cI.
\]
The precise definition  (Definition \ref{def.idealproperties}\ref{item.ISNI}) is slightly technical, so we omit it from this section. Our first main result comes in the form of a list of interesting examples of ISNIs.

\begin{thm}[Examples of ISNIs]\label{thm.mainISNI}
Let $H$ be an arbitrary (not-necessarily-separable) complex Hilbert space and $\cM \subseteq B(H)$ be a von Neumann algebra.
\begin{enumerate}[label=(\roman*),font=\normalfont,leftmargin=2\parindent]
    \item The ideal $(\cM,\|\cdot\|)$ is integral symmetrically normed.\label{item.triv}
    \item If $(\cI,\|\cdot\|_{\cI})$ is a separable symmetrically normed ideal of $\cM$, then $\cI$ is integral symmetrically normed.\label{item.sep}
    \item The ideal $(\mathcal{K}(H),\|\cdot\|)$ of compact operators is an integral symmetrically normed ideal of $B(H)$.\label{item.compact}
    \item If $1 \leq p \leq \infty$, then the ideal of $(\cS_p(H),\|\cdot\|_{\cS_p})$ of Schatten $p$-class operators (Definition 2.2.1 in \cite{nikitopoulosMOI}) is an integral symmetrically normed ideal of $B(H)$.
    (For us, $\cS_{\infty}(H) = B(H)$.)\label{item.Sp}
    \item Suppose $(\cM,\tau)$ is semifinite (Definition \ref{def.trace}).
    If $(E,\|\cdot\|_E)$ is a fully symmetric space of $\tau$-measurable operators (Definition \ref{def.symmopsp}\ref{item.fullysymm}) and $(\cI,\|\cdot\|_{\cI}) \coloneqq (E \cap \cM, \|\cdot\|_{E \cap \cM}) = (E \cap \cM, \max\{\|\cdot\|_E,\|\cdot\|\})$, then $\cI$ is an integral symmetrically normed ideal of $\cM$.\label{item.fs}
\end{enumerate}
\end{thm}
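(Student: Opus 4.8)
The plan is to verify the defining inequality of an integral symmetrically normed ideal (Definition~\ref{def.idealproperties}.\ref{item.ISNI}) in each of the five cases by upgrading, in a case-dependent way, the elementary estimate satisfied by the relevant $\mathrm{w}^\ast$-integral. Fix a measure space $(\Sigma,\rho)$, $\mathrm{w}^\ast$-measurable $A,B\colon\Sigma\to\cM$ with $C\coloneqq\int_\Sigma\|A(\sigma)\|\,\|B(\sigma)\|\,\rho(d\sigma)<\infty$, and $r\in\cI$; write $\Theta_{A,B}(r)\coloneqq\int_\Sigma A(\sigma)\,r\,B(\sigma)\,\rho(d\sigma)$ for the $\mathrm{w}^\ast$-integral of Theorem~\ref{thm.MOIsinM}. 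The basic estimate for $\mathrm{w}^\ast$-integrals gives $\Theta_{A,B}(r)\in\cM$ with $\|\Theta_{A,B}(r)\|\le\int_\Sigma\|A(\sigma)\,r\,B(\sigma)\|\,\rho(d\sigma)\le C\|r\|$, while the symmetrically normed property gives $\|A(\sigma)\,r\,B(\sigma)\|_\cI\le\|A(\sigma)\|\,\|r\|_\cI\,\|B(\sigma)\|$; the target is $\Theta_{A,B}(r)\in\cI$ with $\|\Theta_{A,B}(r)\|_\cI\le C\|r\|_\cI$. For case~\ref{item.triv} this target \emph{is} the basic estimate. For case~\ref{item.sep} I would use separability to run a Bochner-integral argument: the $\cI$-valued map $\sigma\mapsto A(\sigma)\,r\,B(\sigma)$ is weakly measurable (its pairings against $\cI^{\ast}$ are deducible, via density, from the $\mathrm{w}^\ast$-measurability of $A$ and $B$), hence strongly measurable by the Pettis measurability theorem, and it is $\|\cdot\|_\cI$-integrable by the displayed bound, so it is Bochner integrable into $\cI$; its Bochner integral is characterized weakly in the same way as $\Theta_{A,B}(r)$, so the two agree, and $\|\Theta_{A,B}(r)\|_\cI\le\int_\Sigma\|A(\sigma)\,r\,B(\sigma)\|_\cI\,\rho(d\sigma)\le C\|r\|_\cI$.

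For cases~\ref{item.compact} and~\ref{item.Sp} the key point is that, for $1\le p<\infty$, the closed ball of radius $M$ in $\cS_p(H)$ is closed in the $\mathrm{w}^\ast$-topology $\sigma(B(H),\cS_1(H))$: if $S_j\to T$ in that topology with $\sup_j\|S_j\|_{\cS_p(H)}\le M$, then for every finite-rank operator $c$ with $\|c\|_{\cS_{p'}(H)}\le1$ (where $p'$ is the conjugate exponent and $\|\cdot\|_{\cS_\infty(H)}=\|\cdot\|$) one has $|\Tr(Tc)|=\lim_j|\Tr(S_j c)|\le\sup_j\|S_j\|_{\cS_p(H)}\|c\|_{\cS_{p'}(H)}\le M$ by Hölder's inequality, and taking the supremum over such $c$ (which computes $\|T\|_{\cS_p(H)}$ and, being finite, forces $T\in\cS_p(H)$) gives $\|T\|_{\cS_p(H)}\le M$. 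Now $\Theta_{A,B}(r)$ is a $\mathrm{w}^\ast$-limit of finite ``Riemann sums'' $\sum_i A(\sigma_i)\,r\,B(\sigma_i)\,\rho(E_i)$, each with $\cS_p(H)$-norm at most $\sum_i\|A(\sigma_i)\|\,\|r\|_{\cS_p(H)}\,\|B(\sigma_i)\|\,\rho(E_i)$, a quantity that can be taken arbitrarily close to $C\|r\|_{\cS_p(H)}$; this yields case~\ref{item.Sp} for $p<\infty$, and $p=\infty$ falls under case~\ref{item.triv}. For $\cK(H)$: finite-rank operators are $\|\cdot\|$-dense in $\cK(H)$, and $r\mapsto\Theta_{A,B}(r)$ is $\|\cdot\|$-Lipschitz by the basic estimate, so it suffices to treat finite-rank $r$; then $\|r\|_{\cS_1(H)}<\infty$, so the case $p=1$ puts $\Theta_{A,B}(r)\in\cS_1(H)\subseteq\cK(H)$, and the norm bound is again the basic estimate.

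Case~\ref{item.fs} I would prove by interpolation. A fully symmetric space $E(\tau)$ is an exact interpolation space for the Banach couple $(\cM,L^1(\tau))$ — a noncommutative analogue of the Calderón--Mityagin theorem — so it suffices to show that the linear map $\Theta_{A,B}$, suitably interpreted on $\cM+L^1(\tau)$, is bounded with constant $C$ both on $\cM$ (done, case~\ref{item.triv}) and on $L^1(\tau)$. For the latter, for $r\in\cM\cap L^1(\tau)$ the cyclicity of $\tau$ and noncommutative Hölder give $|\tau(A(\sigma)\,r\,B(\sigma)\,y)|=|\tau(r\,B(\sigma)\,y\,A(\sigma))|\le\|r\|_{L^1(\tau)}\,\|A(\sigma)\|\,\|B(\sigma)\|\,\|y\|$ for $y\in\cM$, so using the defining weak characterization of $\Theta_{A,B}(r)$ and the duality $\|x\|_{L^1(\tau)}=\sup\{|\tau(xy)|:y\in\cM\cap L^1(\tau),\ \|y\|\le1\}$ one obtains $\|\Theta_{A,B}(r)\|_{L^1(\tau)}\le C\|r\|_{L^1(\tau)}$; since $\cM\cap L^1(\tau)$ is dense in $L^1(\tau)$, $\Theta_{A,B}$ extends to $L^1(\tau)$ with constant $C$, consistently with its action on $\cM$. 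Exact interpolation then gives boundedness with constant $C$ on $E(\tau)$, and restricting to $\cI=E(\tau)\cap\cM$ and combining with the $\cM$-bound yields $\Theta_{A,B}(r)\in\cI$ with $\|\Theta_{A,B}(r)\|_\cI=\max\{\|\Theta_{A,B}(r)\|_{E(\tau)},\|\Theta_{A,B}(r)\|\}\le C\|r\|_\cI$. (Equivalently: applying the $\cM$- and $L^1(\tau)$-bounds to the $K$-optimal spectral splitting $r=r_0+r_1$ of $r$ for the couple $(\cM,L^1(\tau))$ shows $\Theta_{A,B}(r)\pprec C\,r$, and full symmetry of $E$ concludes.)

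I expect case~\ref{item.fs} to be the main obstacle. The delicate steps are: invoking the exact-interpolation characterization of fully symmetric spaces in the noncommutative (and not necessarily separable) setting, including that $E(\tau)\subseteq\cM+L^1(\tau)$ compatibly; justifying the interchange $\tau(\Theta_{A,B}(r)\,y)=\int_\Sigma\tau(A(\sigma)\,r\,B(\sigma)\,y)\,\rho(d\sigma)$ from the weak characterization of the $\mathrm{w}^\ast$-integral, by realizing the normal functional $\tau(\,\cdot\,y)$ on $\cM$ (for $y\in\cM\cap L^1(\tau)$) as a $\sigma$-weakly continuous functional on $B(H)$; and checking that the interpolated operator restricts on $E(\tau)\cap\cM$ to the original $\mathrm{w}^\ast$-integral. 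A secondary point, underlying cases~\ref{item.sep}--\ref{item.Sp} as well, is the approximation theory of the $\mathrm{w}^\ast$-integral in the possibly non-separable setting — that it is the $\mathrm{w}^\ast$-limit of the Riemann sums above, and, in case~\ref{item.sep}, that strong measurability of the $\cI$-valued integrand is available — which is supplied by the earlier development of the $\mathrm{w}^\ast$-integral.
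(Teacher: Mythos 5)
Your overall strategy matches the paper's: verify the defining inequality of Definition~\ref{def.idealproperties}.\ref{item.ISNI} case by case, deducing it from the elementary operator-norm estimate plus a case-specific upgrade. Items~\ref{item.triv}, \ref{item.sep}, and \ref{item.fs} are essentially the paper's own arguments. For \ref{item.sep}, the paper proves property~(M) via Bochner integrability exactly as you describe; your phrase ``pairings against $\cI^\ast$ are deducible, via density'' is imprecise (there is no reason to expect $\{\ell_{h,k}\}$ to be dense in $\cI^\ast$), but the correct variant of Pettis's theorem --- measurability against a \emph{total}, not dense, subset of the dual suffices when the target is separable, which is Proposition~1.10 in Vakhania et al.\ as the paper cites --- makes your idea work. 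For \ref{item.fs}, your ``equivalently'' parenthetical is the paper's actual route: the paper shows $\|T\|_{L^1\to L^1}\le C$ and $\|T\|_{\cM\to\cM}\le C$ and then invokes Proposition~4.1 of Dodds--Dodds--de Pagter (the noncommutative Calder\'{o}n submajorization) to get $Tc\pprec Cc$, which is precisely the content of your exact-interpolation framing; your $L^1$-bound via trace duality is also what the paper uses (through Theorem~\ref{thm.Lpinteg}). The interchange $\tau(\Theta_{A,B}(r)y)=\int_\Sigma\tau(A(\sigma)rB(\sigma)y)\,\rho(d\sigma)$ you flag as delicate is in fact immediate: $\tau(\cdot\,y)$ for $y\in\cM\cap L^1(\tau)$ is a normal functional, and the weak$^\ast$ integral of Definition~\ref{def.wstarmeasint} commutes with all elements of $\cM_\ast$ by construction.

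The genuine gap is in items~\ref{item.compact} and~\ref{item.Sp}. You assert that $\Theta_{A,B}(r)$ is a $\mathrm{w}^\ast$-limit of Riemann sums $\sum_i A(\sigma_i)\,r\,B(\sigma_i)\,\rho(E_i)$, but the weak$^\ast$ integral of this paper is defined abstractly via Riesz representation, not by a simple-function limit, and when $H$ is not separable there is no general device producing simple functions converging pointwise weak$^\ast$ to a weak$^\ast$-measurable integrand --- that is exactly the obstruction discussed in Section~\ref{sec.propF}, where Vakhania's approximation propositions (Propositions~1.9--10) are only available under separability. There is also a secondary issue: $\sigma\mapsto\|A(\sigma)\|\,\|B(\sigma)\|$ need not be measurable in the non-separable case, so the claim that the Riemann sums' $\cS_p$-norms ``can be taken arbitrarily close to $C\|r\|_{\cS_p}$'' also needs the lower-integral machinery. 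The fix is to bypass Riemann sums entirely with the same duality you already use in case~\ref{item.fs}: for finite-rank $c$ with $\|c\|_{\cS_{p'}}\le 1$ you have $\Tr(\Theta_{A,B}(r)c)=\int_\Sigma\Tr(A(\sigma)rB(\sigma)c)\,\rho(d\sigma)$ directly from the weak$^\ast$ characterization (since $\Tr(\cdot\,c)\in\cM_\ast$), and noncommutative H\"{o}lder then gives $|\Tr(\Theta_{A,B}(r)c)|\le\underline{\int_\Sigma}\|A\|\,\|B\|\,d\rho\cdot\|r\|_{\cS_p}$; your observation that finite-rank $c$ suffice to compute $\|\cdot\|_{\cS_p}$ (and decide membership) closes the argument. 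This is exactly the content of the noncommutative Minkowski inequality (Theorem~\ref{thm.Lpinteg}), which the paper simply cites for item~\ref{item.Sp} (Example~\ref{ex.Lp}). For item~\ref{item.compact}, your finite-rank-plus-$\cS_1$ reduction (which is the paper's Remark after Proposition~\ref{prop.compact}) then works once $\cS_1$ is repaired; the paper's main proof instead reduces to separable corners via Lemma~\ref{lem.compact} and item~\ref{item.sep}, avoiding $\cS_1$ altogether.
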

\begin{proof}
Item \ref{item.triv} is Example \ref{ex.M}, item \ref{item.sep} is part of Proposition \ref{prop.sep}, item \ref{item.compact} follows from Proposition \ref{prop.compact} (or Remark \ref{rem.compact}), item \ref{item.Sp} is a special case of Example \ref{ex.Lp}, and item \ref{item.fs} is Theorem \ref{thm.FSISN}.
\end{proof}

With these in mind, we now state our second main result.
Recall that $f^{[k]} \colon \R^{k+1} \to \C$ is the $k^{\text{th}}$ divided difference of $f \colon \R \to \C$ (please see \eqref{eq.divdiffintro}), $S_k$ is the symmetric group on $k$ letters, and all multiple operator integral (MOI) expressions as in \eqref{eq.formalMOI} are to be interpreted in accordance with Section \ref{sec.MOIsinI}.

\begin{thm}[Derivatives of operator functions in ISNIs]\label{thm.mainopder}
Let $H$ be an arbitrary (not-necessarily-separable) complex Hilbert space, $\cM \subseteq B(H)$ be a von Neumann algebra, and $a$ be a self-adjoint operator on $H$ affiliated with $\cM$.
If $(\cI,\|\cdot\|_{\cI})$ is an integral symmetrically normed ideal of $\cM$ and $f \in PB^1(\R) \cap PB^k(\R)$, then the operator function $\cI_{\sa} \ni b \mapsto f_a(b)\coloneqq f(a+b)-f(a) \in \cI$ is $k$-times continuously Fr\'{e}chet differentiable (Definition \ref{def.frechder}) in the $\cI$-norm $\|\cdot\|_{\cI}$, and
\[
\partial_{b_1}\cdots\partial_{b_k}f_a(0) = \sum_{\pi \in S_k}\underbrace{\int_{\sigma(a)}\cdots\int_{\sigma(a)}}_{k+1 \; \mathrm{times}} f^{[k]}(\lambda_1,\ldots,\lambda_{k+1})\,P^a(d\lambda_1)\,b_{\pi(1)}\cdots P^a(d\lambda_k)\,b_{\pi(k)}\,P^a(d\lambda_{k+1}),
\]
for all $b_1,\ldots,b_k \in \cI_{\sa}$.
\end{thm}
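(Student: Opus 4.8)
The plan is to establish the result by induction on $k$, using the multiple operator integral machinery from Section~\ref{sec.MOIsinI} as the backbone. The base case $k=1$ is essentially the first-order Fr\'{e}chet differentiability statement, which should follow from the integral symmetrically normed property combined with the MOI representation of the first divided difference applied to $f \in \dot{B}_1^{1,\infty}(\R)$ with $f'$ bounded; here one uses the standard perturbation identity $f(a+b) - f(a) = \big(I^{P^{a+b},P^a} f^{[1]}\big)(b)$, valid for suitable $f$, together with continuity of $b \mapsto P^{a+b}$ in the appropriate sense and the norm bound $\|(I^{P_1,P_2}\varphi)(b)\|_{\cI} \lesssim \|b\|_{\cI}$ coming from the ISNI condition and the ``separation of variables'' integral decomposition of $f^{[1]}$ (which exists precisely because $f \in \dot B_1^{1,\infty}$). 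The key point is that the ISNI hypothesis is exactly what is needed to transfer the $B(H)$-level MOI bounds to $\cI$-level bounds.

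For the inductive step, I would differentiate the MOI expression for $\partial_{b_1} \cdots \partial_{b_{k-1}} f_a(b)$ --- which by the inductive hypothesis is a sum over $\gamma \in S_{k-1}$ of MOIs of $f^{[k-1]}$ with $b+a$-spectral measures interleaved with $b_{\gamma(j)}$'s --- in a further direction $b_k$. The crucial algebraic fact is the ``perturbation formula'' for MOIs: differentiating a $(k-1)$-fold MOI $I^{P^{a+b},\ldots,P^{a+b}} f^{[k-1]}$ in $b$ produces a $k$-fold MOI of $f^{[k]}$, by virtue of the recursion \eqref{eq.divdiffintro} for divided differences together with the resolvent/spectral-measure perturbation identity. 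Concretely, one writes $P^{a+b}(d\lambda) - P^a(d\lambda)$ in integrated form and uses that $f^{[k-1]}$'s increment in one slot is $(\lambda_j - \lambda_{j+1}) f^{[k]}$; summing over which of the $k$ spectral-measure slots gets ``split'' yields, after reindexing, the sum over $S_k$. The combinatorial bookkeeping --- checking that the $k$ ways of inserting the new variable, combined with the $(k-1)!$ permutations from the inductive hypothesis, assemble into exactly $\sum_{\gamma \in S_k}$ --- is the fiddly part but is essentially the same bookkeeping as in \cite{azamovetal,peller1}.

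The main obstacles are two-fold. First, one must justify that the candidate derivative (the $k$-fold MOI) is genuinely the Fr\'{e}chet derivative in $\|\cdot\|_{\cI}$, i.e., that the remainder $\partial_{b_1}\cdots\partial_{b_{k-1}}f_a(b+b_k) - \partial_{b_1}\cdots\partial_{b_{k-1}}f_a(b) - (\text{$k$-fold MOI})(b_k)$ is $o(\|b_k\|_{\cI})$; this requires a quantitative MOI continuity estimate, namely that $b \mapsto I^{P^{a+b},\ldots,P^{a+b}} f^{[k]}$ is continuous into the relevant operator-norm-on-multilinear-maps topology, uniformly enough to control the remainder. This continuity, and the boundedness of all the MOIs involved, is where the hypothesis $f \in \dot B_1^{k,\infty}(\R)$ (providing a good separation-of-variables representation of $f^{[k]}$) and $f \in \dot B_1^{1,\infty}(\R)$ with $f'$ bounded (needed to even make $f_a$ land in $\cI$ and handle lower-order terms) are both consumed. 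Second, the non-separability of $H$ must be handled --- but this is precisely what the MOI framework of \cite{nikitopoulosMOI} (invoked via Theorem~\ref{thm.MOIsinM}) is designed to absorb, so it should not require new ideas beyond citing that machinery carefully. The ``$k$-times \emph{continuously}'' clause then follows from the same continuity estimate applied at the top level. I expect the remainder estimate in the inductive step to be the genuine technical heart of the argument.
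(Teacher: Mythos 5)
Your plan is essentially the paper's own argument: the paper factors the result as Theorem \ref{thm.deropfunc} (differentiability for $f$ in the intermediate class $OC^{[k]}(\R)$) plus Corollary \ref{cor.PBkinOCk} ($PB^1 \cap PB^k \subseteq OC^{[k]}$), and Theorem \ref{thm.deropfunc} is proven exactly by your proposed induction on $k$, using the algebraic perturbation formulas for divided differences (Theorem \ref{thm.perturb}) to express increments of the $(k-1)$st derivative as $k$-fold MOIs with one perturbed slot, and controlling the remainder via the continuity of $c \mapsto I^{a+c,\ldots,a+c}f^{[k]}$ into $B_k(\cI^k;\cI)$ (Lemma \ref{lem.Icont}), with the ISNI condition converting operator-norm integrands to $\cI$-norm bounds. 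The one subtlety your sketch elides --- and the reason the paper introduces $BOC(\R)$ and $OC^{[k]}(\R)$ rather than working directly with $\ell^\infty$-integral projective decompositions --- is that the separation-of-variables decomposition of $f^{[k]}$ must have \emph{operator continuous} factors, since $b \mapsto P^{a+b}$ is not continuous in any usable topology (your phrase ``continuity of $b \mapsto P^{a+b}$ in the appropriate sense'' is where this danger lurks); verifying that Peller's explicit decomposition produces such factors (Lemma \ref{lem.expOC}, Proposition \ref{prop.fastmuOC}) is a genuine step, and is precisely how the hypothesis $f \in PB^1(\R) \cap PB^k(\R)$ gets consumed.
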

\begin{proof}
Combine Theorem \ref{thm.deropfunc} and Corollary \ref{cor.PBkinOCk}.
\end{proof}

Theorems \ref{thm.mainopder} and \ref{thm.mainISNI}\ref{item.Sp} generalize the best known results, from \cite{lemerdyskripka}, on differentiability of operator functions in the ideal $(\cI,\|\cdot\|_{\cI}) = (\cS_p(H),\|\cdot\|_{\cS_p})$ to the non-separable case when $p=1$.
We do not, however, fully recover the optimal regularity on $f$, established in \cite{lemerdymcdonald}, when $p \in (1,\infty)$.
Also, to the author's knowledge, the present paper's result on the ideal of compact operators (i.e., Theorems \ref{thm.mainopder} and \ref{thm.mainISNI}\ref{item.compact}) is new even when $H$ is separable.
Finally, as promised at the end of the previous section, Theorems \ref{thm.mainopder} and \ref{thm.mainISNI}\ref{item.fs} (together with Fact \ref{fact.Etau}) make substantial progress on the open problem (Problem 5.3.22 in \cite{skripka}) of finding general conditions for higher Fr\'{e}chet differentiability of operator functions in ideals of semifinite von Neumann algebras induced by (fully) symmetric Banach function spaces.

\section{Preliminaries}\label{sec.bg}

For Section \ref{sec.bg}, fix a complex Hilbert space $(H,\la \cdot, \cdot \ra)$ and a von Neumann algebra $\cM \subseteq B(H)$.
Recall also that $S' \coloneqq \{b \in B(H) : ab=ba$, for all $a \in S\}$ is the \textbf{commutant} of a set $S \subseteq B(H)$ and that a unital $\ast$-subalgebra $\cN \subseteq B(H)$ is a von Neumann algebra if and only if $\cN = \cN'' \coloneqq (\cN')'$.
This is the well-known (von Neumann) Bicommutant Theorem.

\subsection{\texorpdfstring{Weak$^*$}{} integrals in von Neumann algebras}\label{sec.wstarint}

Following parts of Section 3.3 of \cite{nikitopoulosMOI}, we review some basics of operator-valued integrals.
We shall write
\[
\ell^2(\N;H) \coloneqq \Bigg\{(h_n)_{n \in \N} \in H^{\N} : \sum_{n=1}^{\infty} \|h_n\|^2 < \infty\Bigg\} \; \text{ and } \; \la (h_n)_{n \in \N},(k_n)_{n \in \N} \ra_{\ell^2(\N;H)} \coloneqq \sum_{n=1}^{\infty}\la h_n,k_n \ra.
\]
Also, for the duration of this section, fix a measure space $(\Sigma,\sH,\rho)$.

\begin{defi}[Weak$^*$ measurability and integrability]\label{def.wstarmeasint}
A map $F \colon \Sigma \to \cM$ is called \textbf{weak$\boldsymbol{^*}$ measurable} if $\la F(\cdot)h,k \ra \colon \Sigma \to \C$ is $(\sH,\cB_{\C})$-measurable, for all $h,k \in H$.
Now, suppose in addition that
\[
\int_{\Sigma} |\la (F(\sigma)h_n)_{n \in \N},(k_n)_{n \in \N} \ra_{\ell^2(\N;H)}|\,\rho(d\sigma) < \infty, \numberthis\label{eq.wstartintegcond}
\]
for all $(h_n)_{n \in \N},(k_n)_{n \in \N} \in \ell^2(\N;H)$.
We say that $F$ is \textbf{weak$\boldsymbol{^*}$ integrable} if for all $S \in \sH$, there exists (necessarily unique) $I_S \in \cM$ such that
\[
\la (I_Sh_n)_{n \in \N},(k_n)_{n \in \N} \ra_{\ell^2(\N;H)} = \int_S \la (F(\sigma)h_n)_{n \in \N},(k_n)_{n \in \N} \ra_{\ell^2(\N;H)}\,\rho(d\sigma),
\]
for all $(h_n)_{n \in \N},(k_n)_{n \in \N} \in \ell^2(\N;H)$.
In this case, we call $I_S$ the \textbf{weak$\boldsymbol{^*}$ integral} (over $S$) of $F$ with respect to $\rho$, and we write $\int_S F\,d\rho = \int_S F(\sigma)\,\rho(d\sigma) \coloneqq I_S$.
\end{defi}
\begin{rem}
Let $\cM_* \coloneqq \{\sigma$-WOT-continuous linear functionals $\cM \to \C\}$ be the predual of $\cM$.
Part of Theorem 3.3.6 in \cite{nikitopoulosMOI} says that $F$ is weak$^*$ measurable (respectively, integrable) in the sense of Definition \ref{def.wstarmeasint} if and only if $F$ is weakly measurable (respectively, integrable) in the weak$^*$ topology on $\cM$ induced by the usual identification $\cM \cong \cM_*^{\;*}$.
The above terminology is therefore justified.
\end{rem}

It turns out (Theorem 3.3.6(iii) in \cite{nikitopoulosMOI}) that \eqref{eq.wstartintegcond} is actually already enough to guarantee that $F$ is weak$^*$ integrable.
Since we do not need this level of generality, we shall prove a weaker statement from scratch.

\begin{defi}[Upper and lower integrals]\label{def.nonmeasint}
For a function $h \colon \Sigma \to [0,\infty]$ that is not necessarily measurable, we define
\begin{align*}
    \overline{\int_{\Sigma}} h(\sigma) \, \rho(d\sigma) & = \overline{\int_{\Sigma}} h \, d\rho \coloneqq \inf\Bigg\{\int_{\Sigma} \tilde{h} \, d\rho : h \leq \tilde{h} \;\; \rho\text{-almost everywhere}, \; \tilde{h} \colon \Sigma \to [0,\infty] \text{ measurable}\Bigg\} \text{ and} \\
    \underline{\int_{\Sigma}} h(\sigma) \, \rho(d\sigma) & = \underline{\int_{\Sigma}} h \, d\rho \coloneqq \sup\Bigg\{\int_{\Sigma} \tilde{h} \, d\rho : \tilde{h} \leq h \; \rho\text{-almost everywhere}, \; \tilde{h} \colon \Sigma \to [0,\infty] \text{ measurable}\Bigg\}
\end{align*}
to be, respectively, the \textbf{upper} and \textbf{lower integral} of $h$ with respect to $\rho$.
(Please see Section 3.1 of \cite{nikitopoulosMOI}.)
\end{defi}

\begin{prop}[Existence of weak$^*$ integrals]\label{prop.wstarintexist}
If $F \colon \Sigma \to \cM$ is weak$^*$ measurable and $\underline{\int_{\Sigma}} \|F\| \,d\rho < \infty$, then $F$ is weak$^*$ integrable and
\[
\Bigg\|\int_{\Sigma} F\,d\rho \Bigg\| \leq \underline{\int_{\Sigma}}\|F\|\,d\rho. \numberthis\label{eq.opnormtriangle}
\]
This is called the (\textbf{operator norm}) \textbf{integral triangle inequality}.
\end{prop}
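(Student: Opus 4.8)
The plan is to reduce the claim to a single bounded sesquilinear form on $H$, apply the Riesz representation theorem to it, identify the representing operator as an element of $\cM$ via the Bicommutant Theorem, and finally upgrade the resulting scalar identity to the $H^{\infty}$ identity of Definition \ref{def.wstarmeasint} by a Tonelli-type interchange. Throughout, the hypothesis $\underline{\int_{\Sigma}}\|F\|\,d\rho < \infty$ enters only through two elementary facts about the lower integral (Section 3.1 of \cite{nikitopoulosMOI}): it is positively homogeneous, and if $g \colon \Sigma \to [0,\infty]$ is \emph{measurable} with $g \le h$ $\rho$-a.e.\ for some (possibly non-measurable) $h \colon \Sigma \to [0,\infty]$, then $\int_{\Sigma} g\,d\rho \le \underline{\int_{\Sigma}} h\,d\rho$.

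First I would fix $S \in \sH$ and set $B_S(h,k) \coloneqq \int_S \la F(\sigma)h,k\ra\,\rho(d\sigma)$ for $h,k \in H$. This is well defined: $\sigma \mapsto \la F(\sigma)h,k\ra$ is measurable since $F$ is weak$^*$ measurable, its absolute value times the indicator of $S$ is a measurable minorant of $\|h\|\,\|k\|\,\|F(\cdot)\|$, and hence $\int_S |\la F(\sigma)h,k\ra|\,\rho(d\sigma) \le C_0\,\|h\|\,\|k\| < \infty$ where $C_0 \coloneqq \underline{\int_{\Sigma}}\|F\|\,d\rho$. The form $B_S$ is sesquilinear and bounded by $C_0$, so a standard consequence of the Riesz representation theorem yields a unique $I_S \in B(H)$ with $\la I_S h,k\ra = B_S(h,k)$ for all $h,k \in H$ and $\|I_S\| \le C_0$; taking $S = \Sigma$ already gives \eqref{eq.opnormtriangle}, once $I_\Sigma$ is identified with $\int_\Sigma F\,d\rho$ in the last step.

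Next I would check $I_S \in \cM$ using $\cM = \cM''$: for any unitary $u \in \cM'$, each $F(\sigma) \in \cM$ commutes with $u$, so $\la I_S uh,k\ra = \int_S \la F(\sigma)uh,k\ra\,d\rho = \int_S \la uF(\sigma)h,k\ra\,d\rho = \int_S \la F(\sigma)h,u^*k\ra\,d\rho = \la I_S h,u^*k\ra = \la uI_S h,k\ra$ for all $h,k \in H$, hence $I_S u = uI_S$; since $\cM'$ is the linear span of its unitaries, $I_S \in (\cM')' = \cM$. To obtain the $H^{\infty}$ identity, I would fix $(h_n)_{n},(k_n)_{n} \in H^{\infty}$ and note that $\sigma \mapsto \sum_{n} |\la F(\sigma)h_n,k_n\ra|$ is measurable and, by Cauchy--Schwarz, bounded above by $\|(h_n)_n\|_{H^{\infty}}\,\|(k_n)_n\|_{H^{\infty}}\,\|F(\cdot)\|$; its integral over $S$ is therefore at most $C_0\,\|(h_n)_n\|_{H^{\infty}}\|(k_n)_n\|_{H^{\infty}} < \infty$, which in particular establishes \eqref{eq.wstartintegcond}. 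Tonelli's theorem applied to $\rho$ and counting measure then justifies $\sum_{n}\int_S \la F(\sigma)h_n,k_n\ra\,d\rho = \int_S \sum_{n}\la F(\sigma)h_n,k_n\ra\,d\rho$, i.e.\ $\la (I_S h_n)_n,(k_n)_n\ra_{H^{\infty}} = \int_S \la (F(\sigma)h_n)_n,(k_n)_n\ra_{H^{\infty}}\,\rho(d\sigma)$, which is exactly the defining property of the weak$^*$ integral (uniqueness of $I_S$ being immediate from testing against one-term sequences).

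I do not expect a genuine obstacle; the one subtlety is that $\|F(\cdot)\|$ need not be measurable when $H$ is non-separable --- which is precisely why the statement is phrased with the lower integral --- but this is absorbed uniformly into the ``measurable minorant'' estimates above. If one prefers to work with $S = \Sigma$ only, one can first reduce to that case by replacing $F$ with $\mathbf{1}_S F$, which is again weak$^*$ measurable and satisfies $\underline{\int_{\Sigma}}\|\mathbf{1}_S F\|\,d\rho \le \underline{\int_{\Sigma}}\|F\|\,d\rho$.
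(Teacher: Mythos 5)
Your proposal is correct and follows essentially the same path as the paper's proof: define the bounded form $B_S$ via the lower integral estimate, invoke Riesz representation, show membership in $\cM$ by testing against $\cM'$ and applying the Bicommutant Theorem, and finally pass to the $H^{\infty}$ identity by an absolute-integrability/interchange argument. The only cosmetic differences are that the paper tests against arbitrary $a \in \cM'$ rather than unitaries and invokes the Dominated Convergence Theorem rather than Tonelli/Fubini for the sum--integral interchange; both choices are equivalent here.
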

\begin{rem}
First, the operator norm of a weak$^*$ measurable map is not necessarily measurable when $H$ is not separable.
This is why we need the lower integral above.
Second, one can also prove that weak$^*$ integrals are independent of the representation of $\cM$.
Please see Theorem 3.3.6(iv) in \cite{nikitopoulosMOI}.
\end{rem}
\pagebreak
\begin{proof}
Fix $S \in \sH$, and define $B_S \colon H \times H \to \C$ by $(h,k) \mapsto \int_S \la F(\sigma)h,k \ra \,\rho(d\sigma)$.
Then $B_S$ is linear in the first argument and conjugate-linear in the second argument.
Also,
\[
|B_S(h,k)| \leq \int_S |\la F(\sigma)h,k \ra| \,\rho(d\sigma) \leq \int_{\Sigma}|\la F(\sigma)h,k \ra| \,\rho(d\sigma) \leq \underline{\int_{\Sigma}} \|F(\sigma)h\|\,\|k\|\,\rho(d\sigma) \leq  \|h\|\,\|k\|\underline{\int_{\Sigma}} \|F\|\,d\rho,
\]
for all $h,k \in H$.
In particular, $B_S$ is bounded.
By the Riesz Representation Theorem, there exists unique $\rho_S(F) \in B(H)$ such that $\la \rho_S(F)h,k \ra = B_S(h,k)$, for all $h,k \in H$.
Moreover,
\[
\|\rho_S(F)\| = \sup\{|B_S(h,k)| : h,k \in H, \; \|h\|,\|k\| \leq 1\} \leq \underline{\int_{\Sigma}}\|F\|\,d\rho.
\]
If we can show $\rho_S(F) = \int_S F \,d\rho$, then we are done.

To this end, let $a \in \cM'$ and $h,k \in H$.
If $\sigma \in \Sigma$, then $\la F(\sigma)\,ah,k \ra = \la a\,F(\sigma)h,k \ra = \la F(\sigma)h,a^*k \ra$.
Thus $B_S(ah,k) = B_S(h,a^*k)$.
But then $\la a\,\rho_S(F)h,k \ra = \la \rho_S(F)h,a^*k \ra = B_S(h,a^*k) = B_S(ah,k) = \la \rho_S(F)\,ah,k \ra$.
Since $h,k \in H$ were arbitrary, $a\,\rho_S(F) = \rho_S(F)\,a$.
Since $a \in \cM'$ was arbitrary, $\rho_S(F) \in \cM'' = \cM$ by the Bicommutant Theorem.
Next, let $(h_n)_{n \in \N},(k_n)_{n \in \N} \in \ell^2(\N;H)$.
Then
\[
\int_{\Sigma} \sum_{n=1}^{\infty}|\la F(\sigma)h_n,k_n\ra|\,\rho(d\sigma) \leq \|(h_n)_{n \in \N}\|_{\ell^2(\N;H)}\|(k_n)_{n \in \N}\|_{\ell^2(\N;H)}\underline{\int_{\Sigma}}\|F\|\,d\rho < \infty
\]
by the Cauchy--Schwarz Inequality (twice).
Therefore, by the Dominated Convergence Theorem,
\[
\int_S \sum_{n=1}^{\infty}\la F(\sigma)h_n,k_n \ra\,\rho(d\sigma) = \sum_{n=1}^{\infty}\int_S \la F(\sigma)h_n,k_n \ra\,\rho(d\sigma) = \sum_{n=1}^{\infty}\la \rho_S(F)h_n,k_n \ra,
\]
as desired.
\end{proof}

Other than basic algebraic properties of the weak$^*$ integral, which are usually clear from the definition, the most important fact about weak$^*$ integrals that we shall use is an operator-valued Dominated Convergence Theorem, which we prove from scratch for the convenience of the reader.
(But we remark that it follows from Proposition 3.2.3(v) and Theorem 3.3.6 in \cite{nikitopoulosMOI}.)
First, we note that one can do better than the operator norm triangle inequality.
Indeed, retaining the notation from the proof above, we have
\[
\Bigg\|\Bigg(\int_{\Sigma} F\,d\rho\Bigg)h\Bigg\| = \sup\{|B_{\Sigma}(h,k)| : \|k\| \leq 1\}  \leq \sup\Bigg\{\int_{\Sigma}|\la F(\sigma)h,k \ra| \,\rho(d\sigma) : \|k\| \leq 1\Bigg\} \leq \underline{\int_{\Sigma}} \|F(\sigma)h\|\,\rho(d\sigma),
\]
for all $h \in H$.

\begin{lem}[Nonmeasurable Dominated Convergence Theorem]\label{lem.poorDCT}
Suppose $(g_n)_{n \in \N}$ is a sequence of functions $\Sigma \to [0,\infty]$ such that $g_n \to 0$ pointwise $\rho$-almost everywhere as $n \to \infty$.
If
\[
\overline{\int_{\Sigma}}\sup_{n \in \N} g_n \, d\rho < \infty,
\]
then $\underline{\int_{\Sigma}}g_n \, d\rho \to 0$ as $n \to \infty$.
\end{lem}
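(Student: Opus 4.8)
The plan is to reduce the claim to the classical Dominated Convergence Theorem by sandwiching each (possibly non-measurable) $g_n$ between measurable functions, a standard pattern for results of this ``non-measurable'' flavor. Write $G \coloneqq \sup_{n \in \N} g_n$, so that $\overline{\int_{\Sigma}} G \, d\rho < \infty$ by hypothesis. Since $g_n \leq G$ pointwise, the obvious monotonicity of the upper and lower integrals (Definition \ref{def.nonmeasint}) gives $\underline{\int_{\Sigma}} g_n \, d\rho \le \overline{\int_{\Sigma}} g_n \, d\rho \le \overline{\int_{\Sigma}} G \, d\rho < \infty$ for every $n$. In particular each $\underline{\int_{\Sigma}} g_n \, d\rho$ is a finite nonnegative real number, which is the quantity we must show tends to $0$.

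Next I would fix the two families of comparison functions. By the definition of the upper integral, choose a measurable $\Phi \colon \Sigma \to [0,\infty]$ with $\Phi \ge G$ $\rho$-a.e. and $\int_{\Sigma} \Phi \, d\rho < \infty$ (say, with $\int_{\Sigma}\Phi\,d\rho \le \overline{\int_{\Sigma}} G\,d\rho + 1$). By the definition of the lower integral, for each $n$ choose a measurable $h_n \colon \Sigma \to [0,\infty]$ with $h_n \le g_n$ $\rho$-a.e. and $\int_{\Sigma} h_n \, d\rho \ge \underline{\int_{\Sigma}} g_n \, d\rho - \tfrac{1}{n}$.

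The key observation is that the minorants $h_n$ automatically converge to $0$ almost everywhere. Let $N$ be the union of the $\rho$-null sets $\{g_n \not\to 0\}$, $\{\Phi < G\}$, and $\{h_n > g_n\}$ ranging over all $n \in \N$; as a countable union of null sets, $N$ is $\rho$-null. For $x \notin N$ we have $0 \le h_n(x) \le g_n(x) \le G(x) \le \Phi(x)$ for all $n$, together with $g_n(x) \to 0$, and hence $h_n(x) \to 0$. Thus $h_n \to 0$ pointwise $\rho$-a.e.\ and $0 \le h_n \le \Phi$ $\rho$-a.e.\ with $\int_{\Sigma} \Phi \, d\rho < \infty$, so the ordinary Dominated Convergence Theorem gives $\int_{\Sigma} h_n \, d\rho \to 0$. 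Therefore $0 \le \underline{\int_{\Sigma}} g_n \, d\rho \le \int_{\Sigma} h_n \, d\rho + \tfrac{1}{n} \to 0$, as desired.

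I do not anticipate a serious obstacle: the only point requiring care is the bookkeeping with null sets — namely that the measurable minorants $h_n$ inherit pointwise a.e.\ convergence to $0$ from the $g_n$ precisely because $h_n \le g_n$ $\rho$-a.e., and that the countably many exceptional null sets have null union, so that the classical DCT applies with the single measurable dominating function $\Phi$. Everything else follows immediately from the definitions of the upper and lower integrals.
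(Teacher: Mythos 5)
Your proof is correct and follows essentially the same route as the paper: choose a measurable dominating function $\Phi$ from the finiteness of the upper integral, choose measurable minorants $h_n$ within $1/n$ of the lower integrals, observe that the minorants inherit a.e.\ convergence to $0$ and are dominated by $\Phi$, then apply the classical Dominated Convergence Theorem. The only cosmetic difference is that you spell out the null-set bookkeeping more explicitly; the paper leaves this implicit but is otherwise identical.
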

\begin{proof}
By definition of the upper integral, there is some measurable $g \colon \Sigma \to [0,\infty]$ such that $\int_{\Sigma} g \, d\rho < \infty$ and $\sup_{n \in \N}g_n \leq g$ $\rho$-almost everywhere.
Now, if $n \in \N$, then, by definition of the lower integral, there exists a measurable $\tilde{g}_n \colon \Sigma \to [0,\infty]$ such that $0 \leq \tilde{g}_n \leq g_n$ $\rho$-almost everywhere and
\[
\underline{\int_{\Sigma}}g_n \, d\rho - \frac{1}{n} < \int_{\Sigma} \tilde{g}_n \, d\rho.
\]
Since $g_n \to 0$ $\rho$-almost everywhere, and $0 \leq \tilde{g}_n \leq g_n$ $\rho$-almost everywhere, we also have $\tilde{g}_n \to 0$ $\rho$-almost everywhere as $n \to \infty$.
Also, $\tilde{g}_n \leq g_n \leq g$ $\rho$-almost everywhere, for all $n \in \N$.
Therefore, by the Dominated Convergence Theorem,
\[
0 \leq \liminf_{n \to \infty}\underline{\int_{\Sigma}}g_n \, d\rho \leq \limsup_{n \to \infty}\underline{\int_{\Sigma}}g_n \, d\rho = \limsup_{n \to \infty}\Bigg(\underline{\int_{\Sigma}}g_n \, d\rho - \frac{1}{n}\Bigg) \leq \limsup_{n \to \infty}\int_{\Sigma} \tilde{g}_n \, d\rho = 0,
\]
as desired.
\end{proof}

\begin{prop}[Operator-valued Dominated Convergence Theorem]\label{prop.opDCT}
Let $(F_n)_{n \in \N}$ be a sequence of weak$^*$ integrable maps $\Sigma \to \cM$, and suppose $F \colon \Sigma \to \cM$ is such that $F_n \to F$ pointwise in the weak, strong, or strong$^*$ operator topology as $n \to \infty$.
If
\[
\overline{\int_{\Sigma}} \sup_{n \in \N}\|F_n\|\,d\rho < \infty, \numberthis\label{eq.opnormbd}
\]
then $F \colon \Sigma \to \cM$ is weak$^*$ integrable and $\int_{\Sigma} F_n\,d\rho \to \int_{\Sigma} F\, d\rho$ in, respectively, the weak, strong, or strong$^*$ operator topology as $n \to \infty$.
\end{prop}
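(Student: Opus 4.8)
The plan is to reduce the proposition to the scalar Dominated Convergence Theorem and, where measurability fails, to its non-measurable version, Lemma~\ref{lem.poorDCT}; the operator-norm control will come entirely from \eqref{eq.opnormbd}. Throughout, set $G \coloneqq \sup_{n \in \N}\|F_n\| \colon \Sigma \to [0,\infty]$, so that $\overline{\int_{\Sigma}} G\,d\rho < \infty$. I would first show that $F$ is itself weak$^*$ integrable. Since convergence in any of the three operator topologies implies weak convergence, $\la F(\cdot)h,k\ra = \lim_{n}\la F_n(\cdot)h,k\ra$ pointwise for all $h,k \in H$, so $\la F(\cdot)h,k\ra$ is measurable as a pointwise limit of measurable functions and $F$ is weak$^*$ measurable. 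Passing to the limit in $|\la F_n(\sigma)h,k\ra| \le \|F_n(\sigma)\|\,\|h\|\,\|k\|$ and then taking the supremum over $h,k$ in the unit ball gives $\|F(\sigma)\| \le G(\sigma)$ for every $\sigma$, whence $\underline{\int_{\Sigma}}\|F\|\,d\rho \le \overline{\int_{\Sigma}} G\,d\rho < \infty$; by Proposition~\ref{prop.wstarintexist}, $F$ is weak$^*$ integrable. By linearity of the weak$^*$ integral, $F_n - F$ is then weak$^*$ integrable with $\int_{\Sigma}(F_n - F)\,d\rho = \int_{\Sigma}F_n\,d\rho - \int_{\Sigma}F\,d\rho$, and $\|(F_n - F)(\sigma)\| \le 2G(\sigma)$ for all $n$ and $\sigma$.

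I would handle the weak and strong cases next. For $h,k \in H$, the identity defining the weak$^*$ integral (take $H^{\infty}$-sequences supported in the first coordinate) gives $\la(\int_{\Sigma}(F_n - F)\,d\rho)h,k\ra = \int_{\Sigma}\la(F_n - F)(\sigma)h,k\ra\,\rho(d\sigma)$, whose integrand is measurable, bounded in modulus by the majorant $2\|h\|\,\|k\|\,G$ of finite upper integral, and tends to $0$ pointwise; so the Dominated Convergence Theorem forces $\la(\int_{\Sigma}F_n\,d\rho)h,k\ra \to \la(\int_{\Sigma}F\,d\rho)h,k\ra$, i.e., convergence in the WOT. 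For the SOT I would use instead the sharper estimate recorded just before Lemma~\ref{lem.poorDCT}:
\[
\Bigg\|\Bigg(\int_{\Sigma}F_n\,d\rho\Bigg)h - \Bigg(\int_{\Sigma}F\,d\rho\Bigg)h\Bigg\| = \Bigg\|\Bigg(\int_{\Sigma}(F_n - F)\,d\rho\Bigg)h\Bigg\| \le \underline{\int_{\Sigma}}\|(F_n - F)(\sigma)h\|\,\rho(d\sigma).
\]
Since $\|(F_n - F)(\sigma)h\| \to 0$ pointwise and $\sup_n\|(F_n - F)(\cdot)h\| \le 2\|h\|\,G$ has finite upper integral, Lemma~\ref{lem.poorDCT} sends the right-hand side to $0$, giving SOT convergence.

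For the S$^*$OT case, recall that $F_n \to F$ in the S$^*$OT means $F_n \to F$ and $F_n^* \to F^*$ in the SOT. Conjugating the identity $\la(\int_{\Sigma}F_n\,d\rho)k,h\ra = \int_{\Sigma}\la F_n(\sigma)k,h\ra\,\rho(d\sigma)$ (and the analogous one for $F$), and using $\la A^*h,k\ra = \overline{\la Ak,h\ra}$, shows that $F_n^*$ and $F^*$ are weak$^*$ integrable with $(\int_{\Sigma}F_n\,d\rho)^* = \int_{\Sigma}F_n^*\,d\rho$ and $(\int_{\Sigma}F\,d\rho)^* = \int_{\Sigma}F^*\,d\rho$. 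Applying the SOT case of the previous paragraph once to $F_n \to F$ and once to $F_n^* \to F^*$ then yields $\int_{\Sigma}F_n\,d\rho \to \int_{\Sigma}F\,d\rho$ and $(\int_{\Sigma}F_n\,d\rho)^* \to (\int_{\Sigma}F\,d\rho)^*$ in the SOT, which is precisely S$^*$OT convergence.

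I do not expect a real obstacle here: once $F$ is known to be weak$^*$ integrable, the proof is a routine transfer of the scalar Dominated Convergence Theorem through the bilinear pairings that define the weak$^*$ integral. The only point requiring the extra machinery set up in the preliminaries is that, when $H$ is non-separable, the functions $\sigma \mapsto \|(F_n - F)(\sigma)\|$ and $\sigma \mapsto \|(F_n - F)(\sigma)h\|$ need not be measurable — which is exactly why Lemma~\ref{lem.poorDCT} and the lower-integral form of the triangle inequality were isolated in advance.
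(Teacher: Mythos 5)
Your proof is correct and follows essentially the same route as the paper's: establish weak$^*$ integrability of $F$ via Proposition~\ref{prop.wstarintexist}, use the scalar Dominated Convergence Theorem for the WOT case, use the vector-norm estimate preceding Lemma~\ref{lem.poorDCT} together with that lemma for the SOT case, and pass to adjoints for the S$^*$OT case. The decomposition, dominating function, and invocation of the nonmeasurable DCT all match the paper's argument.
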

\begin{proof}
Fix $h,k \in H$.
In all cases, $F_n \to F$ pointwise in the WOT as $n \to \infty$. Therefore, $\la F(\cdot)h,k \ra$ is the pointwise limit of $(\la F_n(\cdot)h,k \ra)_{n \in \N}$.
Consequently, $F$ is weak$^*$ measurable.
Also, $\|F\| \leq \sup_{n \in \N} \|F_n\|$, so $F$ is weak$^*$ integrable by \eqref{eq.opnormbd} and Proposition \ref{prop.wstarintexist}.
Now, \eqref{eq.opnormbd} also gives
\[
\int_{\Sigma}\sup_{n \in \N}|\la F_n(\sigma)h,k \ra|\,\rho(d\sigma) \leq \|h\|\,\|k\|\underline{\int_{\Sigma}}\sup_{n \in \N}\|F_n\|\,d\rho < \infty.
\]
Therefore, by the Dominated Convergence Theorem,
\[
\Bigg\la \Bigg(\int_{\Sigma} F_n\,d\rho\Bigg)h,k \Bigg\ra = \int_{\Sigma} \la F_n(\sigma)h,k \ra\,\rho(d\sigma) \to \int_{\Sigma} \la F(\sigma)h,k \ra\,\rho(d\sigma) = \Bigg\la \Bigg(\int_{\Sigma} F\,d\rho\Bigg)h,k \Bigg\ra
\]
as $n \to \infty$.
Thus $\int_{\Sigma}F_n\,d\rho \to \int_{\Sigma} F\,d\rho$ in the WOT as $n \to \infty$.
Assume now that $F_n \to F$ pointwise in the strong operator topology (SOT) as $n \to \infty$, and write $T_n \coloneqq \int_{\Sigma} F_n\,d\rho$ and $T \coloneqq \int_{\Sigma} F \,d\rho$.
Then
\[
\|T_nh-Th\| = \Bigg\|\Bigg(\int_{\Sigma}(F_n-F)\,d\rho\Bigg)h\Bigg\| \leq \underline{\int_{\Sigma}} \|(F_n(\sigma)-F(\sigma))h\|\,\rho(d\sigma) \to 0
\]
as $n \to \infty$, by the integral triangle inequality observed above and Lemma \ref{lem.poorDCT}, which applies because of \eqref{eq.opnormbd} and the fact that $\sup_{n \in \N} \|(F_n-F)h\| \leq 2\|h\|\sup_{n \in \N}\|F_n\|$.
Finally, the S$^*$OT case follows from the SOT case because $(F_n^*)_{n \in \N}$ and $F^*$ satisfy the same hypotheses as $(F_n)_{n \in \N}$ and $F$, and the adjoint is easily seen to commute with the weak$^*$ integral.
\end{proof}

We end this section by stating an additional property of weak$^*$ integrals when $\cM$ is \textit{semifinite}.
Before stating the result, we recall some notation and terminology.
For $a,b \in B(H)$, we write $a \leq b$ or $b \geq a$ to mean $\la (b-a)h,h \ra \geq 0$, for all $h \in H$. 
Also, we write
\[
\cM_+ \coloneqq \{a \in \cM : a \geq 0\}.
\]
It is easy to see that $\cM_+$ is closed in the WOT.
Also, if $(a_j)_{j \in J}$ is a net in $\cM_+$ that is bounded (there exists $b \in B(H)$ such that $a_j \leq b$, for all $j \in J$) and increasing ($j_1 \leq j_2 \Rightarrow  a_{j_1} \leq a_{j_2}$), then $\sup_{j \in J} a_j$ exists in $B(H)_+$ and belongs to $\cM_+$.
(Please see Proposition 43.1 in \cite{conwayop}.)
This is often known as Vigier's Theorem.

\begin{defi}[Trace]\label{def.trace}
A function $\tau \colon \cM_+ \to [0,\infty]$ is called \textbf{trace} on $\cM$ if
\begin{enumerate}[label=(\alph*),leftmargin=2\parindent]
    \item $\tau(a+b) = \tau(a)+\tau(b)$,\label{item.add}
    \item $\tau(\lambda a) = \lambda\,\tau(a)$,\label{item.poshom} and
    \item $\tau(c^*c) = \tau(cc^*)$,\label{item.trace}
\end{enumerate}
for all $a,b \in \cM_+$, $c \in \cM$, and $\lambda \in \R_+$.
A trace $\tau$ on $\cM$ is called
\begin{enumerate}[label=(\alph*),leftmargin=2\parindent]
    \item \textbf{normal} if $\tau(\sup_{j \in J}a_j)  = \sup_{j \in J}\tau(a_j)$ whenever $(a_j)_{j \in J}$ is a bounded and increasing net in $\cM_+$,
    \item \textbf{faithful} if $a \in \cM_+$ and $\tau(a) = 0$ imply $a=0$, and
    \item \textbf{semifinite} if $\tau(a) = \sup\{\tau(b) : a \geq b \in \cM_+, \, \tau(b) < \infty\}$, for all $a \in \cM_+$.
\end{enumerate}
If $\tau$ is a normal, faithful, semifinite trace on $\cM$, then $(\cM,\tau)$ is called a \textbf{semifinite von Neumann algebra}.
\end{defi}
\begin{rem}
In the presence of \ref{item.add} and \ref{item.poshom}, condition \ref{item.trace} is equivalent to $\tau(u^*au) = \tau(a)$ for all $a \in \cM_+$ and all unitaries $u$ belonging to $\cM$.
This is Corollary 1 in Section I.6.1 of \cite{dixmier}.
\end{rem}
\pagebreak

For basic properties of traces on von Neumann algebras, please see Chapter I.6 of \cite{dixmier} or Section V.2 of \cite{takesaki}.
Motivating examples of semifinite von Neumann algebras are $(B(H),\Tr)$ and $(L^{\infty}(\Om,\mu),\int_{\Om} \boldsymbol{\cdot}\,d\mu)$, where $(\Om,\sF,\mu)$ is a $\sigma$-finite measure space and $L^{\infty}(\Om,\mu)$ is represented as multiplication operators on $L^2(\Om,\mu)$.

\begin{nota}\label{nota.ncLp}
Let $(\cM,\tau)$ be a semifinite von Neumann algebra and $1 \leq p < \infty$.
Write\vspace{-0.5mm}
\[
\|a\|_{L^p(\tau)} \coloneqq \tau(|a|^p)^{\frac{1}{p}} \in [0,\infty],\vspace{-0.5mm}
\]
for all $a \in \cM$, and $\mathcal{L}^p(\tau) \coloneqq \{a \in \cM : \|a\|_{L^p(\tau)}^p = \tau(|a|^p) < \infty\}$.
Also, we take $(\mathcal{L}^{\infty}(\tau),\|\cdot\|_{L^{\infty}(\tau)}) \coloneqq (\cM, \|\cdot\|)$.
\end{nota}

It turns out that if $1 \leq p \leq \infty$, then $\|\cdot\|_{L^p(\tau)}$ is a norm on $\mathcal{L}^p(\tau)$.
(Please see \cite{dixmierLp} or \cite{dasilva}.)
The completion $(L^p(\tau),\|\cdot\|_{L^p(\tau)})$ of $(\mathcal{L}^p(\tau),\|\cdot\|_{L^p(\tau)})$ is called the \textbf{noncommutative $\boldsymbol{L^p}$ space associated to $\boldsymbol{(\cM,\tau)}$}.
We shall see another perspective on $L^p(\tau)$ in Section \ref{sec.idealex2}.

\begin{thm}[Noncommutative Minkowski Inequality for Integrals \cite{nikitopoulosMOI}]\label{thm.Lpinteg}
Let $(\cM,\tau)$ be a semifinite von Neumann algebra.
If $F \colon \Sigma \to \cM$ is weak$^*$ integrable, then\vspace{-0.5mm}
\[
\Bigg\|\int_{\Sigma} F \,d\rho \Bigg\|_{L^p(\tau)} \leq \underline{\int_{\Sigma}}\|F\|_{L^p(\tau)}\,d\rho,\vspace{-0.5mm}
\]
for all $p \in [1,\infty]$.
In particular, if the right hand side is finite, then $\int_{\Sigma} F \,d\rho \in \mathcal{L}^p(\tau)$.
\end{thm}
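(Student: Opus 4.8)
The plan is to handle the endpoint $p=\infty$ for free, and then reduce the case $1\le p<\infty$ to the scalar Minkowski inequality (Tonelli) via trace duality. When $p=\infty$ we have $\|\cdot\|_{L^\infty(\tau)}=\|\cdot\|$, so the assertion is literally the operator-norm integral triangle inequality \eqref{eq.opnormtriangle} from Proposition \ref{prop.wstarintexist}. So I would fix $1\le p<\infty$, let $q\in(1,\infty]$ be the conjugate exponent, write $T\coloneqq\int_\Sigma F\,d\rho\in\cM$, and assume $R\coloneqq\underline{\int_\Sigma}\|F\|_{L^p(\tau)}\,d\rho<\infty$ (nothing to prove otherwise).

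The core computation is this. Let $c\in\mathcal{L}^1(\tau)\cap\mathcal{L}^q(\tau)$. Then $\phi_c\coloneqq\tau(c\,\cdot\,)$ is a normal functional on $\cM$, i.e.\ $\phi_c\in\cM_*$ (standard $L^1(\tau)\cong\cM_*$ theory, e.g.\ \cite{dixmierLp,dasilva}). By the remark following Definition \ref{def.wstarmeasint}, weak$^*$ measurability of $F$ is the same as weak measurability against $\cM_*$, so $\sigma\mapsto\phi_c(F(\sigma))=\tau(cF(\sigma))$ is measurable; and weak$^*$ integrability of $F$ (with $S=\Sigma$, using that elements of $\cM_*$ are represented by $H^\infty$-pairings) gives $\tau(cT)=\int_\Sigma\tau(cF(\sigma))\,\rho(d\sigma)$. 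By the noncommutative Hölder inequality, $|\tau(cF(\sigma))|\le\|c\|_{L^q(\tau)}\,\|F(\sigma)\|_{L^p(\tau)}$ for every $\sigma$; since the integrand on the left is measurable (so its ordinary integral equals its upper integral), monotonicity and positive homogeneity of the upper integral give $\int_\Sigma|\tau(cF(\sigma))|\,\rho(d\sigma)\le\|c\|_{L^q(\tau)}\,R<\infty$. Hence $|\tau(cT)|\le\|c\|_{L^q(\tau)}\,R$ for all such $c$, so
\[
\sup\{|\tau(cT)|:c\in\mathcal{L}^1(\tau)\cap\mathcal{L}^q(\tau),\ \|c\|_{L^q(\tau)}\le 1\}\le R.
\]

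To finish it then suffices to prove the refined trace duality $\|a\|_{L^p(\tau)}=\sup\{|\tau(ca)|:c\in\mathcal{L}^1(\tau)\cap\mathcal{L}^q(\tau),\ \|c\|_{L^q(\tau)}\le1\}$ for every $a\in\cM$, and apply it with $a=T$ (the ``in particular'' is then immediate). I expect this identity to be the main obstacle: the subtlety is that the supremum is taken over the \emph{small} ideal $\mathcal{L}^1(\tau)\cap\mathcal{L}^q(\tau)$ — which is exactly what makes $\tau(c\,\cdot\,)$ a normal functional and thus lets the weak$^*$ integral be evaluated against it above — yet it must still recover the full, possibly infinite, $L^p(\tau)$-norm. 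I would prove it by first reducing to $a\ge0$ using the polar decomposition $a=v|a|$ together with the bimodule/symmetry properties of $\|\cdot\|_{L^q(\tau)}$; then, using semifiniteness and normality of $\tau$, fixing an increasing net $(p_j)$ of finite-$\tau$ projections with $p_j\uparrow 1$ $\sigma$-strongly, so that each $p_jap_j$ lies in the finite von Neumann algebra $p_j\cM p_j$, where $\mathcal{L}^1\cap\mathcal{L}^q$ is everything and the classical finite-trace duality formula applies verbatim; and finally passing to the supremum over $j$ via $\|p_jap_j\|_{L^p(\tau)}\to\|a\|_{L^p(\tau)}$, which holds because $\|p_jap_j\|_{L^p(\tau)}\le\|a\|_{L^p(\tau)}$ while $\sigma$-weak lower semicontinuity of the normal trace (applied to $|p_jap_j|^p\to|a|^p$, a consequence of $p_jap_j\to a$ $\sigma$-strongly$^*$ and continuity of the functional calculus on bounded self-adjoint nets) gives $\|a\|_{L^p(\tau)}\le\liminf_j\|p_jap_j\|_{L^p(\tau)}$. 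Alternatively, this refined duality is contained in the standard references on noncommutative $L^p$ spaces and may simply be quoted.
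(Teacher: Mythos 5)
The paper does not give its own proof of Theorem \ref{thm.Lpinteg} --- it simply cites Theorem 3.4.8 of \cite{nikitopoulosMOI} --- so there is no in-text argument to compare against. Your route via duality is natural and, in outline, sound: the observation that one should test against $c\in\mathcal{L}^1(\tau)\cap\mathcal{L}^q(\tau)$, precisely so that $\tau(c\,\cdot\,)$ lies in $\cM_*$ and can be pulled through the weak$^*$ integral, is exactly the right one.

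There is, however, a real flaw in your justification of the Tonelli/H\"{o}lder step. You say that since $\sigma\mapsto|\tau(cF(\sigma))|$ is measurable its ordinary integral equals its upper integral, and then invoke ``monotonicity and positive homogeneity of the upper integral'' to get $\int_\Sigma|\tau(cF)|\,d\rho\le\|c\|_{L^q(\tau)}R$. But $R=\underline{\int_\Sigma}\|F\|_{L^p(\tau)}\,d\rho$ is a \emph{lower} integral, and monotonicity of the \emph{upper} integral only yields a bound by $\|c\|_{L^q(\tau)}\overline{\int_\Sigma}\|F\|_{L^p(\tau)}\,d\rho$, which can be strictly larger. The correct (and simpler) reason is that $\sigma\mapsto|\tau(cF(\sigma))|/\|c\|_{L^q(\tau)}$ is a \emph{measurable minorant} of the possibly non-measurable function $\sigma\mapsto\|F(\sigma)\|_{L^p(\tau)}$, so the bound $\int_\Sigma|\tau(cF)|\,d\rho\le\|c\|_{L^q(\tau)}R$ follows directly from Definition \ref{def.nonmeasint} of the lower integral as a supremum over measurable minorants. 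This is, in fact, exactly why the statement is phrased with $\underline{\int}$ rather than $\overline{\int}$: lower integrals are tailored to absorb measurable minorants produced by pairing with functionals, which is precisely what your argument does.

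The refined trace duality $\|a\|_{L^p(\tau)}=\sup\{|\tau(ca)|:c\in\mathcal{L}^1(\tau)\cap\mathcal{L}^q(\tau),\ \|c\|_{L^q(\tau)}\le1\}$ for all $a\in\cM$ is the genuine substance of the proof, and your sketch of it is correct: reduce to $a\ge0$ by polar decomposition, cut down by a net of finite-trace projections $p_j\uparrow1$, apply finite-algebra duality in $p_j\cM p_j$ (where every element lies in $\mathcal{L}^1\cap\mathcal{L}^q$), and recover $\|a\|_{L^p(\tau)}$ in the limit via $\|p_jap_j\|_{L^p(\tau)}\le\|a\|_{L^p(\tau)}$ (from $p_jap_j=(a^{1/2}p_j)^*(a^{1/2}p_j)$) and $\sigma$-weak lower semicontinuity of the normal trace. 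If you instead quote this from the literature, be sure the cited version takes the supremum over the \emph{small} ideal $\mathcal{L}^1(\tau)\cap\mathcal{L}^q(\tau)$, since that restriction is what your normality argument hinges on.
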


This is Theorem 3.4.7 in \cite{nikitopoulosMOI}, and the motivation for its name is the classical Minkowski Inequality for Integrals (e.g., 6.19 in \cite{folland}).
In view of the name of \eqref{eq.opnormtriangle}, an equally sensible name for Theorem \ref{thm.Lpinteg} is the ``noncommutative $L^p$-norm integral triangle inequality."

\subsection{Unbounded operators and spectral theory}\label{sec.SpecTh}

In this section, we provide the information about unbounded operators, projection-valued measures, and the Spectral Theorem that is necessary for this paper.
Please see Chapters 3--6 of \cite{birmansolomyak5} or Chapters IX and X of \cite{conwayfunc} for more information and proofs of the facts we state (without proof) in this section.

An (\textbf{unbounded linear}) \textbf{operator} $A$ on $H$ is a linear subspace $\dom(A) \subseteq H$ (the \textbf{domain} of $A$) and a linear map $A \colon \dom(A) \to H$. We may identify $A$ with its graph $\Gamma(A) = \{(h,Ah) : h \in \dom(A)\} \subseteq H \times H$.
We say $A$ is \textbf{densely defined} if $\dom(A) \subseteq H$ is dense, \textbf{closable} if the closure of $\Gamma(A)$ in $H \times H$ is the graph of some operator $\overline{A}$ (called the \textbf{closure} of $A$) on $H$, and \textbf{closed} if $\Gamma(A) \subseteq H \times H$ is closed (i.e., $\overline{A} = A$).
If $B$ is another unbounded operator on $H$, then the sum $A+B$ has domain $\dom(A+B) \coloneqq \dom(A) \cap \dom(B)$, and the product $AB$ has domain $\dom(AB) \coloneqq \{h \in \dom(B) : B h \in \dom(A)\} = B^{-1}(\dom(A))$.
In addition, we write $A \subseteq B$ if $\Gamma(A) \subseteq \Gamma(B)$, i.e., $\dom(A) \subseteq \dom(B)$ and $Ah=Bh$, for all $h \in \dom(A)$;
and $A=B$ if $A \subseteq B$ and $B \subseteq A$, i.e., $\dom(A) = \dom(B)$ and $Ah=Bh$ for $h$ in this common domain. 

Given a densely defined operator $A$ on $H$, we may form the \textbf{adjoint} $A^*$ of $A$ as follows.
First, let\vspace{-0.5mm}
\[
\dom(A^*) \coloneqq \{k \in H : \dom(A) \ni h \mapsto \la Ah,k \ra \in \C \text{ is bounded}\}.\vspace{-0.5mm}
\]
Then, for $k \in \dom(A^*)$, let $A^*k \in H$ be the unique vector in $H$ such that $\la Ah,k \ra = \la h,A^*k \ra$, for all $h \in \dom(A)$.
One can show that if $A$ is densely defined and closed, then so is $A^*$, and $A=(A^*)^*$.
A closed, densely defined operator $A$ on $H$ is \textbf{normal} if $A^*A=AA^*$ and \textbf{self-adjoint} if $A^*=A$.
Finally, a self-adjoint operator $A$ on $H$ is called \textbf{positive}, written $A \geq 0$, if $\la Ah,h \ra \geq 0$ whenever $h \in \dom(A)$.

\begin{nota}
Write $C(H)$ for the set of closed, densely defined linear operators on $H$.
Also, write\vspace{-0.5mm}
\[
C(H)_{\nu} \coloneqq \{A \in C(H) : A^*A=AA^*\}, \; C(H)_{\sa} \coloneqq \{A \in C(H) : A=A^*\}, \; C(H)_+ \coloneqq \{A \in C(H) : A \geq 0\}\vspace{-0.5mm}
\]
for the set of normal, self-adjoint, and positive operators on $H$, respectively.
\end{nota}

Next, we recall basic definitions and facts about integration with respect to a projection-valued measure.

\begin{defi}[Projection-valued measure]
Let $(\Om,\sF)$ be a measurable space and $P \colon \sF \to B(H)$.
We call $P$ a \textbf{projection-valued measure} if $P(\Om) = \id_H$, $P(G)^2 = P(G) = P(G)^*$ whenever $G \in \sF$, and\vspace{-0.5mm}
\[
P\Bigg(\bigcup_{n \in \N}G_n\Bigg) = \text{WOT-}\sum_{n=1}^{\infty}P(G_n) \numberthis\label{eq.WOTcountadd}\vspace{-0.5mm}
\]
whenever $(G_n)_{n \in \N} \in \sF^{\N}$ is a sequence of disjoint measurable sets.
In this case, we call the quadruple $(\Om,\sF,H,P)$ a \textbf{projection-valued measure space}.
\end{defi}
\pagebreak
\begin{rem}
To be clear, it is implicitly required in \eqref{eq.WOTcountadd} that the series on the right hand side converges in the WOT.
It actually follows from the above definition that $P(\emptyset) = 0$ and $P(G_1 \cap G_2) = P(G_1)\,P(G_2)$ whenever $G_1,G_2 \in \sF$.
(Please see Theorem 1 in Section 5.1.1 of \cite{birmansolomyak5}.)
These are often added to the definition of a projection-valued measure because they guarantee that the series in \eqref{eq.WOTcountadd} converges in the SOT.
\end{rem}

\begin{nota}
If $\varphi \colon \Om \to \C$ is a function, then $\|\varphi\|_{\ell^{\infty}(\Om)} \coloneqq \sup_{\om \in \Om} |\varphi(\om)| \in [0,\infty]$.
Also, we write $\ell^0(\Om,\sF) \coloneqq \{(\sF,\cB_{\C})$-measurable functions $\Om \to \C\}$ and $\ell^{\infty}(\Om,\sF) \coloneqq \{\varphi \in \ell^0(\Om,\sF) : \|\varphi\|_{\ell^{\infty}(\Om)} < \infty\}$.
Finally, if $\mu$ is a complex measure on $(\Om,\sF)$, then we write $|\mu|$ for the total variation measure of $\mu$ and $\|\mu\|_{\text{var}} \coloneqq |\mu|(\Om)$ for the total variation norm of $\mu$.
\end{nota}

\begin{prop}[Integration with respect to a projection-valued measure]\label{prop.integP}
$\,$Let $(\Om,\,\sF,\,H,\,P)$ be a projection-valued measure space and $\varphi,\psi \in \ell^0(\Om,\sF)$.
\begin{enumerate}[label=(\roman*),font=\normalfont,leftmargin=2\parindent]
    \item Fix $h,k \in H$.
    If $P_{h,k}(G) \coloneqq \la P(G)h,k\ra$, for all $G \in \sF$, then $P_{h,k}$ is a complex measure such that $\|P_{h,k}\|_{\mathrm{var}} \leq \|h\|\,\|k\|$.
    Also, $P_{h,k} \ll P$ in the sense that if $P(G) = 0$, then $P_{h,k}(G_0) = 0$ whenever $\sF \ni G_0 \subseteq G$, i.e., $|P_{h,k}|(G) = 0$.
    Finally, $P_{h,h}$ is a (finite) positive measure.\label{item.Phk}
    \item Let $\dom(P(\varphi)) \coloneqq \{h \in H : \int_{\Om}|\varphi|^2\,dP_{h,h} < \infty\}$ and $h \in \dom(P(\varphi))$.
    If $k \in H$, then $\varphi \in L^1(\Om,|P_{h,k}|)$, and there exists a unique $h_{\varphi} \in H$ such that $\la h_{\varphi},k \ra_H = \int_{\Om}\varphi \,dP_{h,k}$, for all $k \in K$.
    If we define $P(\varphi)h \coloneqq h_{\varphi}$, then $P(\varphi) \in C(H)_{\nu}$.\label{item.defofPint}
    \item We have $P(\varphi)^* = P(\overline{\varphi})$, $\dom(P(\varphi)P(\psi)) = \dom(P(\psi)) \cap \dom(P(\varphi\psi))$, and $P(\varphi)P(\psi) \subseteq P(\varphi\psi)$.
    In particular, $P(\varphi)^*P(\varphi) = P(|\varphi|^2)$, and if $\psi \in \ell^{\infty}(\Om,\sF)$, then $P(\varphi)P(\psi) = P(\varphi\psi)$.\label{item.algpropPint}
    \item The map $\ell^{\infty}(\Om,\sF) \ni \varphi \mapsto P(\varphi) \in B(H)$ is a (contractive) $\ast$-homomorphism.\label{item.bddPint}
    \item Let $(\varphi_n)_{n \in \N} \in \ell^{\infty}(\Om,\sF)^{\N}$ be a sequence.
    If $\sup_{n \in \N}\|\varphi_n\|_{\ell^{\infty}(\Om)} < \infty$ and $\varphi_n \to \varphi \in \ell^{\infty}(\Om,\sF)$ pointwise (i.e., $\varphi_n \to \varphi$ \textbf{boundedly}), then $P(\varphi_n) \to P(\varphi)$ in the strong$^*$ operator topology as $n \to \infty$.\label{item.PintDCT}
\end{enumerate}
We shall often write $P(\varphi) = \int_{\Om} \varphi \, dP = \int_{\Om} \varphi(\om)\,P(d\om)$ and call this the \textbf{integral} of $\varphi$ with respect to $P$.
\end{prop}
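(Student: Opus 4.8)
The plan is to treat the five parts in order, working outward from a measure-theoretic core. Part \ref{item.Phk} is bookkeeping with the projection-valued measure axioms: countable additivity of $P_{h,k}$ is read off from the WOT countable additivity \eqref{eq.WOTcountadd} of $P$; positivity of $P_{h,h}$ is the identity $P_{h,h}(G) = \la P(G)h,h\ra = \|P(G)h\|^2$; and $P_{h,k} \ll P$ holds because $G_0 \subseteq G$ forces $P(G_0) = P(G_0)P(G)$, so $P(G) = 0 \Rightarrow P(G_0) = 0$. For the variation bound I would fix a measurable partition $\{G_j\}$ of $\Om$, use that the $P(G_j)$ are pairwise orthogonal projections to write $|P_{h,k}(G_j)| = |\la P(G_j)h, P(G_j)k\ra| \le \|P(G_j)h\|\,\|P(G_j)k\|$, and then apply Cauchy--Schwarz in $\ell^2$ together with $\sum_j \|P(G_j)h\|^2 = \la P(\Om)h,h\ra = \|h\|^2$.

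Next I would build the bounded functional calculus (part \ref{item.bddPint}). For $\varphi \in \ell^\infty(\Om,\sF)$ the form $(h,k) \mapsto \int_\Om \varphi\,dP_{h,k}$ is sesquilinear and, by part \ref{item.Phk}, bounded by $\|\varphi\|_{\ell^\infty(\Om)}\|h\|\,\|k\|$, so the Riesz representation theorem supplies $P(\varphi) \in B(H)$ with $\|P(\varphi)\| \le \|\varphi\|_{\ell^\infty(\Om)}$. Conjugate-symmetry $P(\varphi)^* = P(\overline{\varphi})$ follows from $P_{k,h} = \overline{P_{h,k}}$, and unitality from $P(\Om) = \id_H$. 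Multiplicativity is the one delicate point: I would first check $P_{P(\psi)h,k} = \psi\,P_{h,k}$ as measures for $\psi$ a simple function, where it reduces to $P(G_1 \cap G_2) = P(G_1)P(G_2)$; extend to bounded $\psi$ by uniform approximation by simple functions (using $\|P(\psi_n)-P(\psi)\| \le \|\psi_n - \psi\|_{\ell^\infty(\Om)}$); and then compute $\la P(\varphi)P(\psi)h,k\ra = \int_\Om \varphi\,dP_{P(\psi)h,k} = \int_\Om \varphi\psi\,dP_{h,k} = \la P(\varphi\psi)h,k\ra$.

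For the unbounded case (part \ref{item.defofPint}), fix $h \in \dom(P(\varphi))$. Using the polar decomposition $dP_{h,k} = u\,d|P_{h,k}|$ with $|u| \equiv 1$ and the truncations $\varphi_n \coloneqq \varphi\,\mathbf{1}_{\{|\varphi| \le n\}}$, apply the bounded-calculus bound $|\la P(\eta)h,k\ra| \le (\int_\Om |\eta|^2\,dP_{h,h})^{1/2}\|k\|$ to $\eta = \psi_n \coloneqq \varphi_n\,\overline{\mathrm{sgn}(\varphi_n)}\,\overline{u}$ to obtain $\int_\Om |\varphi_n|\,d|P_{h,k}| = \la P(\psi_n)h,k\ra \le (\int_\Om|\varphi|^2\,dP_{h,h})^{1/2}\|k\|$; monotone convergence then gives $\varphi \in L^1(\Om,|P_{h,k}|)$, and the same estimate shows $k \mapsto \int_\Om\varphi\,dP_{h,k}$ is a bounded conjugate-linear functional, so Riesz produces $h_\varphi$ and we set $P(\varphi)h \coloneqq h_\varphi$; linearity is immediate from uniqueness. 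Density of $\dom(P(\varphi))$ follows from $P(A_n)h \in \dom(P(\varphi))$ for $A_n \coloneqq \{|\varphi| \le n\}$ (since $\int_\Om |\varphi|^2\,dP_{P(A_n)h,P(A_n)h} = \int_{A_n}|\varphi|^2\,dP_{h,h} \le n^2\|h\|^2$) and $P(A_n) \to \id_H$ strongly. For part \ref{item.algpropPint}, the containment $P(\overline{\varphi}) \subseteq P(\varphi)^*$ again comes from $P_{k,h} = \overline{P_{h,k}}$; the reverse containment and the product-domain identities I would obtain by testing $k \in \dom(P(\varphi)^*)$ against the dense family $P(A_n)h$ and a Fatou-type argument, using $P_{P(\psi)h,k} = \psi\,P_{h,k}$. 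Closedness and normality of $P(\varphi)$ then drop out: $P(\varphi) = P(\overline{\varphi})^*$ is an adjoint, hence closed, and $P(\varphi)^*P(\varphi) = P(|\varphi|^2) = P(\varphi)P(\varphi)^*$. Finally, part \ref{item.PintDCT}: for $\varphi_n \to \varphi$ boundedly, $\|P(\varphi_n)h - P(\varphi)h\|^2 = \la P(|\varphi_n-\varphi|^2)h,h\ra = \int_\Om |\varphi_n - \varphi|^2\,dP_{h,h} \to 0$ by dominated convergence ($P_{h,h}$ is finite, the integrand is $\le (2\sup_n\|\varphi_n\|_{\ell^\infty(\Om)})^2$), and running the same estimate for $\overline{\varphi_n} \to \overline{\varphi}$ upgrades SOT to S$^*$OT convergence.

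The main obstacle, I expect, is the precise domain bookkeeping in part \ref{item.algpropPint} — obtaining genuine equalities of unbounded operators and of their domains (for $P(\varphi)^* = P(\overline{\varphi})$ and for $\dom(P(\varphi)P(\psi))$), not merely containments — and, closely tied to it, the $L^1(\Om,|P_{h,k}|)$-integrability step in part \ref{item.defofPint}, which is what makes the defining integral $\int_\Om\varphi\,dP_{h,k}$ meaningful for merely form-domain vectors $h$ in the first place. Everything else is a direct computation or a routine approximation argument.
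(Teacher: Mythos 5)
The paper does not prove this proposition: it is stated as standard background, with the reader referred to Chapters 3--6 of \cite{birmansolomyak5} and Chapters IX--X of \cite{conwayfunc}, so there is no argument of the author's to compare against. Your proof is the standard textbook argument and is correct. The order you choose --- (i), then (iv), then (ii), (iii), (v) --- is the natural one, and the technical moves are all right: Cauchy--Schwarz in $\ell^2$ over a partition together with $\sum_j \|P(G_j)h\|^2 = \|h\|^2$ for the total variation bound; building the bounded calculus via Riesz representation on the sesquilinear form $\int_\Om \varphi\,dP_{h,k}$, checking multiplicativity on simple functions and extending by $\|P(\psi_n)-P(\psi)\| \le \|\psi_n - \psi\|_{\ell^\infty(\Om)}$; then using the bound $|\la P(\eta)h,k \ra| \le \big(\int_\Om |\eta|^2\,dP_{h,h}\big)^{1/2}\|k\|$ together with a measurable polar decomposition of $P_{h,k}$ and truncation of $\varphi$ to obtain $\varphi \in L^1(\Om,|P_{h,k}|)$; $P(A_n) \to \id_H$ strongly for density of the domain; and dominated convergence for (v), run once for $\varphi_n$ and once for $\overline{\varphi}_n$ to upgrade SOT to S$^*$OT.

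The one place that would need fleshing out --- and you flag it yourself --- is the domain bookkeeping in part \ref{item.algpropPint}. The identity $P_{P(\psi)h,k} = \psi\,P_{h,k}$ is available from your part (iv) only for bounded $\psi$; to get $\dom(P(\varphi)P(\psi)) = \dom(P(\psi)) \cap \dom(P(\varphi\psi))$ for unbounded $\psi$ one needs $P_{P(\psi)h,P(\psi)h}(G) = \int_G |\psi|^2\,dP_{h,h}$ on $\dom(P(\psi))$, which does not follow from the bounded case directly. The clean route is to first show $P(\psi)h = \lim_{n\to\infty} P\big(\psi\,1_{\{|\psi|\le n\}}\big)h$ in $H$ for $h \in \dom(P(\psi))$ (Cauchy in $n$ by $\|P(\psi_n-\psi_m)h\|^2 = \int |\psi_n-\psi_m|^2\,dP_{h,h}$, then identify the limit against $\la \cdot, k\ra$ by dominated convergence, using $\varphi \in L^1(|P_{h,k}|)$), and then pass to the limit in $\|P(G)P(\psi_n)h\|^2 = \int_G |\psi_n|^2\,dP_{h,h}$. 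After that the domain equality and $P(\varphi)P(\psi) \subseteq P(\varphi\psi)$ fall out, and closedness and normality are exactly as you say ($P(\varphi) = P(\overline{\varphi})^*$ is an adjoint, and $P(\varphi)^*P(\varphi) = P(|\varphi|^2) = P(\varphi)P(\varphi)^*$, the domain agreeing because $\int |\varphi|^2\,dP_{h,h} < \infty$ whenever $\int |\varphi|^4\,dP_{h,h} < \infty$, $P_{h,h}$ being finite). This is a matter of making your sketch precise, not a gap in the approach.
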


The reason projection-valued measures are relevant for us is the Spectral Theorem, which we now recall.
If $A \in C(H)$, then the \textbf{resolvent set} $\rho(A) \subseteq \C$ of $A$ is the set of $\lambda \in \C$ such that $A-\lambda \id_H : \dom(A) \to H$ is a bijection with bounded inverse.
The \textbf{spectrum} $\sigma(A) \subseteq \C$ of $A$ is the complement of $\rho(A)$.
The resolvent set $\rho(A)$ is open in $\C$, and the spectrum $\sigma(A)$ is closed in $\C$. Also, a normal operator $A \in C(H)_{\nu}$ is self-adjoint if and only if $\sigma(A) \subseteq \R$. Finally, $A \in C(H)_+$ if and only if $A \in C(H)_{\sa}$ and $\sigma(A) \subseteq [0,\infty)$.

\begin{thm}[Spectral Theorem for normal operators]\label{thm.spec}
If $A \in C(H)_{\nu}$, then there exists a unique projection-valued measure $P^A \colon \cB_{\sigma(A)} \to B(H)$ such that $A = \int_{\sigma(A)} \lambda\,P^A(d\lambda)$.
We call $P^A$ the \textbf{projection-valued spectral measure} of $A$.
We shall frequently abuse notation and consider $P^A$ to be a projection-valued measure defined on $\cB_{\C}$ or, when $A \in C(H)_{\sa}$, on $\cB_{\R}$.
\end{thm}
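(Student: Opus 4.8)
The plan is to prove the theorem first for bounded $A$ using Gelfand theory and the Riesz Representation Theorem, and then to reduce the unbounded case to the bounded one via a bounded transform.

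\emph{Bounded case.} Suppose $A \in B(H)$ is normal. I would let $\cA \coloneqq C^*(A,\id_H) \subseteq B(H)$; since $A$ is normal, $\cA$ is commutative, so the Gelfand transform is an isometric unital $\ast$-isomorphism of $\cA$ onto $C(\Delta)$, where $\Delta$ is the maximal ideal space of $\cA$, and $\Delta \ni \chi \mapsto \chi(A)$ is a homeomorphism onto $\sigma(A)$. Composing yields the continuous functional calculus, an isometric unital $\ast$-homomorphism $C(\sigma(A)) \ni f \mapsto f(A) \in B(H)$ sending the coordinate function to $A$. For $h,k \in H$ the functional $f \mapsto \la f(A)h,k\ra$ has norm $\le \|h\|\,\|k\|$, so by the Riesz Representation Theorem there is a unique regular complex Borel measure $\mu_{h,k}$ on $\sigma(A)$ with $\la f(A)h,k\ra = \int f\,d\mu_{h,k}$ and $\|\mu_{h,k}\|_{\mathrm{var}} \le \|h\|\,\|k\|$. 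For bounded Borel $g$ the form $(h,k) \mapsto \int g\,d\mu_{h,k}$ is bounded and, by uniqueness in Riesz, sesquilinear, so I would define $g(A) \in B(H)$ by $\la g(A)h,k\ra = \int g\,d\mu_{h,k}$; a dominated-convergence and monotone-class argument shows $g \mapsto g(A)$ is a contractive unital $\ast$-homomorphism on bounded Borel functions, continuous under bounded pointwise convergence. Then $P^A(G) \coloneqq \mathbf{1}_G(A)$ defines a projection-valued measure on $\cB_{\sigma(A)}$ (idempotency and self-adjointness from the homomorphism property, WOT-countable additivity from dominated convergence), and since $P^A(g) = g(A)$ for bounded Borel $g$ and the coordinate function maps to $A$, we get $A = \int_{\sigma(A)} \lambda\,P^A(d\lambda)$. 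For uniqueness, any projection-valued measure $Q$ on $\cB_{\sigma(A)}$ with $\int \lambda\,Q(d\lambda) = A$ satisfies $\int \lambda^m \overline{\lambda}^{\,n}\,Q(d\lambda) = A^m (A^*)^n$ for all $m,n$ by Proposition \ref{prop.integP}.\ref{item.algpropPint}; since polynomials in $\lambda,\overline{\lambda}$ are dense in $C(\sigma(A))$ by Stone--Weierstrass, the measures $Q_{h,k}$, hence $Q$, are determined.

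\emph{Unbounded case.} Let $A \in C(H)_{\nu}$. By von Neumann's theorem (e.g.\ \cite{birmansolomyak5} or \cite{conwayfunc}), $A^*A$ is positive self-adjoint and $\id_H + A^*A \colon \dom(A^*A) \to H$ is a bijection, so $B \coloneqq (\id_H + A^*A)^{-1} \in B(H)$ satisfies $0 \le B \le \id_H$ and $\ker B = 0$; let $B^{1/2}$ be its positive square root. I would verify the standard facts that $\im B^{1/2} = \dom(A)$ and that $Z \coloneqq A B^{1/2}$ extends to a normal operator in $B(H)$ with $Z^*Z = \id_H - B$, so that $\|Z\| \le 1$ and $A$ is recovered via $A = Z(\id_H - Z^*Z)^{-1/2}$ on $\dom(A)$. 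Applying the bounded case to $Z$ gives $P^Z$ on $\cB_{\sigma(Z)}$ with $\sigma(Z) \subseteq \{w \in \C : |w| \le 1\}$ and $P^Z(\{|w| = 1\}) = 0$ (because $\ker(\id_H - Z^*Z)^{1/2} = \ker B^{1/2} = 0$). The map $\beta(w) \coloneqq w(1-|w|^2)^{-1/2}$ is a Borel isomorphism of $\{w \in \sigma(Z) : |w| < 1\}$ onto $\sigma(A)$ (using a spectral-mapping argument and properness of $\beta$ near $|w| = 1$), and I would set $P^A(G) \coloneqq P^Z(\beta^{-1}(G))$. A change of variables against the scalar measures $P^Z_{h,h}$, together with the identity $\im(\id_H - Z^*Z)^{1/2} = \{h : \int (1-|w|^2)^{-1}\,P^Z_{h,h}(dw) < \infty\}$, shows that the domain on which $\int_{\sigma(A)} \lambda\,P^A(d\lambda)$ is defined equals $\dom(A)$ and that this operator is $A$. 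Uniqueness transfers: a projection-valued measure $Q$ on $\cB_{\sigma(A)}$ representing $A$ pulls back under $\beta$ to a projection-valued measure representing $Z$, which is $P^Z$ by the bounded uniqueness, so $Q = P^A$. The last sentence of the statement is then immediate, extending $P^A$ by $0$ and using $\sigma(A) \subseteq \R$ when $A \in C(H)_{\sa}$.

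The main obstacle is the unbounded reduction. The bounded case is the standard Gelfand--Naimark plus Riesz--Markov construction. The unbounded part requires von Neumann's theorem on $A^*A$ (itself nontrivial), the verification that the bounded transform $Z = AB^{1/2}$ is genuinely bounded and normal with $Z^*Z = \id_H - B$ and the stated inverse, and---the most delicate point---establishing the pushforward relation $P^A(\,\cdot\,) = P^Z(\beta^{-1}(\,\cdot\,))$ together with the integral representation and the domain identity, all of which must be done by hand against the finite measures $P^Z_{h,h}$ since $\beta$ is unbounded and so cannot be fed into the bounded functional calculus directly.
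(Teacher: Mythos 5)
The paper does not prove this theorem; it is stated as background in Section~\ref{sec.SpecTh} with proofs delegated to the cited references \cite{birmansolomyak5} and \cite{conwayfunc}, so there is no paper proof to compare your argument against. Your sketch --- Gelfand--Naimark plus Riesz--Markov for the bounded normal case, followed by the bounded transform $Z = A(\id_H + A^*A)^{-1/2}$ for the unbounded reduction, with uniqueness transferred by pulling back along $\alpha(z) = z(1+|z|^2)^{-1/2}$ --- is a standard and essentially correct outline.

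One caveat is worth making explicit, because it changes the logical shape of the reduction. You list the normality of $Z$ among the ``standard facts to verify,'' but establishing $Z^*Z = ZZ^*$ for normal $A$ requires the commutation of bounded Borel functions of $A^*A$ with $A$ --- equivalently, that the partial isometry $U$ in the polar decomposition $A = U|A|$ commutes with the spectral projections of $|A|$. These commutation lemmas, and indeed the objects $|A| = (A^*A)^{1/2}$, $B^{1/2}$, and the identities $\dom(A) = \dom(|A|)$ and $\im B^{1/2} = \dom(A)$ that you use, all presuppose the spectral theorem for the unbounded positive self-adjoint operator $A^*A$. That case must be proved \emph{first} (typically by applying the bounded-unitary case to the Cayley transform $(A^*A - i)(A^*A + i)^{-1}$) and is not delivered by von Neumann's theorem alone, which only yields self-adjointness of $A^*A$ and boundedness of $(\id_H + A^*A)^{-1}$. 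So the unbounded step is really a two-stage bootstrap --- bounded normal $\Rightarrow$ unbounded self-adjoint $\Rightarrow$ unbounded normal --- rather than a single change of variables. As written, the sketch makes the pushforward relation look like the ``most delicate point,'' when in fact the self-adjoint intermediate layer hidden inside the normality of $Z$ (and inside the very definition of $B^{1/2}$) is at least as substantial; it should be surfaced as a separate step in any complete write-up.
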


The Spectral Theorem leads to the usual definition of functional calculus.
If $A \in C(H)_{\nu}$ and $f \colon \sigma(A) \to \C$ is Borel measurable, then we define
\[
f(A) \coloneqq P^A(f) = \int_{\sigma(A)}f\,dP^A \in C(H)_{\nu}.
\]
This definition enjoys the property that if $f,g \colon \C \to \C$ are Borel measurable, then $f(g(A)) = (f \circ g)(A)$.
(Please see Corollary 5.6.29 of \cite{kadisonringrose1}.)

Now, if $A \in C(H)$ is arbitrary, then $A^*A \in C(H)_+$.
(This result is often called von Neumann's Theorem.)
In particular, $\sigma(A^*A) \subseteq [0,\infty)$, so we may define the \textbf{absolute value} $|A| \coloneqq (A^*A)^{\frac{1}{2}} \in C(H)_+$ of $A$ via functional calculus.
Also, there exists a unique partial isometry $U \in B(H)$ with initial space $\overline{\im |A|} = \overline{\im (A^*)}$ and final space $\overline{\im A}$ such that $A=U|A|$.
(In particular, $\dom (A) = \dom (|A|)$.)
This is called the \textbf{polar decomposition} of $A$.
(Please see Section 8.1, particularly Theorems 2 and 3, of \cite{birmansolomyak5}.)

We end this section with a review of the concept of an operator affiliated with a von Neumann algebra.

\begin{defi}[Affiliated operators]\label{def.aff}
An operator $a \in C(H)$ is said to be \textbf{affiliated with $\boldsymbol{\cM}$} if $u^*au = a$, for all unitaries $u$ belonging to $\cM'$.
In this case, we write $a \aff \cM$.
If in addition $a$ is normal (respectively, self-adjoint), then we write $a \aff \cM_{\nu}$ (respectively, $a \aff \cM_{\sa}$).
\end{defi}

Here are some properties of affiliated operators.
\pagebreak

\begin{prop}\label{prop.aff}
Let $(\Om,\sF,H,P)$ be a projection-valued measure space.
\begin{enumerate}[label=(\roman*),font=\normalfont,leftmargin=2\parindent]
    \item If $a \in B(H)$, then $a \aff \cM$ if and only if $a \in \cM$.\label{item.bddaffinM}
    \item If $P(G) \in \cM$, for all $G \in \sF$, then $P(\varphi) \aff \cM$, for all $\varphi \in \ell^0(\Om,\sF)$.
    In particular, by item \ref{item.bddaffinM}, if $\varphi \in \ell^{\infty}(\Om,\sF)$, then $P(\varphi) \in \cM$.\label{item.intPaff}
    \item If $a \in C(H)_{\nu}$, then $a \aff \cM$ if and only if $P^a(G) \in \cM$, for all $G \in \cB_{\sigma(a)}
    $;
    and in this case $f(a) \aff \cM$, for all $f \in \ell^0(\sigma(a),\cB_{\sigma(a)})$.
    In particular, $f(a) \in \cM$, for all $f \in \ell^{\infty}(\sigma(a),\cB_{\sigma(a)})$.\label{item.funcofaffisaff}
    \item If $a \in C(H)$ and $a=u|a|$ is its polar decomposition, then $a \aff \cM$ if and only if $u \in \cM$ and $P^{|a|}(G) \in \cM$, for all $G \in \cB_{\sigma(|a|)}$.\label{item.affandpolar}
\end{enumerate}
\end{prop}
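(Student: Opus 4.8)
The plan is to establish part \ref{item.bddaffinM} directly from the Bicommutant Theorem and then deduce the remaining parts from it. For \ref{item.bddaffinM}: for $a \in B(H)$ and a unitary $u \in \cM'$, the equation $u^*au = a$ is equivalent to $au = ua$; since a von Neumann algebra is the linear span of its unitary elements, requiring $u^*au = a$ for all unitaries $u \in \cM'$ is the same as requiring $a$ to commute with all of $\cM'$, i.e. $a \in \cM'' = \cM$. The converse implication is the same computation read backwards.

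For \ref{item.intPaff}, fix a unitary $u \in \cM'$. Since each $P(G) \in \cM$ commutes with $u$, one gets $P_{uh,uk}(G) = \la P(G)uh,uk\ra = \la P(G)h,k\ra = P_{h,k}(G)$, so $P_{uh,uk} = P_{h,k}$ as complex measures for all $h,k \in H$ (in particular $P_{uh,uh} = P_{h,h}$). Feeding this into the description of $P(\varphi)$ and of $\dom(P(\varphi))$ from Proposition \ref{prop.integP}.\ref{item.defofPint} immediately gives $u\,\dom(P(\varphi)) = \dom(P(\varphi))$ and $\la u^*P(\varphi)u\,h,k\ra = \int_\Om\varphi\,dP_{uh,uk} = \int_\Om\varphi\,dP_{h,k} = \la P(\varphi)h,k\ra$ on that domain, i.e. $u^*P(\varphi)u = P(\varphi)$; hence $P(\varphi) \aff \cM$, and the ``in particular'' clause follows from Proposition \ref{prop.integP}.\ref{item.bddPint} together with \ref{item.bddaffinM}. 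For \ref{item.funcofaffisaff}, the backward implication — and the ``in this case'' clause — is just the $\varphi(\lambda) = \lambda$, resp. $\varphi = f$, case of \ref{item.intPaff} applied to $P = P^a$. For the forward implication, given $a \aff \cM$ and a unitary $u \in \cM'$, the assignment $G \mapsto u^*P^a(G)u$ is again a projection-valued measure, and the same bookkeeping with $\varphi(\lambda) = \lambda$ shows $\int\lambda\,(u^*P^au)(d\lambda) = u^*au = a$; uniqueness in the Spectral Theorem (Theorem \ref{thm.spec}) forces $u^*P^a(G)u = P^a(G)$ for all $G$, and since $\cM'$ is spanned by its unitaries, $P^a(G) \in \cM'' = \cM$.

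For \ref{item.affandpolar}, the backward direction is quick: $u \in \cM$ and $P^{|a|}(G) \in \cM$ give $|a| \aff \cM$ by \ref{item.funcofaffisaff}, whence for a unitary $w \in \cM'$ we have $w^*aw = w^*u|a|w = u(w^*|a|w) = u|a| = a$ (the domains match since $\dom(u|a|) = \dom(|a|)$). For the forward direction, given $a \aff \cM$ and a unitary $w \in \cM'$, I would write $a = w^*aw = (w^*uw)(w^*|a|w)$ using $ww^* = \id_H$; here $w^*|a|w \in C(H)_+$ and $w^*uw$ is a partial isometry whose initial space is $w^*(\overline{\im|a|}) = \overline{\im(w^*|a|w)}$, so this is \emph{the} polar decomposition of $a$, and its uniqueness yields $w^*uw = u$ and $w^*|a|w = |a|$; thus $u \in \cM'' = \cM$, and $|a| \aff \cM$, which by \ref{item.funcofaffisaff} gives $P^{|a|}(G) \in \cM$ for all $G$.

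The one place requiring genuine care — the rest being essentially formal — is the unbounded-operator bookkeeping: verifying that conjugation by a unitary of $\cM'$ really commutes, domains included, with integration against a projection-valued measure, which is what makes the arguments for \ref{item.intPaff} and \ref{item.funcofaffisaff} go through, and the parallel verification in \ref{item.affandpolar} that $(w^*uw)(w^*|a|w)$ is a legitimate polar decomposition — in particular that the initial space of $w^*uw$ is exactly $\overline{\im(w^*|a|w)}$, which uses that the unitary $w$ preserves closures of ranges. These checks follow mechanically from Proposition \ref{prop.integP}, the uniqueness clauses in Theorem \ref{thm.spec} and for the polar decomposition, and the Bicommutant Theorem, but they are where attention must be paid.
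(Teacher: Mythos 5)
Your proofs of items (i)--(iii) coincide with the paper's: Bicommutant Theorem plus ``$\cM'$ is spanned by its unitaries'' for (i), the identity $P_{uh,uk}=P_{h,k}$ and its effect on $\dom(P(\varphi))$ for (ii), and uniqueness in the Spectral Theorem applied to $Q^a\coloneqq u^*P^a(\cdot)u$ for (iii). The backward implication in (iv) is also essentially the paper's. Where you genuinely diverge is the forward implication in (iv). The paper first establishes $|a|\aff\cM$ -- by asserting $a\aff\cM\Rightarrow a^*a\aff\cM$ and applying (iii) to $\sqrt{a^*a}$, which already yields $P^{|a|}(G)\in\cM$ -- and only then writes $a=v^*av=v^*uv\,|a|$ and concludes $v^*uv=u$ from uniqueness of the partial isometry with the prescribed initial and final spaces. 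You instead conjugate both polar factors at once, $a=(w^*uw)(w^*|a|w)$, and read off both $w^*uw=u$ and $w^*|a|w=|a|$ from uniqueness of the polar decomposition, deducing $u\in\cM$ and $|a|\aff\cM$ (hence $P^{|a|}(G)\in\cM$ by (iii)) at the end. Your route saves the ``$a^*a\aff\cM$'' step, which the paper does not actually prove, but it leans on a slightly stronger uniqueness statement than the one the paper records: Theorem~\ref{thm.spec}'s neighbourhood only asserts that the partial isometry $U$ is the unique one with initial space $\overline{\im|A|}$, final space $\overline{\im A}$, and $A=U|A|$; you are using that the positive factor is forced as well. That stronger form is a one-line consequence -- if $A=VB$ with $B\ge0$ and $V$ a partial isometry with initial space $\overline{\im B}$, then $A^*A=BV^*VB=B^2$, so $B=|A|$ -- but you should record it, and likewise note that the final space of $w^*uw$ equals $\overline{\im a}$ (this uses $a\aff\cM$), since the paper's uniqueness clause prescribes both spaces. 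Both checks are routine, so this is a clean stylistic variant rather than a gap.
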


We sketch the proofs for the reader's convenience.
As we shall see, the first three properties follow without much difficulty from the definitions, the Bicommutant Theorem, and the Spectral Theorem.
For the difficult part of item \ref{item.affandpolar}, please see also Lemma 4.4.1 in \cite{murrayvonneumann}.

\begin{proof}[Sketch of proof]
We take each item in turn.

\ref{item.bddaffinM} Let $a \in B(H)$ and $u \in \cM'$ be a unitary.
If $a \in \cM$, then of course $u^*au = u^*ua = a$.
Now, if $a \aff \cM$, then $au = uu^*au=ua$.
Since all $C^*$-algebras are spanned by their unitaries, we conclude that $ab=ba$, for all $b \in \cM'$.
Thus $a \in \cM'' = \cM$ by the Bicommutant Theorem.

\ref{item.intPaff} Suppose that $P(G) \in \cM$, for all $G \in \sF$.
If $h,k \in H$ and $u \in \cM'$ is a unitary, then it is easy to see that $P_{uh,uk} = P_{h,k}$.
Unraveling the definition of $P(\varphi)$ then gives $u^*P(\varphi)u = P(\varphi)$.
Thus $P(\varphi) \aff \cM$.
It is worth mentioning that one can prove much more directly --- without knowing anything about unbounded operators or the Bicommutant Theorem --- that if $P(G) \in \cM$, for all $G \in \sF$, then $P(\varphi) \in \cM$, for all $\varphi \in \ell^{\infty}(\Om,\sF)$.
Please see Lemma 4.2.16 in \cite{nikitopoulosMOI}.

\ref{item.funcofaffisaff} If $P^a(G) \in \cM$, for all $G \in \cB_{\sigma(a)}$, then $a = \int_{\sigma(a)} \lambda \, P^a(d\lambda) \aff \cM$ by the previous item.
Now, suppose that $a \aff \cM_{\nu}$, and let $u \in \cM'$ be a unitary.
Note that $Q^a \coloneqq u^*P^a(\cdot)u \colon \cB_{\sigma(a)} \to B(H)$ is a projection-valued measure, and it is easy to see from the Spectral Theorem and definition of $Q^a$ that $u^*au = \int_{\sigma(a)}\lambda \, Q^a(d\lambda)$.
But $u^*au = a$ by assumption, so the uniqueness part of the Spectral Theorem forces $P^a = Q^a = u^*P^a(\cdot)u$.
In other words, $P^a(G) \aff \cM$ and thus, by item \ref{item.bddaffinM}, $P^a(G) \in \cM$, for all $G \in \cB_{\sigma(a)}$.

\ref{item.affandpolar} Let $a \in C(H)$, $a=u|a|$ be the polar decomposition of $a$, and $v \in \cM'$ be a unitary.
If $P^{|a|}(G) \in \cM$ whenever $G \in \cB_{\sigma(|a|)}$, then $|a| \aff \cM$ by the previous item.
If in addition $u \in \cM$, then we have that $v^*av = v^*u|a|v = v^*uvv^*|a|v = u|a| = a$.
Thus $a \aff \cM$.
Conversely, if $a \aff \cM$, then $|a| = |v^*av| = v^*|a|v$.
Thus $|a| \aff \cM$, and, by the previous item, $P^{|a|}(G) \in \cM$ whenever $G \in \cB_{\sigma(|a|)}$.
Next, notice that $v^*uv$ is a partial isometry, and $a = v^*av = v^*u|a|v = v^*uv|a|$ by what just proved.
Finally, $|a| \aff \cM$ implies that $v^*uv$ has initial space $\overline{\im|a|}$, and $a \aff \cM$ implies that $v^*uv$ has final space $\overline{\im a}$.
We conclude that $v^*uv = u$ by the uniqueness of the polar decomposition.
Thus $u \aff \cM$ and so, by item \ref{item.bddaffinM}, $u \in \cM$.
\end{proof}

\subsection{Symmetric operator spaces}\label{sec.symmopsp}

In Section \ref{sec.idealex2}, we shall make use of the theory of \textit{symmetric operator spaces}.
In the present section, we review the notation, terminology, and results from this theory that are necessary for our purposes.
We refer the reader to \cite{doddsNKS} for extra exposition, examples, and a thorough list of references.
(The reader who is uninterested in Section \ref{sec.idealex2} may safely skip at this point to Section \ref{sec.idealprop}.)
For the duration of this section, suppose $(\cM,\tau)$ is a semifinite von Neumann algebra.

Write $\mathrm{Proj}(\cM) \coloneqq \{p \in \cM : p^2=p=p^*\}$ for the lattice of (orthogonal) projections in $\cM$.
An operator $a \aff \cM$ is called \textbf{$\boldsymbol{\tau}$-measurable} if there exists some $s \geq 0$ such that $\tau(P^{|a|}((s,\infty))) < \infty$.
Write
\[
S(\tau) \coloneqq \{a \aff \cM : a \text{ is }\tau\text{-measurable}\},
\]
and let $a,b \in S(\tau)$.
Then $a+b$ is closable, and $\overline{a+b} \in S(\tau)$;
$ab$ is closable, and $\overline{ab} \in S(\tau)$;
and $a^*,|a| \in S(\tau)$.
Moreover, $S(\tau)$ is a $\ast$-algebra under the adjoint, strong sum (closure of sum), and strong product (closure of product) operations;
we shall therefore omit the closures from strong sums and products in the future.
For the preceding facts (and more) about $\tau$-measurable operators, please see \cite{nelson,terp}.

Fix $a \in S(\tau)$.
For $s \geq 0$, define
\[
d_s(a) \coloneqq \tau\big(P^{|a|}((s,\infty))\big) \in [0,\infty].\pagebreak
\]
By definition of $\tau$-measurability, $d_s(a) < \infty$ for sufficiently large $s$.
The function $d(a) = d_{\boldsymbol{\cdot}}(a)$ is called the (\textbf{noncommutative}) \textbf{distribution function} of $a$.
Now, for $t > 0$, define
\[
\mu_t(a) \coloneqq \inf\{s \geq 0 : d_s(a) \leq t\} \in [0,\infty).
\]
The function $\mu(a) = \mu_{\boldsymbol{\cdot}}(a)$ is called the (\textbf{generalized}) \textbf{singular value function} or (\textbf{noncommutative}) \textbf{decreasing rearrangement} of $a$, and $\mu(a)$ is decreasing and right-continuous.
For properties of $d(a)$ and $\mu(a)$, please see \cite{fackkosaki}.
Now, let $S(\tau)_+ \coloneqq S(\tau) \cap C(H)_+$.
If $a \in \cM_+ = S(\tau)_+ \cap \cM$, then we have the identity
\[
\tau(a) = \int_0^{\infty} \mu_t(a)\,dt.
\]
We therefore extend $\tau$ to $S(\tau)_+$ via the formula above;
this extension is still notated $\tau \colon S(\tau)_+ \to [0,\infty]$.
Finally, if $a,b \in S(\tau)$, then we write
\[
a \pprec b \; \text{ if } \; \int_0^t \mu_s(a)\,ds \leq \int_0^t \mu_s(b)\,ds, \text{ for all } t \geq 0.
\]
In this case, we say that $a$ is \textbf{submajorized by} $b$ or that $b$ \textbf{submarjorizes} $a$ (in the ``noncommutative" sense of Hardy--Littlewood--P\'{o}lya).
We now define symmetric operator spaces.

\begin{defi}[Symmetric operator spaces]\label{def.symmopsp}
Let $E \subseteq S(\tau)$ be a linear subspace and $\|\cdot\|_E$ be a norm on $E$ such that $(E,\|\cdot\|_E)$ is a Banach space.
We call $(E,\|\cdot\|_E)$
\begin{enumerate}[label=(\alph*),leftmargin=2\parindent]
    \item a \textbf{symmetric} (or \textbf{rearrangement-invariant}) \textbf{space of $\boldsymbol{\tau}$-measurable operators} --- a \textbf{symmetric space}\footnote{Beware: This has nothing to do with the notion of a (Riemannian) symmetric space from geometry.} for short --- if $a \in S(\tau)$, $b \in E$, and $\mu(a) \leq \mu(b)$ imply $a \in E$ and $\|a\|_E \leq \|b\|_E$;
    \item a \textbf{strongly symmetric space of $\boldsymbol{\tau}$-measurable operators} --- a \textbf{strongly symmetric space} for short --- if it is a symmetric space, and $a, b \in E$ and $a \pprec b$ imply $\|a\|_E \leq \|b\|_E$;
    and
    \item a \textbf{fully symmetric space of $\boldsymbol{\tau}$-measurable operators} --- a \textbf{fully symmetric space} for short --- if $a \in S(\tau)$, $b \in E$, and $a \pprec b$ imply $a \in E$ and $\|a\|_E \leq \|b\|_E$.\label{item.fullysymm}
\end{enumerate}
If $(E,\|\cdot\|_E)$ is a symmetric space, then we define
\[
\mathrm{Proj}(E) \coloneqq E \cap \mathrm{Proj}(\cM) \; \text{ and } \; c_E \coloneqq \sup \mathrm{Proj}(E) \in \mathrm{Proj}(\cM)
\]
and call $c_E$ the \textbf{carrier projection} of $E$.
\end{defi}

Next, we describe a large class of examples of symmetric spaces.
Let $m$ be the Lebesgue measure on the positive halfline $(0,\infty)$ and $(\mathcal{N},\eta) = (L^{\infty}((0,\infty),m),\int_0^{\infty} \boldsymbol{\cdot} \,dm)$, where $L^{\infty}((0,\infty),m)$ is represented as multiplication operators on $L^2((0,\infty),m)$.
Then the set of densely defined, closed operators affiliated with $\mathcal{N}$ is precisely $L^0((0,\infty),m)$, i.e., the space of $m$-almost-everywhere equivalence classes of measurable functions $(0,\infty) \to \C$, viewed as unbounded multiplication operators on $L^2((0,\infty),m)$;
and
\[
S(\eta) = \{f \in L^0((0,\infty),m) : d_s(f) = m(\{x \in (0, \infty) : |f(x)| > s\}) < \infty \text{ for some } s \geq 0\}.
\]
For proofs of these facts, please see Section 2.3 of \cite{cz}.
A (strongly, fully) symmetric space of $\eta$-measurable operators is called a (\textbf{strongly}, \textbf{fully}) \textbf{symmetric Banach function space}.
For the classical theory of such spaces, please see Chapter II of \cite{kreininterp}.

\begin{fact}\label{fact.Etau}
Let $(E \subseteq L^0((0,\infty),m),\|\cdot\|_E)$ be a (strongly, fully) symmetric Banach function space.
If
\[
E(\tau) \coloneqq \{a \in S(\tau) : \mu(a) \in E\} \; \text{ and } \; \|a\|_{E(\tau)} \coloneqq \|\mu(a)\|_E \; \text{ for } \; a \in E(\tau),
\]
then $(E(\tau),\|\cdot\|_{E(\tau)})$ is a (strongly, fully) symmetric space of $\tau$-measurable operators. 
\end{fact}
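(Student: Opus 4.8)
The plan is to push everything down to the level of singular value functions, exploiting the fact that the (nonlinear) map $a \mapsto \mu(a)$ behaves like an isometry from $S(\tau)$ onto a cone of nonnegative decreasing functions inside $S(\eta) = S(L^\infty((0,\infty),m))$. The key elementary observation is that for a nonnegative \emph{decreasing}, right-continuous function $\phi$ on $(0,\infty)$ one has $\mu_t(\phi) = \phi(t)$; since $\mu(a)$ is itself decreasing and right-continuous, this means $\mu(\mu(a)) = \mu(a)$, so submajorization and pointwise domination \emph{of operators} translate verbatim into the corresponding relations \emph{of their singular value functions viewed as elements of} $E$. With this dictionary, one must check four things: that $E(\tau)$ is a linear subspace of $S(\tau)$; that $\|\cdot\|_{E(\tau)}$ is a norm; that $(E(\tau),\|\cdot\|_{E(\tau)})$ is complete; and that it obeys the symmetric (resp. strongly, resp. fully symmetric) axiom of Definition~\ref{def.symmopsp}. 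Throughout I would use the basic identities and inequalities for $\mu$ from \cite{fackkosaki}: $\mu(\lambda a) = |\lambda|\,\mu(a)$, $\mu(a^*) = \mu(|a|) = \mu(a)$, $\mu_{s+t}(a+b) \le \mu_s(a) + \mu_t(b)$, and the submajorization $\mu(a+b) \pprec \mu(a) + \mu(b)$.

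The symmetry axiom, which is the real point, then becomes a one-line translation: if $a \in S(\tau)$, $b \in E(\tau)$ and $\mu(a) \le \mu(b)$, note $\mu(a) \in S(\eta)$ (because $a$ is $\tau$-measurable, $\mu(a)$ is eventually bounded), so $\mu(a) \le \mu(b)$ with $\mu(b) \in E$; applying the symmetric axiom of $E$ yields $\mu(a) \in E$ and $\|\mu(a)\|_E \le \|\mu(b)\|_E$, i.e.\ $a \in E(\tau)$ and $\|a\|_{E(\tau)} \le \|b\|_{E(\tau)}$; replacing $\le$ by $\pprec$ gives the strongly/fully symmetric cases identically. The norm axioms are mostly routine as well: homogeneity is $\mu(\lambda a) = |\lambda|\mu(a)$; and $\|a\|_{E(\tau)} = 0$ forces $\mu(a) \equiv 0$, hence $\tau\big(P^{|a|}((s,\infty))\big) = 0$ for all $s > 0$, hence $|a| = 0$ by faithfulness of $\tau$, hence $a = 0$. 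For the triangle inequality I would combine $\mu(a+b) \pprec \mu(a) + \mu(b)$ with the monotonicity of the $E$-norm under submajorization — immediate by hypothesis when $E$ is strongly or fully symmetric, and a standard feature of symmetric Banach function spaces over $(0,\infty)$ otherwise — to get $\|a+b\|_{E(\tau)} = \|\mu(a+b)\|_E \le \|\mu(a) + \mu(b)\|_E \le \|a\|_{E(\tau)} + \|b\|_{E(\tau)}$; the same chain of inequalities simultaneously shows $\mu(a+b) \in E$, which together with homogeneity closes the linear-subspace claim.

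The step I expect to be the real obstacle is completeness of $(E(\tau),\|\cdot\|_{E(\tau)})$. The route I would take: given a $\|\cdot\|_{E(\tau)}$-Cauchy sequence $(a_n)$, first use the continuous embedding of any symmetric Banach function space into $L^1((0,\infty)) + L^\infty((0,\infty))$ — equivalently, control of the truncated functional $a \mapsto \int_0^s \mu_t(a)\,dt$ by $\|a\|_{E(\tau)}$ — to show $(a_n)$ is Cauchy for the measure topology on $S(\tau)$, so $a_n \to a$ in $S(\tau)$ for some $a \in S(\tau)$; then pass to a rapidly converging subsequence and apply a Fatou-type lower-semicontinuity property of singular values from \cite{fackkosaki} to the Cauchy differences $a_n - a_m$, together with completeness of $(E,\|\cdot\|_E)$, to conclude $a \in E(\tau)$ and $\|a_n - a\|_{E(\tau)} \to 0$. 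It is worth emphasizing that Fact~\ref{fact.Etau} is exactly the classical construction of noncommutative symmetric (Banach function) spaces going back to Dodds--Dodds--de Pagter and others, so within the paper it may be cleanest simply to cite that literature; the outline above is the self-contained version, and the measure-topology/completeness argument is its one genuinely delicate ingredient.
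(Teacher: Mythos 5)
The paper does not actually supply a proof of Fact~\ref{fact.Etau}: it is stated as a citation, pointing to Section~9.1 of \cite{doddsNKS} for the strongly/fully symmetric cases and, for the case of a bare symmetric space, to \cite{kaltonsukochev}, which the paper explicitly flags as ``highly nontrivial.'' So your proposal is a genuinely different route --- a self-contained sketch rather than a citation --- and it is worth evaluating on its own terms.

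Your dictionary $\mu(\mu(a)) = \mu(a)$ and the translation of the (strongly, fully) symmetric axioms are correct, and for the strongly and fully symmetric cases the outline would go through essentially as you describe. The gap is in the \emph{arbitrary symmetric} case, precisely at the triangle inequality. You write that monotonicity of $\|\cdot\|_E$ under submajorization is ``a standard feature of symmetric Banach function spaces over $(0,\infty)$.'' Under the paper's Definition~\ref{def.symmopsp}, that is exactly the \emph{strongly} symmetric axiom, and it is not automatic for a symmetric space in the minimal sense: the symmetric axiom only gives monotonicity under pointwise domination $\mu(f) \le \mu(g)$, which is strictly weaker than $f \pprec g$. (In the classical Bennett--Sharpley or Krein--Petunin--Semenov framework the Fatou/maximality axioms are built in, and then the space is automatically fully symmetric by Lorentz--Luxemburg; but the paper deliberately works with the weaker minimal definition, which is the entire point of the $E$/strongly $E$/fully $E$ trichotomy.) Since $\mu(a+b) \pprec \mu(a) + \mu(b)$ is a submajorization, not a pointwise inequality (indeed $\mu(a+b) \le \mu(a) + \mu(b)$ already fails commutatively, e.g.\ for $1_{(0,1)}$ and $1_{(1,2)}$), your chain of estimates $\|\mu(a+b)\|_E \le \|\mu(a) + \mu(b)\|_E$ does not follow from the symmetric axiom alone. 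Establishing both the triangle inequality and completeness for arbitrary symmetric $E$ without the strongly symmetric hypothesis is exactly the hard theorem of Kalton and Sukochev that the paper defers to; calling it ``standard'' is the substantive misstep. Your completeness sketch has a similar flavor of difficulty, but the more fundamental missing piece is the triangle inequality itself.
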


For the strongly/fully symmetric cases, please see Section 9.1 of \cite{doddsNKS}.
For the (highly nontrivial) case of an arbitrary symmetric space, please see \cite{kaltonsukochev}.
When $1 \leq p \leq \infty$ and $E = L^p \coloneqq L^p((0,\infty),m)$, then $L^p(\tau)$ as defined using the construction in Fact \ref{fact.Etau} is a concrete description of the abstract (completion-based) definition from Section \ref{sec.wstarint}.
When $p=\infty$, this follows from Lemma 2.5(i) in \cite{fackkosaki};
when $p < \infty$, it follows from Lemma 2.5(iv) in \cite{fackkosaki} and Proposition 2.8 in \cite{doddsNKD}.
Moreover, we have
\[
(L^p(\tau),\|\cdot\|_{L^p(\tau)}) = \big(\{a \in S(\tau) : \tau(|a|^p) < \infty\},\tau(|\cdot|^p)^{\frac{1}{p}}\big)\vspace{2mm}\pagebreak
\]
when $1 \leq p < \infty$.
As a result,
\[
(L^p \cap L^{\infty})(\tau) = L^p(\tau) \cap L^{\infty}(\tau) = L^p(\tau) \cap \cM = \mathcal{L}^p(\tau)
\]
with equality of norms (if we give $\mathcal{L}^p(\tau)$ the norm $\max\{\|\cdot\|_{L^p(\tau)},\|\cdot\|\}$).
It is also true that
\[
(L^1+L^{\infty})(\tau) = L^1(\tau)+L^{\infty}(\tau) = L^1(\tau)+\cM
\]
with equality of norms.
(This follows from Proposition 2.5 in \cite{doddsNKD}.)
To be clear, if $Z$ is a vector space and $X,Y \subseteq Z$ are normed linear subspaces with respective norms $\|\cdot\|_X$ and $\|\cdot\|_Y$, then the subspace $X \cap Y \subseteq Z$ is given the norm $\|\cdot\|_{X \cap Y} \coloneqq \max\{\|\cdot\|_X,\|\cdot\|_Y\}$, and the subspace $X+Y \subseteq Z$ is given the norm $\|z\|_{X+Y} \coloneqq \inf\{\|x\|_X+\|y\|_Y : x \in X, \; y \in Y, \; z=x+y\}$.

In general, if $(E,\|\cdot\|_E)$ is a strongly symmetric space of $\tau$-measurable operators, then $E \subseteq L^1(\tau)+\cM$ with continuous inclusion, and $c_E = 1 \iff L^1(\tau) \cap \cM \subseteq E$ with continuous inclusion.
This is Lemma 25 in \cite{doddsNKS} (combined with the last paragraph of the proof of Lemma 3.4.6 in \cite{nikitopoulosMOI}).
By Theorem 4.1 in Section II.4.1 of \cite{kreininterp}, if $(\tilde{E},\|\cdot\|_{\tilde{E}})$ is a nonzero symmetric Banach function space, then $L^1 \cap L^{\infty} \subseteq \tilde{E} \subseteq L^1+L^{\infty}$ with continuous inclusions, i.e., $c_{\tilde{E}} = 1$.

Finally, we discuss K\"{o}the duals.
For a symmetric space $(E,\|\cdot\|_E)$, define
\begin{align*}
    E^{\times} & \coloneqq \{a \in S(\tau) : ab \in L^1(\tau), \, \text{ for all } \, b \in E\} \; \text{ and} \\
    \|a\|_{E^{\times}} & \coloneqq \sup\{\tau(|ab|) : b \in E, \; \|b\|_E \leq 1\} \, \text{ for } \, a \in S(\tau).
\end{align*}
Of course, $\|a\|_{E^{\times}}$ could be infinite.

\begin{fact}[K\"{o}the dual]\label{fact.Kothedual}
Let $(E,\|\cdot\|_E)$ be a strongly symmetric space of $\tau$-measurable operators with $c_E = 1$.
If $a \in S(\tau)$, then $\|a\|_{E^{\times}} =  \sup\{\tau(|ab|) : b \in \mathcal{L}^1(\tau) = L^1(\tau) \cap \cM, \; \|b\|_E \leq 1\}$.
Moreover, $a \in E^{\times}$ if and only if $\|a\|_{E^{\times}} < \infty$.
Finally, $\|\cdot\|_{E^{\times}}$ is a norm on $E^{\times}$ such that $(E^{\times},\|\cdot\|_{E^{\times}})$ is a fully symmetric space with $c_{E^{\times}} = 1$.
We call $E^{\times}$ the \textbf{K\"{o}the dual} of $E$.
\end{fact}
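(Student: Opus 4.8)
The plan is to establish the four assertions of the Fact --- the norming identity, the equivalence $a \in E^{\times} \iff \|a\|_{E^{\times}} < \infty$, the claim that $\|\cdot\|_{E^{\times}}$ is a complete fully symmetric norm on $E^{\times}$, and $c_{E^{\times}} = 1$ --- by transcribing the classical theory of K\"othe duality for Banach function spaces through the singular value function $\mu(\cdot)$, using the structural facts about strongly symmetric spaces already quoted: since $(E,\|\cdot\|_E)$ is strongly symmetric, $E \subseteq L^1(\tau) + \cM$ with continuous inclusion, and since moreover $c_E = 1$, also $\mathcal{L}^1(\tau) = L^1(\tau) \cap \cM \subseteq E$ with continuous inclusion. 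For the norming identity, one inequality is immediate from $\mathcal{L}^1(\tau) \subseteq E$ (so any $b \in \mathcal{L}^1(\tau)$ with $\|b\|_E \le 1$ already competes in the definition of $\|a\|_{E^{\times}}$). For the other, fix $b \in E$ with $\|b\|_E \le 1$, write $b = v|b|$, and approximate $b$ by operators $b_{n,q} \coloneqq q\,v\,|b|\,P^{|b|}((1/n,n])$, where $q$ runs over $\tau$-finite projections increasing to $1$ (available by semifiniteness): using $\mu_t(xy) \le \|x\|\,\mu_t(y)$ and $0 \le |b|\,P^{|b|}((1/n,n]) \le |b|$ one gets $\mu(b_{n,q}) \le \mu(b)$, so $\|b_{n,q}\|_E \le 1$ by symmetry of $E$, while $b_{n,q}$ is bounded with $\tau$-finite left support, hence lies in $\mathcal{L}^1(\tau)$. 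Since $b_{n,q} \to b$ in measure, left multiplication is continuous in measure, and $c \mapsto \tau(|c|)$ is lower semicontinuous for convergence in measure (Fatou for the trace), $\tau(|ab|) \le \liminf \tau(|ab_{n,q}|)$, and taking suprema over $b$ gives the identity.

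Next, suppose $\|a\|_{E^{\times}} < \infty$; for $b \in E$ the truncations $b_{n,q}$ above satisfy $\tau(|ab_{n,q}|) \le \|a\|_{E^{\times}}\,\|b\|_E$ and $ab_{n,q} \to ab$ in measure, so Fatou gives $\tau(|ab|) < \infty$, i.e.\ $ab \in L^1(\tau)$; hence $a \in E^{\times}$. Conversely, if $a \in E^{\times}$, then $T \colon E \to L^1(\tau)$, $Tb \coloneqq ab$, is a well-defined linear map which is closed --- convergence in $E$ and convergence in $L^1(\tau)$ each imply convergence in measure (via $E \hookrightarrow L^1(\tau) + \cM \hookrightarrow S(\tau)$ and $L^1(\tau) \hookrightarrow S(\tau)$), and left multiplication by $a$ is continuous in measure --- so $T$ is bounded by the closed graph theorem and $\|a\|_{E^{\times}} = \sup_{\|b\|_E \le 1}\|ab\|_{L^1(\tau)} = \|T\| < \infty$. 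Since each map $a \mapsto \tau(|ab|)$ is a seminorm (triangle inequality of $\|\cdot\|_{L^1(\tau)}$, positive homogeneity of $|\cdot|$), $\|\cdot\|_{E^{\times}}$ is a seminorm, and together with the equivalence just proved this shows $E^{\times}$ is a linear subspace of $S(\tau)$; it is a norm because $\|a\|_{E^{\times}} = 0$ forces $\tau(|ab|) = 0$, hence (faithfulness of $\tau$) $ab = 0$, for all $b \in \mathcal{L}^1(\tau)$, and running $b$ over $\tau$-finite projections increasing to $1$ forces $a = 0$.

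For full symmetry, the extra input is the noncommutative Hardy--Littlewood inequality $\tau(|ab|) \le \int_0^{\infty}\mu_t(a)\,\mu_t(b)\,dt$ together with its sharpness (see \cite{fackkosaki}, \cite{doddsNKS}): the supremum of $\tau(|ab'|)$ over $b'$ with $\mu(b') = \mu(b)$ equals $\int_0^{\infty}\mu_t(a)\,\mu_t(b)\,dt$. Since the unit ball of the symmetric space $E$ is a union of order intervals $\{b' : \mu(b') \le \mu(b)\}$, this upgrades the norming identity to
\[
\|a\|_{E^{\times}} = \sup\Big\{\int_0^{\infty}\mu_t(a)\,\mu_t(b)\,dt : b \in E,\ \|b\|_E \le 1\Big\},
\]
so that $\|a\|_{E^{\times}}$ depends only on $\mu(a)$; and if $a' \in S(\tau)$, $a \in E^{\times}$, and $a' \pprec a$ (i.e.\ $\mu(a') \pprec \mu(a)$), then the classical Hardy lemma (for $0 \le f \pprec g$ and $h \ge 0$ decreasing, $\int_0^{\infty} fh \le \int_0^{\infty} gh$, applied with $h = \mu(b)$) yields $\|a'\|_{E^{\times}} \le \|a\|_{E^{\times}} < \infty$ and hence $a' \in E^{\times}$, which is the fully symmetric property. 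For completeness, one runs the standard proof that K\"othe duals are Banach: a Cauchy sequence $(a_n)$ in $E^{\times}$ is bounded and, for each $\tau$-finite projection $q$, $(a_n q)$ is Cauchy in $L^1(\tau)$ since $\|(a_n - a_m)q\|_{L^1(\tau)} \le \|a_n - a_m\|_{E^{\times}}\|q\|_E$; patching the $L^1$-limits over an increasing net $q \uparrow 1$ produces $a \in S(\tau)$, and a Fatou argument as above shows $a \in E^{\times}$ with $\|a_n - a\|_{E^{\times}} \to 0$.

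Finally, $c_{E^{\times}} = 1$: it suffices to check that every $\tau$-finite projection $q$ lies in $E^{\times}$, since such projections have supremum $1$ by semifiniteness. Given $b \in E$, write $b = b_1 + b_2$ with $b_1 \in L^1(\tau)$ and $b_2 \in \cM$; then $qb_1 \in L^1(\tau)$ (an ideal) and $\tau(|qb_2|) \le \|b_2\|\,\tau(q) < \infty$, so $qb = qb_1 + qb_2 \in L^1(\tau)$ and thus $q \in E^{\times}$. I expect the main difficulty to lie in the two soft-analysis points flagged above: making the truncation in the first paragraph genuinely produce unit-ball elements of $\mathcal{L}^1(\tau)$ that converge in measure to $b$ when $\cM$ is not $\sigma$-finite (this requires the combined spectral-plus-spatial truncation and a monotone, rather than naive pointwise, argument), and the patching construction of the limit operator in the completeness proof. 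Everything else is a routine translation of classical Banach-function-space arguments through $\mu(\cdot)$.
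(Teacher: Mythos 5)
The paper does not supply a proof of this Fact; it refers the reader to Section 5 of \cite{doddsNKD} and Sections 5.2 and 6 of \cite{doddsNKS}. So there is no internal proof to compare against. Your outline broadly tracks the cited approach, and the closed-graph argument for $a \in E^{\times} \Rightarrow \|a\|_{E^{\times}} < \infty$, the argument for $c_{E^{\times}} = 1$, and (modulo citing sharpness of the noncommutative Hardy--Littlewood inequality) the full-symmetry argument are sound as sketched.

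The central approximation step in your first two paragraphs, however, has a genuine gap: the claim that $b_{n,q} = q\,v\,|b|\,P^{|b|}((1/n,n])$ converges to $b$ in measure as $n \to \infty$ and $q \uparrow 1$ over $\tau$-finite projections is false whenever $\tau(1) = \infty$, because $\tau(1-q) = \infty$ forces $\mu_t(1-q) = 1$ for every $t > 0$, so $1-q \not\to 0$ in measure. Concretely, take $\cM = L^{\infty}\big([0,\infty)\big)$ with the Lebesgue trace, $E = (L^1 + L^{\infty})(\tau)$, and $b = \tfrac{1}{2}\mathbf{1}$; then $b_{n,q} = \tfrac{1}{2}q$ for $n \geq 2$ and $\mu_t(b - b_{n,q}) = \tfrac{1}{2}$ for every $t$, regardless of $q$. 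You flag this as the ``main difficulty,'' but you rely on the convergence to invoke Fatou for the trace, so the Fatou step is unjustified and the gap is real. A working repair drops the left spatial cutoff and uses normality of $\tau$ in its place. Set $b_n \coloneqq v|b|P^{|b|}((1/n,n])$; then $b_n \to b$ in measure does hold (the low-spectrum tail has norm $\leq 1/n$ and the high-spectrum tail has $\tau$-finite support decreasing to $0$), so Fatou for the trace gives $\tau(|ab|) \leq \liminf_n \tau(|ab_n|)$. For fixed $n$ and each $\tau$-finite projection $p$, the operator $b_np$ lies in $\mathcal{L}^1(\tau)$ with $\mu(b_np) \leq \mu(b_n) \leq \mu(b)$, hence $\|b_np\|_E \leq 1$; and writing $ab_n = w|ab_n|$ gives $p|ab_n|p = (pw^*)\,a(b_np)$, whence $\tau(p|ab_n|p) \leq \tau(|a(b_np)|)$ by $L^1$--$L^\infty$ duality when the right side is finite and trivially otherwise. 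Since $|ab_n|^{1/2}p|ab_n|^{1/2} \uparrow |ab_n|$ as $p \uparrow 1$ and $\tau$ is normal, $\tau(|ab_n|) = \sup_p \tau(p|ab_n|p) \leq \sup\{\tau(|ab'|) : b' \in \mathcal{L}^1(\tau),\; \|b'\|_E \leq 1\}$, which closes the norming identity and the ``$\Leftarrow$'' of the membership criterion.
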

\begin{rem}
In the classical case of symmetric Banach function spaces, the K\"{o}the dual of $E$ is called the \textit{associate space of} $E$ or the \textit{space associated with} $E$.
\end{rem}

For a proof of this fact, please see Section 5 of \cite{doddsNKD} or Sections 5.2 and 6 of \cite{doddsNKS}.
Now, let $(E,\|\cdot\|_E)$ be a strongly symmetric space of $\tau$-measurable operators with $c_E = 1$. Since $E^{\times}$ is fully symmetric and $c_{E^{\times}} = 1$, we can consider the \textbf{K\"{o}the bidual} $(E^{\times\times},\|\cdot\|_{E^{\times\times}}) = ((E^{\times})^{\times},\|\cdot\|_{(E^{\times})^{\times}})$ of $E$ as a (fully) symmetric space.
It is always the case that $E \subseteq E^{\times \times}$ and $\|\cdot\|_{E^{\times\times}} \leq \|\cdot\|_E$ on $E$.
If $E=E^{\times\times}$ and $\|\cdot\|_E = \|\cdot\|_{E^{\times\times}}$ on $E$, then we call $E$ \textbf{K\"{o}the reflexive}.
(This term is not standard; a more common term is \textit{maximal}.)
Note that, by Fact \ref{fact.Kothedual}, if $E$ is K\"{o}the reflexive, then $E$ is automatically fully symmetric.

The following is a celebrated equivalent characterization of K\"{o}the reflexivity.
It is stated and proven as Proposition 5.14 in \cite{doddsNKD} and Theorem 32 in \cite{doddsNKS}.

\begin{thm}[Noncommutative Lorentz--Luxemburg]\label{thm.ncll}
Let $(E,\|\cdot\|_E)$ be a strongly symmetric space of $\tau$-measurable operators with $c_E=1$.
Then $E$ is K\"{o}the reflexive if and only if $E$ has the \textbf{Fatou property}:
whenever $(a_j)_{j \in J}$ is an increasing net ($j_1 \leq j_2 \Rightarrow a_{j_2} - a_{j_1} \in S(\tau)_+$) in $E \cap S(\tau)_+$ with $\sup_{j \in J}\|a_j\|_E < \infty$, we have that $\sup_{j \in J}a_j$ exists in $E \cap S(\tau)_+$ and $\|\sup_{j \in J}a_j\|_E = \sup_{j \in J}\|a_j\|_E$.
\end{thm}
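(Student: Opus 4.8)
The plan is to prove the two implications of Theorem \ref{thm.ncll} separately, the guiding principle in both directions being to pass to the half-line through the singular value function. Since $\|a\|_E$ and $\|a\|_{E^{\times\times}}$ depend only on $\mu(a) = \mu(|a|)$, and — for strongly symmetric $E$ with $c_E = 1$ — the map $a \mapsto \mu(a)$ identifies $(E,\|\cdot\|_E)$ with a strongly symmetric Banach function space $(\widetilde E,\|\cdot\|_{\widetilde E})$ on $(0,\tau(1))$ compatibly with K\"othe duality (the K\"othe dual of $E$ corresponds to that of $\widetilde E$) and with the Fatou property, the noncommutative statement reduces to the classical Lorentz--Luxemburg theorem for $\widetilde E$ (Chapter II of \cite{kreininterp}). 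I would nonetheless also want the argument intrinsically, and for that I would invoke two standard facts from \cite{fackkosaki}: the Hardy--Littlewood inequality $\tau(|ab|) \le \int_0^{\infty} \mu_t(a)\,\mu_t(b)\,dt$, and the identity $\|c\|_{L^1(\tau)+\cM} = \int_0^1 \mu_t(c)\,dt$. I would also isolate one lemma: for $c \in \cM$ with $\tau(\supp c) < \infty$ one has $\|c\|_E = \|c\|_{E^{\times\times}}$, which is the classical fact that the K\"othe dual is norming on integrable simple functions, transported through the commutative model.

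For ``K\"othe reflexive $\Rightarrow$ Fatou'', let $(a_j)_{j \in J}$ be increasing in $E \cap S(\tau)_+$ with $M \coloneqq \sup_j \|a_j\|_E < \infty$. Using $E \hookrightarrow L^1(\tau) + \cM$ continuously and $\|a_j\|_{L^1(\tau)+\cM} = \int_0^1 \mu_t(a_j)\,dt \ge \mu_1(a_j)$, one gets $\sup_j \mu_1(a_j) < \infty$; by a standard fact from \cite{fackkosaki}, an increasing net in $S(\tau)_+$ with this property converges in measure to $a \coloneqq \sup_j a_j \in S(\tau)_+$, with $\mu_t(a) = \sup_j \mu_t(a_j)$ for a.e.\ $t$. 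Then, for each $b \in (\mathcal{L}^1(\tau) \cap \cM)_+$ with $\|b\|_{E^{\times}} \le 1$, Hardy--Littlewood, monotone convergence, and the K\"othe--H\"older estimate $\int_0^{\infty}\mu_t(a_j)\mu_t(b)\,dt \le \|a_j\|_{E^{\times\times}}\|b\|_{E^{\times}} \le M$ (using $E^{\times} = E^{\times\times\times}$) give $\tau(|ab|) \le \int_0^{\infty} \mu_t(a)\mu_t(b)\,dt = \sup_j \int_0^{\infty}\mu_t(a_j)\mu_t(b)\,dt \le M$. Taking the supremum over such $b$ and applying Fact \ref{fact.Kothedual} to $E^{\times}$ yields $\|a\|_{E^{\times\times}} \le M$, hence $a \in E^{\times\times} = E$ with $\|a\|_E = \|a\|_{E^{\times\times}} \le M$; the reverse inequality $\|a\|_E \ge M$ is immediate from $\mu(a) \ge \mu(a_j)$ and symmetry, so $\|a\|_E = M$.

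For the converse, assume $E$ has the Fatou property; since $E \subseteq E^{\times\times}$ and $\|\cdot\|_{E^{\times\times}} \le \|\cdot\|_E$ are known, I must show $\|a\|_E \le \|a\|_{E^{\times\times}}$ for $a \in E^{\times\times}$, and replacing $a$ by $|a|$ I may take $a \in S(\tau)_+$. Choosing $s_0$ with $\tau(p) < \infty$ for $p \coloneqq P^a((s_0,\infty))$, I split $a = ap + a(1-p)$. For $ap$ the operators $(a\wedge n)p$ lie in $\mathcal{L}^1(\tau)\cap\cM \subseteq E$ (using $c_E = 1$), increase to $ap$, and satisfy $\|(a\wedge n)p\|_E = \|(a\wedge n)p\|_{E^{\times\times}} \le \|a\|_{E^{\times\times}}$ by the norm-agreement lemma together with $(a\wedge n)p \le a$; so the Fatou property gives $ap \in E$ with $\|ap\|_E \le \|a\|_{E^{\times\times}}$. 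The summand $a(1-p)$ is bounded but its support projection may have infinite trace, so finite-trace approximation is unavailable — this is where I expect the main obstacle. The cleanest resolution is to work in the commutative model: approximate the decreasing function $\mu(a)$ from below by $g_n \coloneqq \min(\mu(a),n)\,\mathbf{1}_{(0,n)}$, realize each $g_n$ as $\mu(c_n)$ for some $c_n \in \mathcal{L}^1(\tau)\cap\cM$ with $c_n \pprec a$, and rerun the Fatou argument with strong symmetry (submajorization) in place of the order; with $\|c_n\|_E = \|c_n\|_{E^{\times\times}} \le \|a\|_{E^{\times\times}}$ this gives $a \in E$ with $\|a\|_E \le \|a\|_{E^{\times\times}}$. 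Equivalently, one simply invokes classical Lorentz--Luxemburg for $\widetilde E$. Combining with $\|\cdot\|_{E^{\times\times}} \le \|\cdot\|_E$ yields $E = E^{\times\times}$ isometrically.

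The only genuinely delicate points are (i) the measure-convergence fact for increasing norm-bounded nets in $S(\tau)_+$, where semifiniteness and $c_E = 1$ enter, and (ii), in the converse, the treatment of the bounded, infinite-trace-support component of $a$; this is why the theorem is not a routine Hahn--Banach exercise and why passing through the commutative model (and ultimately the classical Lorentz--Luxemburg theorem) is the efficient route.
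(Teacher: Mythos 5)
The paper does not prove this theorem; immediately after the statement it attributes the result to Proposition 5.14 of \cite{doddsNKD} and Theorem 32 of \cite{doddsNKS}, so there is no paper proof to compare against. With that caveat, your ``K\"othe reflexive $\Rightarrow$ Fatou'' half tracks the standard argument and is essentially complete once two cited facts are pinned down: (i) the passage from $\sup_j\|a_j\|_{L^1(\tau)+\cM}<\infty$ to the existence of $a=\sup_j a_j\in S(\tau)_+$ with $\mu_t(a)=\sup_j\mu_t(a_j)$, which is genuinely nontrivial for nets (and lives in the measurable-operator preliminaries of \cite{doddsNKD} rather than \cite{fackkosaki}); and (ii) the ``rearranged'' H\"older estimate $\int_0^\infty\mu_t(a_j)\mu_t(b)\,dt\le\|a_j\|_{E^{\times\times}}\|b\|_{E^\times}$, which is strictly stronger than the defining inequality $\tau(|a_jb|)\le\|a_j\|_{E^{\times\times}}\|b\|_{E^\times}$ (by Hardy--Littlewood the left side is the larger one) and itself requires a K\"othe-duality lemma.

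The converse contains the real gap, which to your credit you anticipate. Your isolated lemma --- that $\|c\|_E=\|c\|_{E^{\times\times}}$ for $c\in\cM$ with $\tau(\supp c)<\infty$ --- is not a one-line transport of a trivial commutative statement: it is the crux, since once it holds, the truncation-plus-Fatou step upgrades it from $\mathcal{L}^1(\tau)\cap\cM$ to all of $E^{\times\times}$ exactly as you describe. Moreover, the proposed shortcut ``identify $E$ with a symmetric Banach function space $\widetilde E$ via $a\mapsto\mu(a)$ and invoke the classical Lorentz--Luxemburg theorem'' does not obviously apply to an \emph{abstractly given} strongly symmetric $E\subseteq S(\tau)$: when $\cM$ has atoms the image of $\mu$ is not all of $L^0((0,\tau(1)))$, so there is no canonical $\widetilde E$, and Example \ref{ex.Fatou} (which encodes the relevant compatibility of K\"othe duals) runs from a commutative $\widetilde E$ to the noncommutative $\widetilde E(\tau)$, not the reverse. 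In other words, your sketch locates the right pressure points, but the norm-agreement lemma and the representation of $E$ by a function space are real theorems rather than bookkeeping, and a self-contained proof must establish them.
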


The definition of the Fatou property involves rather arbitrary nets.
It is therefore reasonable to be concerned that verifying the Fatou property in classical situations might be quite difficult.
However, as we explain shortly, the sequence formulation of the Fatou property is equivalent in classical situations.
Let $(E \subseteq L^0((0,\infty),m),\|\cdot\|_E)$ be a symmetric Banach function space.
We say that $E$ has the \textbf{classical Fatou property} if whenever $(f_n)_{n \in \N}$ is an increasing sequence of nonnegative functions in $E$ such that $\sup_{n \in \N} \|f_n\|_E < \infty$, we have $\sup_{n \in \N} f_n \in E$ and $\|\sup_{n \in \N} f_n\|_E = \sup_{n \in \N} \|f_n\|_E$.
It turns out (Theorem 4.6 in Section 2.4 of \cite{bensharp}) that if $E$ has the classical Fatou property, then $E$ is fully symmetric, so we may speak of its K\"{o}the dual as a (fully) symmetric Banach function space when $E$ is nonzero.
The classical Lorentz--Luxemburg Theorem (e.g., Theorem 1 in Section 71 of \cite{zaanen}) says that a nonzero symmetric Banach function space has the classical Fatou property if and only if it is (strongly symmetric and) K\"{o}the reflexive.
In particular, by the Noncommutative Lorentz--Luxemburg Theorem, a symmetric Banach function space has the Fatou property if and only if it has the classical Fatou property.

\begin{ex}\label{ex.Fatou}
Let $(E,\|\cdot\|_E)$ be a nonzero strongly symmetric Banach function space (which implies $c_E = 1$ as noted above).
By Theorem 5.6 in \cite{doddsNKD},
\[
\big(E(\tau)^{\times},\|\cdot\|_{E(\tau)^{\times}}\big) = \big(E^{\times}(\tau),\|\cdot\|_{E^{\times}(\tau)}\big).
\]
In particular, if $E$ is K\"{o}the reflexive (i.e., has the classical Fatou property), then $E(\tau)$ is K\"{o}the reflexive (i.e., has the Fatou property) as well.
\end{ex}

\begin{rem}
Let $E$ be a symmetric Banach function space.
By Theorem 3 in Section 65 of \cite{zaanen}, $E$ has the classical Fatou property if and only if whenever $(f_n)_{n \in \N}$ is a sequence of nonnegative functions in $E$ with $\liminf_{n \to \infty} \|f_n\|_E < \infty$, we have $\liminf_{n \to \infty} f_n \in E$ and $\big\|\liminf_{n \to \infty} f_n\big\|_E \leq \liminf_{n \to \infty} \|f_n\|_E$, i.e, Fatou's Lemma holds for $\|\cdot\|_E$.
Hence the property's name.
\end{rem}

\section{Ideals of von Neumann algebras}\label{sec.ideals}

For Section \ref{sec.ideals}, fix a complex Hilbert space $(H,\la \cdot,\cdot \ra)$ and a von Neumann algebra $\cM \subseteq B(H)$.

\subsection{Properties to request of ideals}\label{sec.idealprop}

In this section, we introduce some abstract properties of ideals of $\cM$ that are useful in the study of MOIs and their applications to the differentiation of operator functions.
In Section \ref{sec.idealex1}, we give several classes of examples that do not require the theory of symmetric operator spaces to understand.
In Section \ref{sec.idealex2}, we give a large class of additional examples using the theory of symmetric operator spaces.

\begin{defi}[Symmetrically normed ideals]\label{def.sni}
Let $\cA$ be a Banach algebra and $\mathcal{J} \subseteq \cA$ be an ideal, i.e., a linear subspace such that $arb \in \mathcal{J}$ whenever $a,b \in \cA$ and $r \in \mathcal{J}$;
in this case, we write $\mathcal{J} \unlhd \cA$.
Suppose we have another norm $\|\cdot\|_{\mathcal{J}}$ on $\mathcal{J}$.
We call $(\mathcal{J},\|\cdot\|_{\mathcal{J}})$ a \textbf{Banach ideal} of $\cA$ if $(\mathcal{J},\|\cdot\|_{\mathcal{J}})$ is a Banach space and the inclusion $\iota_{\mathcal{J}} \colon (\mathcal{J},\|\cdot\|_{\mathcal{J}}) \hookrightarrow (\cA,\|\cdot\|_{\cA})$ is bounded;
in this case, we write
\[
(\mathcal{J},\|\cdot\|_{\mathcal{J}}) \unlhd \cA \; \text{ and } \; C_{\mathcal{J}} \coloneqq \|\iota_{\mathcal{J}}\|_{\mathcal{J} \to \cA} \in [0,\infty).
\]
If in addition $a,b \in \cA$ and $r \in \mathcal{J}$ imply
\[
\|arb\|_{\mathcal{J}} \leq \|a\|_{\cA}\|r\|_{\mathcal{J}}\|b\|_{\cA},
\]
then we call $(\mathcal{J},\|\cdot\|_{\mathcal{J}})$ a \textbf{symmetrically normed ideal} of $\cA$ and write
\[
(\mathcal{J},\|\cdot\|_{\mathcal{J}}) \sni \cA
\]
or $\mathcal{J} \sni \cA$ when confusion is unlikely.
\end{defi}
\begin{rem}
Beware: Definitions of a symmetrically normed ideal vary in the literature.
Sometimes it is required that $C_{\mathcal{J}} = 1$.
Sometimes $\cA$ is required to be a von Neumann or $C^*$-algebra and $\mathcal{J}$ is required to be a $\ast$-ideal with $\|r^*\|_{\mathcal{J}} = \|r\|_{\mathcal{J}}$, for all $r \in \mathcal{J}$.
Sometimes even more requirements are imposed.
We take the above minimal definition because it is all we need.
\end{rem}

We now define two additional properties one can demand of Banach or symmetrically normed ideals of a von Neumann algebra.
Before doing so, however, we make an observation.
Let $(\Sigma,\sH,\rho)$ be a measure space, $(\cI,\|\cdot\|_{\cI}) \unlhd \cM$ be a Banach ideal, and $F \colon \Sigma \to \cI \subseteq \cM$ be weak$^*$ measurable.
By definition,
\[
\underline{\int_{\Sigma}} \|F\| \, d\rho \leq C_{\cI}\underline{\int_{\Sigma}}\|F\|_{\cI} \, d\rho.
\]
In particular, if $\underline{\int_{\Sigma}} \|F\|_{\cI} \, d\rho < \infty$, then Proposition \ref{prop.wstarintexist} says that $F \colon \Sigma \to \cM$ is weak$^*$ integrable.

\begin{defi}[Properties of Banach ideals of $\cM$]\label{def.idealproperties}
Fix $(\cI,\|\cdot\|_{\cI}) \unlhd \cM$.
\begin{enumerate}[label=(\alph*),leftmargin=2\parindent]
    \item $\cI$ has the \textbf{Minkowski integral inequality property} --- or \textbf{property (M)} for short --- if whenever $(\Sigma,\sH,\rho)$ is a measure space and $F \colon \Sigma \to \cI \subseteq \cM$ is weak$^*$ measurable with $\underline{\int_{\Sigma}} \|F\|_{\cI} \, d\rho < \infty$, we have\label{item.Minkowski}
    \[
    \int_{\Sigma} F \, d\rho \in \cI \; \text{ and } \; \Bigg\|\int_{\Sigma} F \, d\rho \Bigg\|_{\cI} \leq \underline{\int_{\Sigma}} \|F\|_{\cI} \, d\rho.\pagebreak
    \]
    \item $\cI$ is \textbf{integral symmetrically normed} if whenever $(\Sigma,\sH,\rho)$ is a measure space, $A,B \colon \Sigma \to \cM$ are weak$^*$ measurable, $A(\cdot)\,c\,B(\cdot) \colon \Sigma \to \cM$ is weak$^*$ measurable whenever $c \hspace{-0.2mm}\in\hspace{-0.2mm} \cM$, and $\underline{\int_{\Sigma}}\|A\|\,\|B\|\,d\rho \hspace{-0.2mm}<\hspace{-0.2mm} \infty$, it follows that 
    \[
    \int_{\Sigma}A(\sigma)\,r\,B(\sigma)\,\rho(d\sigma) \in \cI \; \text{ and } \; \Bigg\|\int_{\Sigma}A(\sigma)\,r\,B(\sigma)\,\rho(d\sigma)\Bigg\|_{\cI} \leq \|r\|_{\cI}\underline{\int_{\Sigma}}\|A\|\,\|B\|\,d\rho,
    \]
    for all $r \in \cI$.\label{item.ISNI}
\end{enumerate}
\end{defi}
\begin{rem}
First, the name for property (M) is inspired by Theorem \ref{thm.Lpinteg}.
However, inequalities like the one required in \ref{item.Minkowski} are called triangle inequalities in the theory of vector-valued integrals.
Therefore, it would also be appropriate to name \ref{item.Minkowski} the ``integral triangle inequality property."
However, this would lead naturally to the abbreviation ``property (T)," which is already decidedly taken.
Second, if $H$ is separable, then one can show that the pointwise product of weak$^*$ measurable maps $\Sigma \to \cM$ is itself weak$^*$ measurable.
In particular, the requirement in \ref{item.ISNI} that ``$A(\cdot)\,c\,B(\cdot) \colon \Sigma \to \cM$ is weak$^*$ measurable whenever $c \in \cM$" is redundant when $H$ is separable.
\end{rem}

By testing the definition on the one-point probability space, we see that an integral symmetrically normed ideal is symmetrically normed.
We also have the following.

\begin{prop}\label{prop.idealproperties}
Let $(\cI,\|\cdot\|_{\cI}) \sni \cM$. If $\cI$ has property (M), then $\cI$ is integral symmetrically normed.
\end{prop}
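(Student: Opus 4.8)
The plan is to deduce the integral symmetrically normed property directly from property (M), with essentially no extra work. Fix $r \in \cI$, a measure space $(\Sigma,\sH,\rho)$, and weak$^*$ measurable maps $A,B \colon \Sigma \to \cM$ as in Definition \ref{def.idealproperties}.\ref{item.ISNI} (so that $A(\cdot)\,c\,B(\cdot) \colon \Sigma \to \cM$ is weak$^*$ measurable for all $c \in \cM$ and $\underline{\int_{\Sigma}}\|A\|\,\|B\|\,d\rho < \infty$). Set $F(\sigma) \coloneqq A(\sigma)\,r\,B(\sigma)$. Since $\cI \unlhd \cM$ and $r \in \cI \subseteq \cM$, we have $F(\sigma) \in \cI$ for every $\sigma$, so $F$ is a map $\Sigma \to \cI \subseteq \cM$; and it is weak$^*$ measurable precisely because of the measurability clause in \ref{item.ISNI}, applied with $c = r$. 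Thus $F$ is a legitimate candidate for property (M), provided we also verify $\underline{\int_{\Sigma}} \|F\|_{\cI}\,d\rho < \infty$.

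To check this I would use the symmetrically normed inequality pointwise: for each $\sigma \in \Sigma$,
\[
\|F(\sigma)\|_{\cI} = \|A(\sigma)\,r\,B(\sigma)\|_{\cI} \leq \|A(\sigma)\|\,\|r\|_{\cI}\,\|B(\sigma)\|,
\]
so $\|F\|_{\cI} \leq \|r\|_{\cI}\,\|A\|\,\|B\|$ as functions $\Sigma \to [0,\infty]$. The lower integral of Definition \ref{def.nonmeasint} is monotone and positively homogeneous (both immediate, since scaling or shrinking a measurable minorant keeps it a measurable minorant), whence
\[
\underline{\int_{\Sigma}} \|F\|_{\cI}\,d\rho \leq \|r\|_{\cI} \underline{\int_{\Sigma}} \|A\|\,\|B\|\,d\rho < \infty.
\]

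With both hypotheses of property (M) in hand, Definition \ref{def.idealproperties}.\ref{item.Minkowski} yields $\int_{\Sigma} F\,d\rho \in \cI$ and $\big\|\int_{\Sigma} F\,d\rho\big\|_{\cI} \leq \underline{\int_{\Sigma}} \|F\|_{\cI}\,d\rho \leq \|r\|_{\cI} \underline{\int_{\Sigma}}\|A\|\,\|B\|\,d\rho$; and since $\int_{\Sigma} F\,d\rho = \int_{\Sigma} A(\sigma)\,r\,B(\sigma)\,\rho(d\sigma)$ by definition of $F$, this is exactly the conclusion of \ref{item.ISNI}. I do not expect a genuine obstacle: the argument is bookkeeping plus one invocation of the symmetrically normed inequality. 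The only two points deserving a moment's care are (i) matching the measurability clauses of the two definitions — the hypothesis ``$A(\cdot)\,c\,B(\cdot)$ weak$^*$ measurable for all $c \in \cM$'' in \ref{item.ISNI} exists precisely to supply weak$^*$ measurability of $F$ in the non-separable setting; and (ii) observing that $F$ is automatically weak$^*$ integrable as an $\cM$-valued map — from $\|F\| \leq \|r\|\,\|A\|\,\|B\| \leq C_{\cI}\|r\|_{\cI}\,\|A\|\,\|B\|$, Proposition \ref{prop.wstarintexist}, and the remark just before Definition \ref{def.idealproperties} — so that $\int_{\Sigma} F\,d\rho$ is meaningful before property (M) is applied.
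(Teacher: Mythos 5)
Your proof is correct and follows the same argument as the paper: set $F \coloneqq A(\cdot)\,r\,B(\cdot)$, bound $\|F\|_{\cI}$ pointwise using the symmetrically normed inequality, verify the lower-integral hypothesis, and invoke property (M). The extra remarks on measurability and weak$^*$ integrability are sound bookkeeping, though the paper leaves them implicit.
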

\begin{proof}
Suppose that $\cI \sni \cM$ has property (M).
Let $A,B \colon \Sigma \to \cM$ be as in \ref{def.idealproperties}\ref{item.ISNI}, and fix $r \in \cI$.
Since $\cI$ is symmetrically normed, $\|A(\sigma)\,r\,B(\sigma)\|_{\cI} \leq \|r\|_{\cI}\|A(\sigma)\|\,\|B(\sigma)\|$ whenever $\sigma \in \Sigma$.
Applying the definition of property (M) to $F \coloneqq A(\cdot)\,r\,B(\cdot)$, we conclude $\int_{\Sigma}A(\sigma)\,r\,B(\sigma)\,\rho(d\sigma) \in \cI$ and
\[
\Bigg\|\int_{\Sigma}A(\sigma)\,r\,B(\sigma)\,\rho(d\sigma) \Bigg\|_{\cI} \leq \underline{\int_{\Sigma}}\|A(\sigma)\,r\,B(\sigma)\|_{\cI} \, \rho(d\sigma) \leq \|r\|_{\cI} \underline{\int_{\Sigma}}\|A\|\,\|B\|\,d\rho.
\]
Thus $\cI$ is integral symmetrically normed.
\end{proof}

\subsection{Examples of ideals I}\label{sec.idealex1}

In this section, we exhibit several examples of ideals with property (M), namely the trivial ideals, the noncommutative $L^p$-ideals, separable ideals, and the ideal of compact operators.

\begin{ex}[Trivial ideals]\label{ex.M}
The trivial symmetrically normed ideals $\cI = \{0\}$ and $\cI = \cM$ both have property (M).
The latter follows, of course, from Proposition \ref{prop.wstarintexist}.
\end{ex}

\begin{ex}[Noncommutative $L^p$ ideals]\label{ex.Lp}
Suppose $\cM$ is semifinite with normal, faithful, semifinite trace $\tau$.
If $1 \leq p < \infty$ and $\mathcal{L}^p(\tau)$ is given the norm $\|\cdot\|_{\mathcal{L}^p(\tau)} \coloneqq \max\{\|\cdot\|_{L^p(\tau)},\|\cdot\|\}$, then $(\mathcal{L}^p(\tau),\|\cdot\|_{\mathcal{L}^p(\tau)}) \sni \cM$ by Noncommutative H\"{o}lder's Inequality (Th\'{e}or\`{e}me 6 in \cite{dixmierLp}) and the completeness of $(L^p(\tau),\|\cdot\|_{L^p(\tau)})$ and $(\cM,\|\cdot\|)$.
If we combine Example \ref{ex.M} with Theorem \ref{thm.Lpinteg}, then we conclude $\mathcal{L}^p(\tau)$ has property (M) and is therefore integral symmetrically normed by Proposition \ref{prop.idealproperties}.
Note that if $(\cM,\tau) = (B(H),\Tr)$, then $(\mathcal{L}^p(\Tr),\|\cdot\|_{\mathcal{L}^p(\Tr)}) = (\cS_p(H),\|\cdot\|_{\cS_p})$ is the ideal of Schatten $p$-class operators on $H$.
\end{ex}

Notice that the ideal of compact operators is left out of the above examples.
To include it in the mix, we first prove that separable ideals have property (M).

\begin{prop}[Separable ideals]\label{prop.sep}
Fix $(\cI,\|\cdot\|_{\cI}) \unlhd \cM$.
If $(\cI,\|\cdot\|_{\cI})$ is separable, then $\cI$ has property (M).
In particular, if $(\cI,\|\cdot\|_{\cI}) \sni \cM$ is separable, then $\cI$ is integral symmetrically normed.
\end{prop}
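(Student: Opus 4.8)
The plan is to realize the weak$^*$ integral of $F$ as a Bochner integral in the Banach space $(\cI,\|\cdot\|_\cI)$: once that identification is available, the membership $\int_\Sigma F\,d\rho \in \cI$ and the norm estimate drop out of elementary properties of the Bochner integral, and the ``in particular'' clause is then immediate from Proposition \ref{prop.idealproperties}. The only substantive point is upgrading the hypothesis that $F$ is weak$^*$ measurable (i.e. scalarly measurable against the functionals $\langle\,\cdot\,h,k\rangle$, which detect only the operator norm) to the statement that $F$ is \emph{strongly} measurable as a $\cI$-valued map.

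\textbf{Step 1: strong $\cI$-measurability of $F$.} I would first note that the functionals $\langle\,\cdot\,h,k\rangle\colon\cM\to\C$ restrict to bounded functionals on $(\cI,\|\cdot\|_\cI)$ (since $\iota_\cI$ is bounded) and separate the points of $\cI$ (they already separate those of $\cM\supseteq\cI$). Because $\cI$ is separable, the open subset $\{(r,s):r\neq s\}$ of the separable metric space $\cI\times\cI$ is Lindel\"{o}f, so from its open cover by the sets $\{(r,s):\langle(r-s)h,k\rangle\neq 0\}$, $(h,k)\in H\times H$, one extracts a \emph{countable} subfamily $\{\langle\,\cdot\,h_n,k_n\rangle|_\cI\}_{n\in\N}$ that still separates the points of $\cI$. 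Then $\Phi\coloneqq(\langle\,\cdot\,h_n,k_n\rangle|_\cI)_{n\in\N}\colon\cI\to\C^{\N}$ is a continuous injection between Polish spaces, hence by the Lusin--Souslin theorem a Borel isomorphism onto its (Borel) image; in particular $\cB_\cI=\sigma(\langle\,\cdot\,h_n,k_n\rangle|_\cI:n\in\N)$. Since $F$ is weak$^*$ measurable, each $\langle F(\cdot)h_n,k_n\rangle$ is $\sH$-measurable, so $\Phi\circ F$, and therefore $F\colon\Sigma\to(\cI,\|\cdot\|_\cI)$, is Borel measurable; as $\cI$ is separable, $F$ is strongly measurable.

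\textbf{Steps 2--3: Bochner integral and identification.} Strong measurability makes $\sigma\mapsto\|F(\sigma)\|_\cI$ genuinely measurable, so $\underline{\int_{\Sigma}}\|F\|_\cI\,d\rho=\int_\Sigma\|F\|_\cI\,d\rho<\infty$ and $F$ is Bochner integrable into $\cI$; I would set $T\coloneqq\int_\Sigma^{\mathrm{Bochner}}F\,d\rho\in\cI$, so that $\|T\|_\cI\leq\int_\Sigma\|F\|_\cI\,d\rho$. By the observation preceding Definition \ref{def.idealproperties}, $F$ is also weak$^*$ integrable, and I claim $T$ (viewed in $\cM$) is its weak$^*$ integral: for $(h_n)_n,(k_n)_n\in H^{\infty}$ the functional $r\mapsto\langle(rh_n)_n,(k_n)_n\rangle_{H^{\infty}}$ is bounded on $(\cI,\|\cdot\|_\cI)$, being dominated by $C_\cI\|r\|_\cI\|(h_n)_n\|_{H^{\infty}}\|(k_n)_n\|_{H^{\infty}}$, hence commutes with the Bochner integral, and the resulting identity is precisely the one defining the weak$^*$ integral (the same argument with $\mathbf 1_S F$ handles general $S\in\sH$). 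By uniqueness, $\int_\Sigma F\,d\rho=T\in\cI$ and $\|\int_\Sigma F\,d\rho\|_\cI\leq\underline{\int_{\Sigma}}\|F\|_\cI\,d\rho$, which is property (M); Proposition \ref{prop.idealproperties} then gives the final assertion.

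\textbf{Main obstacle.} I expect Step 1 to be the only real difficulty. The functionals one gets for free from weak$^*$ measurability are norming for the operator norm but not for $\|\cdot\|_\cI$, so one cannot read off $\cI$-strong measurability of $F$ directly; it is the separability of $\cI$ that rescues the argument, via the descriptive--set--theoretic fact that a countable point-separating family of continuous linear functionals generates the norm Borel $\sigma$-algebra of a separable Banach space. Everything after that is routine Bochner-integral bookkeeping plus a compatibility check between the two notions of integral.
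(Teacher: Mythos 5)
Your proposal is correct and follows essentially the same route as the paper: upgrade weak$^*$ measurability of $F$ to Bochner measurability of $F\colon\Sigma\to(\cI,\|\cdot\|_{\cI})$ using the separability of $\cI$, then identify the Bochner integral with the weak$^*$ integral to get property (M), and finish via Proposition \ref{prop.idealproperties}. The only difference is cosmetic: where the paper cites Proposition~1.10 in Chapter~I of Vakhania--Tarieladze--Chobanyan for the fact that a countable point-separating family of continuous functionals on a separable Banach space generates its Borel $\sigma$-algebra, you prove this from scratch via a Lindel\"{o}f extraction of a countable separating subfamily of $\{\la\cdot\,h,k\ra\}_{h,k\in H}$ plus the Lusin--Souslin theorem; both are valid, and your version is self-contained at the cost of invoking descriptive set theory.
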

\begin{proof}
To prove this, we make use of the basic theory of the Bochner integral;
please see, for instance, Appendix E of \cite{cohn} for the relevant background. 

Let $(\Sigma,\sH,\rho)$ be a measure space, $F \colon \Sigma \to \cI \subseteq \cM$ be weak$^*$ measurable, and $h,k \in H$.
Now, define $\ell_{h,k} \colon \cI \to \C$ by $r \mapsto \la rh,k \ra$.
Since the inclusion $\iota_{\cI} \colon (\cI,\|\cdot\|_{\cI}) \hookrightarrow (\cM,\|\cdot\|)$ is bounded, $\ell_{h,k}$ is a continuous function $\cI \to \C$.
Also, $\ell_{h,k} \circ F = \la F(\cdot)h,k \ra \colon \Sigma \to \C$ is measurable by assumption.
Since the collection $\{\ell_{h,k} : h,k \in H\}$ clearly separates points, we conclude from the (completeness and) separability of $\cI$ and Proposition 1.10 in Chapter I of \cite{vakhania} that $F \colon \Sigma \to (\cI,\|\cdot\|_{\cI})$ is Borel measurable.
Using again the separability of $\cI$, this implies $F \colon \Sigma \to (\cI,\|\cdot\|_{\cI})$ is strongly (or ``Bochner") measurable.
Therefore, if in addition $\underline{\int_{\Sigma}} \|F\|_{\cI} \, d\rho = \int_{\Sigma} \|F\|_{\cI} \,d\rho < \infty$, then $F \colon \Sigma \to (\cI,\|\cdot\|_{\cI})$ is also Bochner integrable, and --- by applying $\ell_{h,k}$ to the Bochner integral --- the Bochner and weak$^*$ integrals of $F$ agree.
Thus $\int_{\Sigma} F \, d\rho \in \cI$ and $\big\|\int_{\Sigma} F \, d\rho\big\|_{\cI} \leq \int_{\Sigma}\|F\|_{\cI} \, d\rho$, by the triangle inequality for Bochner integrals.
This completes the proof.
\end{proof}

In particular, if $H$ is separable, then the ideal $\cK(H) \sni B(H)$ of compact operators $H \to H$ has property (M).
Actually, this also implies the non-separable case by an argument suggested by J. Jeon.

\begin{lem}\label{lem.compact}
For a closed linear subspace $K \subseteq H$, write $\iota_K \colon K \to H$ and $\pi_K \colon H \to K$ for, respectively, the inclusion of and the orthogonal projection onto $K$.
Fix $A \in B(H)$.
Then $A \in \cK(H)$ if and only if $A_K \coloneqq \pi_K A \iota_K \in \cK(K)$, for all closed, separable linear subspaces $K \subseteq H$.
\end{lem}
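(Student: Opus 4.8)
The plan is to prove the two implications separately. The forward direction is immediate from the ideal property of compact operators, and the reverse direction is a routine separability reduction built on the sequential characterization of compactness.

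First I would dispatch ($\Rightarrow$). Suppose $A \in \cK(H)$ and let $K \subseteq H$ be any closed linear subspace. Since $\iota_K \colon K \to H$ and $\pi_K \colon H \to K$ are bounded, and compactness of an operator is preserved under pre- and post-composition with bounded linear maps, the compression $A_K = \pi_K A \iota_K \colon K \to K$ is compact. This step uses nothing about separability of $K$.

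For ($\Leftarrow$), I would invoke the fact that a bounded operator is compact if and only if every bounded sequence in its domain has an image with a norm-convergent subsequence. Let $(h_n)_{n \in \N}$ be a bounded sequence in $H$ and set $K \coloneqq \overline{\spn}\{h_n, Ah_n : n \in \N\}$. Because the linear span of a countable set has a countable dense subset (e.g.\ its $(\Q + i\Q)$-linear combinations), $K$ is a closed \emph{separable} subspace of $H$. By construction $h_n \in K$ and $Ah_n \in K$ for every $n$, so $\iota_K h_n = h_n$ and $\pi_K(Ah_n) = Ah_n$, whence $A_K h_n = \pi_K A \iota_K h_n = \pi_K(Ah_n) = Ah_n$ for all $n$. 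Since $A_K \in \cK(K)$ by hypothesis and $(h_n)_{n \in \N}$ is bounded in $K$, there is a subsequence $(h_{n_j})_{j \in \N}$ for which $(A_K h_{n_j})_{j \in \N} = (A h_{n_j})_{j \in \N}$ converges in $K$, hence in $H$. As $(h_n)_{n\in\N}$ was an arbitrary bounded sequence, $A \in \cK(H)$.

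The only subtlety — and the one point worth handling carefully — is the choice of the separable subspace $K$ in the reverse direction: it must contain not merely the sequence $(h_n)_{n\in\N}$ but also its image $(Ah_n)_{n\in\N}$, so that $A_K$ agrees with $A$ on each $h_n$ and the convergence of the compressed subsequence transfers back to $A$. Everything else is bookkeeping.
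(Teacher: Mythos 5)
Your proof is correct and follows essentially the same route as the paper's: the forward direction via the ideal property of compacts, and the reverse direction by reducing a bounded sequence to a suitably chosen closed separable subspace so that $A_K$ agrees with $A$ on that sequence. The only cosmetic difference is the choice of $K$ — you take $K = \overline{\spn}\{h_n, Ah_n : n\}$ (the minimal choice that works), while the paper takes the $A$-invariant subspace $\overline{\spn}\{A^k h_n : k \in \N_0, n \in \N\}$; both serve the same purpose of guaranteeing $A_K h_n = Ah_n$.
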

\begin{proof}
The ``only if" direction is clear.
For the ``if" direction, suppose that $A_K = \pi_K A \iota_K \in \cK(K)$, for all closed, separable linear subspaces $K \subseteq H$.
If $(h_n)_{n \in \N}$ is a bounded sequence in $H$, then set
\[
K \coloneqq \overline{\spn}\{A^kh_n : k \in \N_0,\,n \in \N\}.
\]
Of course, $K$ is a separable, closed linear subspace of $H$ that contains $\{h_n : n \in \N\}$ and is invariant under $A$.
Since $A_K$ is compact, there is a subsequence $(h_{n_k})_{k \in \N}$ such that $(A_Kh_{n_k})_{k \in \N}$ converges.
But
\[
A_Kh_{n_k} = \pi_KAh_{n_k} = Ah_{n_k},
\]
for all $k \in \N$, since $K$ is $A$-invariant.
We conclude that $A \in \cK(H)$.
\end{proof}

\begin{prop}[Compact operators]\label{prop.compact}
$(\cK(H),\|\cdot\|) \sni \cM$ has property (M).
\end{prop}
\begin{proof}
Let $(\Sigma,\sH,\rho)$ be a measure space and $F \colon \Sigma \to \cK(H) \subseteq B(H)$ be weak$^*$ measurable with
\[
\underline{\int_{\Sigma}}\|F\|\,d\rho < \infty.
\]
Since we already know the triangle inequality for the operator norm, it suffices to prove $\int_{\Sigma} F \, d\rho \in \cK(H)$.
To this end, let $K \subseteq H$ be a closed, separable linear subspace.
Then, in the notation of Lemma \ref{lem.compact}, $F_K = \pi_KF(\cdot) \iota_K \colon \Sigma \to \cK(K) \subseteq B(K)$ is weak$^*$ measurable and
\[
\underline{\int_{\Sigma}}\|F_K\|\,d\rho \leq \underline{\int_{\Sigma}}\|F\|\,d\rho < \infty.
\]
Since $\cK(K)$ is separable, Proposition \ref{prop.sep} gives $\int_{\Sigma} F_K \, d\rho \in \cK(K)$.
Since
\[
\Bigg(\int_{\Sigma} F\,d\rho \Bigg)_K = \pi_K\Bigg(\int_{\Sigma} F\,d\rho \Bigg) \iota_K = \int_{\Sigma} \pi_KF(\sigma)\,\iota_K \, \rho(d\sigma) = \int_{\Sigma} F_K\,d\rho \in \cK(K),
\]
we conclude from Lemma \ref{lem.compact} that $\int_{\Sigma} F \, d\rho \in \cK(H)$.
\end{proof}
\begin{rem}\label{rem.compact}
In case one only wants to know $\cK(H)$ is integral symmetrically normed, there is a different proof available that does not go through the separable case first.
Indeed, let $(\Sigma,\sH,\rho)$ be a measure space and $A,B \colon \Sigma \to B(H)$ be as in \ref{def.idealproperties}\ref{item.ISNI}. To prove the claim, it suffices to show that if $c \in \cK(H)$, then $\int_{\Sigma}A(\sigma)\,c\,B(\sigma)\,\rho(d\sigma) \in \cK(H)$.
First, suppose $c$ has finite rank.
Then $c \in \cS_1(H)$.
Since $(\cS_1(H),\|\cdot\|_{\cS_1}) \unlhd B(H)$ is integral symmetrically normed, $\int_{\Sigma} A(\sigma)\,c\,B(\sigma) \,\rho(d\sigma) \in \cS_1(H) \subseteq \cK(H)$.
Now, if $c \in \cK(H)$ is arbitrary, then --- using, for instance, the singular value decomposition --- there is a sequence $(c_n)_{n \in \N}$ of finite-rank linear operators $H \to H$ such that $\|c_n - c\| \to 0$ as $n \to \infty$.
But then, by the operator norm triangle inequality, $\int_{\Sigma} A(\sigma)\,c_n\,B(\sigma)\,\rho(d\sigma) \to \int_{\Sigma} A(\sigma)\,c\,B(\sigma) \,\rho(d\sigma)$ in the operator norm topology as $n \to \infty$.
Since this exhibits $\int_{\Sigma}A(\sigma)\,c\,B(\sigma)\,\rho(d\sigma)$ as an operator norm limit of compact operators, we conclude it is compact, as desired.
\end{rem}

\subsection{Examples of ideals II}\label{sec.idealex2}

At this point, we shall make heavy use of the theory reviewed in Section \ref{sec.symmopsp}.
For the duration of this section, suppose $(\cM,\tau)$ is a semifinite von Neumann algebra.
To begin, we note that if $(E,\|\cdot\|_E)$ is a symmetric space of $\tau$-measurable operators, then
\[
(\mathcal{E},\|\cdot\|_{\mathcal{E}}) \coloneqq (E \cap \cM , \|\cdot\|_{E \cap \cM}) = (E \cap \cM , \max\{\|\cdot\|_E,\|\cdot\|\})  \sni \cM.
\]
This follows from Proposition 17 in \cite{doddsNKS}.
We call $\mathcal{E}$ the \textbf{ideal induced by $\boldsymbol{E}$}.
In this section, we prove that ideals induced by fully symmetric spaces are integral symmetrically normed and that ideals induced by symmetric spaces with the Fatou property have property (M).

\begin{thm}[Fully symmetric $\Rightarrow$ integral symmetrically normed]\label{thm.FSISN}
If $(E,\|\cdot\|_E)$ is a fully symmetric space, then $(\mathcal{E},\|\cdot\|_{\mathcal{E}}) \coloneqq (E \cap \cM , \|\cdot\|_{E \cap \cM}) \sni \cM$ is integral symmetrically normed.
\end{thm}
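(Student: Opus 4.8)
The plan is to verify Definition \ref{def.idealproperties}.\ref{item.ISNI} directly. Fix a measure space $(\Sigma,\sH,\rho)$, weak$^*$ measurable maps $A,B\colon\Sigma\to\cM$ such that $A(\cdot)\,c\,B(\cdot)$ is weak$^*$ measurable for every $c\in\cM$ and $C:=\underline{\int_\Sigma}\|A\|\,\|B\|\,d\rho<\infty$, and $r\in\mathcal{E}$; set $x:=\int_\Sigma A(\sigma)\,r\,B(\sigma)\,\rho(d\sigma)$. Since $\|\cdot\|_{\mathcal{E}}=\max\{\|\cdot\|_E,\|\cdot\|\}$, it suffices to prove the two estimates ``$x\in\cM$ with $\|x\|\le C\|r\|$'' and ``$x\in E$ with $\|x\|_E\le C\|r\|_E$''; combining them gives $x\in E\cap\cM=\mathcal{E}$ and $\|x\|_{\mathcal{E}}\le C\|r\|_{\mathcal{E}}=\|r\|_{\mathcal{E}}\,\underline{\int_\Sigma}\|A\|\,\|B\|\,d\rho$, which is the claim. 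The first estimate is immediate: $A(\cdot)\,r\,B(\cdot)$ is weak$^*$ measurable by hypothesis and $\underline{\int_\Sigma}\|A\,r\,B\|\,d\rho\le\|r\|\,C<\infty$, so Proposition \ref{prop.wstarintexist} yields $x\in\cM$ and $\|x\|\le C\|r\|$.

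For the second estimate I would prove the submajorization $x\pprec C\,r$ and then invoke full symmetry: since $r\in\mathcal{E}\subseteq E$ we have $C\,r\in E$ with $\|C\,r\|_E=C\|r\|_E$, and $x\in\cM\subseteq S(\tau)$, so Definition \ref{def.symmopsp}.\ref{item.fullysymm} applied to $x\pprec C\,r$ forces $x\in E$ and $\|x\|_E\le\|C\,r\|_E=C\|r\|_E$. To obtain $x\pprec C\,r$, i.e.\ $\int_0^t\mu_s(x)\,ds\le C\int_0^t\mu_s(r)\,ds$ for all $t>0$, I would use the Fack--Kosaki variational formula (\cite{fackkosaki})
\[
\int_0^t\mu_s(x)\,ds=\sup\big\{\tau(|x|\,y)\,:\,y\in\cM,\ 0\le y\le 1_{\cM},\ \tau(y)\le t\big\}.
\]
Fix such a $y$ and write $x=u|x|$ for the polar decomposition, so $u\in\cM$ and $u^*x=|x|$; then $\tau(|x|\,y)=\tau(u^*xy)=\tau\big((y\,u^*)\,x\big)$ by the trace property. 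As $\tau(y)\le t<\infty$ we have $y\in L^1(\tau)$, hence $y\,u^*\in L^1(\tau)\cong\cM_*$, so the functional $z\mapsto\tau\big((y\,u^*)\,z\big)$ is $\sigma$-weakly continuous and may be moved through the weak$^*$ integral (by the remark following Definition \ref{def.wstarmeasint}):
\[
\tau(|x|\,y)=\int_\Sigma\tau\big(u^*\,A(\sigma)\,r\,B(\sigma)\,y\big)\,\rho(d\sigma)=\int_\Sigma\tau\big(\tilde{y}_\sigma\,r\big)\,\rho(d\sigma),\qquad\tilde{y}_\sigma:=B(\sigma)\,y\,u^*\,A(\sigma).
\]
Because $\cM$ and $L^1(\tau)$ are symmetrically normed $\cM$-bimodules and $0\le y\le 1_{\cM}$, $\tau(y)\le t$, one has $\|\tilde{y}_\sigma\|\le\|A(\sigma)\|\,\|B(\sigma)\|$ and $\|\tilde{y}_\sigma\|_{L^1(\tau)}\le t\,\|A(\sigma)\|\,\|B(\sigma)\|$; the inequality $|\tau(ab)|\le\int_0^\infty\mu_s(a)\,\mu_s(b)\,ds$ together with Hardy's rearrangement lemma then gives the pointwise bound $|\tau(\tilde{y}_\sigma\,r)|\le\|A(\sigma)\|\,\|B(\sigma)\|\int_0^t\mu_s(r)\,ds$. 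The function $\sigma\mapsto|\tau(\tilde{y}_\sigma\,r)|$ is measurable (it equals $|\varphi(A(\cdot)\,r\,B(\cdot))|$ for $\varphi:=\tau((y\,u^*)\,\cdot)\in\cM_*$, and $A(\cdot)\,r\,B(\cdot)$ is $\sigma(\cM,\cM_*)$-measurable), so by monotonicity and positive homogeneity of the lower integral
\[
\tau(|x|\,y)\le\int_\Sigma|\tau(\tilde{y}_\sigma\,r)|\,\rho(d\sigma)\le\Big(\int_0^t\mu_s(r)\,ds\Big)\,\underline{\int_\Sigma}\|A\|\,\|B\|\,d\rho=C\int_0^t\mu_s(r)\,ds.
\]
Taking the supremum over $y$ gives $\int_0^t\mu_s(x)\,ds\le C\int_0^t\mu_s(r)\,ds$ for each $t>0$, hence $x\pprec C\,r$, and the proof is complete.

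The main obstacle is the chain of identifications and estimates in the middle step: correctly invoking $L^1(\tau)\cong\cM_*$ to push a $\sigma$-weakly continuous functional through a weak$^*$ integral, keeping careful track of which singular-value and rearrangement inequalities are in play, and --- since $H$ is not assumed separable --- routing everything through \emph{lower} integrals to accommodate the possible non-measurability of $\sigma\mapsto\|A(\sigma)\|\,\|B(\sigma)\|$. An alternative to the submajorization step would be to first establish a ``noncommutative Minkowski integral inequality'' for the fully symmetric operator space $(L^1+L^\infty)(\tau)=L^1(\tau)+\cM$ --- an $L^1+L^\infty$ analogue of Theorem \ref{thm.Lpinteg} --- and then combine it with the elementary bound $\mu_s(A(\sigma)\,r\,B(\sigma))\le\|A(\sigma)\|\,\|B(\sigma)\|\,\mu_s(r)$ and full symmetry of $E$; this is cleaner to state but merely relocates the work into proving that Minkowski-type inequality.
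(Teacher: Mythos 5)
Your proof is correct, and it takes a genuinely different route from the paper's. Both arguments ultimately reduce to the submajorization $x \pprec C\,r$, where $x := \int_\Sigma A(\sigma)\,r\,B(\sigma)\,\rho(d\sigma)$ and $C := \underline{\int_\Sigma}\|A\|\,\|B\|\,d\rho$, followed by an appeal to the full symmetry of $E$; the difference lies in how the submajorization is obtained. The paper assembles a single linear operator $T$ on $L^1(\tau)+\cM$ -- agreeing with $T_\infty\colon c \mapsto \int A\,c\,B\,d\rho$ on $\cM$ and, after extension by density from $L^1(\tau)\cap\cM$ using Theorem \ref{thm.Lpinteg}, with $T_1$ on $L^1(\tau)$ -- shows $\|T\|_{L^1(\tau)+\cM \to L^1(\tau)+\cM} \le C$, and then cites Proposition 4.1 of \cite{doddsNKD}, a noncommutative Calder\'{o}n--Mityagin-type submajorization theorem, to conclude $Tc \pprec Cc$. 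You instead unpack that cited result in the special case at hand: via the Fack--Kosaki variational formula for $\int_0^t \mu_s$, the polar decomposition $x=u|x|$, the $\sigma$-weak continuity of the predual functional $\tau((yu^*)\,\cdot\,)$ and its compatibility with the weak$^*$ integral, and the estimate $|\tau(ab)|\le\int_0^\infty \mu_s(a)\mu_s(b)\,ds$ combined with Hardy's rearrangement lemma applied to $\mu(\tilde y_\sigma)\pprec \|A(\sigma)\|\,\|B(\sigma)\|\,1_{[0,t]}$, you establish the submajorization directly. This avoids the detour of extending $T_\infty$ to $L^1(\tau)$ and gluing, at the cost of more hands-on bookkeeping; the paper's route is shorter and more modular because the interpolation step is delegated to a citable theorem. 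Your handling of the possibly non-measurable map $\sigma\mapsto\|A(\sigma)\|\,\|B(\sigma)\|$ through lower integrals is correct and matches the paper's mechanism. The ``cleaner alternative'' you sketch at the end -- bounding the integral in $L^1+\cM$ and appealing to the pointwise estimate $\mu_s(A\,r\,B)\le\|A\|\,\|B\|\,\mu_s(r)$ -- is in substance what the paper actually does, with the $L^1$ and $\cM$ bounds obtained separately and the submajorization supplied by the cited theorem.
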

\begin{proof}
Let $(\Sigma,\sH,\rho)$ be a measure space and $A,B \colon \Sigma \to \cM$ be as in \ref{def.idealproperties}\ref{item.ISNI}.
Define $T_{\infty} \colon \cM \to \cM$ by $\cM \ni c \mapsto \int_{\Sigma} A(\sigma)\,c\,B(\sigma) \,\rho(d\sigma) \in \cM$.
Then $\|T_{\infty}\|_{\cM \to \cM} \leq \underline{\int_{\Sigma}}\|A\|\,\|B\|\,d\rho$ by the operator norm triangle inequality.
Also, if $c \in L^1(\tau) \cap \cM$, then
\[
\|T_{\infty}c\|_{L^1(\tau)} \leq \underline{\int_{\Sigma}}\|A(\sigma)\,c\,B(\sigma)\|_{L^1(\tau)} \, \rho(d\sigma) \leq \|c\|_{L^1(\tau)} \underline{\int_{\Sigma}}\|A\|\,\|B\|\,d\rho
\]
by Theorem \ref{thm.Lpinteg}.
Since $L^1(\tau) \cap \cM$ is dense in $L^1(\tau)$ (Proposition 2.8 in \cite{doddsNKD}), we get that $T_{\infty}|_{L^1(\tau) \cap \cM}$ extends uniquely to a bounded linear map $T_1 \colon L^1(\tau) \to L^1(\tau)$ with $\|T_1\|_{L^1(\tau)\to L^1(\tau)} \leq \underline{\int_{\Sigma}}\|A\|\,\|B\|\,d\rho$.
Since $T_{\infty}$ and $T_1$ agree on $L^1(\tau) \cap \cM$, we obtain a well-defined linear map $T \colon L^1(\tau) + \cM \to L^1(\tau) + \cM$ by setting $T(x+y) \coloneqq T_1x+T_{\infty}y$ for $x \in L^1(\tau)$ and $y \in \cM$.
Moreover,
\[
\|T\|_{L^1(\tau)+\cM \to L^1(\tau)+\cM} \leq \max\{\|T_1\|_{L^1(\tau)\to L^1(\tau)},\|T_{\infty}\|_{\cM \to \cM}\} \leq \underline{\int_{\Sigma}}\|A\|\,\|B\|\,d\rho.
\]
By Proposition 4.1 in \cite{doddsNKD}, this implies
\[
Tc \pprec \Bigg(\underline{\int_{\Sigma}}\|A\|\,\|B\|\,d\rho\Bigg)c, \, \text{ for all } c \in L^1(\tau)+\cM.
\]
In particular, if $c \in E \subseteq L^1(\tau)+\cM$, then
\[
Tc \in E \; \text{ and } \; \|Tc\|_E \leq \|c\|_E\underline{\int_{\Sigma}}\|A\|\,\|B\|\,d\rho
\]
because $E$ is fully symmetric;
in other words, $T$ restricts to a bounded linear map $T_E \colon E \to E$ with $\|T_E\|_{E \to E} \leq \underline{\int_{\Sigma}}\|A\|\,\|B\|\,d\rho$.
We conclude that if $c \in \mathcal{E} = E \cap \cM$, then
\[
\int_{\Sigma} A(\sigma) \, c \, B(\sigma) \,\rho(d\sigma) = T_{\infty}c = T_Ec \in \mathcal{E} \; \text{ and } \; \Bigg\|\int_{\Sigma} A(\sigma) \, c \, B(\sigma) \, \rho(d\sigma) \Bigg\|_{\mathcal{E}} \leq \|c\|_{\mathcal{E}}\underline{\int_{\Sigma}}\|A\|\,\|B\|\,d\rho.
\]
Thus $\mathcal{E}$ is integral symmetrically normed.
\end{proof}
\begin{rem}
The argument above is inspired in part by Section 4.4 of \cite{doddssub2}.
\end{rem}

The second main result of this section upgrades Theorem \ref{thm.FSISN} when the symmetric space in question is a K\"{o}the dual.
(It also generalizes Theorem \ref{thm.Lpinteg}.)

\begin{thm}[K\"{o}the duals and property (M)]\label{thm.KothePropM}
Let $(E,\|\cdot\|_E)$ be a strongly symmetric space with $c_E = 1$.
If $(\Sigma,\sH,\rho)$ is a measure space and $F \colon \Sigma \to \cM$ is weak$^*$ integrable, then
\[
\Bigg\|\int_{\Sigma} F\,d\rho \Bigg\|_{E^{\times}} \leq \underline{\int_{\Sigma}} \|F\|_{E^{\times}} \,d\rho.
\]
In particular, $(\mathcal{E}^{\times},\|\cdot\|_{\mathcal{E}^{\times}}) \coloneqq (E^{\times} \cap \cM,\|\cdot\|_{E^{\times}\cap \cM}) \sni \cM$ has property (M).
\end{thm}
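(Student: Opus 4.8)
The plan is to push the inequality down to the trace-norm level using K\"othe duality. Write $T \coloneqq \int_\Sigma F\,d\rho \in \cM$. Since $c_E = 1$, Fact~\ref{fact.Kothedual} gives
\[
\|T\|_{E^\times} = \sup\big\{\tau(|Tb|) : b \in \mathcal{L}^1(\tau),\ \|b\|_E \leq 1\big\},
\]
so it suffices to bound $\tau(|Tb|)$, for a fixed such $b$, by $\underline{\int_\Sigma}\|F\|_{E^\times}\,d\rho$. The crucial step is the interchange of $\tau$ with the weak$^*$ integral. For any $c \in \mathcal{L}^1(\tau)$, the functional $\phi_c \colon \cM \to \C$, $\phi_c(a) \coloneqq \tau(ac)$, lies in the predual $\cM_*$: writing $c$ as a linear combination of four elements of $\mathcal{L}^1(\tau)_+$ reduces this to $c \geq 0$, where $a \mapsto \tau(c^{1/2}ac^{1/2}) = \tau(ac)$ is a positive normal functional, finite because its value at $\id_H$ is $\tau(c) < \infty$. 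As $F$ is weak$^*$ integrable --- equivalently, by the remark following Definition~\ref{def.wstarmeasint}, weakly integrable for $\sigma(\cM,\cM_*)$ --- this forces $\phi_c \circ F \in L^1(\rho)$ and
\[
\tau(Tc) = \phi_c(T) = \int_\Sigma \phi_c(F(\sigma))\,\rho(d\sigma) = \int_\Sigma \tau(F(\sigma)\,c)\,\rho(d\sigma).
\]

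Next I would fix $b \in \mathcal{L}^1(\tau)$ with $\|b\|_E \leq 1$ and take the polar decomposition $Tb = w\,|Tb|$ inside $\cM$, so that $w \in \cM$ is a partial isometry with $|Tb| = w^*Tb$. By the trace property, $\tau(|Tb|) = \tau(w^*Tb) = \tau\big(T(bw^*)\big)$; put $c \coloneqq bw^* \in \mathcal{L}^1(\tau)$. Since $\mu(c) \leq \mu(b)$ and $\mathcal{L}^1(\tau) \subseteq E$ (the latter because $c_E = 1$), we get $c \in E$ with $\|c\|_E \leq \|b\|_E \leq 1$. For every $\sigma \in \Sigma$ the number $\tau(F(\sigma)\,c)$ is finite (as $F(\sigma) \in \cM$, $c \in \mathcal{L}^1(\tau)$), and by the definition of $\|\cdot\|_{E^\times}$,
\[
|\tau(F(\sigma)\,c)| \leq \tau(|F(\sigma)\,c|) \leq \|F(\sigma)\|_{E^\times}\,\|c\|_E \leq \|F(\sigma)\|_{E^\times},
\]
where the middle two bounds are vacuous when $\|F(\sigma)\|_{E^\times} = \infty$. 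Thus $\sigma \mapsto |\tau(F(\sigma)\,c)|$ is a measurable function dominated pointwise by $\|F(\cdot)\|_{E^\times}$, so by the displayed identity, the scalar triangle inequality, and the definition of the lower integral,
\[
\tau(|Tb|) = \Bigg|\int_\Sigma \tau(F(\sigma)\,c)\,\rho(d\sigma)\Bigg| \leq \int_\Sigma |\tau(F(\sigma)\,c)|\,\rho(d\sigma) \leq \underline{\int_\Sigma}\|F\|_{E^\times}\,d\rho.
\]
Taking the supremum over $b$ gives $\big\|\int_\Sigma F\,d\rho\big\|_{E^\times} \leq \underline{\int_\Sigma}\|F\|_{E^\times}\,d\rho$.

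For the ``in particular'' clause: $E^\times$ is (fully) symmetric by Fact~\ref{fact.Kothedual}, so $(\mathcal{E}^\times,\|\cdot\|_{\mathcal{E}^\times}) = (E^\times \cap \cM,\max\{\|\cdot\|_{E^\times},\|\cdot\|\}) \sni \cM$ by the discussion preceding Theorem~\ref{thm.FSISN}. To verify property (M) (Definition~\ref{def.idealproperties}.\ref{item.Minkowski}), let $F \colon \Sigma \to \mathcal{E}^\times \subseteq \cM$ be weak$^*$ measurable with $\underline{\int_\Sigma}\|F\|_{\mathcal{E}^\times}\,d\rho < \infty$; then $\underline{\int_\Sigma}\|F\|\,d\rho < \infty$, so Proposition~\ref{prop.wstarintexist} makes $F$ weak$^*$ integrable, and the inequality just proved yields $\big\|\int_\Sigma F\,d\rho\big\|_{E^\times} \leq \underline{\int_\Sigma}\|F\|_{E^\times}\,d\rho < \infty$, whence $\int_\Sigma F\,d\rho \in E^\times \cap \cM = \mathcal{E}^\times$ by Fact~\ref{fact.Kothedual}; the bound on $\|\cdot\|_{\mathcal{E}^\times} = \max\{\|\cdot\|_{E^\times},\|\cdot\|\}$ then follows from the $E^\times$- and operator-norm triangle inequalities together with monotonicity of the lower integral. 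The one place I expect real work is the interchange $\tau(Tc) = \int_\Sigma \tau(F(\sigma)\,c)\,\rho(d\sigma)$ --- i.e., confirming $\phi_c \in \cM_*$ and that weak$^*$ integrability of $F$ is precisely what is needed for the Pettis-type identity $\phi_c\big(\int_\Sigma F\,d\rho\big) = \int_\Sigma \phi_c\circ F\,d\rho$; everything afterwards is the standard polar-decomposition manipulation of the trace norm combined with the K\"othe-duality description of $\|\cdot\|_{E^\times}$.
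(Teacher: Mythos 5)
Your proof is correct, and it takes a genuinely different route than the paper's. The paper sets $a = \int_\Sigma F\,d\rho$, writes $\tau(|ab|) = \|ab\|_{L^1(\tau)}$, pulls the fixed operator $b$ inside the weak$^*$ integral so that $ab = \int_\Sigma F(\sigma)\,b\,\rho(d\sigma)$ (a basic algebraic property of the weak$^*$ integral), and then quotes Theorem~\ref{thm.Lpinteg} (the Noncommutative Minkowski Inequality for integrals, $p=1$ case) to bound $\|\int_\Sigma F(\sigma)\,b\,\rho(d\sigma)\|_{L^1(\tau)}$ by $\underline{\int_\Sigma}\|F(\sigma)\,b\|_{L^1(\tau)}\,\rho(d\sigma)$, after which the K\"othe-dual bound $\tau(|F(\sigma)b|) \le \|F(\sigma)\|_{E^\times}$ finishes the job. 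You bypass Theorem~\ref{thm.Lpinteg} entirely and instead prove the needed $L^1$-triangle inequality from scratch: you show $\phi_c = \tau(\,\cdot\,c)$ is a normal functional in $\cM_*$ for $c \in \mathcal{L}^1(\tau)$, invoke the Pettis-type identity $\phi_c\bigl(\int_\Sigma F\,d\rho\bigr) = \int_\Sigma \phi_c\circ F\,d\rho$ (which is exactly what the remark after Definition~\ref{def.wstarmeasint} guarantees), and then remove the absolute value on $|Tb|$ via the polar decomposition $Tb = w|Tb|$, absorbing the partial isometry into $c = bw^*$ and observing $\mu(c) \le \mu(b)$. Both proofs rest on the same K\"othe-duality description of $\|\cdot\|_{E^\times}$ and the same pointwise estimate; the trade-off is that the paper's version is shorter because it delegates the integral triangle inequality to the (externally proved) Theorem~\ref{thm.Lpinteg}, while yours is more self-contained, in effect re-deriving the $p=1$ case of that theorem in the course of the argument. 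One small presentational note: the paper's step $ab = \int_\Sigma F(\sigma)\,b\,\rho(d\sigma)$ and your step $\tau(Tc) = \int_\Sigma \tau(F(\sigma)c)\,\rho(d\sigma)$ encode the same content --- commuting a fixed predual/multiplier against the weak$^*$ integral --- just at the operator level versus the scalar level.
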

\begin{proof}
Let $a \coloneqq \int_{\Sigma} F \,d\rho \in \cM$.
By Fact \ref{fact.Kothedual} (twice) and Theorem \ref{thm.Lpinteg}, we have\vspace{-0.4mm}
\begin{align*}
    \Bigg\|\int_{\Sigma} F\,d\rho\Bigg\|_{E^{\times}} & = \|a\|_{E^{\times}} = \sup\{\tau(|ab|) : b \in \mathcal{L}^1(\tau), \; \|b\|_E \leq 1\} \\[-0.4mm]
    & = \sup\Bigg\{\Bigg\|\int_{\Sigma} F(\sigma)\,b\,\rho(d\sigma) \Bigg\|_{L^1(\tau)} : b \in \mathcal{L}^1(\tau), \; \|b\|_E \leq 1\Bigg\} \\[-0.4mm]
    & \leq \sup\Bigg\{\underline{\int_{\Sigma}} \|F(\sigma)\,b\|_{L^1(\tau)}\,\rho(d\sigma) : b \in \mathcal{L}^1(\tau), \; \|b\|_E \leq 1\Bigg\} \leq \underline{\int_{\Sigma}}\|F\|_{E^{\times}}\,d\rho,\vspace{-0.4mm}
\end{align*}
as desired.
\end{proof}

\begin{cor}\label{cor.KbidualMink}
Let $(E,\|\cdot\|_E)$ be a strongly symmetric space with $c_E = 1$.
If $(\Sigma,\sH,\rho)$ is a measure space, $F \colon \Sigma \to \cM$ is weak$^*$ integrable, and $F(\Sigma) \subseteq E \cap \cM$, then\vspace{-0.4mm}
\[
\Bigg\|\int_{\Sigma} F\,d\rho \Bigg\|_{E^{\times \times}} \leq \underline{\int_{\Sigma}} \|F\|_E \,d\rho.\vspace{-0.4mm}
\]
In particular, by Fact \ref{fact.Kothedual}, if the right hand side is finite, then $\int_{\Sigma} F \,d\rho \in E^{\times\times}$.
\end{cor}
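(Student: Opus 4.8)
The plan is to deduce this corollary from Theorem \ref{thm.KothePropM} by applying that theorem to the K\"othe dual $E^{\times}$ in place of $E$. By Fact \ref{fact.Kothedual}, $(E^{\times},\|\cdot\|_{E^{\times}})$ is a fully symmetric space with $c_{E^{\times}} = 1$; since a fully symmetric space is in particular strongly symmetric (take $a,b \in E^{\times}$ in Definition \ref{def.symmopsp}.\ref{item.fullysymm}), $E^{\times}$ satisfies exactly the hypotheses imposed on $E$ in Theorem \ref{thm.KothePropM}. Applying that theorem with $E^{\times}$ in the role of $E$ therefore gives, for any weak$^*$ integrable $F \colon \Sigma \to \cM$,
\[
\Bigg\|\int_{\Sigma} F\,d\rho \Bigg\|_{E^{\times\times}} \leq \underline{\int_{\Sigma}} \|F\|_{E^{\times\times}}\,d\rho.
\]

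Next I would bound the right-hand side using the general inclusion $E \subseteq E^{\times\times}$ with $\|\cdot\|_{E^{\times\times}} \leq \|\cdot\|_E$ on $E$, recalled in Section \ref{sec.symmopsp}. Since $F(\Sigma) \subseteq E \cap \cM$, we have $\|F(\sigma)\|_{E^{\times\times}} \leq \|F(\sigma)\|_E$ for every $\sigma \in \Sigma$, and hence $\underline{\int_{\Sigma}} \|F\|_{E^{\times\times}}\,d\rho \leq \underline{\int_{\Sigma}} \|F\|_E\,d\rho$ by monotonicity of the lower integral in its integrand. Chaining the two inequalities yields the displayed estimate. For the ``in particular" clause, if $\underline{\int_{\Sigma}} \|F\|_E\,d\rho < \infty$ then $\big\|\int_{\Sigma} F\,d\rho\big\|_{E^{\times\times}} < \infty$, so Fact \ref{fact.Kothedual} (applied once more to $E^{\times}$, whence membership in $E^{\times\times}$ is detected by finiteness of the $E^{\times\times}$-norm) gives $\int_{\Sigma} F\,d\rho \in E^{\times\times}$.

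I do not expect a real obstacle here: all the substance is already packaged in Theorem \ref{thm.KothePropM} and in the structural facts about K\"othe duals from Section \ref{sec.symmopsp}. The only points needing a moment's care are (i) verifying that $E^{\times}$ meets the hypotheses of Theorem \ref{thm.KothePropM}, which is immediate from Fact \ref{fact.Kothedual}, and (ii) the elementary monotonicity $h_1 \leq h_2 \Rightarrow \underline{\int_{\Sigma}} h_1\,d\rho \leq \underline{\int_{\Sigma}} h_2\,d\rho$, which is clear from Definition \ref{def.nonmeasint} since enlarging the integrand only enlarges the family of measurable minorants over which the supremum is taken.
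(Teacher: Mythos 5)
Your proof is correct and follows essentially the same route as the paper: apply Theorem \ref{thm.KothePropM} with $E^{\times}$ in place of $E$ to obtain the $E^{\times\times}$-norm estimate, then use $\|\cdot\|_{E^{\times\times}} \leq \|\cdot\|_E$ on $E$ together with monotonicity of the lower integral. You simply spell out two small verifications (that $E^{\times}$ meets the hypotheses of Theorem \ref{thm.KothePropM} via Fact \ref{fact.Kothedual}, and monotonicity of $\underline{\int_\Sigma}$) that the paper leaves implicit.
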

\begin{proof}
Applying Theorem \ref{thm.KothePropM} to the space $E^{\times\times} = (E^{\times})^{\times}$ and using that $\|\cdot\|_{E^{\times\times}} \leq \|\cdot\|_E$ on $E$, we get\vspace{-0.4mm}
\[
\Bigg\|\int_{\Sigma} F\,d\rho \Bigg\|_{E^{\times \times}} \leq \underline{\int_{\Sigma}} \|F\|_{E^{\times\times}}\,d\rho \leq \underline{\int_{\Sigma}} \|F\|_E \,d\rho,\vspace{-0.4mm}
\]
as desired.
\end{proof}
\begin{rem}
Please see equation (0.5) in Section II.0.3 of \cite{kreininterp} for a classical analog of this Minkowski-type integral inequality. 
\end{rem}

Combining the Noncommutative Lorentz--Luxemburg Theorem with Corollary \ref{cor.KbidualMink}, we get the following.

\begin{thm}[Fatou property $\Rightarrow$ property (M)]\label{thm.KRpropM}
Let $(E,\|\cdot\|_E)$ be a strongly symmetric space with $c_E = 1$.
Suppose $(\Sigma,\sH,\rho)$ is a measure space, $F \colon \Sigma \to \cM$ is weak$^*$ integrable, and $F(\Sigma) \subseteq E \cap \cM$.
If $E$ has the Fatou property and $\underline{\int_{\Sigma}} \|F\|_E \,d\rho < \infty$, then\vspace{-0.4mm}
\[
\int_{\Sigma} F \,d\rho \in E \; \text{ and } \; \Bigg\|\int_{\Sigma} F\,d\rho \Bigg\|_E \leq \underline{\int_{\Sigma}} \|F\|_E \,d\rho.\vspace{-0.4mm}
\]
In particular, $(\mathcal{E},\|\cdot\|_{\mathcal{E}}) \coloneqq (E \cap \cM,\|\cdot\|_{E \cap \cM}) \sni \cM$ has property (M).
\end{thm}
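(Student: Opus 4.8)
I would simply chain Corollary \ref{cor.KbidualMink} with the Noncommutative Lorentz--Luxemburg Theorem (Theorem \ref{thm.ncll}), and then read off property (M) for $\mathcal{E}$ by a routine norm comparison.

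For the displayed inequality, the key observation is that since $E$ has the Fatou property and $c_E = 1$, Theorem \ref{thm.ncll} tells us $E$ is K\"othe reflexive, i.e.\ $E = E^{\times\times}$ with $\|\cdot\|_E = \|\cdot\|_{E^{\times\times}}$ on $E$. Meanwhile, $F \colon \Sigma \to \cM$ is weak$^*$ integrable with $F(\Sigma) \subseteq E \cap \cM$ and $\underline{\int_{\Sigma}}\|F\|_E\,d\rho < \infty$, so Corollary \ref{cor.KbidualMink} applies and yields $\int_{\Sigma} F\,d\rho \in E^{\times\times}$ together with $\big\|\int_{\Sigma} F\,d\rho\big\|_{E^{\times\times}} \leq \underline{\int_{\Sigma}}\|F\|_E\,d\rho$. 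Translating both of these through the identity $E = E^{\times\times}$ (with equality of norms) gives $\int_{\Sigma} F\,d\rho \in E$ and $\big\|\int_{\Sigma} F\,d\rho\big\|_E \leq \underline{\int_{\Sigma}}\|F\|_E\,d\rho$, which is exactly the claim.

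For the ``in particular'' assertion, recall that $\mathcal{E} = E \cap \cM$ carries the norm $\|\cdot\|_{\mathcal{E}} = \max\{\|\cdot\|_E,\|\cdot\|\}$ and is a symmetrically normed ideal of $\cM$, as noted at the start of Section \ref{sec.idealex2}. Let $(\Sigma,\sH,\rho)$ be a measure space and $F \colon \Sigma \to \mathcal{E} \subseteq \cM$ be weak$^*$ measurable with $\underline{\int_{\Sigma}}\|F\|_{\mathcal{E}}\,d\rho < \infty$. Since $\|\cdot\| \leq \|\cdot\|_{\mathcal{E}}$ and $\|\cdot\|_E \leq \|\cdot\|_{\mathcal{E}}$ pointwise on $\mathcal{E}$, monotonicity of the lower integral gives both $\underline{\int_{\Sigma}}\|F\|\,d\rho \leq \underline{\int_{\Sigma}}\|F\|_{\mathcal{E}}\,d\rho < \infty$ and $\underline{\int_{\Sigma}}\|F\|_E\,d\rho \leq \underline{\int_{\Sigma}}\|F\|_{\mathcal{E}}\,d\rho < \infty$. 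The first bound, via Proposition \ref{prop.wstarintexist}, shows $F$ is weak$^*$ integrable with $\big\|\int_{\Sigma} F\,d\rho\big\| \leq \underline{\int_{\Sigma}}\|F\|\,d\rho$; the second bound, together with $F(\Sigma) \subseteq E \cap \cM$, means the hypotheses of the first part of the theorem are all met, so $\int_{\Sigma} F\,d\rho \in E$ with $\big\|\int_{\Sigma} F\,d\rho\big\|_E \leq \underline{\int_{\Sigma}}\|F\|_E\,d\rho$. Combining, $\int_{\Sigma} F\,d\rho \in E \cap \cM = \mathcal{E}$ and $\big\|\int_{\Sigma} F\,d\rho\big\|_{\mathcal{E}} = \max\big\{\big\|\int_{\Sigma} F\,d\rho\big\|_E, \big\|\int_{\Sigma} F\,d\rho\big\|\big\} \leq \underline{\int_{\Sigma}}\|F\|_{\mathcal{E}}\,d\rho$, which is property (M).

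As for where the difficulty lies: essentially all of it has already been absorbed into Theorem \ref{thm.ncll} and Corollary \ref{cor.KbidualMink} (the latter itself resting on Theorem \ref{thm.Lpinteg} and the K\"othe-duality machinery of Section \ref{sec.symmopsp}), so the proof above is almost purely a matter of assembling existing pieces. The only spot that wants a touch of care is the ``in particular'' step, where one must recover weak$^*$ integrability separately (through Proposition \ref{prop.wstarintexist}, using that the $\cM$-norm lower integral is finite) and must handle the maximum norm $\|\cdot\|_{\mathcal{E}}$ by comparing the two lower integrals componentwise against $\underline{\int_{\Sigma}}\|F\|_{\mathcal{E}}\,d\rho$, rather than naively interchanging the lower integral with a maximum.
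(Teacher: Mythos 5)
Your proof is correct and follows the paper's own argument essentially verbatim: chain the Noncommutative Lorentz--Luxemburg Theorem (Theorem \ref{thm.ncll}) with Corollary \ref{cor.KbidualMink}. The extra detail you give for the ``in particular'' clause (recovering weak$^*$ integrability via Proposition \ref{prop.wstarintexist} and comparing the two component norms against $\|\cdot\|_{\mathcal{E}}$) is correct and simply spells out what the paper leaves implicit.
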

\begin{proof}
By the Noncommutative Lorentz--Luxemburg Theorem (Theorem \ref{thm.ncll}), $(E,\|\cdot\|_E) = (E^{\times\times},\|\cdot\|_{E^{\times\times}})$.
Therefore, by Corollary \ref{cor.KbidualMink}, we know $\int_{\Sigma} F \,d\rho \in E^{\times\times} = E$ and\vspace{-0.4mm}
\[
\Bigg\|\int_{\Sigma} F\,d\rho \Bigg\|_E = \Bigg\|\int_{\Sigma} F\,d\rho \Bigg\|_{E^{\times \times}} \leq \underline{\int_{\Sigma}} \|F\|_E \,d\rho,\vspace{-0.4mm}
\]
as desired.
\end{proof}

\subsection{Comments about property (F)}\label{sec.propF}

A Banach ideal $(\cI,\|\cdot\|_{\cI}) \unlhd \cM$ has (the \textbf{sequential}) \textbf{property (F)} if whenever $a \in \cM$ and $(a_j)_{j \in J}$ is a net (sequence) in $\cI$ such that $\sup_{j \in J}\|a_j\|_{\cI} < \infty$ and $a_j \to a$ in the S$^*$OT, we have $a \in \cI$ and $\|a\|_{\cI} \leq \sup_{j \in J}\|a_j\|_{\cI}$.
In \cite{azamovetal}, certain multiple operator integrals in invariant operator ideals with property (F) are considered.
We now take some time to discuss the relationship between properties (M) and (F).
First, there are certainly ideals with property (M) that do not have property (F), e.g., the ideal of compact operators (Proposition \ref{prop.compact}).
Second, as mentioned in \cite{azamovetal}, the motivating example of an invariant operator ideal with property (F) is an ideal induced via Fact \ref{fact.Etau} by a (nonzero) symmetric Banach function space with the Fatou property.
By Theorem \ref{thm.KRpropM} and Example \ref{ex.Fatou}, such ideals have property (M).
Third, the author is unaware of an example of a symmetrically normed ideal with property (F) that does not have property (M).
It would be interesting to know if such an ideal exists.
\pagebreak

In this context, it is worth discussing a technical issue in \cite{azamovetal} with its treatment of operator-valued integrals.
For the rest of this section, assume $H$ is separable.
It is implicitly assumed in the proof of (the second sentence of) Lemma 4.6 in \cite{azamovetal} that at least some form of the integral triangle inequality holds for the $\cI$-norm $\|\cdot\|_{\cI}$ when $\cI$ has property (F).
Specifically, it seems to be assumed that if $(\Sigma,\sH,\rho)$ is a finite measure space and $F \colon \Sigma \to \cI \subseteq \cM$ is $\|\cdot\|_{\cI}$-bounded and weak$^*$ measurable, then
\[
\int_{\Sigma} F\,d\rho \in \cI \; \text{ and } \; \Bigg\|\int_{\Sigma} F\,d\rho\Bigg\|_{\cI} \leq \int_{\Sigma}\|F\|_{\cI}\,d\rho
\]
(ignoring that $\|F\|_{\cI}$ may not be measurable).
Let us call this the \textbf{finite property (M)}.
Then we may rephrase the implicit claim as ``property (F) implies the finite property (M)."
As far as the author can tell, the arguments in \cite{azamovetal} are only sufficient to prove 
\[
\int_{\Sigma} F\,d\rho \in \cI \; \text{ and } \; \Bigg\|\int_{\Sigma} F\,d\rho\Bigg\|_{\cI} \leq \rho(\Sigma)\,\sup_{\sigma \in \Sigma}\|F(\sigma)\|_{\cI}.
\]
Indeed, the authors of \cite{azamovetal} prove that $\cI$ has property (F) if and only if $\cI_1 = \{r \in \cI : \|r\|_{\cI} \leq 1\}$ is a complete, separable metric space in the strong$^*$ operator topology and then apply Propositions 1.9--1.10 in Chapter I of \cite{vakhania} to approximate $F$ by simple functions in the strong$^*$ operator topology.
Crucially, Propositions 1.9--1.10 in \cite{vakhania} only guarantee the existence of a sequence $(F_n)_{n \in \N}$ of simple functions $\Sigma \to \cI$ such that $\sup_{\sigma \in \Sigma} \|F_n(\sigma)\|_{\cI} \leq \sup_{\sigma \in \Sigma}\|F(\sigma)\|_{\cI}$, for all $n \in \N$, and $F_n \to F$ pointwise in the strong$^*$ operator topology as $n \to \infty$.
Now, by Proposition \ref{prop.opDCT}, $\int_{\Sigma} F_n \, d\rho \to \int_{\Sigma} F\,d\rho$ in the strong$^*$ operator topology as $n \to \infty$.
Also, by the (obvious) triangle inequality for integrals of simple functions, if $k \in \N$, then
\[
\sup_{n \geq k}\Bigg\|\int_{\Sigma} F_n\,d\rho\Bigg\|_{\cI} \leq \sup_{n \geq k}\int_{\Sigma} \|F_n\|_{\cI} \,d\rho \leq \int_{\Sigma} \sup_{n \geq k} \|F_n\|_{\cI} \, d\rho \leq \rho(\Sigma) \,\sup_{\sigma \in \Sigma}\|F(\sigma)\|_{\cI}.
\]
Thus (the sequential) property (F) and the Dominated Convergence Theorem give
\[
\int_{\Sigma} F \, d\rho \in \cI \; \text{ and } \; \Bigg\|\int_{\Sigma} F\,d\rho \Bigg\|_{\cI} \leq \int_{\Sigma} \limsup_{n \to \infty} \|F_n\|_{\cI} \,d\rho \leq \rho(\Sigma) \,\sup_{\sigma \in \Sigma}\|F(\sigma)\|_{\cI}. \numberthis\label{eq.propFestim}
\]
The definition of property (F) does \textit{not} guarantee that $\|F_n(\sigma)\|_{\cI} \to \|F(\sigma)\|_{\cI}$ as $n \to \infty$, so we cannot evaluate the limit superior above much further without an upgraded version of property (F).
(Interestingly, this does not damage the applications in \cite{azamovetal}, since it seems only the estimate \eqref{eq.propFestim} is used seriously.)
It therefore seems that property (F) \textit{almost} implies some weaker form of property (M) \hspace{-0.45mm}---\hspace{-0.45mm} but \hspace{-0.2mm}perhaps\hspace{-0.2mm} not \hspace{-0.2mm}quite.

\begin{rem}
Though we centered the discussion above on the ``finite property (M)," it is worth pointing out that in order to prove Lemma 4.6 in \cite{azamovetal}, it would actually be sufficient to know the following ``finite integral symmetrically normed" condition:
for every finite measure space $(\Sigma,\sH,\rho)$ and $\|\cdot\|$-bounded, weak$^*$ measurable $A,B \colon \Sigma \to \cM$, we have $\int_{\Sigma}A(\sigma)\,r\,B(\sigma) \,\rho(d\sigma) \in \cI$ and $\big\|\int_{\Sigma}A(\sigma)\,r\,B(\sigma)\,\rho(d\sigma)\big\|_{\cI} \leq \|r\|_{\cI}\underline{\int_{\Sigma}}\|A\|\,\|B\|\,d\rho$, for all $r \in \cI$.
As mentioned, in the presence of property (F), we would already know $\int_{\Sigma} A(\sigma)\,r\,B(\sigma)\,\rho(d\sigma) \in \cI$, so --- as was the case above --- it is really only the integral triangle inequality that is potentially missing.
\end{rem}

\section{Differentiating operator functions}\label{sec.derivopfunc}

\subsection{Multiple operator integrals (MOIs) in ideals}\label{sec.MOIsinI}

We begin with a review of some information from \cite{nikitopoulosMOI} about (a simplified version of) the ``separation of variables" approach to defining multiple operator integrals.
For the duration of this section, fix $k \in \N$ and, for each $j \in \{1,\ldots,k+1\}$, a projection-valued measure space $(\Om_j,\sF_j,H,P_j)$ such that $P_j(G) \in \cM$ whenever $G \in \sF_j$.
Also, write
\[
(\Om,\sF) \coloneqq (\Om_1 \times \cdots \times \Om_{k+1},\sF_1 \otimes \cdots \otimes \sF_{k+1})
\]
for the product measurable space.

\begin{defi}[Integral projective tensor products I]\label{def.babyIPTP}
Let $\varphi \colon \Om \to \C$ be a function.
A $\boldsymbol{\ell^{\infty}}$\textbf{-integral projective decomposition} ($\ell^{\infty}$-IPD) of $\varphi$ is a choice $(\Sigma,\rho,\varphi_1,\ldots,\varphi_{k+1})$ of a $\sigma$-finite measure space $(\Sigma,\sH,\rho)$ and measurable functions $\varphi_1 \colon \Om_1 \times \Sigma \to \C,\ldots,\varphi_{k+1} \colon \Om_{k+1} \times \Sigma \to \C$ such that 
\begin{enumerate}[label=(\alph*),leftmargin=2\parindent]
    \item $\varphi_j(\cdot,\sigma) \in \ell^{\infty}(\Om_j,\sF_j)$, for all $j \in \{1,\ldots,k+1\}$ and $\sigma \in \Sigma$; \label{item.nullset}
    \item $\overline{\int_{\Sigma}} \,\|\varphi_1(\cdot,\sigma)\|_{\ell^{\infty}(\Om_1)} \cdots \|\varphi_{k+1}(\cdot,\sigma)\|_{\ell^{\infty}(\Om_{k+1})} \, \rho(d\sigma) < \infty$; and\label{item.upperintegofphis}
    \item $\varphi(\boldsymbol{\om}) = \int_{\Sigma} \varphi_1(\om_1,\sigma) \cdots \varphi_{k+1}(\om_{k+1},\sigma) \, \rho(d\sigma)$, for all $\boldsymbol{\om} = (\om_1,\ldots,\om_{k+1}) \in \Om$.\label{item.equality}
\end{enumerate}
Now, define
\begin{align*}
    \|\varphi\|_{\ell^{\infty}(\Om_1,\sF_1) \iotimes \cdots \iotimes \ell^{\infty}(\Om_{k\hspace{-0.1mm}+\hspace{-0.1mm}1},\sF_{k\hspace{-0.1mm}+\hspace{-0.1mm}1})} &\hspace{-0.25mm} \coloneqq\hspace{-0.15mm} \inf\hspace{-0.75mm}\Bigg\{\overline{\int_{\Sigma}}\hspace{-0.5mm} \|\varphi_1(\cdot,\sigma)\|_{\ell^{\infty}(\Om_1)}\hspace{-0.5mm}\cdots\hspace{-0.5mm}\|\varphi_{k+1}(\cdot,\sigma)\|_{\ell^{\infty}(\Om_{k\hspace{-0.1mm}+\hspace{-0.1mm}1})} \rho(d\sigma) : (\Sigma,\rho,\varphi_1,\ldots,\varphi_{k\hspace{-0.1mm}+\hspace{-0.1mm}1}\hspace{-0.1mm})\\
    & \; \; \; \; \; \; \; \; \; \; \; \; \; \; \; \; \; \; \; \; \; \; \; \; \; \; \; \text{ is a } \ell^{\infty}\text{-integral projective decomposition of } \varphi\Bigg\},
\end{align*}
where $\inf \emptyset \coloneqq \infty$.
Finally, we define
\[
\ell^{\infty}(\Om_1,\sF_1) \iotimes \cdots \iotimes \ell^{\infty}(\Om_{k+1},\sF_{k+1}) \coloneqq \{\varphi \in \ell^{\infty}(\Om,\sF) : \|\varphi\|_{\ell^{\infty}(\Om_1,\sF_1) \iotimes \cdots \iotimes \ell^{\infty}(\Om_{k+1},\sF_{k+1})} < \infty\}
\]
to be the \textbf{integral projective tensor product of} $\boldsymbol{\ell^{\infty}(\Om_1,\sF_1),\ldots,\ell^{\infty}(\Om_{k+1},\sF_{k+1})}$.
\end{defi}

It is easy to see that $\|\cdot\|_{\ell^{\infty}(\Om)} \leq \|\cdot\|_{\ell^{\infty}(\Om_1,\sF_1) \iotimes \cdots \iotimes \ell^{\infty}(\Om_{k+1},\sF_{k+1})}$.
Also,
\[
\big(\ell^{\infty}(\Om_1,\sF_1) \iotimes \cdots \iotimes \ell^{\infty}(\Om_{k+1},\sF_{k+1}), \|\cdot\|_{\ell^{\infty}(\Om_1,\sF_1) \iotimes \cdots \iotimes \ell^{\infty}(\Om_{k+1},\sF_{k+1})}\big)
\]
is a Banach $\ast$-algebra under pointwise operations.
This is a special case of Proposition 4.1.4 in \cite{nikitopoulosMOI}.
(Please see Example 4.1.5 in \cite{nikitopoulosMOI} as well.)

\begin{thm}[Definition of MOIs \cite{nikitopoulosMOI}]\label{thm.MOIsinM}
If $(\Sigma,\rho,\varphi_1,\ldots,\varphi_{k+1})$ is a $\ell^{\infty}$-integral projective decomposition of $\varphi\in \ell^{\infty}(\Om_1,\sF_1) \iotimes \cdots \iotimes \ell^{\infty}(\Om_{k+1},\sF_{k+1})$ and $b_1,\ldots,b_k \in \cM$, then the map
\[
\Sigma \ni \sigma \mapsto P_1(\varphi_1(\cdot,\sigma))\,b_1 \cdots P_k(\varphi_k(\cdot,\sigma))\,b_k \,P_{k+1}(\varphi_{k+1}(\cdot,\sigma)) \in \cM
\]
is weak$^*$ integrable, and
\begin{align*}
    \big(I^{P_1,\ldots,P_{k+1}}\varphi\big)[b_1,\ldots,b_k] &  = \int_{\Om_{k+1}}\cdots\int_{\Om_1}\varphi(\om_1,\ldots,\om_{k+1})\,P_1(d\om_1)\,b_1\cdots P_k(d\om_k)\,b_k\,P_{k+1}(d\om_{k+1}) \\
    & \coloneqq \int_{\Sigma}P_1(\varphi_1(\cdot,\sigma))\,b_1 \cdots P_k(\varphi_k(\cdot,\sigma))\,b_k \,P_{k+1}(\varphi_{k+1}(\cdot,\sigma))\,\rho(d\sigma) \in \cM
\end{align*}
is independent of the chosen decomposition $(\Sigma,\rho,\varphi_1,\ldots,\varphi_{k+1})$ of $\varphi$.
Moreover,
\[
\big\|\big(I^{P_1,\ldots,P_{k+1}}\varphi\big)[b_1,\ldots,b_k]\big\| \leq \|\varphi\|_{\ell^{\infty}(\Om_1,\sF_1) \iotimes \cdots \iotimes \ell^{\infty}(\Om_{k+1},\sF_{k+1})}\|b_1\|\cdots\|b_k\|,
\]
for all $b_1,\ldots,b_k \in \cM$.
Writing $\boldsymbol{P} = (P_1,\ldots,P_{k+1})$, we call $I^{\boldsymbol{P}}\varphi \colon \cM^k \to \cM$ the \textbf{multiple operator integral} (MOI) of $\varphi$ with respect to $\boldsymbol{P}$.
\end{thm}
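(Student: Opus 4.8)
The statement bundles three claims: weak$^*$ integrability of the integrand, independence of the resulting integral from the chosen $\ell^\infty$-IPD, and the operator-norm bound. My plan is to dispose of integrability and the bound by a short estimate with the upper integral, and to prove independence --- the real content --- by transferring everything to \emph{iterated} scalar integrals.

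First I would establish, for each fixed $\sigma \in \Sigma$, an iterated-integral formula for $\langle F(\sigma)h,g\rangle$, where $h,g \in H$ and $F(\sigma) \coloneqq P_1(\varphi_1(\cdot,\sigma))\,b_1\cdots P_k(\varphi_k(\cdot,\sigma))\,b_k\,P_{k+1}(\varphi_{k+1}(\cdot,\sigma))$. Peeling off the factors one at a time with Proposition \ref{prop.integP}.\ref{item.defofPint} and its adjoint identities (Proposition \ref{prop.integP}.\ref{item.algpropPint}, used to shuttle the $b$'s between slots), one writes $\langle F(\sigma)h,g\rangle$ as follows: integrate $\varphi_{k+1}(\cdot,\sigma)$ against the finite complex measure (Proposition \ref{prop.integP}.\ref{item.Phk}) $B \mapsto \langle P_1(G_1)\,b_1\cdots P_k(G_k)\,b_k\,P_{k+1}(B)\,h,g\rangle$ on $\Om_{k+1}$; the outcome is a finite complex measure in $G_k$; integrate $\varphi_k(\cdot,\sigma)$ against it; and continue outward through $\Om_{k-1},\dots,\Om_1$. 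The key structural point is that the measures so produced involve only $P_1,\dots,P_{k+1},b_1,\dots,b_k,h,g$ --- \emph{not} $\sigma$ --- and that, since each $\varphi_j(\cdot,\sigma)$ is bounded and these measures are finite, every intermediate integration again yields a genuine complex measure, with the evident total-variation bounds. Two consequences follow at once: $\|F(\sigma)\| \le \|b_1\|\cdots\|b_k\|\prod_{j=1}^{k+1}\|\varphi_j(\cdot,\sigma)\|_{\ell^\infty(\Om_j)}$ (also immediate from Proposition \ref{prop.integP}.\ref{item.bddPint}), and --- using joint measurability of the $\varphi_j$ together with a routine monotone-class argument for parametrized integrals --- the map $\sigma \mapsto \langle F(\sigma)h,g\rangle$ is $\sH$-measurable, i.e.\ $F$ is weak$^*$ measurable. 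Combined with Definition \ref{def.babyIPTP}.\ref{item.upperintegofphis}, the norm bound gives
\[
\underline{\int_\Sigma}\|F\|\,d\rho \le \overline{\int_\Sigma}\|F\|\,d\rho \le \|b_1\|\cdots\|b_k\|\;\overline{\int_\Sigma}\prod_{j=1}^{k+1}\|\varphi_j(\cdot,\sigma)\|_{\ell^\infty(\Om_j)}\,\rho(d\sigma) < \infty,
\]
so Proposition \ref{prop.wstarintexist} delivers weak$^*$ integrability of $F$ and $\big\|\int_\Sigma F\,d\rho\big\| \le \underline{\int_\Sigma}\|F\|\,d\rho$; once independence is in hand, taking the infimum over all $\ell^\infty$-IPDs of $\varphi$ produces the claimed estimate.

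Next I would prove independence, reducing a two-IPD comparison to a one-IPD statement by the standard disjoint-union trick. Given $\ell^\infty$-IPDs $(\Sigma,\rho,\varphi_1,\dots,\varphi_{k+1})$ and $(\Sigma',\rho',\varphi_1',\dots,\varphi_{k+1}')$ of the same $\varphi$, take the $\sigma$-finite disjoint union $\Sigma'' = \Sigma \sqcup \Sigma'$ with the measure restricting to $\rho$ and $\rho'$, set $\tilde\varphi_1$ equal to $\varphi_1$ on $\Om_1 \times \Sigma$ and to $-\varphi_1'$ on $\Om_1 \times \Sigma'$, and let $\tilde\varphi_j$ restrict to $\varphi_j$ and $\varphi_j'$ on the two pieces for $2 \le j \le k+1$; then $(\Sigma'',\rho'',\tilde\varphi_1,\dots,\tilde\varphi_{k+1})$ is an $\ell^\infty$-IPD of the zero function (conditions \ref{item.nullset}--\ref{item.equality} are immediate), and by linearity of $\psi \mapsto P_1(\psi)$ its associated integral equals $\int_\Sigma F\,d\rho - \int_{\Sigma'} F'\,d\rho'$. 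So it suffices to show that if $(\Sigma,\rho,\varphi_1,\dots,\varphi_{k+1})$ represents the zero function, then $\int_\Sigma F\,d\rho = 0$. For that I would integrate the iterated-integral formula over $\sigma$; the total-variation bounds from the previous step make the full nested integral (now also including $\int_\Sigma$) absolutely convergent, so Fubini's theorem permits pushing $\int_\Sigma$ inward past every $\int_{\Om_j}$ --- legitimate precisely because the measures there do not involve $\sigma$ --- until it sits beside the product $\varphi_1(\om_1,\sigma)\cdots\varphi_{k+1}(\om_{k+1},\sigma)$. Definition \ref{def.babyIPTP}.\ref{item.equality} then collapses $\int_\Sigma \varphi_1(\om_1,\sigma)\cdots\varphi_{k+1}(\om_{k+1},\sigma)\,\rho(d\sigma) = \varphi(\boldsymbol\om) = 0$, annihilating the whole integral; hence $\big\langle\big(\int_\Sigma F\,d\rho\big)h,g\big\rangle = 0$ for all $h,g \in H$, i.e.\ $\int_\Sigma F\,d\rho = 0$. (Alternatively, one could argue abstractly: $(\psi_1,\dots,\psi_{k+1}) \mapsto P_1(\psi_1)\,b_1\cdots b_k\,P_{k+1}(\psi_{k+1})$ is a $(k{+}1)$-linear map into $\cM$ of norm $\le \|b_1\|\cdots\|b_k\|$, and independence then follows from the universal property underlying the Banach-algebra structure of the integral projective tensor product, Proposition 4.1.4 in \cite{nikitopoulosMOI}.)

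The main obstacle is the independence argument, and within it the careful bookkeeping behind the iterated-integral formula: one has to pin down the inductive construction of the intermediate complex measures and check that every parametrized integral appearing (in $\sigma$ and in each $\om_j$) is jointly measurable --- a string of monotone-class arguments rather than a one-line computation. This is also exactly the step where non-separability of $H$ must be handled: the approach deliberately never asserts that the bimeasure $(G_1,\dots,G_{k+1}) \mapsto \langle P_1(G_1)\,b_1\cdots b_k\,P_{k+1}(G_{k+1})\,h,g\rangle$ extends to a countably additive measure on the product $\sigma$-algebra $\sF$ --- which can fail --- and instead operates entirely with iterated integrals over the individual $\Om_j$, which is the whole point of the separation-of-variables formalism.
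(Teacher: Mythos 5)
The paper does not actually prove Theorem \ref{thm.MOIsinM}; it simply cites Theorem 1.1.3 of \cite{nikitopoulosMOI}, so there is no in-paper proof to compare against. Your reconstruction is conceptually in the right territory (operate with iterated integrals rather than an unavailable product-measure extension; reduce a two-IPD comparison to a zero-IPD statement via the disjoint-union trick; recover the norm bound from Proposition \ref{prop.wstarintexist} together with Definition \ref{def.babyIPTP}.\ref{item.upperintegofphis}), and those parts are sound.

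However, there is a genuine gap in the independence argument, and it sits exactly where you wave at Fubini. When you peel off factors, the innermost measure $B \mapsto \langle P_1(G_1)b_1\cdots b_k P_{k+1}(B)h,g\rangle$ is indeed $\sigma$-independent, but as soon as you integrate $\varphi_{k+1}(\cdot,\sigma)$ against it, the resulting complex measure in $G_k$ \emph{does} depend on $\sigma$ through $\varphi_{k+1}(\cdot,\sigma)$, and likewise for every outer level. So the assertion that ``the measures so produced involve only $P_1,\dots,P_{k+1},b_1,\dots,b_k,h,g$ --- \emph{not} $\sigma$'' is false, and Fubini does not ``push $\int_\Sigma$ inward past every $\int_{\Om_j}$'' because the very measures you want to integrate against in $\Om_1,\dots,\Om_k$ are $\sigma$-dependent. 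This is not a cosmetic issue: the obstruction is the same phenomenon you correctly flag two sentences later (the multimeasure need not extend over $\sF_1 \otimes \cdots \otimes \sF_{k+1}$), so there is in general no $\sigma$-independent nested family of measures on the $\Om_j$'s to swap $\int_\Sigma$ through. Repairing this requires genuine additional work --- e.g.\ writing each level's complex measure as a Radon--Nikodym density against a fixed $(P_j)_{\cdot,\cdot}$ and establishing joint measurability of those densities in $(\om_j,\sigma)$ (delicate, especially without separability of $H$ or countable generation of the $\sF_j$'s), or an approximation by functions simple in the $\om_j$-variables carefully coordinated with the pointwise identity $\int_\Sigma \varphi_1(\om_1,\sigma)\cdots\varphi_{k+1}(\om_{k+1},\sigma)\,\rho(d\sigma)=0$. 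Your parenthetical alternative (appeal to a ``universal property'' of the integral projective tensor product) is likewise not a shortcut: that universal property, for the \emph{integral} projective tensor product, is precisely the well-definedness statement being proved, and establishing it requires the same kind of dominated-convergence argument (via Proposition \ref{prop.integP}.\ref{item.PintDCT} and Proposition \ref{prop.opDCT}) that you are trying to sidestep.
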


This follows from Theorem 1.1.3 (and Equation (4.14)) in \cite{nikitopoulosMOI}.
We shall also need to know that, in general, if $\varphi_j \colon \Om_j \times \Sigma \to \C$ is measurable and $\varphi_j(\cdot,\sigma) \in \ell^{\infty}(\Om_j,\sF_j)$ for $j \in \{1,\ldots,k+1\}$ and $\sigma \in \Sigma$, then
\[
\Sigma \ni \sigma \mapsto P_1(\varphi_1(\cdot,\sigma))\,b_1\cdots P_k(\varphi_k(\cdot,\sigma))\,b_k\,P_{k+1}(\varphi_{k+1}(\cdot,\sigma)) \in \cM
\]
is weak$^*$ measurable, for all $b_1,\ldots,b_k \in \cM$.
This follows from a repeated application of Proposition 4.2.3 in \cite{nikitopoulosMOI}, and we shall use it in the sequel without further comment.

\begin{nota}
If $a_1,\ldots,a_{k+1} \aff \cM_{\nu}$ and $\varphi \in \ell^{\infty}(\sigma(a_1),\cB_{\sigma(a_1)})\iotimes \cdots \iotimes \ell^{\infty}(\sigma(a_{k+1}),\cB_{\sigma(a_{k+1})})$, we write
\[
\varphi(a_1,\ldots,a_{k+1})\sh [b_1,\ldots,b_k] = \varphi(\boldsymbol{a}) \sh b = \big(I^{\boldsymbol{a}}\varphi\big)[b] \coloneqq \big(I^{P^{a_1},\ldots,P^{a_{k+1}}}\varphi\big)[b_1,\ldots,b_k] \in \cM,
\]
for all $b = (b_1,\ldots,b_k) \in \cM^k$, where $\boldsymbol{a} \coloneqq (a_1,\ldots,a_{k+1})$.
\end{nota}
\begin{rem}
Please see Remark 4.2.14 in \cite{nikitopoulosMOI} for an explanation of the use of the $\sh$ symbol above.
Also, please be aware that $T_{\varphi}^{a_1,\ldots,a_{k+1}}$ is a common alternative way to notate $I^{a_1,\ldots,a_{k+1}}\varphi$.
\end{rem}
\pagebreak

The following are two algebraic properties of MOIs that we shall use.
They are proven as part of Proposition 4.3.1 in \cite{nikitopoulosMOI}.

\begin{prop}[Algebraic properties of MOIs]\label{prop.linandmult}
Suppose $1 \leq m \leq k$.
\begin{enumerate}[label=(\roman*),font=\normalfont,leftmargin=2\parindent]
    \item If $\varphi,\psi \in \ell^{\infty}(\Om_1,\sF_1) \iotimes \cdots \iotimes \ell^{\infty}(\Om_{k+1},\sF_{k+1})$ and $\alpha \in \C$, then $I^{\boldsymbol{P}}(\varphi+\alpha\,\psi) = I^{\boldsymbol{P}}\varphi+\alpha\,I^{\boldsymbol{P}}\psi$.\label{item.lin}
    \item If $\psi \in \ell^{\infty}(\Om_m,\sF_m) \iotimes \ell^{\infty}(\Om_{m+1},\sF_{m+1})$, $\tilde{\psi}(\om_1,\ldots,\om_{k+1}) \coloneqq \psi(\om_m,\om_{m+1})$ for $(\om_1,\ldots,\om_{k+1}) \in \Om$, and $\varphi \in \ell^{\infty}(\Om_1,\sF_1) \iotimes \cdots \iotimes \ell^{\infty}(\Om_{k+1},\sF_{k+1})$, then
    \[
    \big(I^{P_1,\ldots,P_{k+1}}(\varphi\tilde{\psi})\big)[b_1,\ldots,b_k] = \big(I^{P_1,\ldots,P_{k+1}}\varphi\big)\big[b_1,\ldots,b_{m-1},\big(I^{P_m,P_{m+1}}\psi\big)[b_m],b_{m+1},\ldots,b_k\big],
    \]
    for all $(b_1,\ldots,b_k) \in \cM^k$.\label{item.mult2}
\end{enumerate}
\end{prop}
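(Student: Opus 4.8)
The plan is to reduce both parts to the decomposition-independence asserted in Theorem~\ref{thm.MOIsinM}: in each case I build a convenient $\ell^{\infty}$-integral projective decomposition ($\ell^{\infty}$-IPD, Definition~\ref{def.babyIPTP}) of the function on the left-hand side out of given $\ell^{\infty}$-IPDs of its constituents. Exhibiting such a decomposition simultaneously shows the left-hand function lies in the integral projective tensor product, so the MOI is defined; then one computes the MOI from that very decomposition and massages the resulting weak$^{*}$ integral into the right-hand side.

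\textbf{Part (i) (linearity).} Take $\ell^{\infty}$-IPDs $(\Sigma,\rho,\varphi_1,\ldots,\varphi_{k+1})$ of $\varphi$ and $(\Sigma',\rho',\psi_1,\ldots,\psi_{k+1})$ of $\psi$, and work over the disjoint union $(\Sigma\sqcup\Sigma',\,\rho\sqcup\rho')$, which is again $\sigma$-finite. Define measurable $\chi_j$ to equal $\varphi_j$ on $\Om_j\times\Sigma$ for every $j$, to equal $\alpha\,\psi_1$ on $\Om_1\times\Sigma'$, and to equal $\psi_j$ on $\Om_j\times\Sigma'$ for $2\le j\le k+1$. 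All conditions of Definition~\ref{def.babyIPTP} are immediate (condition \ref{item.nullset} trivially, and \ref{item.upperintegofphis}, \ref{item.equality} because both the upper integral and the weak$^{*}$ integral over a disjoint union split as the sum of the contributions from the two pieces), so this is an $\ell^{\infty}$-IPD of $\varphi+\alpha\,\psi$. Evaluating $I^{\boldsymbol{P}}(\varphi+\alpha\,\psi)(b_1,\ldots,b_k)$ from it, splitting the weak$^{*}$ integral over $\Sigma$ and $\Sigma'$ (additivity over disjoint sets), and using $P_1(\alpha\,\psi_1(\cdot,\sigma))=\alpha\,P_1(\psi_1(\cdot,\sigma))$ (Proposition~\ref{prop.integP}.\ref{item.bddPint}) together with the fact that scalars factor out of weak$^{*}$ integrals, one recovers $I^{\boldsymbol{P}}\varphi(b_1,\ldots,b_k)+\alpha\,I^{\boldsymbol{P}}\psi(b_1,\ldots,b_k)$.

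\textbf{Part (ii) (multiplicativity).} Fix an $\ell^{\infty}$-IPD $(\Sigma,\rho,\varphi_1,\ldots,\varphi_{k+1})$ of $\varphi$ and an $\ell^{\infty}$-IPD $(\Sigma_0,\rho_0,\psi_1,\psi_2)$ of $\psi$ (so $\psi_1\colon\Om_m\times\Sigma_0\to\C$ and $\psi_2\colon\Om_{m+1}\times\Sigma_0\to\C$), and work over the $\sigma$-finite product $(\Sigma\times\Sigma_0,\,\rho\times\rho_0)$. Put $\chi_j(\om_j,(\sigma,\sigma_0)):=\varphi_j(\om_j,\sigma)$ for $j\notin\{m,m+1\}$, $\chi_m(\om_m,(\sigma,\sigma_0)):=\varphi_m(\om_m,\sigma)\,\psi_1(\om_m,\sigma_0)$, and $\chi_{m+1}(\om_{m+1},(\sigma,\sigma_0)):=\varphi_{m+1}(\om_{m+1},\sigma)\,\psi_2(\om_{m+1},\sigma_0)$; condition \ref{item.nullset} is immediate since each $\chi_j(\cdot,(\sigma,\sigma_0))$ is a product of bounded measurable functions. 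A Tonelli-type inequality for upper integrals over a product space (applied to products of measurable majorants) bounds the upper integral of $\prod_j\|\chi_j(\cdot,(\sigma,\sigma_0))\|_{\ell^{\infty}(\Om_j)}$ over $\Sigma\times\Sigma_0$ by the product of the two finite upper integrals coming from the given IPDs, giving \ref{item.upperintegofphis}; Tonelli again gives $\int_{\Sigma\times\Sigma_0}\prod_j\chi_j\,d(\rho\times\rho_0)=\varphi\,\tilde{\psi}$ pointwise, giving \ref{item.equality}. So this is an $\ell^{\infty}$-IPD of $\varphi\tilde{\psi}$. Computing $I^{P_1,\ldots,P_{k+1}}(\varphi\tilde{\psi})(b_1,\ldots,b_k)$ from it, use Proposition~\ref{prop.integP}.\ref{item.bddPint} (the $\ast$-homomorphism property on $\ell^{\infty}$) to factor $P_m(\chi_m(\cdot,(\sigma,\sigma_0)))=P_m(\varphi_m(\cdot,\sigma))\,P_m(\psi_1(\cdot,\sigma_0))$ and $P_{m+1}(\chi_{m+1}(\cdot,(\sigma,\sigma_0)))=P_{m+1}(\psi_2(\cdot,\sigma_0))\,P_{m+1}(\varphi_{m+1}(\cdot,\sigma))$; the MOI integrand then reads $A(\sigma)\,\big(P_m(\psi_1(\cdot,\sigma_0))\,b_m\,P_{m+1}(\psi_2(\cdot,\sigma_0))\big)\,B(\sigma)$, where $A(\sigma):=P_1(\varphi_1(\cdot,\sigma))\,b_1\cdots P_m(\varphi_m(\cdot,\sigma))$ and $B(\sigma):=P_{m+1}(\varphi_{m+1}(\cdot,\sigma))\,b_{m+1}\cdots P_{k+1}(\varphi_{k+1}(\cdot,\sigma))$ do not depend on $\sigma_0$. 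Integrating first in $\sigma_0$ and then in $\sigma$ (Fubini for weak$^{*}$ integrals, which one checks by pairing with vectors $h,k\in H$ and invoking the scalar Fubini--Tonelli theorem under the integrability bound just established) and using that fixed factors in $\cM$ pull out of the weak$^{*}$ integral, the inner integral equals $A(\sigma)\,\big((I^{P_m,P_{m+1}}\psi)(b_m)\big)\,B(\sigma)$ by the definition of $I^{P_m,P_{m+1}}\psi$ from the decomposition $(\Sigma_0,\rho_0,\psi_1,\psi_2)$, and the outer integral then equals $(I^{P_1,\ldots,P_{k+1}}\varphi)\big(b_1,\ldots,b_{m-1},(I^{P_m,P_{m+1}}\psi)(b_m),b_{m+1},\ldots,b_k\big)$ by Theorem~\ref{thm.MOIsinM} applied to $(\Sigma,\rho,\varphi_1,\ldots,\varphi_{k+1})$.

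\textbf{Expected obstacle.} Part (i) is bookkeeping. The substance is in part (ii): verifying that the product-space data genuinely form an $\ell^{\infty}$-IPD (the $\sigma$-finiteness is automatic, but the upper-integral bound requires a Tonelli-type statement for upper integrals over product spaces, which I would import from the measure-theoretic preliminaries of \cite{nikitopoulosMOI}) and justifying the interchange of the two integrations, i.e.\ Fubini for weak$^{*}$ integrals. Neither is deep, but these are precisely the points where the separation-of-variables formalism must be handled with care; once they are in place, the replacement of $b_m$ by $(I^{P_m,P_{m+1}}\psi)(b_m)$ is forced by the $\ast$-homomorphism property of $P_m$ and $P_{m+1}$ on bounded functions.
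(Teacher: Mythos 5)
The paper does not give its own proof of this proposition; it defers to Proposition~4.3.1 of \cite{nikitopoulosMOI}. Your reconstruction is correct and follows the natural (and almost certainly intended) route: build an explicit $\ell^{\infty}$-IPD of the left-hand function from given IPDs of its constituents, then invoke the decomposition-independence in Theorem~\ref{thm.MOIsinM} and compute from that decomposition. Part (i) via the disjoint-union space and part (ii) via the product space are exactly the right choices, the $\sigma$-finiteness and upper-integral bounds go through as you describe, and the $\ast$-homomorphism factorization $P_m(\varphi_m\psi_1)=P_m(\varphi_m)P_m(\psi_1)$ followed by pulling the $\sigma$-dependent factors out of the inner weak$^*$ integral is precisely what forces the substitution $b_m\mapsto (I^{P_m,P_{m+1}}\psi)(b_m)$. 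Two technical points you flag deserve to be stated explicitly if this were written out in full: first, joint weak$^*$-measurability of the product-space integrand $(\sigma,\sigma_0)\mapsto P_1(\chi_1(\cdot,(\sigma,\sigma_0)))\,b_1\cdots P_{k+1}(\chi_{k+1}(\cdot,(\sigma,\sigma_0)))$ is not automatic for non-separable $H$ and is exactly what the repeated application of Proposition~4.2.3 in \cite{nikitopoulosMOI} (cited just after Theorem~\ref{thm.MOIsinM}) provides; second, the Fubini interchange must be checked against the pairings $\langle\cdot(h_n),(k_n)\rangle_{H^\infty}$ from Definition~\ref{def.wstarmeasint} (not merely $h,k\in H$) to establish weak$^*$ integrability of the iterated integrand, after which the scalar Fubini--Tonelli theorem applies under the established $\ell^1$-bound. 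Neither of these affects the correctness of your argument.
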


Finally, we restrict the MOI in Theorem \ref{thm.MOIsinM} to certain ideals of $\cM$.

\begin{defi}[MOI-friendly ideals]\label{def.MOIfriendly}
Fix $(\cI,\|\cdot\|_{\cI}) \unlhd \cM$.
We say that $\cI$ is \textbf{MOI-friendly} if whenever we are in the setup of Theorem \ref{thm.MOIsinM} and $j \in \{1,\ldots,k\}$, the MOI $I^{\boldsymbol{P}}\varphi \colon \cM^k \to \cM$ restricts to a bounded $k$-linear map (Notation \ref{nota.bddmultilin})
\[
(\cM,\|\cdot\|)^{j-1} \times (\cI,\|\cdot\|_{\cI}) \times (\cM,\|\cdot\|)^{k-j} \to (\cI,\|\cdot\|_{\cI})
\]
with operator norm at most $\|\varphi\|_{\ell^{\infty}(\Om_1,\sF_1) \iotimes \cdots \iotimes \ell^{\infty}(\Om_{k+1},\sF_{k+1})}$.
In this case, $I^{\boldsymbol{P}}\varphi$ also restricts to a bounded $k$-linear map $(\cI,\|\cdot\|_{\cI})^k \to (\cI,\|\cdot\|_{\cI})$ with operator norm at most $C_{\cI}^{k-1}\|\varphi\|_{\ell^{\infty}(\Om_1,\sF_1) \iotimes \cdots \iotimes \ell^{\infty}(\Om_{k+1},\sF_{k+1})}$.
\end{defi}

This definition may seem contrived, but the following shows that all the examples of ideals from Sections \ref{sec.idealex1} and \ref{sec.idealex2} are MOI-friendly.

\begin{prop}\label{prop.MOIfriendly}
If $(\cI,\|\cdot\|_{\cI}) \unlhd \cM$ is integral symmetrically normed, then $\cI$ is MOI-friendly.
\end{prop}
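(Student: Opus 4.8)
The plan is to unwind the definition of the multiple operator integral from Theorem \ref{thm.MOIsinM} and feed the result into the defining inequality of an integral symmetrically normed ideal. Fix an $\ell^{\infty}$-integral projective decomposition $(\Sigma,\rho,\varphi_1,\ldots,\varphi_{k+1})$ of $\varphi$, fix $j \in \{1,\ldots,k\}$, take $b_i \in \cM$ for $i \in \{1,\ldots,k\} \setminus \{j\}$ and $r \in \cI$, and define $A,B \colon \Sigma \to \cM$ by
\[
A(\sigma) \coloneqq P_1(\varphi_1(\cdot,\sigma))\,b_1 \cdots b_{j-1}\,P_j(\varphi_j(\cdot,\sigma)), \qquad B(\sigma) \coloneqq P_{j+1}(\varphi_{j+1}(\cdot,\sigma))\,b_{j+1} \cdots b_k\,P_{k+1}(\varphi_{k+1}(\cdot,\sigma)).
\]
Then the integrand in Theorem \ref{thm.MOIsinM} is exactly $\sigma \mapsto A(\sigma)\,r\,B(\sigma)$, so that $\big(I^{\boldsymbol{P}}\varphi\big)(b_1,\ldots,b_{j-1},r,b_{j+1},\ldots,b_k) = \int_{\Sigma} A(\sigma)\,r\,B(\sigma)\,\rho(d\sigma)$.

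Next I would verify the hypotheses of Definition \ref{def.idealproperties}.\ref{item.ISNI} for this measure space and these $A,B$. Weak$^*$ measurability of $A$, of $B$, and of $\sigma \mapsto A(\sigma)\,c\,B(\sigma)$ for $c \in \cM$ all follow from the measurability observation recorded immediately after Theorem \ref{thm.MOIsinM} (applied with appropriate values of the parameter $k$), since each of these maps is a finite product of operators $P_i(\varphi_i(\cdot,\sigma))$ interleaved with fixed elements of $\cM$. For integrability, Proposition \ref{prop.integP}.\ref{item.bddPint} gives $\|P_i(\varphi_i(\cdot,\sigma))\| \leq \|\varphi_i(\cdot,\sigma)\|_{\ell^{\infty}(\Om_i)}$, so
\[
\|A(\sigma)\|\,\|B(\sigma)\| \leq \Big(\prod_{i \neq j}\|b_i\|\Big)\prod_{i=1}^{k+1}\|\varphi_i(\cdot,\sigma)\|_{\ell^{\infty}(\Om_i)}
\]
for every $\sigma$; the right-hand side is an honestly measurable function of $\sigma$ whose integral is finite by condition \ref{item.upperintegofphis} of Definition \ref{def.babyIPTP}, so $\underline{\int_{\Sigma}}\|A\|\,\|B\|\,d\rho \leq \big(\prod_{i \neq j}\|b_i\|\big)\,\overline{\int_{\Sigma}}\prod_{i=1}^{k+1}\|\varphi_i(\cdot,\sigma)\|_{\ell^{\infty}(\Om_i)}\,\rho(d\sigma) < \infty$. (This domination step is what lets us bypass the possible non-measurability of $\|A\|\,\|B\|$ when $H$ is nonseparable.)

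Now the integral symmetrically normed property of $\cI$ applies and gives $\big(I^{\boldsymbol{P}}\varphi\big)(b_1,\ldots,r,\ldots,b_k) \in \cI$ together with $\big\|\big(I^{\boldsymbol{P}}\varphi\big)(b_1,\ldots,r,\ldots,b_k)\big\|_{\cI} \leq \|r\|_{\cI}\big(\prod_{i \neq j}\|b_i\|\big)\,\overline{\int_{\Sigma}}\prod_{i=1}^{k+1}\|\varphi_i(\cdot,\sigma)\|_{\ell^{\infty}(\Om_i)}\,\rho(d\sigma)$. The one point requiring care is the final estimate on the $\cI$-norm: the element on the left does not depend on the chosen decomposition (Theorem \ref{thm.MOIsinM}), so we may take the infimum of the right-hand side over all $\ell^{\infty}$-integral projective decompositions of $\varphi$ to obtain $\big\|\big(I^{\boldsymbol{P}}\varphi\big)(b_1,\ldots,r,\ldots,b_k)\big\|_{\cI} \leq \|r\|_{\cI}\big(\prod_{i \neq j}\|b_i\|\big)\,\|\varphi\|_{\ell^{\infty}(\Om_1,\sF_1) \iotimes \cdots \iotimes \ell^{\infty}(\Om_{k+1},\sF_{k+1})}$. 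Since $I^{\boldsymbol{P}}\varphi$ is manifestly $k$-linear on $\cM^k$, this exhibits it as a bounded $k$-linear map $(\cM,\|\cdot\|)^{j-1} \times (\cI,\|\cdot\|_{\cI}) \times (\cM,\|\cdot\|)^{k-j} \to (\cI,\|\cdot\|_{\cI})$ of norm at most $\|\varphi\|_{\ell^{\infty}(\Om_1,\sF_1) \iotimes \cdots \iotimes \ell^{\infty}(\Om_{k+1},\sF_{k+1})}$, i.e.\ $\cI$ is MOI-friendly; the supplementary bound for the restriction $(\cI,\|\cdot\|_{\cI})^k \to (\cI,\|\cdot\|_{\cI})$ is then immediate by applying $\|\cdot\| \leq C_{\cI}\|\cdot\|_{\cI}$ to the $k-1$ arguments drawn from $\cM$, exactly as in the statement of Definition \ref{def.MOIfriendly}. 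There is no serious obstacle beyond this bookkeeping; the only genuinely non-automatic move is the passage to the infimum over decompositions, which is licensed precisely by the decomposition-independence of the MOI.
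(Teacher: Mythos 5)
Your proof is correct and follows essentially the same route as the paper's: group the factors of the MOI integrand into $A(\sigma)\,r\,B(\sigma)$, apply the integral symmetrically normed condition, and take the infimum over $\ell^{\infty}$-IPDs (licensed by decomposition-independence). One small imprecision worth fixing: $\sigma \mapsto \prod_{i}\|\varphi_i(\cdot,\sigma)\|_{\ell^{\infty}(\Om_i)}$ is not in general ``honestly measurable'' --- that is precisely why Definition \ref{def.babyIPTP}.\ref{item.upperintegofphis} uses the upper integral $\overline{\int}$ --- but your chain $\underline{\int_{\Sigma}}\|A\|\,\|B\|\,d\rho \leq \big(\prod_{i\neq j}\|b_i\|\big)\overline{\int_{\Sigma}}\prod_i\|\varphi_i(\cdot,\sigma)\|_{\ell^{\infty}(\Om_i)}\,\rho(d\sigma)$ remains valid because any measurable majorant of the right-hand integrand also dominates $\|A\|\,\|B\|$, which is exactly what the upper/lower integral comparison requires.
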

\begin{proof}
Suppose that $\cI$ is integral symmetrically normed and that we are in the setup of Theorem \ref{thm.MOIsinM}.
Let $j \in \{1,\ldots,k\}$, $b = (b_1,\ldots,b_k) \in \cM^{j-1} \times \cI \times \cM^{k-j}$, and $(\Sigma,\rho,\varphi_1,\ldots,\varphi_{k+1})$ be a $\ell^{\infty}$-integral projective decomposition of $\varphi \in \ell^{\infty}(\Om_1,\sF_1) \iotimes \cdots \iotimes \ell^{\infty}(\Om_{k+1},\sF_{k+1})$.
Now, we apply the definition of integral symmetrically normed with
\begin{align*}
    A(\sigma) & \coloneqq \Bigg(\prod_{j_1=1}^{j-1}P_{j_1}(\varphi_{j_1}(\cdot,\sigma))\,b_{j_1}\Bigg)P_j(\varphi_j(\cdot,\sigma)) \, \text{ and} \\
    B(\sigma) & \coloneqq P_{j+1}(\varphi_{j+1}(\cdot,\sigma))\prod_{j_2=j+2}^{k+1}b_{j_2-1}P_{j_2}(\varphi_{j_2}(\cdot,\sigma)),
\end{align*}
where empty products are the identity.
This yields $\big(I^{\boldsymbol{P}}\varphi\big)[b] = \int_{\Sigma} A(\sigma)\,b_j\,B(\sigma)\,\rho(d\sigma) \in \cI$ and
\[
\big\|\big(I^{\boldsymbol{P}}\varphi\big)[b]\big\|_{\cI} \leq \|b_j\|_{\cI}\underline{\int_{\Sigma}}\|A\|\,\|B\|\,d\rho \leq \|b_j\|_{\cI}\prod_{p \neq j}\|b_p\|\underline{\int_{\Sigma}}\|\varphi_1(\cdot,\sigma)\|_{\ell^{\infty}(\Om_1)}\cdots\|\varphi_{k+1}(\cdot,\sigma)\|_{\ell^{\infty}(\Om_{k+1)}}\,\rho(d\sigma).
\]
Using $\underline{\int_{\Sigma}} \leq \overline{\int_{\Sigma}}$ and taking the infimum over all $\ell^{\infty}$-IPDs $(\Sigma,\rho,\varphi_1,\ldots,\varphi_{k+1})$ of $\varphi$ gives the desired result.
\end{proof}

\subsection{Divided differences and perturbation formulas}\label{sec.divdiffandpert}

Our goal is to differentiate operator functions in integral symmetrically normed ideals.
As is common practice, we begin by proving ``perturbation formulas."
To do so, we shall use a generalization of the argument from the proof of Theorem 1.2.3 in \cite{peller2}.
First, we review divided differences.

\begin{defi}[Divided differences]\label{def.divdiff}
Let $f \colon \R \to \C$ be a function.
Define $f^{[0]} \coloneqq f$ and, for $k \in \N$ and distinct $\lambda_1,\ldots,\lambda_{k+1} \in \R$, recursively define
\[
f^{[k]}(\lambda_1,\ldots,\lambda_{k+1}) \coloneqq \frac{f^{[k-1]}(\lambda_1,\ldots,\lambda_k) - f^{[k-1]}(\lambda_1,\ldots,\lambda_{k-1},\lambda_{k+1})}{\lambda_k-\lambda_{k+1}}.
\]
We call $f^{[k]}$ the $\boldsymbol{k^{\textbf{th}}}$ \textbf{divided difference} of $f$.
\end{defi}

It is easy to prove by induction that if $\lambda_1,\ldots,\lambda_{k+1} \in \R$ are distinct, then
\[
f^{[k]}(\lambda_1,\ldots,\lambda_{k+1}) = \sum_{j=1}^{k+1}\frac{f(\lambda_j)}{\prod_{\ell \neq j}(\lambda_j-\lambda_{\ell})}.
\]
In particular, $f^{[k]}$ is symmetric in its arguments.
As we shall see shortly, if in addition $f \in C^k(\R)$, then $f^{[k]}$ extends uniquely to a continuous function defined on all of $\R^{k+1}$.

\begin{nota}
For $m \in \N$, define
\begin{align*}
     \Sigma_m & \coloneqq \Bigg\{(s_1,\ldots,s_m) \in \R^m : s_j \geq 0 \text{ for } 1 \leq j \leq m \text{ and } \sum_{j=1}^m s_j \leq 1\Bigg\} \; \text{ and} \\
     \Delta_m & \coloneqq \Bigg\{(t_1,\ldots,t_{m+1}) \in \R^{m+1} : t_j \geq 0 \text{ for } 1 \leq j \leq m+1 \text{ and } \sum_{j=1}^{m+1}t_j = 1\Bigg\}.
\end{align*}
Denote by $\rho_m$ the pushforward of the restriction to $\Sigma_m$ of the $m$-dimensional Lebesgue measure by the homeomorphism $\Sigma_m \ni (s_1,\ldots,s_m) \mapsto (s_1,\ldots,s_m,1-s_1-\cdots-s_m) \in \Delta_m$.
\end{nota}

Explicitly, $\rho_m$ is the finite Borel measure on $\Delta_m$ such that
\[
\int_{\Delta_m} \varphi(\boldsymbol{t}) \, \rho_m(d\boldsymbol{t}) = \int_{\Sigma_m} \varphi(s_1,\ldots,s_m,1-s_1-\cdots-s_m) \, ds_1\cdots ds_m,
\]
for all $\varphi \in \ell^{\infty}(\Delta_m,\cB_{\Delta_m})$.
In particular, $\rho_m(\Delta_m) = \frac{1}{m!}$.
The following is proven using the Fundamental Theorem of Calculus and induction.

\begin{prop}\label{prop.divdiffCk}
If $k \in \N$, $f \in C^k(\R)$, and $\lambda_1,\ldots,\lambda_{k+1} \in \R$ are distinct, then
\[
f^{[k]}(\blambda) = \int_{\Delta_k}f^{(k)}(\boldsymbol{t} \cdot \blambda) \, \rho_k(d\boldsymbol{t}),
\]
where $\blambda \coloneqq (\lambda_1,\ldots,\lambda_{k+1})$ and $\cdot$ is the Euclidean dot product.
In particular, $f^{[k]}$ extends uniquely to a symmetric continuous function on all of $\R^{k+1}$.
We shall use the same notation for this extended function.
\end{prop}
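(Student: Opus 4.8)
The plan is to argue by induction on $k$. For $k=1$, unwinding the definition of $\rho_1$ --- the pushforward of Lebesgue measure on $\Sigma_1 = [0,1]$ under $s \mapsto (s,1-s)$ --- shows that the right-hand side equals $\int_0^1 f'\big(s\lambda_1 + (1-s)\lambda_2\big)\,ds$; when $\lambda_1 \neq \lambda_2$, the linear substitution $r = s\lambda_1 + (1-s)\lambda_2$ together with the Fundamental Theorem of Calculus turns this into $\frac{f(\lambda_1) - f(\lambda_2)}{\lambda_1 - \lambda_2} = f^{[1]}(\lambda_1,\lambda_2)$.

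For the inductive step, suppose the identity holds with $k-1$ in place of $k$, and let $\lambda_1,\ldots,\lambda_{k+1}\in\R$ be distinct. Applying the inductive hypothesis to $f$ (which lies in $C^k(\R)\subseteq C^{k-1}(\R)$) and writing $\rho_{k-1}$ as a pushforward from $\Sigma_{k-1}$, I would first record, for each $\mu\in\{\lambda_k,\lambda_{k+1}\}$,
\[
f^{[k-1]}(\lambda_1,\ldots,\lambda_{k-1},\mu) = \int_{\Sigma_{k-1}} f^{(k-1)}\!\Big(\textstyle\sum_{j=1}^{k-1}s_j\lambda_j + t_k\,\mu\Big)\,ds_1\cdots ds_{k-1},
\]
where $t_k := 1 - s_1 - \cdots - s_{k-1}$. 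Subtracting these two instances and, for each fixed $s$, applying the Fundamental Theorem of Calculus in an auxiliary variable $u\in[0,1]$ --- namely, with $c := \sum_{j<k}s_j\lambda_j$, one has $f^{(k-1)}(c + t_k\lambda_k) - f^{(k-1)}(c + t_k\lambda_{k+1}) = t_k(\lambda_k - \lambda_{k+1})\int_0^1 f^{(k)}\big(c + t_k\lambda_{k+1} + u\,t_k(\lambda_k - \lambda_{k+1})\big)\,du$ --- and then dividing by $\lambda_k - \lambda_{k+1}$, I obtain
\[
f^{[k]}(\blambda) = \int_{\Sigma_{k-1}} t_k \int_0^1 f^{(k)}\!\Big(\textstyle\sum_{j<k}s_j\lambda_j + t_k u\,\lambda_k + t_k(1-u)\lambda_{k+1}\Big)\,du\,ds_1\cdots ds_{k-1}.
\]

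The final step is a change of variables on the simplex: for each fixed $(s_1,\ldots,s_{k-1})\in\Sigma_{k-1}$, substitute $s_k := t_k u$, so that $t_k\,du = ds_k$ (which cancels the leading factor $t_k$) and $u$ ranging over $[0,1]$ corresponds to $s_k$ ranging over $[0,t_k]$; since $t_k u\,\lambda_k + t_k(1-u)\lambda_{k+1} = s_k\lambda_k + (1 - s_1 - \cdots - s_k)\lambda_{k+1}$, the double integral becomes $\int_{\Sigma_k} f^{(k)}\big(s_1\lambda_1 + \cdots + s_k\lambda_k + (1 - s_1 - \cdots - s_k)\lambda_{k+1}\big)\,ds_1\cdots ds_k$, where Tonelli's theorem is used to identify the iterated integral over $\{(s_1,\ldots,s_{k-1})\in\Sigma_{k-1},\, 0\leq s_k\leq t_k\} = \Sigma_k$ with the integral over $\Sigma_k$. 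By the definition of $\rho_k$ this is exactly $\int_{\Delta_k} f^{(k)}(\boldsymbol{t}\cdot\blambda)\,\rho_k(d\boldsymbol{t})$, completing the induction. For the final assertion: the right-hand side is defined for every $\blambda\in\R^{k+1}$, and since $\rho_k$ is a finite measure and $f^{(k)}$ is continuous, dominated convergence shows it depends continuously on $\blambda$; as it agrees with $f^{[k]}$ on the dense set of tuples with pairwise distinct entries, it is the unique continuous extension of $f^{[k]}$, and symmetry of $f^{[k]}$ on that dense set (observed above) passes to the extension by continuity. I expect the only real obstacle to be the bookkeeping in this last change of variables and keeping the pushforward description of $\rho_k$ straight; the analytic input is just the one-variable Fundamental Theorem of Calculus, applied twice.
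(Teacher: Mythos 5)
Your proof is correct, and it follows exactly the route the paper sketches (``Taylor's Theorem (or the Fundamental Theorem of Calculus) and induction''), with a clean simplex change of variables $s_k = t_k u$ carrying $\Sigma_{k-1}\times[0,1]$ onto $\Sigma_k$ in the inductive step. The only loose thread is the parenthetical ``(observed above)'' for symmetry of $f^{[k]}$ on distinct tuples, which you did not actually establish in the proposal; it is, however, immediate from the explicit formula $f^{[k]}(\lambda_1,\dots,\lambda_{k+1}) = \sum_{j=1}^{k+1} f(\lambda_j)/\prod_{\ell\neq j}(\lambda_j-\lambda_\ell)$ recorded in the paper just before the proposition, so the appeal is harmless.
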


With this under our belts, we move on to proving perturbation formulas.
For the rest of this section, fix a complex Hilbert space $(H,\la \cdot,\cdot \ra)$.

\begin{lem}\label{lem.approxbybdd}
Fix $k \in \N$ and $a_1,\ldots,a_{k+1} \in C(H)_{\sa}$.
For $j \in \{1,\ldots,k+1\}$ and $n \in \N$, define
\[
a_{j,n} \coloneqq a_jP^{a_j}([-n,n]) = \chi_n(a_j) \in B(H)_{\sa},
\]
where $\chi_n(t) \coloneqq t\,1_{[-n,n]}(t)$ for $t \in \R$.
If $\varphi \in \ell^{\infty}(\R,\cB_{\R})^{\iotimes(k+1)}$ and $(b_{\cdot,n})_{n \in \N} = (b_{1,n},\ldots,b_{k,n})_{n \in \N}$ is a sequence in $B(H)^k$ converging in the (product) SOT to $b = (b_1,\ldots,b_k) \in B(H)^k$, then
\[
\varphi(a_{1,n},\ldots,a_{k+1,n})\sh b_{\cdot,n} \to \varphi(a_1,\ldots,a_{k+1}) \sh b
\]
in the SOT as $n \to \infty$.
\end{lem}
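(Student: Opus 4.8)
The plan is to realize both multiple operator integrals as weak$^*$ integrals over a single measure space and then invoke the Operator-Valued Dominated Convergence Theorem (Proposition~\ref{prop.opDCT}), working throughout in $\cM = B(H)$. Fix an $\ell^{\infty}$-integral projective decomposition $(\Sigma,\rho,\varphi_1,\ldots,\varphi_{k+1})$ of $\varphi$. Since the integral projective tensor product $\ell^{\infty}(\R,\cB_{\R})^{\iotimes(k+1)}$ and the notion of an $\ell^{\infty}$-IPD depend only on the measurable space $(\R,\cB_{\R})$ — not on any choice of projection-valued measure on it — this \emph{same} decomposition computes the MOI of $\varphi$ against $(P^{a_1},\ldots,P^{a_{k+1}})$ and against $(P^{a_{1,n}},\ldots,P^{a_{k+1,n}})$. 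Hence, by Theorem~\ref{thm.MOIsinM}, $\varphi(a_1,\ldots,a_{k+1})\sh b = \int_{\Sigma} F\,d\rho$ and $\varphi(a_{1,n},\ldots,a_{k+1,n})\sh b_{\cdot,n} = \int_{\Sigma} F_n\,d\rho$, where
\[
F(\sigma) \coloneqq P^{a_1}(\varphi_1(\cdot,\sigma))\,b_1\cdots b_k\,P^{a_{k+1}}(\varphi_{k+1}(\cdot,\sigma)), \quad F_n(\sigma) \coloneqq P^{a_{1,n}}(\varphi_1(\cdot,\sigma))\,b_{1,n}\cdots b_{k,n}\,P^{a_{k+1,n}}(\varphi_{k+1}(\cdot,\sigma)),
\]
and both $F$ and $F_n$ are weak$^*$ integrable.

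The first key step is a bounded-convergence fact for the truncated spectral calculus: for any $\psi \in \ell^{\infty}(\R,\cB_{\R})$ and any $j$, the composition rule $g(h(A)) = (g\circ h)(A)$ gives $P^{a_{j,n}}(\psi) = \psi(\chi_n(a_j)) = (\psi\circ\chi_n)(a_j) = P^{a_j}(\psi\circ\chi_n)$. Because $\chi_n(t) = t$ for $|t|\le n$, the functions $\psi\circ\chi_n$ converge to $\psi$ pointwise on $\R$ while $\|\psi\circ\chi_n\|_{\ell^{\infty}(\R)} \le \|\psi\|_{\ell^{\infty}(\R)}$; thus $\psi\circ\chi_n\to\psi$ boundedly, and Proposition~\ref{prop.integP}.\ref{item.PintDCT} yields $P^{a_{j,n}}(\psi)\to P^{a_j}(\psi)$ in the S$^*$OT. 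Applying this with $\psi = \varphi_j(\cdot,\sigma)$, using the hypothesis $b_{j,n}\to b_j$ in the SOT, the uniform bounds $\|P^{a_{j,n}}(\varphi_j(\cdot,\sigma))\|\le\|\varphi_j(\cdot,\sigma)\|_{\ell^{\infty}(\R)}$, and $C \coloneqq \sup_{n}\max_{1\le j\le k}\|b_{j,n}\| < \infty$ (the last by the Uniform Boundedness Principle, since each $(b_{j,n})_{n}$ is SOT-convergent hence pointwise bounded), a left-to-right induction on the $2k+1$ factors shows $F_n(\sigma)\to F(\sigma)$ in the SOT for every $\sigma\in\Sigma$. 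The only fact used in the induction is that $X_nY_n\to XY$ in the SOT whenever $X_n\to X$, $Y_n\to Y$ in the SOT and $\sup_n\|X_n\|<\infty$, together with the observation that each partial product inherits the uniform bound.

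It remains to check the domination hypothesis of Proposition~\ref{prop.opDCT}. From the uniform operator-norm estimates above, $\|F_n(\sigma)\|\le C^{k}\prod_{j=1}^{k+1}\|\varphi_j(\cdot,\sigma)\|_{\ell^{\infty}(\R)}$ for all $n$ and $\sigma$, and the right-hand side is a $\rho$-measurable function with finite integral by property~\ref{item.upperintegofphis} of an $\ell^{\infty}$-IPD; in particular $\sup_n\|F_n\|$ is dominated by a $\rho$-integrable function. Proposition~\ref{prop.opDCT} then gives $\int_{\Sigma} F_n\,d\rho\to\int_{\Sigma} F\,d\rho$ in the SOT, which is precisely the claimed convergence $\varphi(a_{1,n},\ldots,a_{k+1,n})\sh b_{\cdot,n}\to\varphi(a_1,\ldots,a_{k+1})\sh b$ in the SOT.

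I expect the only real friction to be organizational: keeping track of which factors in $F_n(\sigma)$ converge in the S$^*$OT (the truncated spectral integrals) versus merely the SOT (the operators $b_{j,n}$), and ordering the multiplication so that SOT-continuity of multiplication only ever requires uniform boundedness of the left-hand partial product. The identification of the spectral measures of $a_{j,n}$ via $\chi_n$, the bounded-convergence input, and the domination bound are all immediate consequences of results already recorded above.
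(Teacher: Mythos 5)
Your proof is correct and follows essentially the same route as the paper: fix one $\ell^{\infty}$-IPD (which makes sense simultaneously for the truncated and untruncated operators since the IPD notion depends only on $(\R,\cB_{\R})$), use $\psi(\chi_n(a_j))=(\psi\circ\chi_n)(a_j)$ together with Proposition \ref{prop.integP}.\ref{item.PintDCT} to get S$^*$OT convergence of each spectral-calculus factor, multiply through (with the uniform norm bounds coming from the Uniform Boundedness Principle) to get SOT convergence of the integrand, dominate by $C^{k}\prod_j\|\varphi_j(\cdot,\sigma)\|_{\ell^{\infty}(\R)}$, and apply Proposition \ref{prop.opDCT}. The only small quibble: the bound $C^{k}\prod_j\|\varphi_j(\cdot,\sigma)\|_{\ell^{\infty}(\R)}$ need not itself be $\rho$-measurable in $\sigma$ (only its upper integral is assumed finite), so one should phrase the domination in terms of the upper integral as the paper does, though — as you in fact go on to say — the definition of the upper integral still supplies a genuine integrable dominating function, so the application of Proposition \ref{prop.opDCT} stands.
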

\begin{proof}
First, fix $j \in \{1,\ldots,k+1\}$ and $n \in \N$.
If $f \in \ell^0(\R,\cB_{\R})$, then $f(a_{j,n}) = f(\chi_n(a_j)) = (f \circ \chi_n)(a_j)$.
Now, if $f$ is also bounded, then $\sup_{n \in \N} \|f\circ\chi_n\|_{\ell^{\infty}(\R)} \leq \|f\|_{\ell^{\infty}(\R)} < \infty$ and $f \circ \chi_n \to f \circ \id_{\R} = f$ pointwise as $n \to \infty$.
Therefore, by Proposition \ref{prop.integP}\ref{item.PintDCT}, $f(a_{j,n}) \to f(a_j)$ in the S$^*$OT as $n \to \infty$.

Next, let $(\Sigma,\rho,\varphi_1,\ldots,\varphi_{k+1})$ be a $\ell^{\infty}$-IPD of $\varphi$.
By definition,
\[
\varphi(a_{1,n},\ldots,a_{k+1,n})\sh b = \int_{\Sigma} \varphi_1(a_{1,n},\sigma)\,b_1\cdots\varphi_k(a_{k,n},\sigma)\,b_k\,\varphi_{k+1}(a_{k+1,n},\sigma)\, \rho(d\sigma)
\]
whenever $b = (b_1,\ldots,b_k) \in B(H)^k$.
By the previous paragraph's observations,
\[
\varphi_1(a_{1,n},\sigma)\,b_{1,n}\cdots\varphi_k(a_{k,n},\sigma)\,b_{k,n}\,\varphi_{k+1}(a_{k+1,n},\sigma) \to \varphi_1(a_1,\sigma)\,b_1\cdots\varphi_k(a_k,\sigma)\,b_k\,\varphi_{k+1}(a_{k+1},\sigma)\pagebreak
\]
in the SOT as $n \to \infty$, for all $\sigma \in \Sigma$.
Since
\begin{align*}
    & \overline{\int_{\Sigma}}\sup_{n \in \N} \|\varphi_1(a_{1,n},\sigma)\,b_{1,n}\cdots\varphi_k(a_{k,n},\sigma)\,b_{k,n}\,\varphi_{k+1}(a_{k+1,n},\sigma)\|\,\rho(d\sigma)\\
    & \; \; \; \; \; \; \; \; \; \; \; \leq \sup_{n\in \N}(\|b_{1,n}\|\cdots\|b_{k,n}\|)\overline{\int_{\Sigma}}\|\varphi_1(\cdot,\sigma)\|_{\ell^{\infty}(\R)}\cdots\|\varphi_{k+1}(\cdot,\sigma)\|_{\ell^{\infty}(\R)}\,\rho(d\sigma) < \infty,
\end{align*}
the desired result follows from Proposition \ref{prop.opDCT} and the definition of $\varphi(a_1,\ldots,a_{k+1})\sh b$.
\end{proof}

Before stating and proving our perturbation formulas, we make an observation that we shall use repeatedly.
If $f \colon \R \to \C$ is Lipschitz, then there are constants $C_1,C_2 \geq 0$ such that $|f(\lambda)| \leq C_1|\lambda|+C_2$, for all $\lambda \in \R$.
In particular, it follows from the definition of functional calculus and the Spectral Theorem that
\[
\dom(a) \subseteq \dom(f(a)), \numberthis\label{eq.dom}
\]
for all $a \in C(H)_{\sa}$.

\begin{nota}\label{nota.list}
Let $S$ be a set, $m \in \N$, and $s = (s_1,\ldots,s_m) \in S^m$.
If $j \in \{1,\ldots,m+1\}$, then
\[
s_{j-} \coloneqq (s_1,\ldots,s_{j-1}) \in S^{j-1} \; \text{ and } \; s_{j+} \coloneqq (s_j,\ldots,s_m) \in S^{m+1-j},
\]
where $s_{1-}$ and $s_{(m+1)+}$ are both the empty list.
\end{nota}

\begin{thm}[Perturbation formulas]\label{thm.perturb}
Fix $c \in B(H)_{\sa}$ and $a \in C(H)_{\sa}$.
If $f \in C^1(\R)$ is such that $f^{[1]} \in \ell^{\infty}(\R,\cB_{\R}) \iotimes \ell^{\infty}(\R,\cB_{\R})$, then
\[
f(a+c)-f(a) = f^{[1]}(a+c,a)\sh c. \numberthis\label{eq.perturb0}
\]
More precisely, $f(a+c)-f(a)$ is densely defined and bounded, and $f^{[1]}(a+c,a)\sh c$ is its unique bounded linear extension.
Now, suppose $k \geq 2$, $b=(b_1,\ldots,b_{k-1}) \in B(H)_{\sa}^{k-1}$, and $\vec{a} = (a_1,\ldots,a_{k-1}) \in C(H)_{\sa}^{k-1}$. If $f \in C^k(\R)$ is such that $f^{[k-1]} \in \ell^{\infty}(\R,\cB_{\R})^{\iotimes k}$ and $f^{[k]} \in \ell^{\infty}(\R,\cB_{\R})^{\iotimes (k+1)}$, then
\[
f^{[k-1]}\big(\vec{a}_{j-},a+c,\vec{a}_{j+}\big)\sh b - f^{[k-1]}\big(\vec{a}_{j-},a,\vec{a}_{j+}\big)\sh b = f^{[k]}\big(\vec{a}_{j-},a+c,a,\vec{a}_{j+}\big)\sh[b_{j-},c,b_{j+}], \numberthis\label{eq.perturbk}
\]
for all $j \in \{1,\ldots,k\}$.
\end{thm}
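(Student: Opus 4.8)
Here is a plan for proving \eqref{eq.perturb0} and \eqref{eq.perturbk}.

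I would prove both displays by the same two-stage strategy: first establish the identity when every operator in sight is bounded self-adjoint, and then remove that restriction via a truncation argument built on Lemma~\ref{lem.approxbybdd}. I would treat \eqref{eq.perturb0} as the case $k=1$ and handle it together with \eqref{eq.perturbk}. So suppose temporarily that $a$, $a+c$, and (when $k\geq 2$) $a_1,\ldots,a_{k-1}$ are all bounded, let $\boldsymbol P$ be the $(k+1)$-tuple of spectral measures obtained by inserting $P^{a+c},P^a$ into the list $P^{a_1},\ldots,P^{a_{k-1}}$ at positions $j,j+1$ (so $\boldsymbol P=(P^{a+c},P^a)$ when $k=1$), and note that every spectrum in sight is now compact.

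The heart of the bounded case is a three-step manipulation. First, since $P^a(\sigma(a))=\id_H$, one checks directly from Theorem~\ref{thm.MOIsinM} that inserting a spectral measure integrated against the constant $1$ with $\id_H$ beside it changes nothing; hence both $f^{[k-1]}(\vec a_{j-},a+c,\vec a_{j+})\sh b$ and $f^{[k-1]}(\vec a_{j-},a,\vec a_{j+})\sh b$ are MOIs over $\boldsymbol P$, evaluated on the common list $(b_1,\ldots,b_{j-1},\id_H,b_j,\ldots,b_{k-1})$, of the two functions
\[
\Phi_+(\om_1,\ldots,\om_{k+1})\coloneqq f^{[k-1]}(\om_1,\ldots,\om_j,\om_{j+2},\ldots,\om_{k+1}),\qquad \Phi_-(\om_1,\ldots,\om_{k+1})\coloneqq f^{[k-1]}(\om_1,\ldots,\om_{j-1},\om_{j+1},\ldots,\om_{k+1})
\]
(for $k=1$: the elementary tensors $f\otimes 1$ and $1\otimes f$ applied to $\id_H$); all functions here lie in the appropriate integral projective tensor product precisely because the spectra are compact. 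Second, the divided-difference recursion (Definition~\ref{def.divdiff}) together with the symmetry of $f^{[k]}$ and $f^{[k-1]}$ gives $\Phi_+-\Phi_-=(\om_j-\om_{j+1})\,f^{[k]}$ pointwise on $\R^{k+1}$ (both sides vanish where $\om_j=\om_{j+1}$); for $k=1$ this is $f(\lambda)-f(\mu)=(\lambda-\mu)f^{[1]}(\lambda,\mu)$. So, by Proposition~\ref{prop.linandmult}.\ref{item.lin}, the left side of \eqref{eq.perturbk} is the MOI of $(\om_j-\om_{j+1})f^{[k]}$ over $\boldsymbol P$ applied to $(b_1,\ldots,b_{j-1},\id_H,b_j,\ldots,b_{k-1})$. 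Third, writing $(\om_j-\om_{j+1})f^{[k]}=f^{[k]}\cdot\widetilde\psi$ with $\psi(\om_j,\om_{j+1})=\om_j-\om_{j+1}$ — which lies in $\ell^{\infty}(\sigma(a+c),\cB_{\sigma(a+c)})\iotimes\ell^{\infty}(\sigma(a),\cB_{\sigma(a)})$ because the spectra are compact — Proposition~\ref{prop.linandmult}.\ref{item.mult2} with $m=j$ pulls $\psi$ out of the integrand, replacing the inserted $\id_H$ in position $j$ by $\bigl(I^{P^{a+c},P^a}\psi\bigr)(\id_H)=(a+c)\,\id_H-\id_H\,a=c$. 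Collecting the three steps yields exactly $f^{[k]}(\vec a_{j-},a+c,a,\vec a_{j+})\sh(b_{j-},c,b_{j+})$, i.e. \eqref{eq.perturbk}; for $k=1$ the same chain produces \eqref{eq.perturb0}. (The degenerate positions $j=1$ and $j=k$, where $\vec a_{j-}$ or $\vec a_{j+}$ is empty, require the same bookkeeping with trivial modifications.)

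To remove boundedness I would pass to the truncations $\chi_n(\,\cdot\,)=(\,\cdot\,)P^{(\cdot)}([-n,n])$ and let $n\to\infty$, invoking Lemma~\ref{lem.approxbybdd} (with the bounded operators $b_1,\ldots,b_{k-1},c$ held fixed) to see that each MOI appearing converges in the strong operator topology to the corresponding untruncated one. For \eqref{eq.perturbk} this is essentially all there is, once one checks that the truncated bounded-case identity carries $\chi_n(a+c)-\chi_n(a)$ in the perturbed slot and that this operator may be exchanged for $c$ in the limit. For \eqref{eq.perturb0} there is an additional analytic point about interpreting the left side: $f^{[1]}\in\ell^{\infty}(\R,\cB_{\R})^{\iotimes 2}$ forces $f$ to be Lipschitz (with constant at most $\|f^{[1]}\|_{\ell^{\infty}(\R,\cB_{\R})^{\iotimes 2}}$), so by \eqref{eq.dom} we have $\dom(a+c)=\dom(a)\subseteq\dom(f(a+c))\cap\dom(f(a))$ and $f(a+c)-f(a)$ is at least densely defined; one then verifies with the spectral theorem and dominated convergence that $f(\chi_n(a+c))h\to f(a+c)h$ and $f(\chi_n(a))h\to f(a)h$ for $h\in\dom(a)$, and concludes that $f(a+c)-f(a)$ agrees on the core $\dom(a^2)$ — hence, by the Lipschitz bound, on all of $\dom(a)$ — with the bounded operator $f^{[1]}(a+c,a)\sh c$.

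The step I expect to be the genuine obstacle is the one flagged above: replacing $\chi_n(a+c)-\chi_n(a)$ by $c$ in the limit. This difference is not norm-bounded in $n$ and does not converge strongly on all of $H$, so Lemma~\ref{lem.approxbybdd} does not apply to it directly. The remedy is to exploit that $\chi_n$ is the identity on $[-n,n]$: applied to the vectors produced by an $\ell^\infty$-integral projective decomposition of $f^{[k-1]}$ (or $f^{[1]}$), the operator $\chi_n(a+c)-\chi_n(a)-c$ generates an error controlled by the tails $\int_{|\lambda|>n}\lambda^2\,dP^{a}_{h,h}$ and $\int_{|\lambda|>n}\lambda^2\,dP^{a+c}_{h,h}$, which vanish as $n\to\infty$ for $h$ in a suitable core of $a$ — and this is precisely why one proves the identity first on $\dom(a^2)$ and only then extends by continuity. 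Everything else is bookkeeping: tracking which slots change, and checking that the functions manipulated in the bounded case genuinely lie in the relevant integral projective tensor products, for which the compactness of the spectra (hence the reduction to bounded operators) is what makes the argument go through.
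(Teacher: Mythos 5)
Your bounded-operator argument is the same as the paper's: use the divided-difference identity $\Phi_+-\Phi_-=(\om_j-\om_{j+1})f^{[k]}$ and Proposition~\ref{prop.linandmult}.\ref{item.mult2} to pull out $\psi(\lambda,\mu)=\lambda-\mu$, turning the inserted $\id_H$ into $c$. The truncation-and-limit strategy and the invocation of Lemma~\ref{lem.approxbybdd} are also the paper's approach. But the step you yourself flag as the genuine obstacle is where the proposal breaks, and the remedy you sketch does not repair it.

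The difficulty is that $d_n-a_n=\chi_n(a+c)-\chi_n(a)$ is, inside the integrand of the MOI, being applied to vectors of the form $\varphi_{j+1}(a_n,\sigma)\,b_j\cdots b_{k-1}\,\varphi_{k+1}(a_{k+1,n},\sigma)\,h$. When $j$ is not the last slot, these intermediate vectors pass through the arbitrary bounded operators $b_j,\ldots,b_{k-1}$, so there is no reason for them to lie in $\dom(a)$ even when $h$ belongs to a nice core such as $\dom(a^2)$. Your estimate by the tails $\int_{|\lambda|>n}\lambda^2\,dP^a_{h,h}$ controls $(\chi_n(a+c)-\chi_n(a)-c)w$ only for $w\in\dom(a)$, which is adequate when the perturbed slot is adjacent to $h$ (e.g. $k=1$) but says nothing about slots further to the left. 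Passing from $\dom(a^2)$ to $\dom(a)$ by Lipschitz continuity does not help: the issue is the domain of the intermediate vectors, not of $h$.

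The paper's fix is to insert the truncating projections $p_n\coloneqq P^a([-n,n])$ and $q_n\coloneqq P^{a+c}([-n,n])$ directly into the argument list (replacing $b_{j-1}$ and $b_j$ by $b_{j-1}q_n$ and $p_n b_j$, or multiplying externally by $q_n$ or $p_n$ when $j\in\{1,k\}$), and to use the algebraic identity
\[
q_n\,\psi_1(d_n)(d_n-a_n)\psi_2(a_n)\,p_n \;=\; \psi_1(a+c)\,q_n\,c\,p_n\,\psi_2(a),
\]
which rests on $(\psi\circ\chi_n)\,1_{[-n,n]}=\psi\,1_{[-n,n]}$ and on $q_n(d_n-a_n)p_n=q_ncp_n$ (valid because $\im p_n\subseteq\dom(a)=\dom(a+c)$). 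This replaces the problematic $d_n-a_n$ in the integrand by $q_ncp_n$, which is uniformly bounded and converges strongly to $c$, so Lemma~\ref{lem.approxbybdd} then applies on both sides. You would need something equivalent to this projection-insertion device — your current remedy, restricting $h$ to a core of $a$, is not enough.
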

\begin{proof}
We first make an important observation.
Fix $a \in C(H)_{\sa}$, $c \in B(H)_{\sa}$, and $n \in \N$.
Now, define $p_n \coloneqq P^a([-n,n])$, $q_n \coloneqq P^{a+c}([-n,n])$, $a_n \coloneqq a\,p_n = \chi_n(a)$, and $d_n \coloneqq (a+c)\,q_n= \chi_n(a+c)$ in the notation of Lemma \ref{lem.approxbybdd}.
If $\psi_1,\psi_2 \in \ell^{\infty}(\R,\cB_{\R})$, then
\begin{align*}
    q_n\,\psi_1(d_n)(d_n-a_n)\psi_2(a_n)\,p_n & = 1_{[-n,n]}(a+c)\,(\psi_1 \circ \chi_n)(a+c)\,(d_n-a_n)\,(\psi_2 \circ \chi_n)(a)\,1_{[-n,n]}(a) \\
    & = ((\psi_1 \circ \chi_n)1_{[-n,n]})(a+c)\,(d_n-a_n)\,((\psi_2 \circ \chi_n)1_{[-n,n]})(a) \\
    & = (\psi_1 \,1_{[-n,n]})(a+c)\,(d_n-a_n)\,(\psi_2 \,1_{[-n,n]})(a) \\
    & = \psi_1(a+c)\,q_n\,(d_n-a_n)\,p_n\,\psi_2(a) = \psi_1(a+c)\,q_n\,c\,p_n\,\psi_2(a),
\end{align*}
where $q_n(d_n-a_n)p_n = q_nd_np_n - q_na_np_n = q_n(a+c)p_n - q_nap_n = q_ncp_n$ because $\im p_n \subseteq \dom(a) = \dom(a+c)$.

We now begin in earnest.
Let $f \in C^1(\R)$ be such that $f^{[1]} \in \ell^{\infty}(\R,\cB_{\R}) \iotimes \ell^{\infty}(\R,\cB_{\R})$. Then
\[
f(\lambda)-f(\mu) = f^{[1]}(\lambda,\mu)(\lambda-\mu), \numberthis\label{eq.keyperturbid1}
\]
for all $\lambda,\mu \in \R$.
Now, let $a,c \in B(H)_{\sa}$.
Since $\sigma(a)$ and $\sigma(a+c)$ are compact and $f \in C(\R)$, the functions
\begin{align*}
    \sigma(a+c)\times \sigma(a) \ni (\lambda,\mu) & \mapsto \psi(\lambda,\mu) = \lambda-\mu \in \C  \, \text{ and} \\
    \sigma(a+c)\times \sigma(a) \ni (\lambda,\mu) & \mapsto \varphi(\lambda,\mu) = f(\lambda)-f(\mu) \in \C
\end{align*}
belong to $\ell^{\infty}(\sigma(a+c),\cB_{\sigma(a+c)}) \iotimes \ell^{\infty}(\sigma(a),\cB_{\sigma(a)})$.
By Proposition \ref{prop.linandmult}\ref{item.mult2} and \eqref{eq.keyperturbid1}, we have
\[
I^{a+c,a}\varphi = \big(I^{a+c,a}f^{[1]}\big)\circ \big(I^{a+c,a}\psi\big).
\]
Applying this to the identity $1 = \id_H \in B(H)$, we conclude
\[
f(a+c) - f(a) = \big(I^{a+c,a}\varphi\big)[1] = \big(I^{a+c,a}f^{[1]}\big)\big[ \big(I^{a+c,a}\psi\big)[1]\big] = \big(I^{a+c,a}f^{[1]}\big)[c] = f^{[1]}(a+c,a)\sh c,
\]
as desired.
\pagebreak

For general $a \in C(H)_{\sa}$, we begin by showing $f(a+c)-f(a)$ is densely defined;
specifically, we show $\dom(a) \subseteq \dom(f(a+c)-f(a))$.
Indeed, since $f^{[1]} \in \ell^{\infty}(\R^2,\cB_{\R^2})$, $f$ is Lipschitz on $\R$.
By \eqref{eq.dom}, we have $\dom(a_0) \subseteq \dom(f(a_0))$, for all $a_0 \in C(H)_{\sa}$.
In particular, since $\dom(a) = \dom(a+c)$, we get
\[
\dom(a) = \dom(a) \cap \dom(a+c) \subseteq \dom(f(a+c))\cap \dom(f(a)) = \dom(f(a+c)-f(a)),
\]
as desired.
Next, let $p_n$, $q_n$, $a_n$, and $d_n$ be as in the first paragraph.
If $(\Sigma,\rho,\varphi_1,\varphi_2)$ is a $\ell^{\infty}$-IPD of $f^{[1]}$, then the results of the previous two paragraphs and Lemma \ref{lem.approxbybdd} give
\begin{align*}
    q_n(f(d_n)-f(a_n))p_n & = q_n\,f^{[1]}(d_n,a_n)\sh (d_n-a_n)\,p_n = \int_{\Sigma} q_n\,\varphi(d_n,\sigma)(d_n-a_n)\varphi_2(a_n,\sigma)\,p_n\,\rho(d\sigma) \\
    & = \int_{\Sigma} \varphi(a+c,\sigma)\,q_n\,c\,p_n\,\varphi_2(a,\sigma)\,\rho(d\sigma) = f^{[1]}(a+c,c)\sh[q_n\,c\,p_n] \to f^{[1]}(a+c,a)\sh c
\end{align*}
in the SOT as $n \to \infty$, since $q_n \to 1$ and $p_n \to 1$ in the SOT as $n \to \infty$.
But now, notice
\[
f(a_n)\,p_n = (f \circ \chi_n)(a)\,1_{[-n,n]}(a) = ((f \circ \chi_n)\,1_{[-n,n]})(a) = (f\,1_{[-n,n]})(a) = f(a)\,p_n
\]
and similarly $q_nf(d_n)p_n = q_nf(a+c)p_n$.
(For the latter, we use that $\im p_n \subseteq \dom(a) \subseteq \dom(f(a+c))$.)
It follows that if $m \in \N$, $h \in \im p_m$, and $n \geq m$, then
\begin{align*}
    q_n(f(d_n)-f(a_n))p_nh & = q_n(f(a+c)-f(a))p_nh \\
    & = q_n(f(a+c)-f(a))p_np_mh \\
    & = q_n(f(a+c)-f(a))p_mh \\
    & \to (f(a+c)-f(a))p_mh = (f(a+c)-f(a))h
\end{align*}
in $H$ as $n \to \infty$.
We have now proven that
\[
(f(a+c)-f(a))h = (f^{[1]}(a+c,c)\sh c)h
\]
for $h \in \im p_m$.
Since $\bigcup_{m \in \N} \im p_m \subseteq H$ is a dense linear subspace, we are done with the first part.

Next, let $k \geq 2$ and $f \in C^k(\R)$ be such that $f^{[k-1]} \in \ell^{\infty}(\R,\cB_{\R})^{\iotimes k}$ and $f^{[k]} \in \ell^{\infty}(\R,\cB_{\R})^{\iotimes (k+1)}$.
By definition and symmetry of divided differences, if $j \in \{1,\ldots,k\}$ and $\lambda,\mu \in \R$, then
\[
f^{[k-1]}\big(\vec{\lambda}_{j-},\lambda,\vec{\lambda}_{j+}\big) - f^{[k-1]}\big(\vec{\lambda}_{j-},\mu,\vec{\lambda}_{j+}\big) = f^{[k]}\big(\vec{\lambda}_{j-},\lambda,\mu,\vec{\lambda}_{j+}\big)(\lambda-\mu), \numberthis\label{eq.keyperturbid2}
\]
for $\vec{\lambda} = (\lambda_1,\ldots,\lambda_{k-1}) \in \R^{k-1}$.
Now, suppose $a,c \in B(H)_{\sa}$ again, and fix $\vec{a} = (a_1,\ldots,a_{k-1}) \in C(H)_{\sa}^{k-1}$ and $b = (b_1,\ldots,b_{k-1}) \in B(H)^{k-1}$.
Since $\sigma(a)$ and $\sigma(a+c)$ are compact and $f^{[k-1]} \in \ell^{\infty}(\R,\cB_{\R})^{\iotimes k}$, we have that both of the functions
\begin{align*}
    \R^{j-1} \times \sigma(a+c)\times \sigma(a) \times \R^{k-j} \ni (u,\lambda,\mu,v) & \overset{\psi}{\mapsto} \lambda-\mu \in \C  \, \text{ and} \\
    \R^{j-1} \times \sigma(a+c)\times \sigma(a) \times \R^{k-j} \ni (u,\lambda,\mu,v) & \overset{\varphi}{\mapsto}  f^{[k-1]}(u,\lambda,v) - f^{[k-1]}(u,\mu,v) \in \C
\end{align*}
belong to
\[
\ell^{\infty}(\R,\cB_{\R})^{\iotimes(j-1)}\iotimes \ell^{\infty}(\sigma(a+c),\cB_{\sigma(a+c)}) \iotimes \ell^{\infty}(\sigma(a),\cB_{\sigma(a)}) \iotimes \ell^{\infty}(\R,\cB_{\R})^{\iotimes(k-j)}.
\]
This allows us to apply $I^{\vec{a}_{j-},a+c,a,\vec{a}_{j+}}$ to \eqref{eq.keyperturbid2}, which may be rewritten
\[
\varphi = f^{[k]}\psi.
\]
If we do so and then plug $(b_{j-},1,b_{j+})$ into the result, then we get
\begin{align*}
    f^{[k-1]}\big(\vec{a}_{j-},a+c,\vec{a}_{j+}\big)\sh b - f^{[k-1]}\big(\vec{a}_{j-},a,\vec{a}_{j+}\big)\sh b & = \varphi\big(\vec{a}_{j-},a+c,a,\vec{a}_{j+}\big)\sh[b_{j-},1,b_{j+}] \\
    & = \big(f^{[k]}\psi\big)\big(\vec{a}_{j-},a+c,a,\vec{a}_{j+}\big)\sh[b_{j-},1,b_{j+}] \\
    & = f^{[k]}\big(\vec{a}_{j-},a+c,a,\vec{a}_{j+}\big)\sh[b_{j-},c,b_{j+}],
\end{align*}
where in the last line we used Proposition \ref{prop.linandmult}\ref{item.mult2} and the definition of $\psi$.
\pagebreak

Finally, for general $a \in C(H)_{\sa}$, let $p_n$, $q_n$, $a_n$, and $d_n$ be as in the first paragraph.
If $1 < j < k$, then we also let $b_{\cdot,n} \coloneqq (b_{(j-1)-},b_{j-1}q_n,p_nb_j,b_{(j+1)+})$.
Since $p_n \to 1$ and $q_n \to 1$ in the SOT, Lemma \ref{lem.approxbybdd} gives
\begin{align*}
    f^{[k-1]}\big(\vec{a}_{j-},d_n,\vec{a}_{j+}\big)\sh b_{\cdot,n} & \to f^{[k-1]}\big(\vec{a}_{j-},a+c,\vec{a}_{j+}\big)\sh b \; \text{ and} \\
    f^{[k-1]}\big(\vec{a}_{j-},a_n,\vec{a}_{j+}\big)\sh b_{\cdot,n} & \to f^{[k-1]}\big(\vec{a}_{j-},a,\vec{a}_{j+}\big)\sh b
\end{align*}
in the SOT as $n \to \infty$.
Now, let $(\Sigma,\rho,\varphi_1,\ldots,\varphi_{k+1})$ be a $\ell^{\infty}$-IPD of $f^{[k]}$.
Then
\begin{align*}
    T_{j,n} & \coloneqq f^{[k]}\big(\vec{a}_{j-},d_n,a_n,\vec{a}_{j+}\big)\sh[(b_{\cdot,n})_{j-},d_n-a_n,(b_{\cdot,n})_{j+}] \\
    & = \int_{\Sigma} \Bigg(\prod_{j_1=1}^{j-1}\varphi(a_{j_1},\sigma)\,b_{j_1}\Bigg)q_n\,\varphi_j(d_n,\sigma)(d_n-a_n)\varphi_{j+1}(a_n,\sigma)\,p_n\Bigg(\prod_{j_2=j}^{k-1}b_{j_2}\,\varphi(a_{j_2+2},\sigma)\Bigg)\,\rho(d\sigma) \\
    & = \int_{\Sigma} \Bigg(\prod_{j_1=1}^{j-1}\varphi(a_{j_1},\sigma)\,b_{j_1}\Bigg)\varphi_j(a+c,\sigma)\,q_n\,c\,p_n\,\varphi_{j+1}(a,\sigma)\Bigg(\prod_{j_2=j}^{k-1}b_{j_2}\,\varphi(a_{j_2+2},\sigma)\Bigg)\,\rho(d\sigma) \\
    & = f^{[k]}\big(\vec{a}_{j-},a+c,a,\vec{a}_{j+}\big)\sh[b_{j-},q_n\,c\,p_n,b_{j+}] \\
    & \to f^{[k]}\big(\vec{a}_{j-},a+c,a,\vec{a}_{j+}\big)\sh[b_{j-},c,b_{j+}]
\end{align*}
in the SOT as $n \to \infty$, by the observation from the first paragraph and Lemma \ref{lem.approxbybdd}.
(Above, empty products are declared to be $1$.)
Since we already know from the previous paragraph that
\[
f^{[k-1]}\big(\vec{a}_{j-},d_n,\vec{a}_{j+}\big)\sh b_{\cdot,n} -  f^{[k-1]}\big(\vec{a}_{j-},a_n,\vec{a}_{j+}\big)\sh b_{\cdot,n} = f^{[k]}\big(\vec{a}_{j-},d_n,a_n,\vec{a}_{j+}\big)\sh[(b_{\cdot,n})_{j-},d_n-a_n,(b_{\cdot,n})_{j+}],
\]
for all $n \in \N$, this completes the proof when $1 < j < k$.
For the cases $j \in \{1,k\}$, we redefine
\[
b_{\cdot,n} \coloneqq (p_nb_1,b_{2+}) \; \text{ and } \; \tilde{b}_{\cdot,n} \coloneqq (b_{(k-1)-},b_{k-1}q_n).
\]
Then we use an argument similar to the one above to see that
\begin{align*}
    q_n(f^{[k-1]}(d_n,\vec{a}) \sh b_{\cdot,n} - f^{[k-1]}(a_n,\vec{a}) \sh b_{\cdot,n}) & = q_n f^{[k]}(d_n,a_n,\vec{a})\sh [d_n-a_n,b_{\cdot,n}] \\
    & = f^{[k]}(a+c,a,\vec{a})\sh [q_n\,c\,p_n,b]
\end{align*}
and
\begin{align*}
    \big(f^{[k-1]}(\vec{a},d_n) \sh \tilde{b}_{\cdot,n} - f^{[k-1]}(\vec{a},a_n) \sh \tilde{b}_{\cdot,n}\big)p_n & = \big(f^{[k]}(\vec{a},d_n,a_n)\sh \big[\tilde{b}_{\cdot,n},d_n-a_n\big]\big)p_n \\
    & = f^{[k]}(\vec{a},a+c,a)\sh [b,q_n\,c\,p_n].
\end{align*}
Then we use Lemma \ref{lem.approxbybdd} to take $n \to \infty$.
This completes the proof.
\end{proof}

\begin{cor}\label{cor.Iperturb}
Let $\cM \subseteq B(H)$ be a von Neumann algebra and $a \aff \cM_{\sa}$.
Suppose $(\cI,\|\cdot\|_{\cI}) \unlhd \cM$ is MOI-friendly.
If $f \in C^1(\R)$ is such that $f^{[1]} \in \ell^{\infty}(\R,\cB_{\R}) \iotimes \ell^{\infty}(\R,\cB_{\R})$, then
\[
f(a+c)-f(a) \in \cI \; \text{ and } \; \|f(a+c)-f(a)\|_{\cI} \leq \big\|f^{[1]}\big\|_{\ell^{\infty}(\sigma(a+c),\cB_{\sigma(a+c)})\iotimes \ell^{\infty}(\sigma(a),\cB_{\sigma(a)})}\|c\|_{\cI}
\]
for all $c \in \cI_{\sa} \coloneqq \cI \cap \cM_{\sa}$, where $\cM_{\sa} = \{b \in \cM : b^*=b\}$.
\end{cor}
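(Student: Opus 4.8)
The plan is to deduce this directly from the first perturbation formula \eqref{eq.perturb0} in Theorem~\ref{thm.perturb} together with the defining property of MOI-friendliness (Definition~\ref{def.MOIfriendly}); essentially all the work is in verifying that the hypotheses of those two results line up. First I would record the one nontrivial preliminary: that $a+c \aff \cM_{\sa}$. Indeed, $c \in \cI_{\sa} \subseteq \cM_{\sa}$ is a bounded self-adjoint operator, so $a+c$ (with $\dom(a+c) = \dom(a)$) is again self-adjoint, and for every unitary $u \in \cM'$ we have $u^*(a+c)u = u^*au + u^*cu = a+c$, using $a \aff \cM$ for the first summand and $c \in \cM = \cM''$ for the second. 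By Proposition~\ref{prop.aff}.\ref{item.funcofaffisaff}, it follows that $P^{a+c}(G) \in \cM$ and $P^a(G) \in \cM$ for all Borel $G$, so we are in the setup of the convention preceding Theorem~\ref{thm.MOIsinM} with $k = 1$ and the projection-valued measure spaces $(\sigma(a+c),\cB_{\sigma(a+c)},H,P^{a+c})$ and $(\sigma(a),\cB_{\sigma(a)},H,P^a)$.

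Next I would invoke Theorem~\ref{thm.perturb}: viewing $c$ as an element of $B(H)_{\sa}$, formula \eqref{eq.perturb0} says that $f(a+c)-f(a)$ is densely defined and bounded, with unique bounded linear extension $f^{[1]}(a+c,a)\sh c = \big(I^{P^{a+c},P^a}f^{[1]}\big)(c)$; here $f^{[1]}$ is read as its restriction to $\sigma(a+c)\times\sigma(a)$, which lies in $\ell^\infty(\sigma(a+c),\cB_{\sigma(a+c)})\iotimes\ell^\infty(\sigma(a),\cB_{\sigma(a)})$ because any $\ell^\infty$-IPD of $f^{[1]} \in \ell^\infty(\R,\cB_\R)^{\iotimes 2}$ restricts to one of $f^{[1]}|_{\sigma(a+c)\times\sigma(a)}$. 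As is standard, I would then silently identify the densely defined, bounded operator $f(a+c)-f(a)$ with this bounded extension.

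Finally I would apply Definition~\ref{def.MOIfriendly} with $\boldsymbol{P} = (P^{a+c},P^a)$, $\varphi = f^{[1]}$, $k = 1$, and $j = 1$: since $\cI$ is MOI-friendly, the MOI $I^{P^{a+c},P^a}f^{[1]} \colon \cM \to \cM$ restricts to a bounded linear map $(\cI,\|\cdot\|_{\cI}) \to (\cI,\|\cdot\|_{\cI})$ of norm at most $\|f^{[1]}\|_{\ell^\infty(\sigma(a+c),\cB_{\sigma(a+c)})\iotimes\ell^\infty(\sigma(a),\cB_{\sigma(a)})}$. Since $c \in \cI_{\sa} \subseteq \cI$, this yields $f(a+c)-f(a) = \big(I^{P^{a+c},P^a}f^{[1]}\big)(c) \in \cI$ together with
\[
\|f(a+c)-f(a)\|_{\cI} \le \big\|f^{[1]}\big\|_{\ell^\infty(\sigma(a+c),\cB_{\sigma(a+c)})\iotimes\ell^\infty(\sigma(a),\cB_{\sigma(a)})}\,\|c\|_{\cI},
\]
which is the claim. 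I do not expect any genuine obstacle here: the only points requiring care are the affiliation statement $a+c \aff \cM_{\sa}$ (so that the MOI is even defined with values in $\cM$, and then in $\cI$) and the routine identification of the a priori unbounded operator $f(a+c)-f(a)$ with its bounded extension.
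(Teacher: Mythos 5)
Your proof is correct and follows the same route as the paper: establish $a+c \aff \cM_{\sa}$ (so $P^{a+c}$, $P^a$ take values in $\cM$), invoke the first perturbation formula \eqref{eq.perturb0} from Theorem~\ref{thm.perturb}, and then apply the defining property of MOI-friendliness with $k=j=1$. The paper's proof is essentially the same argument, stated more tersely (it dismisses the affiliation step as ``easy to see''), whereas you spell out that step and the harmless restriction of $f^{[1]}$ from $\R^2$ to $\sigma(a+c)\times\sigma(a)$; both are fine.
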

\begin{proof}
Since $a \aff \cM_{\sa}$ and $c \in \cI_{\sa} \subseteq \cM_{\sa}$, it is easy to see $a+c \aff \cM_{\sa}$ as well.
In particular, the projection-valued measures $P^{a+c}$ and $P^a$ take values in $\cM$.
It then follows from \eqref{eq.perturb0} and the definition of MOI-friendly that $f(a+c)-f(a) \in \cI$ and $\|f(a+c)-f(a)\|_{\cI} \leq \|f^{[1]}\|_{\ell^{\infty}(\sigma(a+c),\cB_{\sigma(a+c)})\iotimes \ell^{\infty}(\sigma(a),\cB_{\sigma(a)})}\|c\|_{\cI}$.
\end{proof}
\begin{rem}[Quasicommutators]
Let $f \in C^1(\R)$ be such that $f^{[1]} \in \ell^{\infty}(\R,\cB_{\R}) \iotimes \ell^{\infty}(\R,\cB_{\R})$.
One can show using essentially the same proofs that if $a,b \in C(H)_{\sa}$ and $q \in B(H)$ are such that $aq-qb \in B(H)$ (i.e., $aq-qb$ is densely defined and bounded), then $f(a)q-qf(b) \in B(H)$ and
\[
f(a)q-qf(b) = f^{[1]}(a,b)\sh [aq-qb].
\]
As a result, we get a quasicommutator estimate in MOI-friendly ideals.
Let $\cM \subseteq B(H)$ be a von Neumann algebra, and suppose $(\cI,\|\cdot\|_{\cI}) \unlhd \cM$ is MOI-friendly.
If $a,b \aff \cM_{\sa}$ and $q \in B(H)$ are such that $aq-qb \in \cI$, then $f(a)q-qf(b) \in \cI$ and $\|f(a)q-qf(b)\|_{\cI} \leq \|f^{[1]}\|_{\ell^{\infty}(\sigma(a),\cB_{\sigma(a)}) \iotimes \ell^{\infty}(\sigma(b),\cB_{\sigma(b)})}\|aq-qb\|_{\cI}$.
Such quasicommutator estimates are of interest in the study of \textit{operator Lipschitz functions}.
Please see \cite{aleksandrovOL} or \cite{peller2} for more information.
\end{rem}

\subsection{The spaces \texorpdfstring{$BOC(\R)^{\iotimes(k+1)}$}{} and \texorpdfstring{$OC^{[k]}(\R)$}{}}\label{sec.OCk}

In the following section, we prove a general result about derivatives of operator functions.
In this section, we introduce the functions whose operator functions we shall be differentiating.
Then we use Peller's work from \cite{peller1}, which we review in detail in Appendix \ref{sec.BesovPeller}, to give a large class of examples of such functions.

\begin{defi}[Operator continuity]\label{def.opcont}
Fix $f \in \ell^0(\R,\cB_{\R})$.
We say that $f$ is \textbf{operator continuous} if 
\begin{enumerate}[label=(\alph*),leftmargin=2\parindent]
    \item for every complex Hilbert space $H$, $a \in C(H)_{\sa}$, and $c \in B(H)_{\sa}$, the operator $f(a+c)-f(a)$ is densely defined and bounded;
    and
    \item for every complex Hilbert space $H$ and $a \in C(H)_{\sa}$, $f(a+c)-f(a) \to 0$ in $B(H)$ as $c \to 0$ in $B(H)_{\sa}$.
    (More precisely, for every $a \in C(H)_{\sa}$ and $\e > 0$, there is some $\delta > 0$ such that $\|f(a+c)-f(a)\| < \e$ whenever $c \in B(H)_{\sa}$ and $\|c\| < \delta$.)
\end{enumerate}
In this case, we write $f \in OC(\R)$.
If in addition $f$ is bounded, then we write $f \in BOC(\R)$.
\end{defi}

Taking $H = \C$ in the definition, it is clear that operator continuous functions are continuous.
Also, we observe that if $f,g \in BOC(\R)$, $H$ is a complex Hilbert space, $a \in C(H)_{\sa}$, and $c \in B(H)_{\sa}$, then
\[
(fg)(a+c)-(fg)(a) = (f(a+c)-f(a))g(a+c) + f(a)(g(a+c)-g(a)).
\]
But then
\[
\|(fg)(a+c)-(fg)(a)\|  \leq \|g\|_{\ell^{\infty}(\R)} \|f(a+c)-f(a)\| + \|f\|_{\ell^{\infty}(\R)}\|g(a+c)-g(a)\| \to 0
\]
as $c \to 0$ in $B(H)_{\sa}$. Thus $fg \in BOC(\R)$.
It is even easier to see $f+g \in BOC(\R)$ and $\overline{f} \in BOC(\R)$.

Next, if $(\Sigma,\sH)$ is a measurable space, $\psi \colon \R \times \Sigma \to \C$ is measurable, $\psi(\cdot,\sigma) \in C(\R)$ for $\sigma \in \Sigma$, then
\[
\|\psi(\cdot,\sigma)\|_{\ell^{\infty}(\R)} = \sup_{t \in \Q}|\psi(t,\sigma)|, \, \text{ for all } \sigma \in \Sigma.
\]
In particular, $\sigma \mapsto \|\psi(\cdot,\sigma)\|_{\ell^{\infty}(\R)}$ is measurable.
Thus the following definition makes sense (without needing to use upper or lower integrals).

\begin{defi}[Integral projective tensor products II]\label{def.BOCIPTP}
Let $\varphi \colon \R^{k+1} \to \C$ be a function.
A $\boldsymbol{BOC}$\textbf{-integral projective decomposition} (BOCIPD) of $\varphi$ is a choice $(\Sigma,\rho,\varphi_1,\ldots,\varphi_{k+1})$ of a $\sigma$-finite measure space $(\Sigma,\sH,\rho)$ and measurable functions $\varphi_1, \ldots,\varphi_{k+1} \colon \R \times \Sigma \to \C$ such that
\begin{enumerate}[label=(\alph*),leftmargin=2\parindent]
    \item $\varphi_j(\cdot,\sigma) \in BOC(\R)$, for all $j \in \{1,\ldots,k+1\}$ and $\sigma \in \Sigma$; \label{item.BOCnullset}
    \item $\int_{\Sigma} \|\varphi_1(\cdot,\sigma)\|_{\ell^{\infty}(\R)} \cdots \|\varphi_{k+1}(\cdot,\sigma)\|_{\ell^{\infty}(\R)} \, \rho(d\sigma) < \infty$; and\label{item.BOICintegofphis}
    \item $\varphi(\boldsymbol{\lambda}) = \int_{\Sigma} \varphi_1(\lambda_1,\sigma) \cdots \varphi_{k+1}(\lambda_{k+1},\sigma) \, \rho(d\sigma)$, for all $\boldsymbol{\lambda} = (\lambda_1,\ldots,\lambda_{k+1}) \in \R^{k+1}$.\label{item.BOCequality}
\end{enumerate}
Now, define
\begin{align*}
    \|\varphi\|_{BOC(\R)^{\iotimes(k+1)}} &\coloneqq \inf\Bigg\{\int_{\Sigma} \|\varphi_1(\cdot,\sigma)\|_{\ell^{\infty}(\R)} \cdots \|\varphi_{k+1}(\cdot,\sigma)\|_{\ell^{\infty}(\R)} \, \rho(d\sigma) : (\Sigma,\rho,\varphi_1,\ldots,\varphi_{k+1})\\
    & \; \; \; \; \; \; \; \; \; \; \; \; \; \; \; \; \; \; \; \; \; \; \; \; \; \; \;  \; \; \; \text{ is a } BOC\text{-integral projective decomposition of } \varphi\Bigg\},
\end{align*}
where $\inf \emptyset \coloneqq \infty$.
Finally, we define
\[
BOC(\R)^{\iotimes(k+1)} \coloneqq \big\{\varphi : \|\varphi\|_{BOC(\R)^{\iotimes(k+1)}} < \infty\big\}
\]
to be the \textbf{$\boldsymbol{(k+1)}^{\text{\textbf{st}}}$ integral projective tensor power of $\boldsymbol{BOC(\R)}$}.
\end{defi}

\begin{prop}\label{prop.BOCIPTP}
$BOC(\R)^{\iotimes(k+1)} \subseteq C(\R^{k+1}) \cap \ell^{\infty}(\R,\cB_{\R})^{\iotimes(k+1)}$ is a $\ast$-subalgebra, and
\[
\big(BOC(\R)^{\iotimes(k+1)},\|\cdot\|_{BOC(\R)^{\iotimes(k+1)}}\big)
\]
is a Banach $\ast$-algebra under pointwise operations.
\end{prop}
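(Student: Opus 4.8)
Proposition \ref{prop.BOCIPTP} has four components, and the plan is to handle them in the following order: (1) the inclusion $BOC(\R)^{\iotimes(k+1)} \subseteq C(\R^{k+1}) \cap \ell^{\infty}(\R,\cB_{\R})^{\iotimes(k+1)}$; (2) closure of $BOC(\R)^{\iotimes(k+1)}$ under the pointwise $\ast$-algebra operations, together with the corresponding estimates on $\|\cdot\|_{BOC(\R)^{\iotimes(k+1)}}$; (3) the remaining norm axioms (definiteness, homogeneity); and (4) completeness. Throughout I would use the facts recorded right after Definition \ref{def.opcont}: operator continuous functions are continuous, so $BOC(\R) \subseteq C(\R) \subseteq \ell^{\infty}(\R,\cB_{\R})$, and $BOC(\R)$ is closed under sums, scalar multiples, pointwise products, and complex conjugation (with $\|\overline{f}\|_{\ell^{\infty}(\R)} = \|f\|_{\ell^{\infty}(\R)}$).

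For (1), fix $\varphi \in BOC(\R)^{\iotimes(k+1)}$ and a $BOC$-integral projective decomposition $(\Sigma,\rho,\varphi_1,\dots,\varphi_{k+1})$ of $\varphi$. The integrand $\boldsymbol{\lambda} \mapsto \varphi_1(\lambda_1,\sigma)\cdots\varphi_{k+1}(\lambda_{k+1},\sigma)$ is continuous on $\R^{k+1}$ for each $\sigma$ (as each $\varphi_j(\cdot,\sigma) \in BOC(\R) \subseteq C(\R)$) and is dominated in modulus by the $\rho$-integrable function $\sigma \mapsto \prod_j \|\varphi_j(\cdot,\sigma)\|_{\ell^{\infty}(\R)}$, so the Dominated Convergence Theorem gives $\varphi \in C(\R^{k+1})$; the same domination gives $|\varphi(\boldsymbol{\lambda})| \leq \int_{\Sigma} \prod_j \|\varphi_j(\cdot,\sigma)\|_{\ell^{\infty}(\R)}\,\rho(d\sigma)$, hence $\|\varphi\|_{\ell^{\infty}(\R^{k+1})} \leq \|\varphi\|_{BOC(\R)^{\iotimes(k+1)}}$ after taking the infimum. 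Moreover every $BOC$-integral projective decomposition is an $\ell^{\infty}$-integral projective decomposition in the sense of Definition \ref{def.babyIPTP} with $\Om_j = \R$, $\sF_j = \cB_{\R}$, and — by the measurability remark preceding Definition \ref{def.BOCIPTP} — the upper integral appearing in Definition \ref{def.babyIPTP} equals the ordinary integral for such decompositions; so $\|\varphi\|_{\ell^{\infty}(\R,\cB_{\R})^{\iotimes(k+1)}} \leq \|\varphi\|_{BOC(\R)^{\iotimes(k+1)}} < \infty$, and together with continuity (hence Borel measurability) this places $\varphi$ in $\ell^{\infty}(\R,\cB_{\R})^{\iotimes(k+1)}$. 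Since $C(\R^{k+1}) \cap \ell^{\infty}(\R,\cB_{\R})^{\iotimes(k+1)}$ is a $\ast$-algebra under pointwise operations (intersection of two such), it only remains to see $BOC(\R)^{\iotimes(k+1)}$ is closed under the operations.

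For (2) and (3): closure under complex conjugation, with $\|\overline{\varphi}\|_{BOC(\R)^{\iotimes(k+1)}} = \|\varphi\|_{BOC(\R)^{\iotimes(k+1)}}$, follows by conjugating every factor of a decomposition; closure under addition with the triangle inequality, together with $\|0\|_{BOC(\R)^{\iotimes(k+1)}} = 0$, follows from the disjoint-union construction (lay the $\sigma$-finite measure spaces of two decompositions side by side); and homogeneity $\|\alpha\varphi\|_{BOC(\R)^{\iotimes(k+1)}} = |\alpha|\,\|\varphi\|_{BOC(\R)^{\iotimes(k+1)}}$ follows by rescaling a single factor by $\alpha$ and an inverse-rescaling argument for $\alpha \neq 0$ — exactly as for ordinary projective tensor products. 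Definiteness of the norm is then immediate from $\|\varphi\|_{\ell^{\infty}(\R^{k+1})} \leq \|\varphi\|_{BOC(\R)^{\iotimes(k+1)}}$. For submultiplicativity, given decompositions $(\Sigma,\rho,\varphi_1,\dots,\varphi_{k+1})$ of $\varphi$ and $(\Sigma',\rho',\psi_1,\dots,\psi_{k+1})$ of $\psi$, take the ($\sigma$-finite) product measure space $(\Sigma \times \Sigma', \rho \otimes \rho')$ and the factors $\widetilde{\varphi}_j(\lambda,(\sigma,\sigma')) \coloneqq \varphi_j(\lambda,\sigma)\,\psi_j(\lambda,\sigma')$, which lie in $BOC(\R)$ in the variable $\lambda$ because $BOC(\R)$ is closed under products; Tonelli's theorem bounds $\int_{\Sigma \times \Sigma'} \prod_j \|\widetilde{\varphi}_j(\cdot,(\sigma,\sigma'))\|_{\ell^{\infty}(\R)}\,d(\rho \otimes \rho')$ by the product of the two single-decomposition integrals, and Fubini's theorem (applicable by the same absolute integrability) yields $(\varphi\psi)(\boldsymbol{\lambda}) = \int_{\Sigma \times \Sigma'} \prod_j \widetilde{\varphi}_j(\lambda_j,(\sigma,\sigma'))\,d(\rho \otimes \rho')$, so $(\Sigma \times \Sigma', \rho \otimes \rho', \widetilde{\varphi}_1,\dots,\widetilde{\varphi}_{k+1})$ is a $BOC$-integral projective decomposition of $\varphi\psi$ witnessing $\|\varphi\psi\|_{BOC(\R)^{\iotimes(k+1)}} \leq \|\varphi\|_{BOC(\R)^{\iotimes(k+1)}}\|\psi\|_{BOC(\R)^{\iotimes(k+1)}}$.

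For (4), I would invoke the standard criterion that a normed space is complete if every absolutely convergent series converges. Given $(\varphi^{(n)})_{n \in \N}$ with $\sum_n \|\varphi^{(n)}\|_{BOC(\R)^{\iotimes(k+1)}} < \infty$, pick for each $n$ a $BOC$-integral projective decomposition $(\Sigma_n,\rho_n,\varphi_1^{(n)},\dots,\varphi_{k+1}^{(n)})$ of $\varphi^{(n)}$ with cost at most $\|\varphi^{(n)}\|_{BOC(\R)^{\iotimes(k+1)}} + 2^{-n}$, and form the disjoint union $(\Sigma,\rho) \coloneqq \bigsqcup_n (\Sigma_n,\rho_n)$ (a $\sigma$-finite measure space) with factors $\varphi_j$ restricting to $\varphi_j^{(n)}$ on $\Sigma_n$. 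Then $\int_{\Sigma} \prod_j \|\varphi_j(\cdot,\sigma)\|_{\ell^{\infty}(\R)}\,\rho(d\sigma) \leq \sum_n \big(\|\varphi^{(n)}\|_{BOC(\R)^{\iotimes(k+1)}} + 2^{-n}\big) < \infty$, which by absolute convergence lets $\varphi(\boldsymbol{\lambda}) \coloneqq \int_{\Sigma} \prod_j \varphi_j(\lambda_j,\sigma)\,\rho(d\sigma) = \sum_n \varphi^{(n)}(\boldsymbol{\lambda})$ be defined and exhibits $(\Sigma,\rho,\varphi_1,\dots,\varphi_{k+1})$ as a $BOC$-integral projective decomposition of $\varphi$; applying the same construction to the tail $\bigsqcup_{n > N}(\Sigma_n,\rho_n)$ gives $\big\|\varphi - \sum_{n \leq N} \varphi^{(n)}\big\|_{BOC(\R)^{\iotimes(k+1)}} \leq \sum_{n > N} \big(\|\varphi^{(n)}\|_{BOC(\R)^{\iotimes(k+1)}} + 2^{-n}\big) \to 0$. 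I expect the main (though not deep) obstacle to be the bookkeeping in these disjoint-union and product-measure constructions — verifying $\sigma$-finiteness of the glued/product spaces, joint measurability of the glued factor functions, and the precise Fubini/Tonelli justifications — rather than any conceptual difficulty; this mirrors the proof of Proposition 4.1.4 in \cite{nikitopoulosMOI} referenced for the $\ell^{\infty}$-version.
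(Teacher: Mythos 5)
Your proposal is correct and follows the same route the paper itself indicates: the containment via the Dominated Convergence Theorem (plus the observation that a $BOC$-IPD is automatically an $\ell^\infty$-IPD because of the measurability of $\sigma\mapsto\|\varphi_j(\cdot,\sigma)\|_{\ell^\infty(\R)}$), and the algebra/norm/completeness parts via the disjoint-union and product-measure constructions that the paper defers to the pattern of Proposition 4.1.4 of \cite{nikitopoulosMOI}. You have merely spelled out the bookkeeping that the paper's two-sentence sketch leaves implicit, so there is nothing to flag.
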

\pagebreak
\begin{proof}[Sketch of proof]
The containment $BOC(\R)^{\iotimes(k+1)} \subseteq C(\R^{k+1}) \cap \ell^{\infty}(\R,\cB_{\R})^{\iotimes(k+1)}$ follows from the definitions and an application of the (standard) Dominated Convergence Theorem.
The rest of the statement follows from the observation above that $BOC(\R)$ is a $\ast$-algebra and arguments similar to (but easier than) those in the proof of Proposition 4.1.4 in \cite{nikitopoulosMOI}.
\end{proof}

We now introduce the space of functions to which the main result of the following section applies.

\begin{nota}\label{nota.OCk}
For $k \in \N$ and $f \in C^k(\R)$, define
\[
[f]_{OC^{[k]}(\R)} \coloneqq \sum_{j=1}^k\big\|f^{[j]}\big\|_{BOC(\R)^{\iotimes(j+1)}} \in [0,\infty],
\]
and let $OC^{[k]}(\R) \coloneqq \{f \in C^k(\R) : [f]_{OC^{[k]}(\R)} < \infty\}$.
\end{nota}

Notice that if $f \in C^1(\R)$ and $[f]_{OC^1(\R)} = \|f^{[1]}\|_{BOC(\R) \iotimes BOC(\R)}= 0$, then $f^{[1]} \equiv 0$, so $f$ must be a constant.
In particular, $[\cdot]_{OC^{[k]}(\R)}$ is a seminorm but not quite a norm.
If we define
\[
\|f\|_{OC^{[k]},r} \coloneqq \|f\|_{\ell^{\infty}([-r,r])}+[f]_{OC^{[k]}(\R)}
\]
for $r > 0$, then it can be shown --- using standard arguments and Proposition \ref{prop.BOCIPTP} ---  that $OC^{[k]}(\R)$ is a Fr\'{e}chet space with the topology induced by the collection $\{\|\cdot\|_{OC^{[k]},r} : r > 0\}$ of seminorms.
One can even show that $OC^{[k]}(\R)$ is a $\ast$-algebra under pointwise operations, and that these operations are continuous. Since we shall not need these facts, we shall not dwell on them.
Instead, we turn to examples.

\begin{defi}[Wiener space]\label{def.Wk}
If $k \in \N$, then we define the \textbf{$\boldsymbol{k^{\text{\textbf{th}}}}$ Wiener space} to be the set of functions $f \colon \R \to \C$ such that there is a (unique) Borel complex measure $\mu$ on $\R$ satisfying $\int_{\R} |\xi|^k\,|\mu|(d\xi) < \infty$ and $f(\lambda) = \int_{\R} e^{i\lambda\xi} \,\mu(d\xi)$, for all $\lambda \in \R$.
\end{defi}

It is easy to see that if $1 \leq j \leq k$, then $W_k(\R) \subseteq W_j(\R)$ and $W_k(\R) \subseteq C^k(\R)$.
We now prove by elementary means that $W_k(\R) \subseteq OC^{[k]}(\R)$.
Then we use Peller's work from \cite{peller1} to generalize this substantially.

\begin{lem}\label{lem.expOC}
If $\xi \in \R$ and $f(\lambda) \coloneqq e^{i\lambda\xi}$, for all $\lambda \in \R$, then $f \in BOC(\R)$.
\end{lem}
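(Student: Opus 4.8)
The plan is to verify the two conditions of Definition \ref{def.opcont} directly for $f(\lambda) = e^{i\lambda\xi}$, which is manifestly bounded. The density/boundedness condition (a) is immediate: $f$ is a bounded Borel function, so $f(a+c)$ and $f(a)$ are everywhere-defined bounded operators for any $a \in C(H)_{\sa}$ and $c \in B(H)_{\sa}$, and hence so is their difference. The real content is the continuity condition (b): given $a \in C(H)_{\sa}$, we must show $\|e^{i(a+c)\xi} - e^{ia\xi}\| \to 0$ as $c \to 0$ in $B(H)_{\sa}$.

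First I would fix $\xi$ and reduce to studying the one-parameter unitary groups. The cleanest route is via the Duhamel-type identity $e^{i(a+c)\xi} - e^{ia\xi} = i\xi \int_0^1 e^{i(a+c)s\xi}\, c\, e^{ia(1-s)\xi}\, ds$, which holds as a norm-convergent Riemann/Bochner integral because $s \mapsto e^{i(a+c)s\xi}$ and $s \mapsto e^{ia(1-s)\xi}$ are norm-continuous families of unitaries (continuity in $s$ of $e^{iAt}$ for a fixed self-adjoint $A$ follows from the Spectral Theorem, Proposition \ref{prop.integP}.\ref{item.PintDCT}, since $t \mapsto e^{i\lambda t}$ is bounded and converges boundedly as $t' \to t$). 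One checks the identity by differentiating $s \mapsto e^{i(a+c)s\xi}e^{ia(1-s)\xi}$; each factor is norm-differentiable because $a+c, a \in B(H)$ (note $a$ itself need not be bounded, but this is only an issue in condition (a), which is already handled — for condition (b) we may first establish the bounded case and then pass to the unbounded case below). Taking norms gives $\|e^{i(a+c)\xi} - e^{ia\xi}\| \le |\xi|\,\|c\| \to 0$, since each $e^{i(\cdot)s\xi}$ is unitary.

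To handle general (possibly unbounded) $a \in C(H)_{\sa}$, I would argue that the estimate $\|e^{i(a+c)\xi} - e^{ia\xi}\| \le |\xi|\,\|c\|$ persists. One way: since $e^{ia\xi}$ depends only on the functional calculus of $a$, and the bounded-operator argument above used only that $a$ and $a+c$ are self-adjoint and that their unitary exponential groups are norm-continuous in $s$ — both of which hold for unbounded self-adjoint operators by the Spectral Theorem — the Duhamel formula and the resulting bound go through verbatim with $a \in C(H)_{\sa}$, $c \in B(H)_{\sa}$, $a+c \in C(H)_{\sa}$. Alternatively, one can truncate: with $a_n = \chi_n(a)$ as in Lemma \ref{lem.approxbybdd}, one has $e^{ia_n\xi} \to e^{ia\xi}$ and $e^{i(a_n+c)\xi} \to e^{i(a+c)\xi}$ in the S$^*$OT, apply the bounded estimate to get $\|e^{i(a_n+c)\xi} - e^{ia_n\xi}\| \le |\xi|\,\|c\|$ uniformly in $n$, and pass to the limit; but S$^*$OT convergence does not upgrade to norm convergence, so the first (direct) argument is preferable and I would use that.

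The main obstacle I anticipate is not conceptual but a matter of care: justifying that the Duhamel integral converges in operator norm and equals the claimed difference when $a$ is unbounded, since the intermediate expressions involve products like $e^{i(a+c)s\xi}\,c\,e^{ia(1-s)\xi}$ which are fine (all bounded) but whose $s$-derivative formally produces $i\xi(a+c)e^{i(a+c)s\xi}e^{ia(1-s)\xi} - i\xi e^{i(a+c)s\xi}a\,e^{ia(1-s)\xi}$, involving the unbounded $a$. The honest fix is to apply the whole computation to vectors $h \in \dom(a) = \dom(a+c)$: for such $h$ the map $s \mapsto e^{i(a+c)s\xi}e^{ia(1-s)\xi}h$ is differentiable in $H$ with derivative $i\xi\, e^{i(a+c)s\xi}\,c\,e^{ia(1-s)\xi}h$ (using Stone's theorem and the fact that $e^{ia(1-s)\xi}h \in \dom(a)$), integrate over $s \in [0,1]$, and conclude $(e^{i(a+c)\xi} - e^{ia\xi})h = i\xi\int_0^1 e^{i(a+c)s\xi}\,c\,e^{ia(1-s)\xi}h\,ds$; taking norms gives $\|(e^{i(a+c)\xi} - e^{ia\xi})h\| \le |\xi|\,\|c\|\,\|h\|$ on the dense set $\dom(a)$, hence on all of $H$ by density and boundedness of $e^{i(a+c)\xi} - e^{ia\xi}$. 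This yields condition (b) with $\delta = \e/|\xi|$ (and any $\delta$ when $\xi = 0$), completing the proof that $f \in BOC(\R)$.
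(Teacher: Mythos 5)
Your final, vector-level argument is correct, but there is a factual error in the middle of the proposal that deserves flagging. You assert that for a self-adjoint $A$, the unitary group $t \mapsto e^{iAt}$ is norm-continuous and cite Proposition \ref{prop.integP}.\ref{item.PintDCT} as justification; that proposition yields only S$^*$OT convergence, and in fact norm-continuity of $t\mapsto e^{iAt}$ holds if and only if $A$ is bounded (e.g., for $A$ multiplication by $x$ on $L^2(\R)$ one has $\|e^{iAs}-e^{iAs'}\|=2$ whenever $s\neq s'$). So the claim that the Duhamel formula ``goes through verbatim'' for unbounded $a$ because the exponential groups are norm-continuous is wrong. Fortunately, your ``honest fix'' --- applying the whole computation to vectors $h\in\dom(a)=\dom(a+c)$, using Stone's theorem for strong differentiability, and obtaining
\[
(e^{i(a+c)\xi}-e^{ia\xi})h=i\xi\int_0^1 e^{i(a+c)s\xi}\,c\,e^{ia(1-s)\xi}h\,ds,
\]
hence $\|(e^{i(a+c)\xi}-e^{ia\xi})h\|\leq|\xi|\,\|c\|\,\|h\|$ on $\dom(a)$ and then on all of $H$ by density --- is indeed the right repair, and it simultaneously resolves the norm-continuity issue, since only strong continuity of the unitary groups is needed for the $H$-valued Riemann integral to exist and the FTC to apply. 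The proof is therefore sound once one disregards the intermediate (incorrect) claim and keeps only the vector argument.

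As for the route, it is genuinely different from the paper's. The paper computes $f^{[1]}(\lambda,\mu)=i\xi\int_0^1 e^{it\lambda\xi}e^{i(1-t)\mu\xi}\,dt$ via Proposition \ref{prop.divdiffCk}, reads off an $\ell^{\infty}$-integral projective decomposition of $f^{[1]}$ with norm at most $|\xi|$, and then invokes Corollary \ref{cor.Iperturb} with $\cI=\cM=B(H)$ to conclude $\|f(a+c)-f(a)\|\leq|\xi|\,\|c\|$ for $a\in C(H)_{\sa}$; the heavy lifting (including the passage from bounded to unbounded $a$) is delegated to Theorem \ref{thm.perturb}. Your Duhamel identity is precisely what $f^{[1]}(a+c,a)\sh c$ unwinds to for this $f$, interpreted as a weak$^*$ integral; you are in effect re-deriving the relevant special case of the perturbation formula from scratch rather than citing it. Given the machinery already established in Section \ref{sec.divdiffandpert}, the paper's proof is the shorter route; yours is more elementary and self-contained, but repeats (for one function) work that Theorem \ref{thm.perturb} has already done in full generality.
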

\begin{proof}
Of course, $f$ is bounded and continuous.
Now, if $\lambda,\mu \in \R$, then
\[
f^{[1]}(\lambda,\mu) = \int_0^1f'(t\lambda+(1-t)\mu)\,dt = i\xi\int_0^1e^{it\lambda\xi}e^{i(1-t)\xi\mu}\,dt
\]
by Proposition \ref{prop.divdiffCk}.
This is clearly a $\ell^{\infty}$-integral projective decomposition of $f^{[1]}$ that yields
\[
\big\|f^{[1]}\big\|_{\ell^{\infty}(\R,\cB_{\R}) \iotimes \ell^{\infty}(\R,\cB_{\R})} \leq |\xi|.
\]
In particular, if $a \in C(H)_{\sa}$ and $c \in B(H)_{\sa}$, then $\|f(a+c)-f(a)\| \leq |\xi|\, \|c\|$ by Corollary \ref{cor.Iperturb}.
It follows that $f$ is operator continuous.
\end{proof}

\begin{prop}\label{prop.WkinOCk}
$W_k(\R) \subseteq OC^{[k]}(\R)$, for all $k \in \N$.
Specifically, if $f(\lambda) = \int_{\R} e^{i\lambda\xi}\,\mu(d\xi)$, where $\mu$ is a Borel complex measure on $\R$ with $\int_{\R}|\xi|^k\,|\mu|(d\xi) < \infty$, then
\[
[f]_{OC^{[k]}(\R)} \leq \sum_{j=1}^k\frac{1}{j!}\int_{\R}|\xi|^j\,|\mu|(d\xi).
\]
\end{prop}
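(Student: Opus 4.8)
The strategy is to establish the bound $\big\|f^{[j]}\big\|_{BOC(\R)^{\iotimes(j+1)}} \leq \frac{1}{j!}\int_{\R}|\xi|^j\,|\mu|(d\xi)$ for each $j \in \{1,\ldots,k\}$ and then sum over $j$ to get the claimed estimate on $[f]_{OC^{[k]}(\R)}$. So fix $j$. The key input is the integral-representation formula for divided differences of $C^j$ functions, namely Proposition \ref{prop.divdiffCk}, which for $f(\lambda) = \int_{\R} e^{i\lambda\xi}\,\mu(d\xi)$ (so that $f^{(j)}(\lambda) = \int_{\R} (i\xi)^j e^{i\lambda\xi}\,\mu(d\xi)$ by differentiating under the integral sign, justified by $\int |\xi|^j\,|\mu|(d\xi)<\infty$) gives
\[
f^{[j]}(\lambda_1,\ldots,\lambda_{j+1}) = \int_{\Delta_j} f^{(j)}(\boldsymbol{t}\cdot\boldsymbol{\lambda})\,\rho_j(d\boldsymbol{t}) = \int_{\Delta_j}\int_{\R} (i\xi)^j \prod_{\ell=1}^{j+1} e^{i t_\ell \lambda_\ell \xi}\,\mu(d\xi)\,\rho_j(d\boldsymbol{t}).
\]
This already exhibits $f^{[j]}$ as an integral over the measure space $\Sigma \coloneqq \Delta_j \times \R$ with measure $\rho \coloneqq \rho_j \otimes |\mu|$ (using, say, $g\,d\mu = d(g|\mu|)$ for the Radon--Nikodym density $g$ with $|g|=1$, to absorb the sign) of a product $\prod_{\ell=1}^{j+1}\varphi_\ell(\lambda_\ell,\sigma)$, where each factor is (a constant multiple of) an exponential $\lambda \mapsto e^{i t_\ell \lambda \xi}$.

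The point is that this is a genuine $BOC$-integral projective decomposition. Each factor $\varphi_\ell(\cdot,\sigma)$ is of the form $\lambda \mapsto c_\ell(\sigma) e^{i s \lambda}$ with $|c_\ell(\sigma)|\le 1$ (we put the full factor $(i\xi)^j$ into one of them, say $\varphi_1$, so $\|\varphi_1(\cdot,\sigma)\|_{\ell^\infty(\R)} = |\xi|^j$ and $\|\varphi_\ell(\cdot,\sigma)\|_{\ell^\infty(\R)} = 1$ for $\ell\ge 2$); by Lemma \ref{lem.expOC} each exponential lies in $BOC(\R)$, and scaling preserves this, so condition \ref{item.BOCnullset} of Definition \ref{def.BOCIPTP} holds. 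The measurability of $(\boldsymbol{t},\xi)\mapsto\varphi_\ell(\lambda,(\boldsymbol{t},\xi))$ is routine. Condition \ref{item.BOICintegofphis} is exactly
\[
\int_{\Delta_j\times\R} \|\varphi_1(\cdot,\sigma)\|_{\ell^\infty(\R)}\cdots\|\varphi_{j+1}(\cdot,\sigma)\|_{\ell^\infty(\R)}\,\rho(d\sigma) = \int_{\Delta_j}\int_{\R} |\xi|^j\,|\mu|(d\xi)\,\rho_j(d\boldsymbol{t}) = \rho_j(\Delta_j)\int_{\R}|\xi|^j\,|\mu|(d\xi) = \frac{1}{j!}\int_{\R}|\xi|^j\,|\mu|(d\xi),
\]
using $\rho_j(\Delta_j) = \frac{1}{j!}$, and this is finite by hypothesis; and condition \ref{item.BOCequality} is the displayed integral formula above (with Fubini to move the $|\mu|$-integral inside, legitimate by absolute integrability). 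Taking the infimum over decompositions in the definition of $\|\cdot\|_{BOC(\R)^{\iotimes(j+1)}}$ then gives $\big\|f^{[j]}\big\|_{BOC(\R)^{\iotimes(j+1)}} \le \frac{1}{j!}\int_{\R}|\xi|^j\,|\mu|(d\xi)$. Summing over $j=1,\ldots,k$ yields the proposition, and in particular $[f]_{OC^{[k]}(\R)}<\infty$, i.e. $f\in OC^{[k]}(\R)$, giving $W_k(\R)\subseteq OC^{[k]}(\R)$.

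**Main obstacle.** The conceptual content is light — the only genuine worry is the bookkeeping around where to place the factor $(i\xi)^j$ and the sign of $\mu$, and making sure the product structure of the integrand is clean (each $\varphi_\ell$ depends only on $\lambda_\ell$ and $\sigma$, which it does since $e^{i t_\ell \lambda_\ell \xi}$ separates). One should also double-check the differentiation-under-the-integral step for $f^{(j)}$ and the Fubini application, both of which are immediate from $\int|\xi|^j\,|\mu|(d\xi)<\infty$ and the uniform boundedness of the exponentials on $\Delta_j$. No step is hard; the "obstacle" is purely organizational.
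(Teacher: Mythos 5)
Your proof is correct and essentially identical to the paper's: same use of Proposition \ref{prop.divdiffCk} to write $f^{[j]}(\blambda) = \int_{\Delta_j}\int_{\R} e^{it_1\lambda_1\xi}\cdots e^{it_{j+1}\lambda_{j+1}\xi}\,(i\xi)^j\,\mu(d\xi)\,\rho_j(d\boldsymbol{t})$, same observation via Lemma \ref{lem.expOC} that this is a BOCIPD over $\Delta_j \times \R$ with measure $\rho_j \otimes |\mu|$ after absorbing $d\mu/d|\mu|$, and same evaluation $\rho_j(\Delta_j) = 1/j!$. The extra bookkeeping you spell out (where to lodge the $(i\xi)^j$ factor, differentiation under the integral sign, Fubini) is carried out silently in the paper but is exactly what is meant there.
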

\begin{proof}
Notice that if $f$ is as in the statement and $j \in \{1,\ldots,k\}$, then $f^{(j)}(\lambda) = \int_{\R} e^{i\lambda\xi}(i\xi)^j\,\mu(d\xi)$, for all $\lambda \in \R$.
Therefore, by Proposition \ref{prop.divdiffCk},
\[
f^{[j]}(\blambda) = \int_{\Delta_j} f^{(j)}(\boldsymbol{t} \cdot \blambda) \, \rho_j(d\boldsymbol{t}) =
\int_{\Delta_j}\int_{\R}e^{it_1\lambda_1\xi}\cdots e^{it_{j+1}\lambda_{j+1}\xi} \, (i\xi)^j\mu(d\xi) \, \rho_j(d \boldsymbol{t}).
\]
By Lemma \ref{lem.expOC}, this is (after writing $d\mu = \frac{d\mu}{d|\mu|}d|\mu|$ to match the definition) a BOCIPD of $f^{[j]}$ that yields
\[
\big\|f^{[j]}\big\|_{BOC(\R)^{\iotimes(j+1)}} \leq \int_{\Delta_j}\int_{\R} |\xi|^j\,|\mu|(d\xi)\,\rho_j(d\boldsymbol{t}) = \rho_j(\Delta_j)\int_{\R} |\xi|^j\,|\mu|(d\xi) = \frac{1}{j!}\int_{\R}|\xi|^j\,|\mu|(d\xi).
\]
Summing over $j \in \{1,\ldots,k\}$ gives the desired bound.
\end{proof}
\pagebreak
\begin{rem}
For similar reasons, if $f \in C^k(\R)$ and, for all $j \in \{1,\ldots,k\}$, $f^{(j)}$ and the Fourier transform of $f^{(j)}$ belong to $L^1(\R)$, then $f \in OC^{[k]}(\R)$.
\end{rem}

Now, we use more serious harmonic analysis done by Peller \cite{peller1} to exhibit a large class --- containing $W_k(\R)$ strictly --- of functions belonging to $OC^{[k]}(\R)$.
We begin by defining Besov spaces.

\begin{nota}\label{nota.fourier}
If $m \in \N$, then we write $\mathscr{S}(\R^m)$ for the Fr\'{e}chet space of Schwartz functions $\R^m \to \C$ and $\mathscr{S}'(\R^m) \coloneqq \mathscr{S}(\R^m)^*$ for the space of tempered distributions on $\R^m$.
Also, the conventions we use for the Fourier transform and its inverse are, respectively,
\[
\wh{f}(\xi) = (\cF f)(\xi) = \int_{\R^m} e^{-i\xi\cdot x}f(x)\,dx \; \text{ and } \; \wch{f}(x) = \frac{1}{(2\pi)^m}\int_{\R^m}e^{ix\cdot\xi}f(\xi)\,d\xi
\]
for $f \in L^1(\R)$, with corresponding extensions to $\mathscr{S}'(\R^m)$.
\end{nota}

\begin{defi}[Homogeneous Besov spaces]\label{def.Besov}
Fix $m \in \N$ and $\varphi \in C_c^{\infty}(\R^m)$ such that $0 \leq \varphi \leq 1$, $\supp \varphi \subseteq \{\xi \in \R^m : \|\xi\| \leq 2\}$, and $\varphi \equiv 1$ on $\{\xi \in \R^m : \|\xi\| \leq 1\}$.
For $j \in \Z$, define
\[
\varphi_j(\xi) \coloneqq \varphi(2^{-j}\xi) - \varphi(2^{-j+1}\xi),
\]
for all $\xi \in \R^m$.
Now, for $s \in \R$, $p,q \in [1,\infty]$, and $f \in \mathscr{S}'(\R^m)$, write
\[
\|f\|_{\dot{B}_q^{s,p}} \coloneqq \big\|\big(2^{js}\|\wch{\varphi}_j \ast f\|_{L^p}\big)_{j \in \Z}  \big\|_{\ell^q(\Z)} \in [0,\infty].
\]
We call $\dot{B}_q^{s,p}(\R^m) \coloneqq \{f \in \mathscr{S}'(\R^m) : \|f\|_{\dot{B}_q^{s,p}} < \infty\}$ the \textbf{homogeneous $\boldsymbol{(s,p,q)}$-Besov space}.
\end{defi}
\begin{rem}
First, note that $\wch{\varphi} \ast f, \wch{\varphi}_j \ast f$ have compactly supported Fourier transforms and so are smooth by the Paley--Wiener Theorem;
it therefore makes sense to apply the $L^p$-norm to them.
Second, since it is easy to show that $\|f\|_{\dot{B}_q^{s,p}} = 0$ if and only if $f$ is a polynomial, it is usually best to define $\dot{B}_q^{s,p}(\R^m)$ as a quotient space in which all polynomials are zero.
The definition above is given in Chapter 3 of \cite{peetre} and Sections 5.1.2 and 5.1.3 of \cite{triebel1}.
The definition ``modulo polynomials" is given in Section 2.4 of \cite{sawano}.
(Please see Section 1.2.5.3 of \cite{sawano} as well.)
Finally, beware that the positions of $p$ and $q$ in $\dot{B}_q^{s,p}(\R^m)$ are far from consistent in the literature.
\end{rem}

The case of interest is $m=1$ and $(s,p,q) = (k,\infty,1)$ for $k \in \N$.
As we show in Section \ref{sec.PellerII}, in this case it turns out $\dot{B}_1^{k,\infty}(\R) \subseteq C^k(\R)$.
Also, as mentioned above, if $f \in \C[\lambda]$ is a polynomial, then
\[
\|f\|_{\dot{B}_1^{k,\infty}} = 0.
\]
Therefore, if we are to prove sensible results about differentiating the operator function $c \mapsto f(a+c)-f(a)$ when $a$ is \textit{unbounded} and $f \in \dot{B}_1^{k,\infty}(\R)$, it is necessary to impose additional restrictions that exclude (at least) polynomials of degree higher than two.
We accomplish this with the following modified Besov spaces.

\begin{defi}[Peller--Besov spaces]\label{def.PellerBesov}
If $k \in \N$, then we define
\[
PB^k(\R) \coloneqq \dot{B}_1^{k,\infty}(\R) \cap \big\{f \in C^k(\R) : f^{(k)} \text{ is bounded}\big\}
\]
to be the $\boldsymbol{k^{\text{\textbf{th}}}}$ \textbf{Peller--Besov space}.
\end{defi}

The following result is a slight upgrade of Theorem 5.5 in \cite{peller1} or Theorem 2.2.1 in \cite{peller2}.

\begin{thm}[Peller \cite{peller1}]\label{thm.Peller}
If $k \in \N$, then there is a constant $c_k < \infty$ such that
\[
\big\|f^{[k]}\big\|_{BOC(\R)^{\iotimes(k+1)}} \leq \frac{1}{k!}\inf_{x \in \R}\big|f^{(k)}(x)\big| + c_k\|f\|_{\dot{B}_1^{k,\infty}},
\]
for all $f \in PB^k(\R)$;
and if $k \geq 2$, then
\[
\big\|f^{[k]}\big\|_{BOC(\R)^{\iotimes(k+1)}} \leq  c_k\|f\|_{\dot{B}_1^{k,\infty}},
\]
for all $f \in PB^1(\R) \cap \dot{B}_1^{k,\infty}(\R) = PB^1(\R) \cap PB^k(\R) = \bigcap_{j=1}^kPB^j(\R)$.
\end{thm}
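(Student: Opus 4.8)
The statement to prove is Theorem~\ref{thm.Peller}, which bounds $\|f^{[k]}\|_{BOC(\R)^{\iotimes(k+1)}}$ in terms of $\inf_x|f^{(k)}(x)|$ and $\|f\|_{\dot B_1^{k,\infty}}$. Since the theorem is advertised as a ``slight upgrade'' of Theorem~5.5 in \cite{peller1}, the plan is not to redo Peller's harmonic analysis from scratch but to \emph{transfer} his estimate for the analogous norm built on $\ell^\infty$-IPDs to the $BOC$-norm. Peller's theorem produces, for $f \in PB^k(\R)$, an explicit integral-projective-type decomposition of $f^{[k]}$ whose building-block factors are (up to normalization) exponentials $\lambda \mapsto e^{i\lambda\xi}$ together with a ``polynomial part'' coming from the Taylor remainder. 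The key observation is that by Lemma~\ref{lem.expOC}, the exponentials lie in $BOC(\R)$, and by Proposition~\ref{prop.divdiffCk}/\ref{prop.WkinOCk} the explicit pieces arising from the $k$-th Taylor term are $BOC$-decomposable with the stated $\tfrac{1}{k!}$ constant. So the strategy is: (i) recall from Section~\ref{sec.PellerII}/Appendix~\ref{sec.BesovPeller} Peller's concrete decomposition of $f^{[k]}$ for $f \in PB^k(\R)$; (ii) split $f^{(k)} = c_0 + g^{(k)}$ where $c_0$ is a constant with $|c_0|$ close to $\inf_x|f^{(k)}(x)|$ and $g = f - c_0\lambda^k/k!$, so that $g \in \dot B_1^{k,\infty}(\R)$ with $\|g\|_{\dot B_1^{k,\infty}} = \|f\|_{\dot B_1^{k,\infty}}$ (adding a polynomial does not change the homogeneous Besov norm) and $g^{(k)}$ vanishes somewhere; (iii) handle the constant part via $f^{[k]}$ of $\lambda^k/k!$ using Proposition~\ref{prop.WkinOCk}-style bookkeeping (in fact $(\lambda^k/k!)^{[k]} \equiv \tfrac{1}{k!}$, a constant, whose $BOC^{\iotimes(k+1)}$-norm is exactly $\tfrac{1}{k!}$ via the trivial one-point decomposition); (iv) handle $g$ by checking that Peller's decomposition of $g^{[k]}$ has all factors in $BOC(\R)$, with the $\ell^1$-sum of products of $\ell^\infty$-norms controlled by $c_k\|g\|_{\dot B_1^{k,\infty}}$.

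\textbf{Key steps, in order.} First I would state the reduction: $\|\cdot\|_{BOC(\R)^{\iotimes(k+1)}}$ dominates (is $\geq$, with the same infimum structure) the analogous $\ell^\infty$-IPD norm, so that any decomposition whose factors happen to be in $BOC(\R)$ gives a valid upper bound on the $BOC$-norm; thus it suffices to produce \emph{one} $BOC$-decomposition with the right size. Second, I would recall (citing Section~\ref{sec.PellerII}) that $\dot B_1^{k,\infty}(\R) \subseteq C^k(\R)$ and the explicit ``nuclear'' representation of $f^{[k]}$ that Peller extracts from a Littlewood--Paley decomposition of $f$: each dyadic block $\wch\varphi_j * f$ is band-limited, hence by a Fourier-series argument on a suitable cube $f^{[k]}$ restricted to that block's contribution is an $\ell^1$-combination of tensor products of exponentials, with total weight $\lesssim 2^{jk}\|\wch\varphi_j * f\|_{L^\infty}$; summing over $j$ gives $c_k\|f\|_{\dot B_1^{k,\infty}}$. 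Third, I would apply this to $g$ (which is genuinely in $\dot B_1^{k,\infty}$ and whose derivative vanishes somewhere, so the boundedness hypothesis of $PB^k$ is met and the ``$\inf|f^{(k)}|$'' term is absorbed into the constant piece), and separately bound the constant-derivative piece by $\tfrac{1}{k!}|c_0| \leq \tfrac{1}{k!}(\inf_x|f^{(k)}(x)| + \e)$; letting $\e \to 0$ and using subadditivity of $\|\cdot\|_{BOC(\R)^{\iotimes(k+1)}}$ (Proposition~\ref{prop.BOCIPTP}) gives the first displayed inequality. Fourth, the $k\geq 2$ statement follows because on $PB^1(\R)\cap \dot B_1^{k,\infty}(\R)$ one can instead split off the degree-$\leq 1$ part (where $f^{(k)} = 0$ forces no constant term is needed at all — $\inf|f^{(k)}|$ can be taken $0$ after subtracting the right polynomial), so the first term drops; I would verify the chain of equalities $PB^1 \cap \dot B_1^{k,\infty} = PB^1 \cap PB^k = \bigcap_{j=1}^k PB^j$ using the remark after \eqref{eq.PBspace} and the nesting of Besov spaces.

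\textbf{Main obstacle.} The genuinely delicate point is step two: verifying that Peller's decomposition of $f^{[k]}$ really has \emph{all} factor functions in $BOC(\R)$ (not merely in $\ell^\infty(\R,\cB_\R)$) and that the relevant convergence in Definition~\ref{def.BOCIPTP}\ref{item.BOCequality} holds pointwise on all of $\R^{k+1}$ rather than just on the spectrum of some bounded operator. Peller works with bounded self-adjoint operators, so his building blocks only need to be defined and controlled on compact intervals; upgrading to unbounded $a$ requires the factors to be \emph{globally} bounded operator-continuous functions. The exponentials are fine (Lemma~\ref{lem.expOC}), but the ``cutoff'' or ``polynomial-times-bump'' functions that appear when one localizes the Littlewood--Paley pieces may need to be replaced by globally bounded analogues, or one must argue that the degree-$\leq k$ polynomial ambiguity can be quotiented out precisely because of the $PB^k$ restriction. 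Managing this — essentially, routing everything through $BOC(\R)$ and the already-established machinery $W_k(\R)\subseteq OC^{[k]}(\R)$ — is where the real work lies; everything else is bookkeeping with seminorm subadditivity and the harmless addition of low-degree polynomials.
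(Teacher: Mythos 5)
Your high-level outline (Littlewood--Paley blocks, split off a constant from $f^{(k)}$, reduce to band-limited $f$, sum the dyadic contributions) is broadly aligned with the paper, but you have correctly flagged -- and then not resolved -- the place where the real content lives. That gap is genuine, so the plan as stated does not constitute a proof.

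The unresolved step is the band-limited estimate: if $\supp\wh{f}$ is contained in an annulus $\{2^{j-1}\le|\xi|\le 2^{j+1}\}$, one must exhibit a decomposition of $f^{[k]}$ whose factors are \emph{globally} in $BOC(\R)$, with total weight $\lesssim 2^{jk}\|f\|_{L^\infty}$. You propose routing this through a Fourier-series-on-a-cube argument producing $\ell^1$-combinations of exponentials, and you correctly note that this only represents $f^{[k]}$ on a compact box -- yet you leave it at ``the ``cutoff'' functions may need to be replaced by globally bounded analogues,'' which is precisely the step that needs an actual construction. The paper's resolution is Theorem \ref{thm.Pellerexp}: for $f$ band-limited to a half-line, $f^{[k]}(\blambda) = i^k\sum_{j=1}^{k+1}\int_{\R_+^k}\big(\prod e^{i\lambda_mu_m}\big)\,(f*\mu_{|\vec u|})(\lambda_j)\,e^{-i\lambda_j|\vec u|}\big(\prod e^{i\lambda_m u_{m-1}}\big)\,d\vec u$, an \emph{integral} (not a series) decomposition, valid on all of $\R^{k+1}$, whose factors are exponentials (Lemma \ref{lem.expOC}) and functions $f*\mu_v$ (which lie in $BOC(\R)$ by Proposition \ref{prop.fastmuOC} -- a nontrivial fact whose proof itself uses the $k=1$ case of Theorem \ref{thm.Pellerexp}). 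This is what replaces the compact-interval trigonometric approximation, and Proposition \ref{prop.Pellerprecursor} packages it into the dyadic bound $a_k\sigma^k\|f\|_{L^\infty}$. Citing $W_k(\R)\subseteq OC^{[k]}(\R)$ does not help here: that inclusion covers Fourier transforms of measures with finite $k$th moment, and individual Littlewood--Paley blocks of a Besov-space function need not fall into that class with summable norms.

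Two smaller deviations. First, your split $g=f-c_0\lambda^k/k!$ with $c_0$ chosen near $\inf_x|f^{(k)}(x)|$ is not quite what the paper does: there, $C$ is the \emph{forced} constant $f^{(k)}-\sum_j(\wch\varphi_j*f)^{(k)}$ (which exists because the difference is a bounded distribution supported in frequency at $\{0\}$), and only afterward does one estimate $|C|\le\inf_t|f^{(k)}(t)|+b_k\|f\|_{\dot B_1^{k,\infty}}$. Choosing $c_0$ first and then invoking ``the Besov bound for $g$'' presupposes you already have the constant-free estimate, so your two subcases become circular unless you decouple them the way the paper does. Second, your explanation of the $k\ge2$ inequality -- ``split off the degree-$\le1$ part'' -- does not actually do anything to $f^{(k)}$ when $k\ge2$; the mechanism in the paper is that the $PB^1$ hypothesis pins down the Littlewood--Paley constant at the level of $f'$, namely $f'=C+\sum_j(\wch\varphi_j*f)'$ as in \eqref{eq.f'series}, and because $\sum_j\|(\wch\varphi_j*f)^{(\ell)}\|_{L^\infty}<\infty$ for all $1\le\ell\le k$ (using the nesting $\dot B_1^{1,\infty}\cap\dot B_1^{k,\infty}=\bigcap_{\ell=1}^k\dot B_1^{\ell,\infty}$), differentiating this identity $k-1$ more times kills the constant, yielding $f^{(k)}=\sum_j(\wch\varphi_j*f)^{(k)}$ with no $C$-term at all.
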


The proof given in \cite{peller1} is not very detailed and is only explicit in the cases $k \in \{1,2\}$, so we present a full proof of this theorem in Appendix \ref{sec.BesovPeller}.
As a result, we obtain the following.
\pagebreak

\begin{cor}\label{cor.PBkinOCk}
$PB^1(\R) \cap PB^k(\R) = PB^1(\R) \cap \dot{B}_1^{k,\infty}(\R) \subseteq OC^{[k]}(\R)$, for all $k \in \N$.
Specifically,
\[
[f]_{OC^{[k]}(\R)} \leq \inf_{x \in \R}|f'(x)| + \sum_{j=1}^kc_j\|f\|_{\dot{B}^{j,\infty}},
\]
for all $f \in PB^1(\R) \cap PB^k(\R)$, where $c_1,\ldots,c_k$ are as in Theorem \ref{thm.Peller}. \qed
\end{cor}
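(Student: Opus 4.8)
The plan is to deduce Corollary \ref{cor.PBkinOCk} directly from Theorem \ref{thm.Peller} together with the definitions of $OC^{[k]}(\R)$ and $[\cdot]_{OC^{[k]}(\R)}$ from Notation \ref{nota.OCk}. First I would recall that, by Notation \ref{nota.OCk},
\[
[f]_{OC^{[k]}(\R)} = \sum_{j=1}^k \big\|f^{[j]}\big\|_{BOC(\R)^{\iotimes(j+1)}},
\]
so that membership $f \in OC^{[k]}(\R)$ (given that $f \in C^k(\R)$, which I address below) is equivalent to the finiteness of the right-hand side. Thus the entire task reduces to bounding each summand $\big\|f^{[j]}\big\|_{BOC(\R)^{\iotimes(j+1)}}$ for $j \in \{1,\ldots,k\}$ when $f \in PB^1(\R) \cap PB^k(\R)$.

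Next I would verify that $PB^1(\R) \cap PB^k(\R) = PB^1(\R) \cap \dot B_1^{k,\infty}(\R)$ — but this is exactly the chain of equalities already asserted inside Theorem \ref{thm.Peller}, namely $PB^1(\R) \cap \dot B_1^{k,\infty}(\R) = PB^1(\R) \cap PB^k(\R) = \bigcap_{j=1}^k PB^j(\R)$, so I may simply invoke it. The key consequence is that if $f \in PB^1(\R) \cap PB^k(\R)$, then $f \in PB^j(\R)$ for every $j \in \{1,\ldots,k\}$; in particular $f \in \bigcap_{j=1}^k \dot B_1^{j,\infty}(\R)$ and $f \in C^k(\R)$ (the latter because $\dot B_1^{k,\infty}(\R) \subseteq C^k(\R)$, as noted in the excerpt and proven in Section \ref{sec.PellerII}), so the hypothesis ``$f \in C^k(\R)$'' needed for $OC^{[k]}(\R)$ is automatic.

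Now I would apply Theorem \ref{thm.Peller} term by term. For $j = 1$ the first bound in Theorem \ref{thm.Peller} gives $\big\|f^{[1]}\big\|_{BOC(\R)^{\iotimes 2}} \leq \inf_{x\in\R}|f'(x)| + c_1\|f\|_{\dot B_1^{1,\infty}}$. For each $j$ with $2 \leq j \leq k$, since $f \in PB^1(\R) \cap PB^j(\R) = PB^1(\R) \cap \dot B_1^{j,\infty}(\R)$, the second bound in Theorem \ref{thm.Peller} (applied with $k$ replaced by $j$) gives $\big\|f^{[j]}\big\|_{BOC(\R)^{\iotimes(j+1)}} \leq c_j\|f\|_{\dot B_1^{j,\infty}}$. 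Summing these inequalities over $j \in \{1,\ldots,k\}$ yields
\[
[f]_{OC^{[k]}(\R)} = \sum_{j=1}^k \big\|f^{[j]}\big\|_{BOC(\R)^{\iotimes(j+1)}} \leq \inf_{x\in\R}|f'(x)| + \sum_{j=1}^k c_j\|f\|_{\dot B_1^{j,\infty}} < \infty,
\]
which is precisely the claimed estimate; since the right side is finite and $f \in C^k(\R)$, this also shows $f \in OC^{[k]}(\R)$, completing the proof.

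There is essentially no obstacle here: the corollary is a bookkeeping consequence of Theorem \ref{thm.Peller}, and all the genuine analytic content — the harmonic-analytic estimates on divided differences in terms of Besov norms, and the identification $PB^1 \cap \dot B_1^{k,\infty} = \bigcap_{j=1}^k PB^j$ — is already packaged into that theorem (and proven in Appendix \ref{sec.BesovPeller}). The only mild point to be careful about is ensuring one uses the \emph{second} bound of Theorem \ref{thm.Peller} for the indices $j \geq 2$ (which requires $f \in PB^1(\R) \cap \dot B_1^{j,\infty}(\R)$, available because $f$ lies in the full intersection) and the \emph{first} bound only for $j = 1$, so that the $\inf_{x}|f'(x)|$ term appears exactly once rather than $k$ times.
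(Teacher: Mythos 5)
Your proof is correct and matches the paper's intent exactly; the paper simply marks the corollary with a \qed because it is this immediate term-by-term application of Theorem \ref{thm.Peller}, using the first bound for $j=1$ (giving the $\inf_x|f'(x)|$ term once) and the second bound for $2\le j\le k$, together with the identity $PB^1(\R)\cap\dot B_1^{k,\infty}(\R)=\bigcap_{j=1}^k PB^j(\R)$ and the inclusion $\dot B_1^{k,\infty}(\R)\subseteq C^k(\R)$, both supplied by Theorem \ref{thm.Peller} and Section \ref{sec.PellerII}.
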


Since it is easy to show that $W_k(\R) \subsetneq PB^1(\R) \cap \dot{B}_1^{k,\infty}(\R)$, for all $k \in \N$, Corollary \ref{cor.PBkinOCk} does in fact generalize Proposition \ref{prop.WkinOCk}.

\subsection{Derivatives of operator functions in ideals}

In this section, we finally differentiate operator functions in integral symmetrically normed ideals.
For the duration of this section, fix a complex Hilbert space $(H,\la \cdot,\cdot \ra)$, a von Neumann algebra $\cM \subseteq B(H)$, and $(\cI,\|\cdot\|_{\cI}) \unlhd \cM$.
Also, write $\cI_{\sa} \coloneqq \cI \cap \cM_{\sa} = \{b \in \cI : b^*=b\}$.

As a consequence of the definition of a Banach ideal, $\cI_{\sa}$ is a real Banach space when it is given (the restriction of) the $\cI$-norm $\|\cdot\|_{\cI}$.
Now, before setting up the main result of this section, we prove a key technical lemma that is the main reason \textit{integral} symmetrically normed ideals are considered in this paper.

\begin{nota}[Bounded multilinear maps]\label{nota.bddmultilin}
Let $(V_1,\|\cdot\|_{V_1}),\ldots,(V_k,\|\cdot\|_{V_k}),(W,\|\cdot\|_W)$ be normed vector spaces over $\F \in \{\R,\C\}$ and $T \colon V_1 \times \cdots \times V_k \to W$ be a $k$-linear map.
Then we write
\[
\|T\|_{B_k(V_1 \times \cdots \times V_k;W)} \coloneqq \sup\{\|T(v_1,\ldots,v_k)\|_W : v_j \in V_j, \; \|v_j\|_{V_j}\leq 1, \; 1 \leq j \leq k\}
\]
for the operator norm of $T$ and $B_k(V_1 \times \cdots \times V_k;W)$ for the space of $k$-linear maps $V_1 \times \cdots \times V_k \to W$ with finite operator norm.
\end{nota}

\begin{lem}[Continuous Perturbation]\label{lem.Icont}
If $\cI$ is integral symmetrically normed, $a_1,\ldots,a_{k+1} \aff \cM_{\sa}$, and $\varphi \in BOC(\R)^{\iotimes(k+1)}$, then the map
\[
\cI_{\sa}^{k+1} \ni (c_1,\ldots,c_{k+1}) \mapsto I^{a_1+c_1,\ldots,a_{k+1}+c_{k+1}}\varphi \in B_k(\cI^k;\cI)
\]
is continuous.
(To be clear, $\cI$ and $\cI_{\sa}$ are always endowed with the $\cI$-norm $\|\cdot\|_{\cI}$.)
\end{lem}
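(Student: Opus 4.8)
The plan is to reduce the continuity of the multilinear-map-valued function to a single-variable (one-operator-at-a-time) perturbation estimate, and then to control the latter using the perturbation formulas of Theorem~\ref{thm.perturb} together with the definition of MOI-friendliness (which holds here since integral symmetrically normed $\Rightarrow$ MOI-friendly by Proposition~\ref{prop.MOIfriendly}). Concretely, fix $\varphi \in BOC(\R)^{\iotimes(k+1)}$, fix a basepoint $(c_1,\dots,c_{k+1}) \in \cI_{\sa}^{k+1}$, and consider a perturbation to $(c_1+e_1,\dots,c_{k+1}+e_{k+1})$ with each $e_j \in \cI_{\sa}$. Writing $\widetilde{a}_j \coloneqq a_j + c_j$ and $\widetilde{a}_j' \coloneqq a_j + c_j + e_j$ (all affiliated with $\cM_{\sa}$), I want to estimate
\[
\big\| I^{\widetilde{a}_1',\dots,\widetilde{a}_{k+1}'}\varphi - I^{\widetilde{a}_1,\dots,\widetilde{a}_{k+1}}\varphi \big\|_{B_k(\cI^k;\cI)}.
\]
By a telescoping decomposition, this difference is a sum over $j \in \{1,\dots,k+1\}$ of terms in which only the $j$-th operator changes, so it suffices to bound each
\[
\big\| I^{\widetilde{a}_1,\dots,\widetilde{a}_{j-1},\widetilde{a}_j',\widetilde{a}_{j+1}',\dots}\varphi - I^{\widetilde{a}_1,\dots,\widetilde{a}_{j-1},\widetilde{a}_j,\widetilde{a}_{j+1}',\dots}\varphi \big\|_{B_k(\cI^k;\cI)}
\]
and show it tends to $0$ as $e_j \to 0$ in $\cI_{\sa}$ (uniformly enough in the other $e_i$, which we may take small).

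The core of the argument is the single-variable step. For a fixed BOCIPD $(\Sigma,\rho,\varphi_1,\dots,\varphi_{k+1})$ of $\varphi$, each $\varphi_m(\cdot,\sigma) \in BOC(\R) \subseteq C^1(\R)$ has $\varphi_m(\cdot,\sigma)^{[1]} \in BOC(\R)^{\iotimes 2} \subseteq \ell^\infty(\R,\cB_\R)^{\iotimes 2}$ (this is part of the definition of $BOC$, via the estimate $\|f(a+c)-f(a)\| \le \|f^{[1]}\|_{\ell^\infty \iotimes \ell^\infty}\|c\|$ used implicitly in Lemma~\ref{lem.expOC}; more precisely one needs $BOC$-functions to have $\ell^\infty$-integrable divided differences, which is exactly the content baked into $BOC(\R)^{\iotimes(k+1)}$ membership). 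Applying the first perturbation formula \eqref{eq.perturb0} to $f = \varphi_j(\cdot,\sigma)$ with base operator $\widetilde a_j$ and perturbation $e_j$ (both in $\cM_{\sa}$, so bounded perturbation of an affiliated self-adjoint), we get
\[
\varphi_j(\widetilde a_j',\sigma) - \varphi_j(\widetilde a_j,\sigma) = \varphi_j(\cdot,\sigma)^{[1]}(\widetilde a_j',\widetilde a_j)\,\sh\, e_j .
\]
Substituting this into the integrand of the MOI $I^{\dots}\varphi$ (using the $\ell^\infty$-IPD representation of Theorem~\ref{thm.MOIsinM} and linearity of the weak$^*$ integral), the difference $I^{\dots\widetilde a_j'\dots}\varphi(b) - I^{\dots\widetilde a_j\dots}\varphi(b)$ becomes a weak$^*$ integral over $\Sigma$ whose integrand has the shape $A(\sigma)\,e_j\,B(\sigma)$ with $A,B$ built from bounded operators of operator norm controlled by $\prod_m \|\varphi_m(\cdot,\sigma)\|_{\ell^\infty(\R)}$ and $\|b_1\|\cdots\|b_k\|$, together with the $\ell^\infty$-IPD bound for $\varphi_j(\cdot,\sigma)^{[1]}$. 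Here is where \emph{integral} symmetric normedness enters decisively: it lets us pull the $\cI$-norm of $e_j$ outside the $\Sigma$-integral and bound
\[
\big\| I^{\dots\widetilde a_j'\dots}\varphi - I^{\dots\widetilde a_j\dots}\varphi \big\|_{B_k(\cI^k;\cI)} \le \|e_j\|_{\cI}\; \cdot\; C(\varphi),
\]
where $C(\varphi)$ is (up to $C_\cI$ factors) essentially $\|\varphi_j(\cdot,\cdot)^{[1]}\text{-IPD bound}\| \cdot \int_\Sigma \prod_m \|\varphi_m(\cdot,\sigma)\|_{\ell^\infty}\,d\rho$, a finite constant depending only on $\varphi$ (after taking the infimum over IPDs, controlled by $\|\varphi\|_{BOC(\R)^{\iotimes(k+1)}}$ and the Peller-type estimates). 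Summing over $j$ and letting $\|e_j\|_\cI \to 0$ gives continuity at the basepoint.

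The main obstacle I anticipate is bookkeeping the \emph{uniformity} and the measurability/weak$^*$-integrability of the new integrands: when we replace $\varphi_j(\widetilde a_j,\sigma)$ by the expression $\varphi_j(\cdot,\sigma)^{[1]}(\widetilde a_j',\widetilde a_j)\sh e_j$, the latter is itself an MOI (a double operator integral in the variable $\sigma$ held fixed), so the resulting object is a \emph{nested} MOI and one must argue that $\sigma \mapsto$ (this nested expression) is weak$^*$ measurable with $\underline\int_\Sigma \|A\|\|B\|\,d\rho < \infty$, as required to invoke Definition~\ref{def.idealproperties}.\ref{item.ISNI}. This should follow from Proposition~\ref{prop.linandmult}.\ref{item.mult2} (which expresses a product-of-symbols MOI as a composition and is exactly the tool to rewrite the nested MOI as a single MOI with symbol $\varphi \cdot \widetilde{(\varphi_j^{[1]})}$) combined with the weak$^*$ measurability remark following Theorem~\ref{thm.MOIsinM}; but the careful choice of which IPD to use for the composite symbol, and checking that its $\ell^\infty$-IPD integral is finite and small, is the delicate part. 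A secondary, more routine point is that the telescoping introduces operators of the form $a_i + c_i + e_i$ for $i \neq j$; since we only need continuity (not a global estimate), we may assume $\|e_i\|_\cI \le 1$, so these have uniformly bounded contribution, and the proof goes through.
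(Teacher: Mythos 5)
Your telescoping decomposition, your reduction to a one-coordinate-at-a-time estimate, and your use of the integral symmetrically normed condition to pull $\|e_j\|_{\cI}$ out of the $\Sigma$-integral are all exactly in the spirit of the paper's argument. But the core step of your proposal — applying the first perturbation formula \eqref{eq.perturb0} to each factor $\varphi_j(\cdot,\sigma)$ in a BOCIPD — rests on the claim that $\varphi_j(\cdot,\sigma) \in BOC(\R)$ forces $\varphi_j(\cdot,\sigma)^{[1]} \in \ell^{\infty}(\R,\cB_{\R}) \iotimes \ell^{\infty}(\R,\cB_{\R})$, and this is \emph{not} part of the definition of $BOC(\R)$. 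Definition \ref{def.opcont} asks only for boundedness and operator \emph{continuity}: $\|f(a+c)-f(a)\| \to 0$ as $\|c\| \to 0$. It does not ask for membership in $C^1(\R)$, and in particular it does not ask for an operator Lipschitz estimate $\|f(a+c)-f(a)\| \lesssim \|c\|$ (which is what $f^{[1]} \in \ell^{\infty} \iotimes \ell^{\infty}$ would buy you via Corollary \ref{cor.Iperturb}). So Theorem \ref{thm.perturb} cannot be applied to the individual factors $\varphi_j(\cdot,\sigma)$, and the quantitative bound you aim for,
\[
\big\| I^{\ldots\widetilde a_j'\ldots}\varphi - I^{\ldots\widetilde a_j\ldots}\varphi \big\|_{B_k(\cI^k;\cI)} \le C(\varphi)\,\|e_j\|_{\cI},
\]
is not available. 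Indeed, the conclusion of Lemma \ref{lem.Icont} is merely that the map $(c_1,\ldots,c_{k+1}) \mapsto I^{a_1+c_1,\ldots,a_{k+1}+c_{k+1}}\varphi$ is \emph{continuous}, not Lipschitz; your approach, were it valid, would prove the stronger Lipschitz statement, which should be a warning sign.

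The fix — and the point of the $BOC$ condition in the BOCIPD — is to use the operator continuity of $\varphi_j(\cdot,\sigma)$ \emph{directly}, rather than routing it through a divided-difference estimate. After the telescoping and the application of Definition \ref{def.idealproperties}.\ref{item.ISNI}, the quantity controlling the $j$-th term is
\[
\underline{\int_{\Sigma}} \big\|\varphi_j(a_j+c_{j,n},\sigma) - \varphi_j(a_j+c_j,\sigma)\big\|\,\prod_{m\neq j}\|\varphi_m(\cdot,\sigma)\|_{\ell^{\infty}(\R)}\,\rho(d\sigma).
\]
For each fixed $\sigma$, the integrand's first factor tends to $0$ as $n \to \infty$ precisely because $\varphi_j(\cdot,\sigma) \in BOC(\R)$ (this is where operator continuity is consumed) and $\|c_{j,n}-c_j\| \le C_{\cI}\|c_{j,n}-c_j\|_{\cI} \to 0$. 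The integrand is dominated by $2\prod_m\|\varphi_m(\cdot,\sigma)\|_{\ell^{\infty}(\R)}$, which is integrable by part \ref{item.BOICintegofphis} of the BOCIPD definition. One then concludes with the nonmeasurable Dominated Convergence Theorem (Lemma \ref{lem.poorDCT}), which is tailored to handle the lower/upper integrals that arise because $\|F(\sigma)\|$ need not be measurable when $H$ is not separable. This closes the argument with only qualitative input on the factors, which is all the hypotheses give you. Your secondary concern about nested MOIs and composite symbols via Proposition \ref{prop.linandmult}.\ref{item.mult2} is thus moot: the correct proof never needs to rewrite the perturbed integrand as an MOI at all.
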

\begin{rem}
Recall from Proposition \ref{prop.idealproperties} that integral symmetrically normed ideals are MOI-friendly.
In particular, the map under consideration in Lemma \ref{lem.Icont} does actually make sense by definition of MOI-friendly and the fact that $a+c \aff \cM_{\sa}$ whenever $a \aff \cM_{\sa}$ and $c \in \cI_{\sa}$.
(As in the proof of Corollary \ref{cor.Iperturb}, the latter imply that $P^a$ and $P^{a+c}$ take values in $\cM$.)
\end{rem}
\begin{proof}
Write $\varphi_a \colon \cI_{\sa}^{k+1} \to B_k(\cI^k;\cI)$ for the map in question.
Now, let $c = (c_1,\ldots,c_{k+1}) \in \cI_{\sa}^{k+1}$ and $(c_{\cdot,n})_{n \in \N} = (c_{1,n},\ldots,c_{k+1,n})_{n \in \N}$ be a sequence in $\cI_{\sa}^{k+1}$ converging to $c$.
Then
\[
\varphi_a(c_{\cdot,n}) - \varphi_a(c) = \sum_{j=1}^{k+1} (\underbrace{\varphi_a(c_{1,n},\ldots,c_{j,n},c_{j+1},\ldots,c_{k+1}) - \varphi_a(c_{1,n},\ldots,c_{j-1,n},c_j,\ldots,c_{k+1})}_{\coloneqq T_{j,n}}).
\]
Fix now a BOCIPD $(\Sigma,\rho,\varphi_1,\ldots,\varphi_{k+1})$ of $\varphi$ and $b_1,\ldots,b_k \in \cI$, and write $b_{k+1} \coloneqq 1$.
By definition of the multiple operator integral, $T_{j,n}(b_1,\ldots,b_k)$ is precisely
\[
\int_{\Sigma}\Bigg(\prod_{m=1}^{j-1}\varphi_m(a_m+c_{m,n},\sigma)\,b_m\Bigg)(\varphi_j(a_j+c_{j,n},\sigma) - \varphi_j(a_j+c_j,\sigma))\,b_j\Bigg(\prod_{m=j+1}^{k+1}\varphi_m(a_m+c_m,\sigma)\,b_m\Bigg)\,\rho(d\sigma),
\]
where empty products are the identity.
Now, if $1 \leq j < k+1$ and
\begin{align*}
    A_n(\sigma) & \coloneqq \Bigg(\prod_{m=1}^{j-1}\varphi_m(a_m+c_{m,n},\sigma)\,b_m\Bigg)(\varphi_j(a_j+c_{j,n},\sigma) - \varphi_j(a_j+c_j,\sigma)) \; \text{ and} \\
    B(\sigma) & \coloneqq \prod_{m=j+1}^{k+1}\varphi_m(a_m+c_m,\sigma))\,b_m,
\end{align*}
then
\[
T_{j,n}(b_1,\ldots,b_k) = \int_{\Sigma}A_n(\sigma)\,b_j\,B(\sigma)\,\rho(d\sigma).
\]
But
\begin{align*}
    \underline{\int_{\Sigma}} \|A_n\|\,\|B\|\,d\rho & \leq \prod_{p \neq j}\|b_p\|\underline{\int_{\Sigma}} \big\|\varphi_j(a_j+c_{j,n},\sigma) - \varphi_j(a_j+c_j,\sigma)\big\|\prod_{m \neq j}\|\varphi_m(\cdot,\sigma)\|_{\ell^{\infty}(\R)}\,\rho(d\sigma) < \infty.
\end{align*}
Therefore, the definition of integral symmetrically normed gives $T_{j,n}(b_1,\ldots,b_k) \in \cI$ and
\begin{align*}
    \|T_{j,n}(b_1,\ldots,b_k)\|_{\cI} & \leq \|b_j\|_{\cI}\prod_{p \neq j}\|b_p\|\underline{\int_{\Sigma}} \big\|\varphi_j(a_j+c_{j,n},\sigma) - \varphi_j(a_j+c_j,\sigma)\big\|\prod_{m \neq j}\|\varphi_m(\cdot,\sigma)\|_{\ell^{\infty}(\R)}\,\rho(d\sigma) \\
    & \leq C_{\cI}^{k-1}\|b_1\|_{\cI}\cdots\|b_k\|_{\cI}\underline{\int_{\Sigma}} \big\|\varphi_j(a_j+c_{j,n},\sigma) - \varphi_j(a_j+c_j,\sigma)\big\|\prod_{m \neq j}\|\varphi_m(\cdot,\sigma)\|_{\ell^{\infty}(\R)}\,\rho(d\sigma).
\end{align*}
Thus
\[
\|T_{j,n}\|_{B_k(\cI^k;\cI)} \leq C_{\cI}^{k-1}\underline{\int_{\Sigma}} \big\|\varphi_j(a_j+c_{j,n},\sigma) - \varphi_j(a_j+c_j,\sigma)\|\prod_{m \neq j}\|\varphi_m(\cdot,\sigma)\|_{\ell^{\infty}(\R)}\,\rho(d\sigma). \numberthis\label{eq.Tjn}
\]
Next, fix $\sigma \in \Sigma$.
Since $\|c_{j,n}-c_j\| \leq C_{\cI}\|c_{j,n}-c_j\|_{\cI} \to 0$ as $n \to \infty$, the operator continuity of $\varphi_j(\cdot,\sigma)$ gives
\[
\|\varphi_j(a_j+c_{j,n},\sigma) - \varphi_j(a_j+c_j,\sigma)\| \to 0
\]
as $n \to \infty$.
Since
\[
\overline{\int_{\Sigma}}\sup_{n \in \N} \Bigg(\big\|\varphi_j(a_j+c_{j,n},\sigma) - \varphi_j(a_j+c_j,\sigma)\big\|\prod_{m \neq j}\|\varphi_m(\cdot,\sigma)\|_{\ell^{\infty}(\R)}\Bigg)\,\rho(d\sigma) \leq 2\int_{\Sigma}\prod_{m =1}^{k+1}\|\varphi_m(\cdot,\sigma)\|_{\ell^{\infty}(\R)}\,\rho(d\sigma),
\]
which is finite, we conclude from \eqref{eq.Tjn} and Lemma \ref{lem.poorDCT} that $\|T_{j,n}\|_{B_k(\cI^k;\cI)} \to 0$ as $n \to \infty$.
If $j=k+1$, then we run the same argument with
\begin{align*}
    A_n(\sigma) & \coloneqq \Bigg(\prod_{m=1}^{k-1}\varphi_m(a_m+c_{m,n},\sigma)\,b_m\Bigg)\,\varphi_k(a_k+c_{k,n},\sigma) \; \text{ and} \\
    B_n(\sigma) & \coloneqq \varphi_{k+1}(a_{k+1}+c_{k+1,n},\sigma) - \varphi_{k+1}(a_{k+1}+c_{k+1},\sigma)
\end{align*}
to prove that $\|T_{k+1,n}\|_{B_k(\cI^k;\cI)} \to 0$ as $n \to \infty$.
We conclude that
\[
\|\varphi_a(c_{\cdot,n}) - \varphi_a(c)\|_{B_k(\cI^k;\cI)} \leq \sum_{j=1}^{k+1} \|T_{j,n}\|_{B_k(\cI^k;\cI)} \to 0
\]
as $n \to \infty$, as claimed.
\end{proof}

Next, we recall the notion of Fr\'{e}chet differentiability of maps between normed vector spaces and then define what it means for a scalar function to be $\cI$\textit{-differentiable}.
For these purposes, note that if $V_1,\ldots,V_k,W$ are normed vector spaces, then $B_k(V_1 \times \cdots \times V_k ; W) \cong B(V_1;B_{k-1}(V_2 \times \cdots \times V_k;W))$ isometrically via
\[
T \mapsto (v_1 \mapsto ((v_2,\ldots,v_k) \mapsto T(v_1,\ldots,v_k))).
\]
We use this identification below.

\begin{defi}[Fr\'{e}chet differentiability]\label{def.frechder}
Let $V$ and $W$ be normed vector spaces, $U \subseteq V$ be open, and $F \colon U \to W$ be a map.
For $p \in U$, we say $F$ is \textbf{Fr\'{e}chet differentiable at} $\boldsymbol{p}$ if there exists (necessarily unique) $DF(p) \in B(V;W)$ such that
\[
\frac{\|F(p+h)-F(p)-DF(p)h\|_W}{\|h\|_V} \to 0
\]
as $h \to 0$ in $V$.
If $F$ is Fr\'{e}chet differentiable at all $p \in U$, then we say $F$ is \textbf{Fr\'{e}chet differentiable in} $\boldsymbol{U}$ and write $D^1F = DF \colon U \to B(V;W)$ for its \textbf{Fr\'{e}chet derivative} map $U \ni p \mapsto DF(p) \in B(V;W)$.
For $k\geq 2$, we say $F$ is $\boldsymbol{k}$\textbf{-times Fr\'{e}chet differentiable at} $\boldsymbol{p}$ if it is $(k-1)$-times Fr\'{e}chet differentiable in a\pagebreak\ neighborhood of $p$ --- say $U$ for simplicity --- and $D^{k-1}F \colon U \to B_{k-1}(V^{k-1};W)$ is Fr\'{e}chet differentiable at $p$.
In this case, we write
\[
D^kF(p) \coloneqq D(D^{k-1}F)(p) \in  B(V;B_{k-1}(V^{k-1};W)) \cong B_k(V^k;W).
\]
If $F$ is $k$-times Fr\'{e}chet differentiable at all $p \in U$, then we say $F$ is $\boldsymbol{k}$\textbf{-times Fr\'{e}chet differentiable in} $\boldsymbol{U}$ and write $D^kF \colon U \to B_k(V^k;W)$ for its $\boldsymbol{k^{\text{\textbf{th}}}}$ \textbf{Fr\'{e}chet derivative} map $U \ni p \mapsto D^kF(p) \in B_k(V^k;W)$.
Finally, if $D^kF$ is also continuous, then we say $F$ is $\boldsymbol{k}$\textbf{-times continuously differentiable in} $\boldsymbol{U}$ and write $F \in C^k(U;W)$.
\end{defi}

Concretely, if $F \colon U \to W$ is $k$-times Fr\'{e}chet differentiable (in $U$), then one can show by induction that
\[
D^kF(p)[h_1,\ldots,h_k] = \partial_{h_1}\cdots \partial_{h_k}F(p) = \frac{d}{ds_1}\Big|_{s_1=0}\cdots \frac{d}{ds_k}\Big|_{s_k=0} F(p+s_1h_1+\cdots+s_kh_k),
\]
for all $p \in U$ and $h_1,\ldots,h_k \in V$.

\begin{defi}[$\cI$-differentiability]\label{def.Idiff}
Fix $a \aff \cM_{\sa}$.
A Borel measurable function $f \colon \R \to \C$ is called \textbf{$\boldsymbol{k}$-times (Fr\'{e}chet) $\boldsymbol{\cI}$-differentiable at $\boldsymbol{a}$} if there is an open set $U \subseteq \cI_{\sa}$ with $0 \in U$ such that
\begin{enumerate}[label=(\alph*),leftmargin=2\parindent]
    \item $f(a+b)-f(a) \in \cI$ for all $b \in U$ (i.e., when $b \in U$, $f(a+b)-f(a)$ is densely defined and bounded, and its unique bounded linear extension belongs to $\cI$), and
    \item the map $U \ni b \mapsto f_{a,\mathsmaller{\cI}}(b) \coloneqq f(a+b)-f(a) \in \cI$ is $k$-times Fr\'{e}chet differentiable (with respect to $\|\cdot\|_{\cI}$) at $0 \in U \subseteq \cI_{\sa}$.
\end{enumerate}
In this case, we write
\[
D_{\mathsmaller{\cI}}^kf(a) \coloneqq D^kf_{a,\mathsmaller{\cI}}(0) \in B_k(\cI_{\sa}^k;\cI)
\]
for the $k^{\text{th}}$ Fr\'{e}chet derivative of $f_{a,\mathsmaller{\cI}} \colon U \to \cI$ at $0 \in U$.
If $f$ is $k$-times $\cI$-differentiable at $a$ for every $a \aff \cM_{\sa}$, then we simply say $f$ is \textbf{$\boldsymbol{k}$-times $\boldsymbol{\cI}$-differentiable}.
\end{defi}

Suppose that $f \colon \R \to \C$ is Lipschitz and $f(a+c)-f(a) \in \cI$, for all $a \aff \cM_{\sa}$ and $c \in \cI_{\sa}$ (i.e., $f_{a,\mathsmaller{\cI}} \colon \cI_{\sa} \to \cI$ is defined everywhere).
We claim that if $f$ is $k$-times $\cI$-differentiable, then $f_{a,\mathsmaller{\cI}}$ is $k$-times Fr\'{e}chet differentiable everywhere --- not just at $0 \in \cI_{\sa}$.
Indeed, fix $b,c \in \cI_{\sa}$, and note that
\[
f_{a,\mathsmaller{\cI}}(b+c)-f_{a,\mathsmaller{\cI}}(b) = f(a+b+c)-f(a+b) = f_{a+b,\mathsmaller{\cI}}(c). \numberthis\label{eq.domissues}
\]
This is the case because \eqref{eq.domissues} is immediate from the definition on
\[
\dom(a) = \dom(a)\cap \dom(a+b+c)\cap \dom(a+b) \subseteq \dom(f(a)) \cap \dom(f(a+b+c)) \cap \dom(f(a+b)),
\]
which is dense in $H$.
(Note that we used \eqref{eq.dom}.)
In other words,
\[
f_{a,\mathsmaller{\cI}}(b+c) = f_{a+b,\mathsmaller{\cI}}(c) + f_{a,\mathsmaller{\cI}}(b),
\]
for all $c \in \cI_{\sa}$.
Since $c \mapsto f_{a+b,\mathsmaller{\cI}}(c)$ is $k$-times differentiable at $0 \in \cI_{\sa}$, we conclude that $f_{a,\mathsmaller{\cI}}$ is $k$-times differentiable at $b$ with
\[
D^kf_{a,\mathsmaller{\cI}}(b) = D^kf_{a+b,\mathsmaller{\cI}}(0) = D_{\mathsmaller{\cI}}^kf(a+b). 
\]
With this in mind, here is the main result of this section.

\begin{thm}[Derivatives of operator functions in ISNIs]\label{thm.deropfunc}
Suppose $(\cI,\|\cdot\|_{\cI}) \unlhd \cM$ is integral symmetrically normed, and fix $a \aff \cM_{\sa}$.
If $f \in OC^{[k]}(\R)$, then $f_{a,\mathsmaller{\cI}} \colon \cI_{\sa} \to \cI$ is defined everywhere, and $f_{a,\mathsmaller{\cI}} \in C^k(\cI_{\sa};\cI)$.
In particular, $f$ is $k$-times $\cI$-differentiable.
Moreover,
\[
D_{\mathsmaller{\cI}}^kf(a)[b_1,\ldots,b_k] = \sum_{\pi \in S_k}f^{[k]}(\underbrace{a,\ldots,a}_{\mathsmaller{k+1 \; \mathrm{times}}})\sh [b_{\pi(1)},\ldots,b_{\pi(k)}],
\]
for all $(b_1,\ldots,b_k) \in \cI_{\sa}^k$.

By the observation above, we therefore also have
\[
D^kf_{a,\mathsmaller{\cI}}(c)[b_1,\ldots,b_k] = D_{\mathsmaller{\cI}}^kf(a+c)[b_1,\ldots,b_k] = \sum_{\pi \in S_k}f^{[k]}(\underbrace{a+c,\ldots,a+c}_{k+1 \; \mathrm{times}})\sh [b_{\pi(1)},\ldots,b_{\pi(k)}],
\]
for all $c,b_1,\ldots,b_k \in \cI_{\sa}$.
\end{thm}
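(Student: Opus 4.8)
The plan is to induct on $k$, using the perturbation formulas (Theorem \ref{thm.perturb}), the algebra of MOIs on $BOC(\R)^{\iotimes(k+1)}$ (Propositions \ref{prop.BOCIPTP}, \ref{prop.linandmult}), and the continuous-perturbation lemma (Lemma \ref{lem.Icont}) as the three engines. First I would settle the base case $k=1$. Since $f \in OC^{[1]}(\R)$, we have $f^{[1]} \in BOC(\R)^{\iotimes 2} \subseteq \ell^\infty(\R,\cB_\R)^{\iotimes 2}$, so by Corollary \ref{cor.Iperturb} the map $f_{a,\mathsmaller{\cI}} \colon \cI_{\sa} \to \cI$ is defined everywhere. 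For the derivative at $0$, apply the perturbation formula $f(a+b+h) - f(a+b) = f^{[1]}(a+b+h, a+b)\sh h$ together with \eqref{eq.domissues}, so that
\[
f_{a,\mathsmaller{\cI}}(b+h) - f_{a,\mathsmaller{\cI}}(b) - \big(I^{a+b,a+b}f^{[1]}\big)(h) = \big(I^{a+b+h,a+b}f^{[1]} - I^{a+b,a+b}f^{[1]}\big)(h).
\]
By Lemma \ref{lem.Icont} the operator-norm of the bracketed difference of MOIs tends to $0$ as $\|h\|_{\cI} \to 0$, so the remainder is $o(\|h\|_{\cI})$; this gives Fréchet differentiability with $D^1_{\mathsmaller{\cI}}f(a+b) = I^{a+b,a+b}f^{[1]}$, and continuity of $b \mapsto D^1_{\mathsmaller{\cI}}f(a+b)$ is exactly Lemma \ref{lem.Icont} again (with all $a_i = a$, $c_i = b$). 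Since $S_1$ is trivial this matches the claimed formula.

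For the inductive step, suppose the theorem holds for $k-1$ (for all ISNIs and all self-adjoint affiliated operators), and let $f \in OC^{[k]}(\R) \subseteq OC^{[k-1]}(\R)$. By the induction hypothesis applied to $f_{a+b,\mathsmaller{\cI}}$ and \eqref{eq.domissues},
\[
D^{k-1}f_{a,\mathsmaller{\cI}}(b)(b_1,\dots,b_{k-1}) = \sum_{\gamma \in S_{k-1}} f^{[k-1]}(a+b,\dots,a+b)\sh(b_{\gamma(1)},\dots,b_{\gamma(k-1)}),
\]
so I must differentiate the map $b \mapsto I^{a+b,\dots,a+b}f^{[k-1]} \in B_{k-1}(\cI^{k-1};\cI)$ at a point and identify the derivative. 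Fixing $\gamma$ and $(b_1,\dots,b_{k-1})$, write $g(b) = f^{[k-1]}(a+b,\dots,a+b)\sh(b_{\gamma(1)},\dots,b_{\gamma(k-1)})$. The increment $g(b+h)-g(b)$ is a telescoping sum over the $k$ slots of the differences $f^{[k-1]}(\dots,a+b+h,\dots) - f^{[k-1]}(\dots,a+b,\dots)$; applying the higher perturbation formula \eqref{eq.perturbk} in each slot converts each such difference into $f^{[k]}(\dots,a+b+h,a+b,\dots)\sh(\dots,h,\dots)$ — an MOI with $f^{[k]} \in BOC(\R)^{\iotimes(k+1)}$ (available since $f \in OC^{[k]}(\R)$) evaluated on arguments that include $h$ in one slot. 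Summing the $k$ slots and then over $\gamma \in S_{k-1}$, and noting that inserting $h$ into each of the $k$ positions of a permutation of $(b_1,\dots,b_{k-1})$ produces exactly the $k!$ permutations in $S_k$ (where $h$ plays the role of $b_k$), I would get the "main term" $\sum_{\gamma \in S_k} f^{[k]}(a+b,\dots,a+b)\sh(\dots)$ plus error terms in which the first of the two middle arguments is $a+b+h$ rather than $a+b$. The error terms have the form $\big(I^{\dots,a+b+h,a+b,\dots}f^{[k]} - I^{\dots,a+b,a+b,\dots}f^{[k]}\big)$ applied to a tuple containing $h$, so by Lemma \ref{lem.Icont} their $B_k(\cI^k;\cI)$-norm is $o(1)$ as $\|h\|_{\cI}\to 0$, and the whole error is $o(\|h\|_{\cI})$. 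This establishes $k$-times Fréchet differentiability and the formula at $b$; continuity of $b \mapsto D^k_{\mathsmaller{\cI}}f(a+b)$ follows once more from Lemma \ref{lem.Icont} applied to each summand $I^{a+b,\dots,a+b}f^{[k]}$.

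The main obstacle is the bookkeeping in the inductive step: carefully justifying that the telescoping-plus-perturbation-formula manipulation is legitimate as an identity of $k$-linear maps on $\cI$ (not just of bounded operators on $H$), that the $o(\|h\|_{\cI})$ estimate is uniform over the unit ball of $\cI^k$ so that it really controls the $B_k(\cI^k;\cI)$-norm, and that the combinatorial reindexing genuinely yields the sum over $S_k$ with the correct arguments. Each ingredient is available — the algebraic MOI identities from Proposition \ref{prop.linandmult} handle the "as $k$-linear maps" point, MOI-friendliness (Proposition \ref{prop.MOIfriendly}) keeps everything inside $\cI$ with controlled norms, and Lemma \ref{lem.Icont} is precisely the uniform continuity statement needed — so the difficulty is organizational rather than conceptual. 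A secondary subtlety is handling the boundary slots $j \in \{1,k\}$ of the telescoping sum separately, mirroring the case split already present in the proof of Theorem \ref{thm.perturb}.
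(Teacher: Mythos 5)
Your proposal is correct and follows essentially the same route as the paper's proof: well-definedness from Corollary \ref{cor.Iperturb}, induction on $k$ driven by the telescoping application of the perturbation formulas (Theorem \ref{thm.perturb}), the $o(\|h\|_{\cI})$ remainder estimate and continuity of the derivative both coming from Lemma \ref{lem.Icont}, and the same $S_{k-1}\times\{1,\ldots,k\}\to S_k$ reindexing to produce the symmetrized sum. The only cosmetic difference is that the paper first establishes the formula at $0$ for every affiliated $a$ and then invokes the translation identity \eqref{eq.domissues} to deduce the formula at arbitrary $c$, whereas you fold \eqref{eq.domissues} directly into the differentiation at an arbitrary point $b$; the substance is identical.
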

\begin{proof}
Fix $a \aff \cM_{\sa}$.
Notice that if $f \in OC^{[k]}(\R)$, then $f \in C^1(\R)$ and $f^{[1]} \in \ell^{\infty}(\R,\cB_{\R}) \iotimes \ell^{\infty}(\R,\cB_{\R})$.
In particular, by Corollary \ref{cor.Iperturb}, $f_{a,\mathsmaller{\cI}}(c) = f(a+c)-f(a) \in \cI$, for all $c \in \cI_{\sa}$.
In addition, we observe that if $f \in OC^{[k]}(\R)$, then the map
\[
\cI_{\sa} \ni c \mapsto I^{a+c,\ldots,a+c}f^{[k]} \in B_k(\cI^k;\cI)
\]
is continuous by the Continuous Perturbation Lemma (Lemma \ref{lem.Icont}).
Therefore, the claimed $k^{\text{th}}$ derivative map is, in fact, continuous.
Thus, to prove the theorem, it suffices to prove the claimed formula for $D_{\mathsmaller{\cI}}^kf(a)$.
We do so by induction on $k$.

Fix $c \in \cI_{\sa}$.
By Theorem \ref{thm.perturb},
\begin{align*}
    f_{a,\mathsmaller{\cI}}(c)-f_{a,\mathsmaller{\cI}}(0)-f^{[1]}(a,a)\sh c & = f(a+c)-f(a) - f^{[1]}(a,a)\sh c \\
    & = f^{[1]}(a+c,a)\sh c - f^{[1]}(a,a)\sh c \\
    & = \big(I^{a+c,a}f^{[1]}-I^{a,a}f^{[1]}\big)[c].
\end{align*}
Therefore, by the Continuous Perturbation Lemma (Lemma \ref{lem.Icont}),
\[
\frac{1}{\|c\|_{\cI}}\big\|f_{a,\mathsmaller{\cI}}(c)-f_{a,\mathsmaller{\cI}}(0)-f^{[1]}(a,a)\sh c\big\|_{\cI} \leq \big\|I^{a+c,a}f^{[1]}-I^{a,a}f^{[1]}\big\|_{B(\cI)} \to 0
\]
as $c \to 0$ in $\cI_{\sa}$.
This completes the proof when $k=1$.

Next, suppose $k \geq 2$ and that we have proven the claimed derivative formula when $f \in OC^{[k-1]}(\R)$.
To prove the formula for $f \in OC^{[k]}(\R)$, we set some notation and make some preliminary observations.
If $S$ is a set, $s \in S$, and $m \in \N_0$, then we write
\[
s_{(m)} \coloneqq (s,\ldots,s) \in S^m,
\]
where $s_{(0)}$ is the empty list.
Now, fix $b = (b_1,\ldots,b_{k-1}) \in \cI^{k-1}$ and $f \in OC^{[k]}(\R) \subseteq OC^{[k-1]}(\R)$.
By Theorem \ref{thm.perturb}, we have
\begin{align*}
    \delta(b,c) & \coloneqq f^{[k-1]}\big((a+c)_{(k)}\big)\sh b - f^{[k-1]}\big(a_{(k)}\big)\sh b \\
    & = \sum_{j=1}^k \big(f^{[k-1]}\big((a+c)_{(j)},a_{(k-j)}\big)\sh b - f^{[k-1]}\big((a+c)_{(j-1)},a_{(k-j+1)}\big)\sh b\big) \\
    & = \sum_{j=1}^k f^{[k]}\big((a+c)_{(j)},a_{(k+1-j)}\big)\sh [b_{j-},c,b_{j+}],
\end{align*}
using Notation \ref{nota.list}.
Next, by the inductive hypothesis, 
\[
D^{k-1}f_{a,\mathsmaller{\cI}}(c_0)[b] = D_{\mathsmaller{\cI}}^{k-1}f(a+c_0)[b] = \sum_{\tau \in S_{k-1}} f^{[k-1]}\big((a+c_0)_{(k)}\big)\sh b^{\tau},
\]
for all $c_0 \in \cI_{\sa}$, where $b^{\tau} = (b_{\tau(1)},\ldots,b_{\tau(k-1)})$ for $\tau \in S_{k-1}$.
Combining this inductive hypothesis with the expression for $\delta(b,c)$ above gives
\begin{align*}
    \e(b,c) & \coloneqq D^{k-1}f_{a,\mathsmaller{\cI}}(c)[b] - D^{k-1}f_{a,\mathsmaller{\cI}}(0)[b] - \sum_{\tau \in S_{k-1}}\sum_{j=1}^kf^{[k]}\big(a_{(k+1)}\big)\sh [b_{j-}^{\tau},c,b_{j+}^{\tau}] \\
    & = \sum_{\tau \in S_{k-1}} \big(f^{[k-1]}\big((a+c)_{(k)}\big)\sh b^{\tau} - f^{[k-1]}\big(a_{(k)}\big)\sh b^{\tau}\big) - \sum_{\tau \in S_{k-1}}\sum_{j=1}^kf^{[k]}\big(a_{(k+1)}\big)\sh [b_{j-}^{\tau},c,b_{j+}^{\tau}] \\
    & = \sum_{\tau \in S_{k-1}}\Bigg(\delta(b^{\tau},c) - \sum_{j=1}^kf^{[k]}\big(a_{(k+1)}\big)\sh [b_{j-}^{\tau},c,b_{j+}^{\tau}]\Bigg) \\
    & = \sum_{\tau \in S_{k-1}}\sum_{j=1}^k\big(f^{[k]}\big((a+c)_{(j)},a_{(k+1-j)}\big)\sh [b_{j-}^{\tau},c,b_{j+}^{\tau}]-f^{[k]}\big(a_{(k+1)}\big)\sh [b_{j-}^{\tau},c,b_{j+}^{\tau}]\big).
\end{align*}
It follows that\vspace{-0.7mm}
\[
\frac{\|\e(\cdot,c)\|_{B_{k-1}(\cI_{\sa}^{k-1};\cI)}}{\|c\|_{\cI}} \leq (k-1)!\sum_{j=1}^k\big\|I^{(a+c)_{(j)},a_{(k+1-j)}}f^{[k]}-I^{a_{(k+1)}}f^{[k]}\big\|_{B_k(\cI^k;\cI)} \to 0\vspace{-1.5mm}
\]
as $c \to 0$ in $\cI_{\sa}$ by the Continuous Perturbation Lemma.
Writing $\tilde{b} \coloneqq (b_0,b_1,\ldots,b_{k-1})$, this proves\vspace{-1.01mm}
\[
D_{\mathsmaller{\cI}}^kf(a)\big[\tilde{b}\big] = D^kf_{a,\mathsmaller{\cI}}(0)\big[\tilde{b}\big] = \sum_{\tau \in S_{k-1}}\sum_{j=1}^kf^{[k]}\big(a_{(k+1)}\big)\sh [b_{j-}^{\tau},b_0,b_{j+}^{\tau}] = \sum_{\pi \in S_k}f^{[k]}\big(a_{(k+1)}\big)\sh\tilde{b}^{\pi},\vspace{-1.01mm}
\]
as claimed.
This completes the proof.
\end{proof}

\begin{rem}\label{rem.dePS}
Let $H$ be a separable complex Hilbert space, $(\cM \subseteq B(H),\tau)$ be a semifinite von Neumann algebra, and $(E,\|\cdot\|_E)$ be a separable symmetric Banach function space.
In \cite{depagtersukochev}, it is proven (Theorem 5.16) that if $f \colon \R \to \R$ is a continuous function such that $f^{[1]}$ admits a decomposition as in Definition \ref{def.BOCIPTP} with only $\varphi_1(\cdot,\sigma),\varphi_2(\cdot,\sigma) \in BC(\R)$ (i.e., these functions are not assumed to be operator continuous) and if $a \in S(\tau)_{\sa}$, then the map $E(\tau)_{\sa} \ni b \mapsto f(a+b)-f(a) \in E(\tau)_{\sa}$ makes sense and is Gateaux differentiable at $0$ with Gateaux derivatives expressible as double operator integrals involving $f^{[1]}$.
In particular, this result applies when $E = L^p$ with $1 \leq p < \infty$.
It is noted, however, in the introduction of \cite{depagtersukochev} that Fr\'{e}chet differentiability does not in general hold in this setting.
This is why we must work in the space $(\mathcal{E}(\tau),\|\cdot\|_{\mathcal{E}(\tau)}) = (E(\tau) \cap \cM,\|\cdot\|_{E(\tau) \cap \cM})$ (e.g., $\mathcal{L}^p(\tau)$) instead of the space $(E(\tau),\|\cdot\|_{E(\tau)})$ (e.g., $L^p(\tau)$), to prove positive results about Fr\'{e}chet differentiability in this setting.
(Also, our method --- particularly the extra assumption of operator continuity in our decompositions --- allows us to assume only that $a \aff \cM_{\sa}$, i.e., we need not assume that $a$ is $\tau$-measurable.)
In short, the results in \cite{depagtersukochev} are, for good reason, of a different flavor than the results in the present paper.
\end{rem}

\appendix
\section{Proof of Theorem \ref{thm.Peller}}\label{sec.BesovPeller}

In this appendix, we provide a full proof of Theorem \ref{thm.Peller}.
We shall freely use basic facts about tempered distributions and their Fourier transforms.
In particular, we recall that, as a consequence of the Paley--Wiener Theorem, if $f \in \mathscr{S}'(\R^m)$ is such that $\supp \wh{f}$ is compact, then $f$ is a smooth function.

\subsection{Part I}\label{sec.PellerI}

First, we set some notation that we shall use to write an expression (Theorem \ref{thm.Pellerexp} below) that is key to the endeavor of proving Theorem \ref{thm.Peller}.
Write $\R_+ \coloneqq [0,\infty)$.

\begin{nota}
We define two families $(r_u)_{u \in \R_+}$ and $(\mu_u)_{u \in \R_+}$ of tempered distributions on $\R$ by requiring\vspace{-1.01mm}
\begin{align*}
    \wh{r}_u(\xi) & \coloneqq 1_{[0,u]}(|\xi|) + \frac{u}{|\xi|}1_{(u,\infty)}(|\xi|) = \begin{cases}
    1, & \text{if } |\xi| \leq u \\
    \frac{u}{|\xi|}, & \text{if } |\xi| > u
    \end{cases} \\[-0.8mm]
    \wh{\mu_u}(\xi) & \coloneqq \frac{|\xi|-u}{|\xi|}1_{(u,\infty)}(|\xi|) = 1- \wh{r}_u(\xi)
\end{align*}\vspace{-1.01mm}
for $u > 0$ and $\xi \in \R$;
and $r_0 \coloneqq \delta_0$, $\mu_0 \coloneqq 0$.
In other words, $\mu_u = \wch{1} - r_u = \delta_0 - r_u$, for all $u \geq 0$.
\end{nota}

\begin{prop}\label{prop.randmu}
Let $f \colon \R \to \C$ be a Borel measurable function.
Write $f \ast \mu \colon \R_+ \times \R \to \C$ for the map $(u,x) \mapsto (f \ast \mu_u)(x) = f(x) - (f \ast r_u)(x)$ when it makes sense.\vspace{-0.4mm}
\begin{enumerate}[label=(\roman*),font=\normalfont,leftmargin=2\parindent]
    \item If $u > 0$, then $r_u \in L^1(\R) \cap L^2(\R)$.
    Specifically, $r_u = u\, r_1(u \cdot)$, $\|r_1\|_{L^2} = \sqrt{2\pi^{-1}}$, and $\|r_1\|_{L^1} < 2 < \infty$;
    so that $\|r_u\|_{L^2} = \sqrt{2(\pi u)^{-1}}$ and $\|r_u\|_{L^1} < 2$. \label{item.rbound}\vspace{-0.4mm}
    \item If $f$ is bounded, then $f \ast \mu$ is bounded and Borel measurable with\vspace{-1.01mm}
    \[
    \|f \ast \mu\|_{\ell^{\infty}(\R_+ \times \R)} \leq \|f\|_{\ell^{\infty}(\R)}(1+\|r_1\|_{L^1}) \leq 3\|f\|_{\ell^{\infty}(\R)}.\vspace{-1.01mm}
    \]
    (And we can replace the $\ell$'s with $L$'s.)
    If in addition $f \in C(\R)$, then $f \ast \mu \in C((0,\infty) \times \R)$.\label{item.fastmuinfbound}\vspace{-0.4mm}
    \item If $f \in L^1(\R)$, then $\|f \ast \mu_u\|_{L^1} \leq \|f\|_{L^1}(1+\|r_1\|_{L^1}) \leq 3\|f\|_{L^1}$, for all $u \geq 0$, as well.\label{item.fastmu1bound}\vspace{-0.4mm}
    \item Fix $\sigma > 0$.
    Suppose $f$ is bounded and $\supp \wh{f} \subseteq [0,\sigma]$.
    Then $f \ast \mu_u \equiv 0$ when $u > \sigma$.
    In particular, $(f \ast \mu_{\boldsymbol{\cdot}})(x) \in C_c(\R_+)$, for all $x \in \R$.\label{item.fastmusupp}
\end{enumerate}
\end{prop}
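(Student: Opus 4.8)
The plan is to establish the four items essentially by unwinding the Fourier-analytic definitions and using elementary estimates. The key computational fact underlying everything is that $\widehat{r}_u$ is a bounded, nonnegative, compactly-modified function that is continuous except possibly at $\xi = 0$ and decays like $u/|\xi|$, together with the scaling relation $\widehat{r}_u(\xi) = \widehat{r}_1(\xi/u)$, which by the standard dilation rule for the Fourier transform gives $r_u = u\,r_1(u\,\cdot)$. So I would open with item \ref{item.rbound}: compute $r_1$ explicitly. Since $\widehat{r}_1(\xi) = 1_{[-1,1]}(\xi) + |\xi|^{-1}1_{|\xi|>1}(\xi)$ is in $L^2(\R)$, Plancherel gives $r_1 \in L^2$ with $\|r_1\|_{L^2}^2 = \frac{1}{2\pi}\|\widehat{r}_1\|_{L^2}^2 = \frac{1}{2\pi}\big(2 + 2\int_1^\infty \xi^{-2}\,d\xi\big) = \frac{1}{2\pi}\cdot 4 = \frac{2}{\pi}$; the $L^1$ bound requires actually computing (or estimating) the inverse Fourier transform. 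One can write $r_1(x) = \frac{1}{\pi}\big(\frac{\sin x}{x} + \int_1^\infty \frac{\cos(x\xi)}{\xi}\,d\xi\big)$ (using evenness), recognize the tail integral in terms of the cosine integral function, and bound $\|r_1\|_{L^1}$ crudely; alternatively, since we only need $\|r_1\|_{L^1} < 2$, an explicit if slightly tedious estimate suffices. Then the scaling gives $\|r_u\|_{L^1} = \|r_1\|_{L^1} < 2$ and $\|r_u\|_{L^2} = u^{1/2}u^{-1}\|r_1\|_{L^2}\cdot\ldots$ — more carefully, $\|u\,r_1(u\,\cdot)\|_{L^2}^2 = u^2 \cdot u^{-1}\|r_1\|_{L^2}^2 = u\|r_1\|_{L^2}^2$, wait, that gives $\|r_u\|_{L^2} = \sqrt{2u/\pi}$, which contradicts the claimed $\sqrt{2/(\pi u)}$; so the correct reading must be that the normalization or the claim uses a different convention — I would double-check against the stated $\widehat{r}_u$ and simply report whatever the computation yields, trusting the paper's statement as the target.

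For item \ref{item.fastmuinfbound}, since $\mu_u = \delta_0 - r_u$ we have $f \ast \mu_u = f - f \ast r_u$, and Young's inequality gives $\|f \ast r_u\|_{\ell^\infty} \le \|f\|_{\ell^\infty}\|r_u\|_{L^1} = \|f\|_{\ell^\infty}\|r_1\|_{L^1}$, hence $\|f\ast\mu\|_{\ell^\infty(\R_+\times\R)} \le \|f\|_{\ell^\infty}(1 + \|r_1\|_{L^1}) \le 3\|f\|_{\ell^\infty}$; the $u=0$ case is $f\ast\mu_0 = f\ast 0 = 0$, wait, $\mu_0 = 0$ so $f\ast\mu_0 \equiv 0$, consistent. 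Borel measurability of $(u,x)\mapsto (f\ast r_u)(x) = u\int f(x-y)r_1(uy)\,dy$ follows from continuity in $(u,x)$ on $(0,\infty)\times\R$ (dominated convergence, using $|r_1|\le$ an $L^1$ majorant after a change of variables) plus the trivial behavior at $u=0$; if $f\in C(\R)$ the same dominated-convergence argument upgrades to joint continuity on $(0,\infty)\times\R$. Item \ref{item.fastmu1bound} is the same Young estimate with the roles swapped: $\|f\ast r_u\|_{L^1}\le\|f\|_{L^1}\|r_u\|_{L^1} = \|f\|_{L^1}\|r_1\|_{L^1}$.

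For item \ref{item.fastmusupp}, the point is that $\widehat{f\ast\mu_u} = \widehat{f}\cdot\widehat{\mu_u}$, and $\widehat{\mu_u}(\xi) = \frac{|\xi|-u}{|\xi|}1_{(u,\infty)}(|\xi|)$ is supported in $\{|\xi| > u\}$; if $\supp\widehat{f}\subseteq[0,\sigma]$ and $u > \sigma$, then $\widehat{f}\cdot\widehat{\mu_u} \equiv 0$, so $f\ast\mu_u \equiv 0$ (the product of a distribution supported in $[0,\sigma]$ with a function vanishing on a neighborhood of that set is zero — one should note $\widehat{\mu_u}$ is continuous and vanishes on $\{|\xi|\le u\}\supseteq[0,\sigma]$, so the product makes sense and is zero). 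Consequently $u\mapsto(f\ast\mu_u)(x)$ is supported in $[0,\sigma]$, and combined with the continuity from item \ref{item.fastmuinfbound} it lies in $C_c(\R_+)$ — here I would remark that continuity at $u=0$ needs a separate check, namely $f\ast r_u \to f$ appropriately as $u\to 0^+$; since $\widehat{r_u}\to 1$ boundedly and pointwise, $\widehat{f\ast r_u}\to\widehat{f}$, giving $f\ast\mu_u = f - f\ast r_u \to 0$ at least in the sense needed (pointwise, when $f$ is continuous with compactly supported Fourier transform, hence band-limited and nice). The main obstacle I anticipate is the explicit $L^1$-norm estimate $\|r_1\|_{L^1} < 2$: unlike the $L^2$ norm, which falls out of Plancherel instantly, the $L^1$ norm requires an honest estimate of an oscillatory integral involving the cosine integral function, and getting a clean bound below $2$ may require splitting the integration region carefully (small $x$ where $r_1$ is roughly constant, versus large $x$ where one integrates by parts to exploit oscillation).
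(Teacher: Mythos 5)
Your treatment of items (i)--(iii) follows the paper's route: Plancherel for the $L^2$ norm, a split into $|x|\le 1$ (Cauchy--Schwarz) and $|x|>1$ (two integrations by parts, giving $|r_1(x)|\le \pi^{-1}x^{-2}$) for the $L^1$ estimate, and Young's inequality for items (ii) and (iii). You are also right to flag the $\|r_u\|_{L^2}$ discrepancy: since $r_u=u\,r_1(u\,\cdot)$, one gets $\|r_u\|_{L^2}=u^{1/2}\|r_1\|_{L^2}=\sqrt{2u/\pi}$, not $\sqrt{2(\pi u)^{-1}}$, so the stated formula (and the line ``$\|r_u\|_{L^2}=u^{-1/2}\|r_1\|_{L^2}$'' in the paper's proof) is a typo; it is never used downstream.

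The genuine gap is in item (iv). You argue that $\cF(f\ast\mu_u)=\wh{f}\,\wh{\mu_u}$ and that ``the product makes sense and is zero'' because $\wh{\mu_u}$ vanishes on a neighborhood of $\supp\wh{f}$. But for $f$ merely bounded, $\wh{f}$ is only a compactly supported tempered distribution, and $\wh{\mu_u}$ is a continuous but \emph{not} smooth multiplier (it has corners at $|\xi|=u$), so the product of distribution and multiplier is not defined in general; the vanishing of $\wh{\mu_u}$ near $\supp\wh{f}$ is indeed what must save you, but converting this into the pointwise identity $f\ast\mu_u\equiv0$ requires work, and the identity $\cF(f\ast r_u)=\wh{f}\,\wh{r_u}$ is itself not automatic outside $L^1$. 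The paper sidesteps both issues by proving item (iv) first under the extra hypothesis $f,\wh{f}\in L^1(\R)$ --- where everything is classical and the disjoint-supports argument is immediate --- and then regularizing an arbitrary bounded $f$ by $f_n=\wch{\om}_nf$ via Lemma \ref{lem.Pellerapprox}, using $\supp\wh{f}_n\subseteq[0,\sigma+\tfrac1n]$ and the boundedly pointwise convergence $f_n\ast\mu\to f\ast\mu$. This is precisely why the paper postpones the proof of (iv) until after that lemma; any rigorous version of your distributional sketch ends up requiring a comparable approximation step.

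One further error: your remark on continuity at $u=0$ has the limit backwards. As $u\to0^+$ one has $\wh{r}_u(\xi)=u/|\xi|\to0$ for each $\xi\ne0$ (it is $u\to\infty$ that gives $\wh{r}_u\to1$), and indeed $u\mapsto(f\ast\mu_u)(x)$ can jump at $u=0$: with $f(x)=e^{ix}$ (so $\supp\wh{f}=\{1\}$) one finds $(f\ast\mu_u)(x)=e^{ix}(1-u)\to e^{ix}$ as $u\to0^+$, while $f\ast\mu_0\equiv0$. The paper's proof never asserts continuity at $u=0$, and for the eventual application only the Lebesgue integral over $\R_+^k$ matters, so the single point $u=0$ is harmless.
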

\pagebreak
\begin{proof}
We take each item in turn but postpone the proof of \ref{item.fastmusupp} until just after Lemma \ref{lem.Pellerapprox}.

\ref{item.rbound} First, notice that $\|\wh{r}_1\|_{L^2} = 2$ by an easy calculation.
Therefore, $\|r_1\|_{L^2} = (2\pi)^{-\frac{1}{2}}\|\wh{r}_1\|_{L^2} = \sqrt{2\pi^{-1}}$ by Plancherel's Theorem.
Next, fix $u > 0$ and $\xi \in \R$.
Notice $\wh{r}_u(\xi) = \wh{r}_1(\xi/u)$, from which it follows, by Fourier Inversion on $L^2$, that $r_u = \cF^{-1}(\wh{r}_1(\cdot/u)) = u\, r_1(u\, \cdot)$.
In particular, we have $\|r_u\|_{L^2} = u^{-\frac{1}{2}}\|r_1\|_{L^2}$ and $\|r_u\|_{L^1} = \|r_1\|_{L^1}$, as claimed.

It now suffices to prove $\|r_1\|_{L^1} < 2$.
To this end, note $\int_{-1}^1 |r_1(x)| \, dx \leq \sqrt{2} \|r_1\|_{L^2} = 2\pi^{-\frac{1}{2}}$, by the previous paragraph.
Now, for almost every $x \in \R$,
\begin{align*}
    r_1(x) & = \frac{1}{2\pi}\int_{-\infty}^{\infty} \wh{r}_1(\xi) e^{ix\xi} \, d\xi = -\frac{1}{2\pi x^2} \int_{-\infty}^{\infty} \wh{r}_1(\xi) \frac{d^2}{d\xi^2}e^{ix\xi} \, d\xi \\
    & = -\frac{1}{2\pi x^2} \int_{-\infty}^{\infty} \frac{d^2}{d\xi^2}\wh{r}_1(\xi) e^{ix\xi} \, d\xi = -\frac{1}{2\pi x^2} \int_{|\xi|>1} \frac{2 }{|\xi|^3} e^{ix\xi} \, d\xi,
\end{align*}
using integration by parts, where all of the above are improper Riemann integrals.\footnote{This is not technically perfect since actually $r_1 = L^2\text{-}\lim_{R \to \infty} \frac{1}{2\pi}\int_{|\xi| \leq R} \wh{r}_1(\xi)e^{i\boldsymbol{\cdot}\xi} \, d\xi$.
One should really take a particular sequence $R_k \to \infty$ as $k \to \infty$ for the first couple of integrals.}
Now, notice
\[
\Bigg|\int_{|\xi|>1} \frac{2 }{|\xi|^3} e^{ix\xi} \, d\xi\Bigg| \leq 4\int_1^{\infty}\frac{1}{\xi^3} \, d\xi = 2.
\]
It follows that $\int_{|x|>1}|r_1(x)| \, dx \leq \frac{2}{\pi}\int_1^{\infty}\frac{1}{x^2} \, dx = \frac{2}{\pi}$.
We finally conclude $\|r_1\|_{L^1} \leq \frac{2}{\sqrt{\pi}} + \frac{2}{\pi} < 2$, as desired.

\ref{item.fastmuinfbound} Fix $u > 0$ and $x \in \R$.
Then, recalling $r_u = u \, r_1(u\cdot)$,
\[
f \ast r_u(x) = \int_{\R}f(x-y) \,r_u(y) \, dy = \int_{\R} f(x-u^{-1}t) \, r_1(t) \, dt.
\]
Measurability of $f \ast \mu$ follows from this identity and the fact that $f \ast \mu(0,\cdot) = 0$.
The bounds are also immediate from this identity (because $f \ast \mu = f-f \ast r_{\cdot}$) and the first part.
Finally, joint continuity of $(0,\infty) \times \R \ni (u,x) \mapsto f \ast r_u(x) \in \R$ follows from the continuity of $f$ and the Dominated Convergence Theorem (which applies because $f$ is bounded and $r_1 \in L^1(\R)$).

\ref{item.fastmu1bound} This is immediate from Young's Convolution Inequality (when $u > 0$), the fact that $f \ast \mu_0 = 0$ (when $u=0$), and the first part.
\end{proof}

In order to bound integral projective tensor norms, one must exhibit expressions for the functions in question as integrals that ``separate variables" in a particular way.
Here is one such expression, which we take the rest of the section to prove.

\begin{thm}\label{thm.Pellerexp}
Fix $\sigma > 0$.
If $f \in \ell^{\infty}(\R,\cB_{\R})$ satisfies $\supp \wh{f} \subseteq [0,\sigma]$, then
\[
f^{[k]}(\blambda) = i^k\sum_{j=1}^{k+1}\int_{\R_+^k} \Bigg(\prod_{m=1}^{j-1}e^{i\lambda_mu_m}\Bigg) (f \ast \mu_{|\vec{u}|})(\lambda_j)\,e^{-i\lambda_j|\vec{u}|}\Bigg( \prod_{m=j+1}^{k+1}e^{i\lambda_mu_{m-1}}\Bigg) \, d\vec{u}, \numberthis\label{eq.fkexp}
\]
for all $k \in \N$ and $\blambda \in \R^{k+1}$, where $|\vec{u}| \coloneqq \sum_{m=1}^ku_m$ and empty products are defined to be $1$.
\end{thm}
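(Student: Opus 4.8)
I will prove \eqref{eq.fkexp} by induction on $k$, starting from the case $k=1$, which encodes the essential calculation, and then propagating through the recursive definition of the divided difference. Throughout I will work with $f$ having $\supp \wh f \subseteq [0,\sigma]$, so that $f$ is smooth (Paley--Wiener) and $(f \ast \mu_{\boldsymbol{\cdot}})(x) \in C_c(\R_+)$ by Proposition \ref{prop.randmu}.\ref{item.fastmusupp}; the latter is what makes all the integrals over $\R_+^k$ in \eqref{eq.fkexp} literally finite (the integrand is compactly supported in $\vec u$), so there are no convergence subtleties to worry about, only algebra.

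\emph{Base case $k=1$.} I would first establish the identity
\[
f^{[1]}(\lambda_1,\lambda_2) = i\int_0^\infty \Big( (f\ast \mu_u)(\lambda_1)\,e^{-i\lambda_1 u} + e^{i\lambda_1 u}(f\ast\mu_u)(\lambda_2)\,e^{-i\lambda_2 u}\Big)\,du,
\]
wait --- more carefully, the $j=1$ term is $(f\ast\mu_u)(\lambda_1)e^{-i\lambda_1 u}$ and the $j=2$ term is $e^{i\lambda_1 u}(f\ast\mu_u)(\lambda_2)e^{-i\lambda_2 u}$, matching \eqref{eq.fkexp}. The way I'd get this: use $\mu_u = \delta_0 - r_u$ and the fundamental relation $\frac{d}{du}\wh r_u(\xi) = -\frac{1}{|\xi|}1_{(u,\infty)}(|\xi|) \operatorname{sgn}(\xi)\cdot(\text{something})$ --- actually the cleaner route is to observe $\frac{\partial}{\partial u}(f\ast r_u)(x)$ has a Fourier multiplier supported away from a neighborhood of $0$, so that integrating $\frac{\partial}{\partial u}$ against $e^{\pm i\lambda u}$ telescopes. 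Concretely, I expect the key computational lemma to be: for $g$ with $\supp \wh g$ compact and bounded away from $0$ is false here, so instead one writes $f(\lambda_1) - f(\lambda_2) = \int_0^\infty -\frac{d}{du}\big[(f\ast r_u)(\lambda_1) - (f\ast r_u)(\lambda_2)\big]\,du$ (using $f\ast r_0 = f$ and $f\ast r_u \to 0$ or the support condition to kill the endpoint at $\infty$), then divide by $\lambda_1 - \lambda_2$ and manipulate the $u$-derivative of $\wh r_u$ into the stated form involving $e^{\pm i\lambda u}$ and $f\ast\mu_u$. The precise bookkeeping of this $k=1$ identity is where I expect the real work to be.

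\emph{Inductive step.} Assuming \eqref{eq.fkexp} for $k-1$, I apply the defining recursion $f^{[k]}(\blambda) = \dfrac{f^{[k-1]}(\lambda_1,\dots,\lambda_k) - f^{[k-1]}(\lambda_1,\dots,\lambda_{k-1},\lambda_{k+1})}{\lambda_k - \lambda_{k+1}}$. In the inductive expression for $f^{[k-1]}$, the variables $\lambda_k$ and $\lambda_{k+1}$ appear only in the last slot (slot index $j = k$ in the $(k-1)$-level formula, which is the ``$e^{i\lambda_{k}u_{k-1}}$'' factor or the ``$(f\ast\mu_{|\vec u|})(\lambda_k)e^{-i\lambda_k|\vec u|}$'' factor depending on $j$). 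Forming the difference quotient in $\lambda_k$ vs.\ $\lambda_{k+1}$, every term with $j \le k-1$ produces a factor $\frac{e^{i\lambda_k u_{k-1}} - e^{i\lambda_{k+1}u_{k-1}}}{\lambda_k - \lambda_{k+1}} = i\int_0^{u_{k-1}} e^{i\lambda_k t + i\lambda_{k+1}(u_{k-1}-t)}\,dt$ --- wait, $= i u_{k-1}\int_0^1 \dots$; reparametrizing $t = s u_{k-1}$ and then absorbing into a new integration variable over $\R_+$, this splits slot $k$ into two slots and shifts the index, contributing to terms $j$ and $j+1$ of the $k$-level formula. The single term $j = k$ of the $(k-1)$-formula, which carries the $f\ast\mu$ factor in the last slot, needs the $k=1$ identity applied to the function $x \mapsto (f\ast\mu_{|\vec u|})(x)$ (whose Fourier support is still inside $[0,\sigma]$, crucially) to split off the new $f\ast\mu_{|\vec u| + u_k}$ factor --- one checks $(g\ast\mu_v)\ast\mu_w$-type identities, i.e.\ that applying $\mu$ twice at nested scales behaves as $\mu$ at the sum of scales on the relevant frequency range, which should follow from $\wh\mu_v \wh\mu_w = \wh\mu_{v+w}$ where relevant or a direct computation with $\wh r$. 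Collecting all pieces and relabeling the $k$ integration variables $\vec u = (u_1,\dots,u_k)$ gives exactly \eqref{eq.fkexp}.

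\emph{Main obstacle.} The genuinely delicate point is the $k=1$ identity together with justifying the passage from $f(\lambda_1) - f(\lambda_2)$ to the integral over $u \in \R_+$: one must be careful that $(f\ast r_u)(\lambda) \to 0$ as $u \to \infty$ (or rather is eventually $= (f \ast r_\infty)$... no --- with $\supp\wh f \subseteq [0,\sigma]$ one has $f \ast r_u = f$ for... no, $\wh r_u(\xi) = 1$ for $|\xi| \le u$, so for $u \ge \sigma$, $f\ast r_u = f$ and hence $f \ast \mu_u = 0$, recovering Proposition \ref{prop.randmu}.\ref{item.fastmusupp}), so actually the $u$-integral is over $[0,\sigma]$ and the endpoint behavior is trivial --- the subtlety is instead the differentiation under the integral sign and the exact form of $\partial_u \wh r_u$ as a distribution, which has a jump at $|\xi| = u$. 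I would handle this by a direct Fourier-side computation: write both sides' Fourier transforms in $\blambda$ (as tempered distributions), reduce \eqref{eq.fkexp} to an identity among explicit elementary functions/distributions of the frequency variables, and verify it by the $\wh r_u, \wh\mu_u$ formulas; this sidesteps convergence issues entirely and turns the whole theorem into (a somewhat lengthy but) routine bookkeeping once the $k=1$ Fourier identity is nailed down. The induction above is then just a sanity-check narrative; the Fourier-side verification is the proof I would actually write.
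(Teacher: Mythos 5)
Your high-level plan --- induct on $k$, with $k=1$ as the base case, and handle the difference quotient in the last variable by splitting into the exponential-factor terms ($j\le k-1$) and the one $f\ast\mu$-factor term ($j=k$) --- is exactly the skeleton of the paper's Steps 2 and 3. You also correctly identify that the $j=k$ term needs a composition identity for the $\mu$-family; the paper proves this as Lemma \ref{lem.easy2}. But the naive guess $\wh{\mu}_v\,\wh{\mu}_w=\wh{\mu}_{v+w}$ in your proposal is false (the cutoffs and rational factors do not compose that way). The correct identity is $\wh{\mu_u}(\xi+u)\,\wh{\mu_v}(\xi)=\wh{\mu_{u+v}}(\xi+u)$, equivalently $(g\ast\mu_v)(\lambda)=(f\ast\mu_{u+v})(\lambda)e^{-i\lambda u}$ where $g(\lambda)=(f\ast\mu_u)(\lambda)e^{-i\lambda u}$. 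The modulation $e^{-i\lambda u}$ appearing in \eqref{eq.fkexp} is precisely what makes this shift work, so it is structurally essential, not incidental bookkeeping.

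The more serious gap is in your base case and the claim that a ``Fourier-side computation $\dots$ sidesteps convergence issues entirely.'' Under the stated hypotheses $f\in\ell^{\infty}(\R,\cB_{\R})$ with $\supp\wh{f}\subseteq[0,\sigma]$, the Fourier transform $\wh{f}$ is only a compactly supported tempered distribution, not an $L^1$ function; for instance $f(\lambda)=e^{i\sigma\lambda/2}$ gives $\wh{f}=2\pi\delta_{\sigma/2}$. The Fourier inversion formula $f(\lambda)=\frac{1}{2\pi}\int\wh{f}(\xi)e^{i\lambda\xi}\,d\xi$ and the manipulations you need for the $k=1$ identity literally require $\wh{f}\in L^1(\R)$. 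The paper handles this with an explicit regularization (Lemma \ref{lem.Pellerapprox}): replace $f$ by $f_n=\wch{\om}_n f$ so that $f_n,\wh{f}_n\in L^1(\R)$ and $\supp\wh{f}_n\subseteq\big[0,\sigma+\frac{1}{n}\big]$, prove \eqref{eq.fkexp} for each $f_n$, then pass to the limit using $f_n\to f$ pointwise, $f_n\ast\mu\to f\ast\mu$ boundedly (Lemma \ref{lem.Pellerapprox}.\ref{item.bddconv}), the uniform compact support of the $\vec{u}$-integrand, dominated convergence, and continuity in $\blambda$. Your alternative --- verifying the Fourier transform in $\blambda$ of both sides of \eqref{eq.fkexp} --- does not obviously avoid this: $\wh{f^{[k]}}$ is not an ``explicit elementary function'' of the frequency variables when $\wh{f}$ is a distribution, and even $\cF(f\ast\mu_{|\vec{u}|})=\wh{f}\,\wh{\mu}_{|\vec{u}|}$ is a product of a distribution with a discontinuous function, which is ill-defined in general. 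So the approximation step is load-bearing, and without it your argument does not apply to the class of $f$ in the statement.
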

\begin{rem}
The expression in \eqref{eq.fkexp} was written in \cite{peller0} and \cite{peller1} in the cases $k=1$ and $k=2$, respectively, in a slightly different form.
The use of $\mu_u$ was inspired by \cite{peller2}, in which \eqref{eq.fkexp} is written down exactly as stated in the cases $k \in \{1,2\}$.
\end{rem}

For example,
\begin{align*}
    f^{[1]}(\lambda_1,\lambda_2) & = i\int_{\R_+}\big((f \ast \mu_u)(\lambda_1) \,e^{-i\lambda_1u} e^{i\lambda_2u} +  e^{i\lambda_1u}(f \ast \mu_u)(\lambda_2) \,e^{-i\lambda_2u}\big) \, du \;\text{ and} \numberthis\label{eq.expf1}\\
    f^{[2]}(\lambda_1,\lambda_2,\lambda_3) & = -\int_{\R_+^2} \big((f \ast \mu_{u+v})(\lambda_1) \,e^{-i\lambda_1(u+v)} e^{i\lambda_2u}e^{i\lambda_3v}  + e^{i\lambda_1u}(f \ast \mu_{u+v})(\lambda_2) \,e^{-i\lambda_2(u+v)} e^{i\lambda_3v} \\
    & \; \; \; \; \; \; \; \; \; \; \; \; \; \; \; \; \; \; \; \; \; \; \; \; \; \; + e^{i\lambda_1u}e^{i\lambda_2v}(f \ast \mu_{u+v})(\lambda_3) \,e^{-i\lambda_3(u+v)} \big)\,  du \, dv,
\end{align*}
for all $\lambda_1,\lambda_2,\lambda_3 \in \R$.
\pagebreak

Notice that Proposition \ref{prop.randmu} allows us to make sense of the expression \eqref{eq.fkexp} in the first place.
By item \ref{item.fastmusupp}, the integrand in \eqref{eq.fkexp} is bounded, continuous, and vanishes when $|\vec{u}| > \sigma$. 
Therefore, the integral above is really over $\{\vec{u} \in \R_+^k : |\vec{u}| \leq \sigma\}$, which has finite measure.
This, together with the continuity part of item \ref{item.fastmuinfbound} and the Dominated Convergence Theorem, also implies the right hand side of \eqref{eq.fkexp} is continuous in $\blambda$.

The expression \eqref{eq.fkexp} is proven, inspired by the sketch in \cite{peller1}, in the following steps.
\begin{enumerate}[label=Step \arabic*., leftmargin=3.36\parindent]
    \item Use an approximation procedure (Lemma \ref{lem.Pellerapprox}) to reduce to the case $f,\wh{f} \in L^1(\R)$.\makeatletter\def\@currentlabel{Step 1}\makeatother\label{step.1}
    \item Use an inductive argument to reduce to the case $k=1$.\makeatletter\def\@currentlabel{Step 2}\makeatother\label{step.2}
    \item Prove \eqref{eq.fkexp} when $k=1$ (i.e., prove \eqref{eq.expf1}) assuming $f,\wh{f} \in L^1(\R)$.\makeatletter\def\@currentlabel{Step 3}\makeatother\label{step.3}
\end{enumerate}
The approximation procedure in \ref{step.1} will also help us to prove Proposition \ref{prop.randmu}\ref{item.fastmusupp}.

\begin{conv}\label{conv.PW}
For this section, fix $\sigma > 0$ and $f \in \ell^{\infty}(\R,\cB_{\R})$ with $\supp \wh{f} \subseteq [0,\sigma]$.
\end{conv}

\begin{lem}\label{lem.Pellerapprox}
Fix $0 \leq \om \in C_c^{\infty}(\R)$ such that $\supp \om \subseteq [0,1]$ and $\int_{\R} \om(\xi) \, d\xi = 2\pi$.
Define 
\[
\om_n \coloneqq n \, \om(n\cdot) \; \text{ and } \; f_n \coloneqq \wch{\om}_nf,
\]
for all $n \in \N$.
Then
\begin{enumerate}[label=(\roman*),font=\normalfont,leftmargin=2\parindent]
    \item $\|f_n\|_{\ell^{\infty}(\R)} \leq \|f\|_{\ell^{\infty}(\R)}$ and $f_n \to f$ pointwise as $n \to \infty$,\label{item.Linf}
    \item $f_n \in L^1(\R) \cap L^{\infty}(\R)$,\label{item.L1}
    \item $\wh{f}_n \in \mathscr{S}(\R) \subseteq L^1(\R)$ and $\supp \wh{f}_n \subseteq \big[0,\sigma+\frac{1}{n}\big] \subseteq [0,\sigma+1]$, and\label{item.hatL1}
    \item $f_n \ast \mu \to f \ast \mu$ boundedly on $\R_+ \times \R$ as $n \to \infty$.\label{item.bddconv}
\end{enumerate}
\end{lem}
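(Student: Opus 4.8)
The plan is to deduce all four items from elementary properties of the rescaled mollifier $\wch{\om}_n = \cF^{-1}(\om_n)$. First I would record the basic facts: a change of variables $\eta = n\xi$ in the inverse Fourier integral gives $\wch{\om}_n(x) = \wch{\om}(x/n)$ for all $x \in \R$; since $\om \geq 0$ and $\int_\R \om = 2\pi$, we have $\lvert\wch{\om}(x)\rvert \leq \tfrac{1}{2\pi}\int_\R \om = 1$ with equality at $x = 0$, so $\|\wch{\om}_n\|_{\ell^\infty(\R)} \leq 1$; and since $\om \in C_c^\infty(\R) \subseteq \mathscr{S}(\R)$, its inverse Fourier transform $\wch{\om}$ belongs to $\mathscr{S}(\R)$, hence is continuous, so $\wch{\om}_n(x) = \wch{\om}(x/n) \to \wch{\om}(0) = 1$ as $n \to \infty$ for each fixed $x$.

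Items \ref{item.Linf} and \ref{item.L1} are then immediate. For \ref{item.Linf}: $\lvert f_n(x)\rvert = \lvert\wch{\om}_n(x)\rvert\,\lvert f(x)\rvert \leq \lvert f(x)\rvert$ gives the norm bound, and $f_n(x) = \wch{\om}_n(x)\,f(x) \to f(x)$ pointwise by the previous paragraph. For \ref{item.L1}: $\wch{\om}_n(x) = \wch{\om}(x/n)$ is Schwartz in $x$, hence lies in $L^1(\R)$, while $f \in L^\infty(\R)$; therefore $f_n = \wch{\om}_n f \in L^1(\R)$, and $f_n \in L^\infty(\R)$ by \ref{item.Linf}.

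For item \ref{item.hatL1}, since $f \in L^\infty(\R) \subseteq \mathscr{S}'(\R)$ we have $\wh f \in \mathscr{S}'(\R)$, and $f_n = \wch{\om}_n \cdot f$ is the product of a Schwartz function with a tempered distribution, so the product–convolution rule for the Fourier transform yields $\wh{f}_n = \tfrac{1}{2\pi}\,\om_n \ast \wh f$ (using $\cF(\wch{\om}_n) = \om_n$ and our normalization). Now $\om_n = n\,\om(n\,\cdot) \in C_c^\infty(\R)$ has $\supp \om_n \subseteq [0,\tfrac1n]$, while $\supp \wh f \subseteq [0,\sigma]$ is compact by hypothesis; hence $\om_n \ast \wh f$ is a compactly supported smooth function with $\supp(\om_n \ast \wh f) \subseteq \supp \om_n + \supp \wh f \subseteq [0,\sigma + \tfrac1n]$. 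Thus $\wh{f}_n \in C_c^\infty(\R) \subseteq \mathscr{S}(\R) \subseteq L^1(\R)$ and $\supp \wh{f}_n \subseteq [0,\sigma+\tfrac1n] \subseteq [0,\sigma+1]$.

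Finally, for item \ref{item.bddconv}, the uniform bound is free: by Proposition \ref{prop.randmu}.\ref{item.fastmuinfbound} together with \ref{item.Linf}, $\|f_n \ast \mu\|_{\ell^\infty(\R_+ \times \R)} \leq 3\|f_n\|_{\ell^\infty(\R)} \leq 3\|f\|_{\ell^\infty(\R)}$ for every $n$. For pointwise convergence, fix $(u,x) \in \R_+ \times \R$: the case $u = 0$ is trivial since $f_n \ast \mu_0 = 0 = f \ast \mu_0$, and for $u > 0$ we write $(f_n \ast \mu_u)(x) = f_n(x) - (f_n \ast r_u)(x)$, use $f_n(x) \to f(x)$ from \ref{item.Linf}, and apply the Dominated Convergence Theorem to $(f_n \ast r_u)(x) = \int_\R \wch{\om}_n(x-y)\,f(x-y)\,r_u(y)\,dy$, whose integrand converges pointwise to $f(x-y)\,r_u(y)$ and is dominated by $\|f\|_{\ell^\infty(\R)}\lvert r_u\rvert \in L^1(\R)$ (Proposition \ref{prop.randmu}.\ref{item.rbound}); hence $(f_n \ast \mu_u)(x) \to (f \ast \mu_u)(x)$. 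Combined with the uniform bound, this is exactly bounded convergence on $\R_+ \times \R$. The only step demanding any care is \ref{item.hatL1}, where one must correctly invoke the Fourier interchange of products and convolutions in the tempered-distribution setting and the support identity for convolutions with compactly supported functions; both are standard, so I do not anticipate a real obstacle.
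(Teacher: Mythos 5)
Your proof is correct and follows essentially the same route as the paper: rescaling properties of $\wch{\om}_n$ for \ref{item.Linf}--\ref{item.L1}, the convolution--support computation for \ref{item.hatL1}, and dominated convergence for \ref{item.bddconv}. The only notable difference is cosmetic: you (correctly) keep the $\tfrac{1}{2\pi}$ normalization factor in $\wh{f}_n = \tfrac{1}{2\pi}\,\om_n \ast \wh f$, which the paper silently drops, and in \ref{item.bddconv} you apply dominated convergence directly to $\int \wch{\om}_n(x-y)f(x-y)r_u(y)\,dy$ rather than first rescaling to an integral against $r_1$ as the paper does.
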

\begin{proof}
We take each item in turn.

\ref{item.Linf} Notice that for all $x \in \R$,
\[
\wch{\om}_n(x) = \wch{n\,\om(n\cdot)}(x) = \wch{\om}(n^{-1}x) \to \wch{\om}(0) = \frac{1}{2\pi}\int_{\R} \om(\xi) \, d\xi = 1
\]
as $n \to \infty$.
But also, since $\om \geq 0$, $\big|\wch{\om}_n(x)\big| = \big|\frac{1}{2\pi} \int_{\R}\om(\xi) e^{in^{-1}x\xi} \, d\xi \big| \leq \frac{1}{2\pi}\int_{\R}\om(\xi) \, d\xi = 1$, i.e., $\|\wch{\om}_n\|_{\ell^{\infty}(\R)} \leq 1$, as well.
This takes care of the first part.

\ref{item.L1} Of course, $\wch{\om}_n \in \mathscr{S}(\R) \subseteq L^1(\R)$, so that $\|f_n\|_{L^1} \leq \|f\|_{L^{\infty}}\|\wch{\om}_n\|_{L^1} < \infty$.

\ref{item.hatL1} By basic properties of Fourier transforms of tempered distributions, $\wh{f}_n = \cF(\wch{\om}_nf) = \om_n \ast \wh{f}$.
Since $\wh{f}$ has compact support, we have $\om_n \ast \wh{f} \in \mathscr{S}(\R)$ and
\[
\supp \wh{f}_n = \supp \big(\om_n \ast \wh{f}\big) \subseteq \overline{\supp \om_n + \supp \wh{f}} \subseteq \big[0,n^{-1}\big]+[0,\sigma] = \big[0,\sigma+n^{-1}\big],
\]
as claimed.

\ref{item.bddconv} By Proposition \ref{prop.randmu} and the first part, $\|f_n \ast \mu\|_{\ell^{\infty}(\R_+\times \R)} \leq 3\|f_n\|_{\ell^{\infty}(\R)} \leq 3\|f\|_{\ell^{\infty}(\R)}$, for all $n \in \N$.
Now, let $u > 0$ and $x \in \R$.
(The case $u=0$ is obvious.)
By the proof of Proposition \ref{prop.randmu}\ref{item.fastmuinfbound} and the Dominated Convergence Theorem,
\[
(f_n \ast r_u)(x) = \int_{\R} f_n(x-u^{-1}y) \,r_1(y) \, dy \to \int_{\R} f(x-u^{-1}y) \,r_1(y) \, dy = (f \ast r_u)(x)
\]
as $n \to \infty$.
Therefore, $(f_n \ast \mu_u)(x) = f_n(x) - (f_n \ast r_u)(x) \to f(x) - (f \ast r_u)(x) = (f \ast \mu_u)(x)$ as $n \to \infty$.
\end{proof}

\begin{proof}[Proof of Proposition \ref{prop.randmu}\ref{item.fastmusupp}]
Suppose first that $f,\wh{f} \in L^1(\R)$. Recall from Proposition \ref{prop.randmu}\ref{item.fastmu1bound} that $\|f\ast\mu_u\|_{L^1} \leq 3\|f\|_{L^1}$, so that $f \ast \mu_u \in L^1(\R)$.
Also,
\[
\cF( f \ast \mu_u) = \wh{f} \; \,\wh{\mu_u} \in L^1(\R).
\]
But $\supp \wh{\mu_u} = (-\infty,-u] \cup [u,\infty)$ when $u > 0$, and $\supp \wh{f} \subseteq [0,\sigma]$.
Therefore, if $u > \sigma$, then $\cF(f \ast \mu_u) \equiv 0$.
Therefore, by the Fourier Inversion Theorem,
\[
f \ast \mu_u = \cF^{-1}(\cF(f \ast \mu_u)) \equiv 0
\]
as well.
(Recall $f \ast \mu_u \in C(\R)$, so this equality is \textit{everywhere}.)
\pagebreak

Now, for general $f$ as in Proposition \ref{prop.randmu}\ref{item.fastmusupp}, let $(f_n)_{n \in \N}$ be as in Lemma \ref{lem.Pellerapprox}.
Since $f_n,\wh{f}_n \in L^1(\R)$ and $\supp \wh{f}_n \subseteq \big[0,\sigma+\frac{1}{n}\big]$, we know from the previous paragraph that $f_n \ast \mu_u \equiv 0$ when $u > \sigma + \frac{1}{n}$.
Now, suppose $u > \sigma$.
Then, choosing $n_1 \in \N$ such that $u > \sigma + \frac{1}{n}$, for all $n \geq n_1$, we know that $f_n \ast \mu_u \equiv 0$ whenever $n \geq n_1$.
Since $f_n \ast \mu \to f \ast \mu$ pointwise as $n \to \infty$, we conclude $f \ast \mu_u \equiv 0$ as well.
\end{proof}

We now begin the proof of Theorem \ref{thm.Pellerexp} in earnest.

\begin{nota}
Fix $\blambda = (\lambda_1,\ldots,\lambda_{k+1}) \in \R^{k+1}$ and $\vec{u} = (u_1,\ldots,u_k) \in \R_+^k$.
Define
\[
\e_{k,j}^f(\blambda,\vec{u}) \coloneqq \Bigg(\prod_{m=1}^{j-1}e^{i\lambda_mu_m}\Bigg) (f \ast \mu_{|\vec{u}|})(\lambda_j)\,e^{-i\lambda_j|\vec{u}|}\Bigg( \prod_{m=j+1}^{k+1}e^{i\lambda_mu_{m-1}}\Bigg)
\]
whenever $1 \leq j \leq k+1$, where $|\vec{u}| \coloneqq \sum_{m=1}^ku_m$.
\end{nota}

\begin{proof}[Proof of \ref{step.1}]
Suppose \eqref{eq.fkexp} holds when we also assume $f,\wh{f} \in L^1(\R)$.
For arbitrary $f$, let $(f_n)_{n \in \N}$ be as in Lemma \ref{lem.Pellerapprox}.
Since $f_n,\wh{f}_n \in L^1(\R)$, we know \eqref{eq.fkexp} holds for $f_n$ in place of $f$.
We must take $n \to \infty$ to obtain \eqref{eq.fkexp} for $f$.
To this end, first let $\lambda_1,\ldots,\lambda_{k+1} \in \R$ be distinct and $\blambda \coloneqq (\lambda_1,\ldots,\lambda_{k+1}) \in \R^{k+1}$.
Then, by the recursive definition of the $k^{\text{th}}$ divided difference, $f_n^{[k]}(\blambda) \to f^{[k]}(\blambda)$ as $n \to \infty$ because $f_n \to f$ pointwise as $n \to \infty$.
Second, $\e_{k,j}^{f_n} \to \e_{k,j}^f$ boundedly on $\R^{k+1} \times \R_+^k$ as $n \to \infty$ by Lemma \ref{lem.Pellerapprox}\ref{item.bddconv}.
Third, by Proposition \ref{prop.randmu}\ref{item.fastmusupp}, the integral $\sum_{j=1}^{k+1}\int_{\R_+^k} \e_{k,j}^{f_n}(\blambda,\vec{u}) \, d\vec{u}$ is really only over $\{\vec{u} \in \R_+^k : |\vec{u}| \leq \sigma+1\}$, for all $n \in \N$.
Therefore, by the assumption and the Dominated Convergence Theorem,
\[
f_n^{[k]}(\blambda) = i^k\sum_{j=1}^{k+1} \int_{\R_+^k}\e_{k,j}^{f_n}(\blambda,\vec{u}) \, d\vec{u} \to i^k\sum_{j=1}^{k+1}\int_{\R_+^k} \e_{k,j}^f(\blambda,\vec{u}) \, d\vec{u}
\]
as $n \to \infty$, for all $\blambda \in \R^{k+1}$.
We conclude that
\[
f^{[k]}(\blambda) = i^k\sum_{j=1}^{k+1}\int_{\R_+^k} \e_{k,j}^f(\blambda,\vec{u}) \, d\vec{u}
\]
whenever $\lambda_1,\ldots,\lambda_{k+1} \in \R$ are distinct.
Since $\{\blambda \in \R^{k+1} : \lambda_1,\ldots,\lambda_{k+1} \in \R$ are distinct$\}$ is dense in $\R^{k+1}$ and both sides of the above are continuous in $\blambda$, we are done.
\end{proof}

\begin{conv}
For the remainder of this section, assume in addition that $f,\wh{f} \in L^1(\R)$.
\end{conv}

By the reduction from \ref{step.1}, this assumption is appropriate.
Next comes the proof of \ref{step.2}, which is a bit painful and may be skipped on a first read.
We warm up with two easy lemmas.

\begin{lem}\label{lem.easy1}
If $u \geq 0$ and $h(\lambda) \coloneqq e^{i\lambda u}$, then $h^{[1]}(\lambda_1,\lambda_2) = i\int_0^ue^{i\lambda_1v}e^{i\lambda_2(u-v)} \, dv$.
\end{lem}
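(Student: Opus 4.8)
The plan is to compute both sides directly and match them; this is a pure bookkeeping exercise with no real obstacle, so I will keep it brief. The only mild subtlety is handling the degenerate cases ($u = 0$ and $\lambda_1 = \lambda_2$), which are dealt with by continuity.

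First I would dispatch the case $u = 0$: then $h \equiv 1$ is constant, so $h^{[1]} \equiv 0$ by definition of the divided difference, while the right-hand side is $i\int_0^0 (\cdots)\,dv = 0$. So assume $u > 0$. For distinct $\lambda_1,\lambda_2 \in \R$, I would compute the right-hand side explicitly:
\[
i\int_0^u e^{i\lambda_1 v}e^{i\lambda_2(u-v)}\,dv = i\,e^{i\lambda_2 u}\int_0^u e^{i(\lambda_1-\lambda_2)v}\,dv = i\,e^{i\lambda_2 u}\cdot\frac{e^{i(\lambda_1-\lambda_2)u}-1}{i(\lambda_1-\lambda_2)} = \frac{e^{i\lambda_1 u}-e^{i\lambda_2 u}}{\lambda_1-\lambda_2} = h^{[1]}(\lambda_1,\lambda_2),
\]
the last equality being the definition (Definition \ref{def.divdiff}) of $h^{[1]}$ for distinct arguments. (Alternatively, one can invoke Proposition \ref{prop.divdiffCk}: $h^{[1]}(\lambda_1,\lambda_2) = \int_0^1 h'(t\lambda_1+(1-t)\lambda_2)\,dt = iu\int_0^1 e^{i(t\lambda_1+(1-t)\lambda_2)u}\,dt$, and the substitution $v = tu$ gives exactly $i\int_0^u e^{i\lambda_1 v}e^{i\lambda_2(u-v)}\,dv$, since $(t\lambda_1+(1-t)\lambda_2)u = \lambda_1 v + \lambda_2(u-v)$.)

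Finally, for the diagonal case $\lambda_1 = \lambda_2 = \lambda$, the right-hand side is $i\int_0^u e^{i\lambda u}\,dv = iu\,e^{i\lambda u} = h'(\lambda)$, which agrees with the continuous extension $h^{[1]}(\lambda,\lambda)$ of the divided difference guaranteed by Proposition \ref{prop.divdiffCk}. Since both sides of the claimed identity are continuous functions of $(\lambda_1,\lambda_2) \in \R^2$ (the left by Proposition \ref{prop.divdiffCk}, the right by dominated convergence) and they agree on the dense set where $\lambda_1 \neq \lambda_2$, they agree everywhere. This completes the proof.
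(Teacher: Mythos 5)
Your proof is correct, and the parenthetical alternative you offer is precisely the paper's proof: invoke Proposition \ref{prop.divdiffCk} to write $h^{[1]}(\lambda_1,\lambda_2) = \int_0^1 h'(t\lambda_1+(1-t)\lambda_2)\,dt$ and substitute $v = tu$, which handles distinct and coincident arguments in one stroke. Your primary route instead computes the right-hand integral directly from the defining formula for $h^{[1]}$ at distinct arguments and then extends across the diagonal by continuity; this is equally valid, just slightly more work since it requires the separate diagonal check and a density argument that the Proposition \ref{prop.divdiffCk} route avoids.
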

\begin{proof}
The result is obvious if $u=0$, so we assume $u > 0$.
By Proposition \ref{prop.divdiffCk},
\[
h^{[1]}(\lambda_1,\lambda_2) = \int_0^1h'(t\lambda_1+(1-t)\lambda_2) \, dt = i\int_0^1ue^{i(t\lambda_1+(1-t)\lambda_2)u}\,dt = i\int_0^ue^{i\lambda_1v}e^{i\lambda_2(u-v)} \, dv,
\]
where we substituted $v \coloneqq tu$.
\end{proof}

\begin{lem}\label{lem.easy2}
If $u > 0$ and $g(\lambda) \coloneqq (f \ast \mu_u)(\lambda) \,e^{-i\lambda u}$, then $(g \ast \mu_v)(\lambda) = (f \ast \mu_{u+v})(\lambda) \,e^{-i\lambda u}$ when $v > 0$.
\end{lem}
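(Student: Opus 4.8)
The plan is to pass to the Fourier side. Throughout, recall that $f$ and $\wh{f}$ lie in $L^{1}(\R)$ and that $\supp\wh{f}\subseteq[0,\sigma]$; in particular $f$ may be taken continuous (as $\wh{f}\in L^{1}$) and bounded (so $f\in L^{1}\cap L^{\infty}$). By Proposition \ref{prop.randmu}.\ref{item.fastmu1bound}, together with the fact that multiplication by the unimodular function $\lambda\mapsto e^{-i\lambda u}$ preserves $L^{1}(\R)$, the function $g(\lambda)=(f\ast\mu_u)(\lambda)e^{-i\lambda u}$ belongs to $L^{1}(\R)$, and hence so does $g\ast\mu_v$; likewise $\lambda\mapsto(f\ast\mu_{u+v})(\lambda)e^{-i\lambda u}$ belongs to $L^{1}(\R)$. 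Moreover both $g\ast\mu_v$ and $\lambda\mapsto(f\ast\mu_{u+v})(\lambda)e^{-i\lambda u}$ are continuous — the first by Proposition \ref{prop.randmu}.\ref{item.fastmuinfbound} applied to the continuous bounded function $g$, the second because $f\ast\mu_{u+v}$ is continuous by the same proposition. Therefore it suffices to show these two functions have the same Fourier transform: the uniqueness theorem for the Fourier transform on $L^{1}(\R)$ then gives equality almost everywhere, and continuity upgrades this to equality everywhere.

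The computation rests on the elementary pointwise identity
\[
\wh{\mu_u}(\eta)\,\wh{\mu_v}(\eta-u)=\wh{\mu_{u+v}}(\eta),\qquad\text{for every }\eta\geq0.
\]
To see it, split into cases. If $0\leq\eta\leq u$ then $\wh{\mu_u}(\eta)=0$, and since $\eta<u+v$ also $\wh{\mu_{u+v}}(\eta)=0$, so both sides vanish. If $\eta>u$ then $\eta-u>0$, whence $\wh{\mu_v}(\eta-u)=\frac{\eta-u-v}{\eta-u}\,1_{(u+v,\infty)}(\eta)$, and
\[
\wh{\mu_u}(\eta)\,\wh{\mu_v}(\eta-u)=\frac{\eta-u}{\eta}\cdot\frac{\eta-u-v}{\eta-u}\,1_{(u+v,\infty)}(\eta)=\frac{\eta-(u+v)}{\eta}\,1_{(u+v,\infty)}(\eta)=\wh{\mu_{u+v}}(\eta).
\]
Since $\supp\wh{f}\subseteq[0,\infty)$, multiplying through by $\wh{f}(\eta)$ yields $\wh{f}(\eta)\,\wh{\mu_u}(\eta)\,\wh{\mu_v}(\eta-u)=\wh{f}(\eta)\,\wh{\mu_{u+v}}(\eta)$ for \emph{all} $\eta\in\R$, the case $\eta<0$ being trivial because $\wh{f}(\eta)=0$ there.

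Now assemble the pieces. With the convention $\wh{h}(\xi)=\int_{\R}e^{-i\xi x}h(x)\,dx$ one has the translation rule $\cF\big(\lambda\mapsto h(\lambda)e^{-i\lambda u}\big)(\xi)=\wh{h}(\xi+u)$, so by the convolution theorem (using $\mu_u=\delta_0-r_u$ with $r_u\in L^1$) one gets
\[
\wh{g}(\xi)=\cF(f\ast\mu_u)(\xi+u)=\wh{f}(\xi+u)\,\wh{\mu_u}(\xi+u),
\]
and therefore $\cF(g\ast\mu_v)(\xi)=\wh{g}(\xi)\,\wh{\mu_v}(\xi)=\wh{f}(\xi+u)\,\wh{\mu_u}(\xi+u)\,\wh{\mu_v}(\xi)$. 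Putting $\eta\coloneqq\xi+u$, so that $\wh{\mu_v}(\xi)=\wh{\mu_v}(\eta-u)$, the identity of the previous paragraph gives
\[
\cF(g\ast\mu_v)(\xi)=\wh{f}(\eta)\,\wh{\mu_{u+v}}(\eta)=\cF(f\ast\mu_{u+v})(\xi+u)=\cF\big(\lambda\mapsto(f\ast\mu_{u+v})(\lambda)e^{-i\lambda u}\big)(\xi),
\]
which is exactly the desired equality of Fourier transforms.

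The only genuinely delicate point is the bookkeeping in the pointwise identity: one must restrict to $\eta\geq0$ — equivalently, exploit that $\supp\wh{f}$ lies in the positive halfline — since for $\eta<0$ the argument $\eta-u$ of $\wh{\mu_v}$ can have the ``wrong'' sign and the naive cancellation breaks down. Everything else (the $L^1$-membership accounting via Proposition \ref{prop.randmu}, the translation rule for the Fourier transform, and the uniqueness-plus-continuity endgame) is routine.
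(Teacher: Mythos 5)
Your proof is correct and takes the same route as the paper: compute $\cF(g\ast\mu_v)$ via the translation rule and the convolution theorem, reduce to the pointwise identity $\wh{\mu_u}(\eta)\,\wh{\mu_v}(\eta-u)=\wh{\mu_{u+v}}(\eta)$, and conclude by Fourier uniqueness/inversion plus continuity. You are in fact slightly more careful than the paper's own write-up, which silently drops the absolute values from the definition of $\wh{\mu_u}$ and $\wh{\mu_v}$; you correctly observe that the identity is only valid for $\eta\geq 0$ and that multiplying by $\wh{f}(\eta)$ (supported in $[0,\sigma]$) is what repairs the $\eta<0$ case.
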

\begin{proof}
Note that $\wh{g}(\xi) = \cF(f \ast \mu_u)(\xi+u) = \wh{f}(\xi+u) \, \wh{\mu_u}(\xi+u)$, so that
\[
\cF (g \ast \mu_v)(\xi) = \wh{g}(\xi) \, \wh{\mu_v}(\xi) = \wh{f}(\xi+u) \, \wh{\mu_u}(\xi+u) \, \wh{\mu_v}(\xi).
\]
But
\begin{align*}
    \wh{\mu_u}(\xi+u) \, \wh{\mu_v}(\xi) & = \frac{\xi+u-u}{\xi+u}1_{(u,\infty)}(\xi+u) \, \frac{\xi-v}{\xi} 1_{(v,\infty)}(\xi) \\
    & = \frac{\xi-v}{\xi+u} 1_{(u+v,\infty)}(\xi+u) = \wh{\mu_{u+v}}(\xi+u),
\end{align*}
so that $\cF(g \ast \mu_u)(\xi) = \wh{f}(\xi+u)\, \wh{\mu_{u+v}}(\xi+u) = \cF((f \ast \mu_{u+v}) \, e^{-i\boldsymbol{\cdot}u})(\xi)$, for all $\xi \in \R$.
The result follows from the Fourier Inversion Theorem.
\end{proof}
\pagebreak

We are now ready for the proof of \ref{step.2}.

\begin{proof}[Proof of \ref{step.2}]
Assume for some $\ell \geq 1$ that \eqref{eq.fkexp} holds whenever $1 \leq k \leq \ell$ (and all relevant $f$).
Suppose that $1 \leq k \leq \ell$, and fix distinct $\lambda_1,\ldots,\lambda_{k+2} \in \R$.
Then
\begin{align*}
    f^{[k+1]}(\lambda_1,\ldots,\lambda_{k+2}) & = \frac{f^{[k]}(\lambda_1,\ldots,\lambda_{k+1}) - f^{[k]}(\lambda_1,\ldots,\lambda_k,\lambda_{k+2})}{\lambda_{k+1}-\lambda_{k+2}} \\
    & = i^k\sum_{j=1}^{k+1}\int_{\R_+^k} \frac{\e_{k,j}^f(\lambda_1,\ldots,\lambda_{k+1},\vec{u}) - \e_{k,j}^f(\lambda_1,\ldots,\lambda_k,\lambda_{k+2},\vec{u})}{\lambda_{k+1}-\lambda_{k+2}} \, d\vec{u}.
\end{align*}
We now examine each term in the above sum.
Define
\[
\delta_j(\vec{u}) \coloneqq \frac{\e_{k,j}^f(\lambda_1,\ldots,\lambda_{k+1},\vec{u}) - \e_{k,j}^f(\lambda_1,\ldots,\lambda_k,\lambda_{k+2},\vec{u})}{\lambda_{k+1}-\lambda_{k+2}}
\]
for ease of notation.

First, suppose $1 \leq j < k+1$.
Then, by definition of the $\e_{k,j}$'s and Lemma \ref{lem.easy1},
\begin{align*}
    \delta_j(\vec{u}) & = \prod_{m=1}^{j-1}e^{i\lambda_mu_m} (f \ast \mu_{|\vec{u}|})(\lambda_j)\,e^{-i\lambda_j|\vec{u}|} \prod_{m=j+1}^ke^{i\lambda_mu_{m-1}}\frac{e^{i\lambda_{k+1}u_k}-e^{i\lambda_{k+2}u_k}}{\lambda_{k+1}-\lambda_{k+2}} \\
    & = i\int_0^{u_k}\prod_{m=1}^{j-1}e^{i\lambda_mu_m} (f \ast \mu_{|\vec{u}|})(\lambda_j)\,e^{-i\lambda_j|\vec{u}|} \prod_{m=j+1}^ke^{i\lambda_mu_{m-1}} \, e^{i\lambda_{k+1}v}e^{i\lambda_{k+2}(u_k-v)} \, dv.
\end{align*}
Now, this allows us to write
\[
\int_{\R_+^k}\delta_j(\vec{u}) \, d\vec{u} = i\int_{\R_+^k}\int_0^{u_k} \prod_{m=1}^{j-1}e^{i\lambda_mu_m} (f \ast \mu_{|\vec{u}|})(\lambda_j)\,e^{-i\lambda_j|\vec{u}|} \prod_{m=j+1}^ke^{i\lambda_mu_{m-1}} \, e^{i\lambda_{k+1}v}e^{i\lambda_{k+2}(u_k-v)}\, dv\, d\vec{u}.
\]
We now manipulate this integral expression.
Changing the order of integration yields
\[
i\int_{\R_+^k}\int_0^{u_k} \boldsymbol{\cdot} \, dv\, d\vec{u} = i\int_{\R_+}\int_{\R_+^k}1_{\{u_k \geq v\}}(\vec{u}) \boldsymbol{\cdot} \, d\vec{u} \,dv.
\]
Changing variables as $(u_1,\ldots,u_k,v) \mapsto (u_1,\ldots,u_{k-1},u_k-v,v) =: (v_1,\ldots,v_{k+1}) = \boldsymbol{v}$ yields
\[
i\int_{\R_+}\int_{\R_+^k}1_{\{u_k \geq v\}}(\vec{u}) \boldsymbol{\cdot} \, d\vec{u} \,dv = i\int_{\R_+^{k+1}} \boldsymbol{\cdot} \, d\boldsymbol{v},
\]
where functions evaluated at $u_1,\ldots,u_k,v$ go to the same functions evaluated at $v_1,\ldots,v_{k-1},v_k+v_{k+1},v_{k+1}$.
(In particular, $|\vec{u}|$ goes to $|\boldsymbol{v}|$.)
This yields
\begin{align*}
    \int_{\R_+^k}\delta_j(\vec{u}) \, d\vec{u} & = i\int_{\R_+^{k+1}}\prod_{m=1}^{j-1}e^{i\lambda_mv_m} (f \ast \mu_{|\boldsymbol{v}|})(\lambda_j)\,e^{-i\lambda_j|\boldsymbol{v}|} \prod_{m=j+1}^ke^{i\lambda_mv_{m-1}}\, e^{i\lambda_{k+1}v_k}e^{i\lambda_{k+2}v_{k+1}} \, d\boldsymbol{v} \\
    & = i\int_{\R_+^{k+1}}\prod_{m=1}^{j-1}e^{i\lambda_mv_m} (f \ast \mu_{|\boldsymbol{v}|})(\lambda_j)\,e^{-i\lambda_j|\boldsymbol{v}|} \prod_{m=j+1}^{k+2}e^{i\lambda_mv_{m-1}}\, d\boldsymbol{v} \\
    & = i\int_{\R_+^{k+1}}\e_{k+1,j}^f(\lambda_1,\ldots,\lambda_{k+2},\boldsymbol{v})\, d\boldsymbol{v},
\end{align*}
which is one of the terms we wanted to see.

Second, for the $j=k+1$ term, notice that
\[
\delta_{k+1}(\vec{u}) = \prod_{m=1}^k e^{i\lambda_mu_m} \big(f \ast \mu_{|\vec{u}|} \,e^{-i\boldsymbol{\cdot} |\vec{u}|}\big)^{[1]}(\lambda_{k+1},\lambda_{k+2}).
\]
But $g(x) \coloneqq (f \ast \mu_{|\vec{u}|})(x) \,e^{-ix |\vec{u}|}$ satisfies $g,\wh{g} \in L^1(\R) \cap L^{\infty}(\R)$ and $\supp \wh{g} \subseteq [0,\sigma]$.
Therefore, by assumption and Lemma \ref{lem.easy2}, we have
\begin{align*}
    g^{[1]}(\lambda_{k+1},\lambda_{k+2}) & = i\int_{\R_+}\big((g \ast \mu_v)(\lambda_{k+1}) \,e^{-i\lambda_{k+1}v} e^{i\lambda_{k+2}v} +(g \ast \mu_v)(\lambda_{k+2}) \,e^{-i\lambda_{k+2}v} e^{i\lambda_{k+1}v}\big) \, dv \\
    & = i\int_{\R_+}\hspace{-0.5mm}\big((f \ast \mu_{|\vec{u}|+v})(\lambda_{k+1})\, e^{-i\lambda_{k+1}(|\vec{u}|+v)} e^{i\lambda_{k+2}v} \hspace{-0.5mm} +\hspace{-0.5mm}(f \ast \mu_{|\vec{u}|+v})(\lambda_{k+2})\,e^{-i\lambda_{k+2}(|\vec{u}|+v)} e^{i\lambda_{k+1}v}\big) \,dv
\end{align*}
assuming $|\vec{u}| > 0$.
Therefore, renaming $(u_1,\ldots,u_k,v)$ to $(v_1,\ldots,v_{k+1})$,
\begin{align*}
    \int_{\R_+^k}\delta_{k+1}(\vec{u}) \, d\vec{u} & = i\int_{\R_+^k}\int_{\R_+}\Bigg(\prod_{m=1}^k e^{i\lambda_mu_m}(f \ast \mu_{|\vec{u}|+v})(\lambda_{k+1})\, e^{-i\lambda_{k+1}(|\vec{u}|+v)} e^{i\lambda_{k+2}v} \\
    & \; \; \; \; \; \; \; \; \; \; \; \; \; \; \; \; \; \; \; \; \; \; \; \; \; \; +\prod_{m=1}^k e^{i\lambda_mu_m}(f \ast \mu_{|\vec{u}|+v})(\lambda_{k+2})\,e^{-i\lambda_{k+2}(|\vec{u}|+v)} e^{i\lambda_{k+1}v}\Bigg) \, dv \, d\vec{u} \\
    & = i\int_{\R_+^{k+1}}\Bigg(\prod_{m=1}^k e^{i\lambda_mv_m}(f \ast \mu_{|\boldsymbol{v}|})(\lambda_{k+1})\, e^{-i\lambda_{k+1}|\boldsymbol{v}|} e^{i\lambda_{k+2}v_{k+1}} \\
    & \; \; \; \; \; \; \; \; \; \; \; \; \; \; \; \; \; \; \; \; \; \; \; \; \; \; +\prod_{m=1}^{k+1} e^{i\lambda_mv_m}(f \ast \mu_{|\boldsymbol{v}|})(\lambda_{k+2})\,e^{-i\lambda_{k+2}|\boldsymbol{v}|} \Bigg) \, d\boldsymbol{v} \\
    & = i\int_{\R_+^{k+1}} \big(\e_{k+1,k+1}^f(\lambda_1,\ldots,\lambda_{k+2},\boldsymbol{v}) + \e_{k+1,k+2}^f(\lambda_1,\ldots,\lambda_{k+2},\boldsymbol{v}) \big) \, d\boldsymbol{v},
\end{align*}
which are the remaining terms we needed.

Finally, putting it all together, we have
\[
f^{[k+1]}(\lambda_1,\ldots,\lambda_{k+2}) = i^k\sum_{j=1}^{k+1}\int_{\R_+^k} \delta_j(\vec{u}) \, d\vec{u} = i^{k+1}\sum_{j=1}^{k+2}\int_{\R_+^{k+1}}\e_{k+1,j}^f(\lambda_1,\ldots,\lambda_{k+2},\boldsymbol{v})\, d\boldsymbol{v}
\]
when $\lambda_1,\ldots,\lambda_{k+2} \in \R$ are distinct.
But, since both sides of the equation are continuous in $(\lambda_1,\ldots,\lambda_{k+2})$, this completes the proof. 
\end{proof}

We are now left with \ref{step.3} (the easiest step), i.e., the base case of the induction in \ref{step.2}.

\begin{proof}[Proof of \ref{step.3}]
First, we claim that
\[
f^{[1]}(\lambda_1,\lambda_2) = \frac{i}{2\pi}\int_{\R_+^2} \wh{f}(u+v)\, e^{i\lambda_1u}e^{i\lambda_2v} \, du \, dv,
\]
for all $\lambda_1,\lambda_2 \in \R$.
(The integral above makes sense because $\wh{f}$ is compactly supported and belongs to $L^1(\R) \cap L^{\infty}(\R)$.)
Indeed, by the Fourier Inversion Theorem, continuity of $f$, and the fact that $\supp \wh{f} \subseteq \R_+$, we have $f(\lambda) = \frac{1}{2\pi}\int_{\R_+} \wh{f}(\xi) \, e^{i\lambda\xi} \, d\xi$, for all $\lambda \in \R$.
Therefore, if $\lambda_1,\lambda_2 \in \R$ are distinct, then
\begin{align*}
    f^{[1]}(\lambda_1,\lambda_2) & = \frac{1}{2\pi}\int_{\R_+} \wh{f}(\xi) \frac{e^{i\lambda_1\xi} - e^{i\lambda_2\xi}}{\lambda_1-\lambda_2} \, d\xi = \frac{i}{2\pi}\int_{\R_+}\int_0^{\xi} \wh{f}(\xi) \, e^{i\lambda_1v}e^{i\lambda_2(\xi-v)} \, dv \, d\xi \\
    & = \frac{i}{2\pi}\int_{\R_+}\int_v^{\infty} \wh{f}(\xi) \, e^{i\lambda_1v}e^{i\lambda_2(\xi-v)} \, d\xi \, dv = \frac{i}{2\pi}\int_{\R_+^2} \wh{f}(u+v) \, e^{i\lambda_1v}e^{i\lambda_2u} \, du \, dv,
\end{align*}
by Lemma \ref{lem.easy1} and the change of variables $u \coloneqq \xi -v$.
Swapping the roles of $u$ and $v$ in the above integral gives the desired expression.
As usual, the continuity of both sides in $(\lambda_1,\lambda_2)$ allows us to pass from distinct $\lambda_1,\lambda_2$ to arbitrary $\lambda_1,\lambda_2$.

Therefore, our goal is to show
\[
i\int_{\R_+}\big((f \ast \mu_u)(\lambda_1) \,e^{-i\lambda_1u} e^{i\lambda_2u} +(f \ast \mu_u)(\lambda_2) \,e^{-i\lambda_2u} e^{i\lambda_1u}\big) \, du = \frac{i}{2\pi}\int_{\R_+^2} \wh{f}(u+v) \, e^{i\lambda_1u}e^{i\lambda_2v} \, du \, dv,\pagebreak
\]
for all $\lambda_1,\lambda_2 \in \R$.
To this end, notice that for all $u \geq 0$, the function $g(\lambda) \coloneqq (f \ast \mu_u)(\lambda) \,e^{-i\lambda u}$ satisfies $g,\wh{g} \in L^1(\R) \cap L^{\infty}(\R)$ and $g \in C(\R)$.
Also, 
\[
\wh{g}(\xi) = \wh{f}(\xi+u) \,\wh{\mu_u}(\xi+u) = \wh{f}(\xi+u) \frac{\xi}{\xi+u}1_{(0,\infty)}(\xi)
\]
when $u > 0$.
Therefore, by the Fourier Inversion Theorem and the continuity of $g$,
\[
g(\lambda) = \frac{1}{2\pi}\int_{\R}\wh{g}(\xi)\,e^{i\lambda\xi} \, d\xi = \frac{1}{2\pi}\int_{\R_+}\wh{f}(\xi+u)\frac{\xi}{\xi+u}e^{i\lambda\xi} \, d\xi,
\]
for all $\lambda \in \R$ (when $u > 0$).
Therefore,
\begin{align*}
    i\int_{\R_+}(f \ast \mu_u)(\lambda_1)\, e^{-i\lambda_1u} e^{i\lambda_2u} \, du & = \frac{i}{2\pi}\int_{\R_+}\int_{\R_+}\wh{f}(\xi+u)\frac{\xi}{\xi+u}e^{i\lambda_1\xi}e^{i\lambda_2u} \, d\xi \, du  \\
    & = \frac{i}{2\pi}\int_{\R_+^2}\wh{f}(u+v)\frac{u}{u+v}e^{i\lambda_1u}e^{i\lambda_2v} \, du \, dv \; \text{ and}\\
    i\int_{\R_+}(f \ast \mu_u)(\lambda_2)\,e^{-i\lambda_2u} e^{i\lambda_1u} \, du & = \frac{i}{2\pi}\int_{\R_+}\int_{\R_+}\wh{f}(\xi+u)\frac{\xi}{\xi+u}e^{i\lambda_2\xi}e^{i\lambda_1u} \, d\xi \, du \\
    & = \frac{i}{2\pi}\int_{\R_+^2}\wh{f}(u+v)\frac{v}{u+v}e^{i\lambda_1u}e^{i\lambda_2v} \, dv \, du.
\end{align*}
Adding these together yields $\frac{i}{2\pi}\int_{\R_+^2} \wh{f}(u+v) \, e^{i\lambda_1u}e^{i\lambda_2v} \, du \, dv$, as desired.
This completes the proof.
\end{proof}

\subsection{Part II}\label{sec.PellerII}

We now use Theorem \ref{thm.Pellerexp} to prove Theorem \ref{thm.Peller}.

\begin{prop}\label{prop.fastmuOC}
If $v,\sigma > 0$ and $f \colon \R \to \C$ is as in Convention \ref{conv.PW}, then $f \ast \mu_v \in BOC(\R)$.
\end{prop}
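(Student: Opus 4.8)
The plan is to show that $g \coloneqq f \ast \mu_v$ belongs to $BOC(\R)$ by verifying the three requirements: boundedness, operator continuity in the sense that $g(a+c) - g(a)$ is always densely defined and bounded, and norm-continuity of $c \mapsto g(a+c) - g(a)$ at $0$. The boundedness is already in hand: Proposition \ref{prop.randmu}.\ref{item.fastmuinfbound} gives $\|f \ast \mu_v\|_{\ell^{\infty}(\R)} \le 3\|f\|_{\ell^{\infty}(\R)} < \infty$, and continuity of $g$ on $\R$ follows from the same proposition (or from the fact that $g \in C(\R)$ was noted repeatedly). So the real content is the operator-continuity clauses, and by Corollary \ref{cor.Iperturb} together with Lemma \ref{lem.expOC}-style reasoning, it suffices to show $g \in C^1(\R)$ with $g^{[1]} \in \ell^{\infty}(\R,\cB_{\R}) \iotimes \ell^{\infty}(\R,\cB_{\R})$ and a bound on $\|g^{[1]}\|_{\ell^{\infty}(\R,\cB_{\R})\iotimes\ell^{\infty}(\R,\cB_{\R})}$; indeed, Corollary \ref{cor.Iperturb} then yields $\|g(a+c)-g(a)\|_{\cI} \lesssim \|c\|_{\cI}$ (applied with $\cM = B(H)$, $\cI = B(H)$), which gives both the densely-defined-and-bounded clause (via \eqref{eq.perturb0} in Theorem \ref{thm.perturb}) and norm continuity at $0$ simultaneously.

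So the plan reduces to computing $g^{[1]}$ and exhibiting a good $\ell^{\infty}$-integral projective decomposition. Here I would invoke the $k=1$ case of Theorem \ref{thm.Pellerexp} (i.e., equation \eqref{eq.expf1}), but applied to $g = f \ast \mu_v$ rather than to $f$. To do this legitimately I need to check that $g$ satisfies the hypotheses of Convention \ref{conv.PW}: namely $g$ is bounded (done) and $\supp \wh{g}$ is compact, contained in some $[0,\sigma']$. This follows because $\wh{g} = \wh{f}\,\wh{\mu_v}$, $\supp\wh{f} \subseteq [0,\sigma]$, and $\wh{\mu_v}$ is supported in $(-\infty,-v] \cup [v,\infty)$, so $\supp \wh{g} \subseteq [v,\sigma]$ (in particular $g \equiv 0$ unless $v \le \sigma$, in which case we may take $\sigma' = \sigma$). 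Then \eqref{eq.expf1} gives
\[
g^{[1]}(\lambda_1,\lambda_2) = i\int_{\R_+}\big((g \ast \mu_u)(\lambda_1)\,e^{-i\lambda_1 u}e^{i\lambda_2 u} + e^{i\lambda_1 u}(g \ast \mu_u)(\lambda_2)\,e^{-i\lambda_2 u}\big)\,du,
\]
and since by Proposition \ref{prop.randmu}.\ref{item.fastmusupp} the integrand vanishes for $u > \sigma$, this is an integral over a finite-measure set; the factors $e^{\pm i\lambda_j u}$ are in $BOC(\R)$ by Lemma \ref{lem.expOC}, and $g \ast \mu_u$ is bounded with $\|g \ast \mu_u\|_{\ell^{\infty}(\R)} \le 3\|g\|_{\ell^{\infty}(\R)} \le 9\|f\|_{\ell^{\infty}(\R)}$ by Proposition \ref{prop.randmu}.\ref{item.fastmuinfbound} again. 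This exhibits an explicit $\ell^{\infty}$-IPD of $g^{[1]}$ over $\Sigma = [0,\sigma]$ (with $\rho$ Lebesgue measure) and yields the finite bound
\[
\big\|g^{[1]}\big\|_{\ell^{\infty}(\R,\cB_{\R})\iotimes\ell^{\infty}(\R,\cB_{\R})} \le 2\sigma \cdot 3\|g\|_{\ell^{\infty}(\R)} \le 18\,\sigma\,\|f\|_{\ell^{\infty}(\R)}.
\]

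With $g^{[1]} \in \ell^{\infty}(\R,\cB_{\R})\iotimes\ell^{\infty}(\R,\cB_{\R})$ established, I would finish exactly as in the proof of Lemma \ref{lem.expOC}: Corollary \ref{cor.Iperturb} applied to $g$ (with the trivial ideal $\cI = \cM = B(H)$, which is integral symmetrically normed hence MOI-friendly by Example \ref{ex.M} and Proposition \ref{prop.MOIfriendly}) gives that $g(a+c) - g(a)$ is densely defined and bounded with $\|g(a+c)-g(a)\| \le \|g^{[1]}\|_{\ell^{\infty}(\sigma(a+c))\iotimes\ell^{\infty}(\sigma(a))}\|c\| \le 18\,\sigma\,\|f\|_{\ell^{\infty}(\R)}\|c\| \to 0$ as $c \to 0$ in $B(H)_{\sa}$. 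Hence $g = f \ast \mu_v$ is operator continuous, and being also bounded, $g \in BOC(\R)$. The main obstacle I anticipate is the bookkeeping in verifying that Theorem \ref{thm.Pellerexp}'s hypotheses genuinely apply to $g$ in place of $f$ — in particular tracking the support of $\wh{g}$ and confirming $g$ is measurable and bounded so that $g \in \ell^{\infty}(\R,\cB_{\R})$ — but this is the routine part; once that is cleared, everything else is a direct citation of results already proven.
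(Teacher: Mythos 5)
Your overall strategy is the paper's: apply the $k=1$ case of Theorem \ref{thm.Pellerexp} to $g \coloneqq f\ast\mu_v$ to produce an $\ell^\infty$-integral projective decomposition of $g^{[1]}$, then cite Corollary \ref{cor.Iperturb} with the trivial ideal $\cI = B(H)$ to get operator continuity. The one place where the paper is visibly more careful than you is the step you flag as ``routine bookkeeping,'' and it is worth spelling out why it is not quite routine. You justify that $g$ satisfies Convention \ref{conv.PW} by writing $\wh{g} = \wh{f}\,\wh{\mu_v}$ and reading off the support. For general $f \in \ell^\infty(\R,\cB_\R)$, though, $\wh{f}$ is merely a compactly supported tempered distribution of some finite but unknown order, while $\wh{\mu_v}$ is continuous with a corner at $|\xi| = v$ and hence not a $C^\infty$ multiplier; so the distributional product $\wh{f}\cdot\wh{\mu_v}$ is not automatically defined, and the identity $\wh{g} = \wh{f}\,\wh{\mu_v}$ is not free. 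The paper avoids this by first assuming $f,\wh{f}\in L^1(\R)$ -- then $\wh{g}=\wh{f}\,\wh{\mu_v}$ is a product of honest functions, $g,\wh{g}\in L^1$, and Step 3 of Theorem \ref{thm.Pellerexp} applies directly -- and then, for general $f$, replaying the approximation of Step 1 via Lemma \ref{lem.Pellerapprox} to pass the $g^{[1]}$ formula from $f_n\ast\mu_v$ to $f\ast\mu_v$ at distinct arguments, closing by continuity. Your shortcut of quoting the full Theorem \ref{thm.Pellerexp} on $g$ does work, because $\supp\wh{g}\subseteq\supp\wh{f}\subseteq[0,\sigma]$ is in fact true for any bounded Borel $f$ (it can be proven directly by a Fubini argument, pairing $f\ast\mu_v$ against $\wh\phi$ for a test function $\phi$ supported off $[0,\sigma]$, or by the very approximation in Lemma \ref{lem.Pellerapprox} plus weak$^*$ convergence of $\wh{g}_n\to\wh{g}$), but that step deserves its own sentence rather than the formal identity, which is literally valid only after the reduction to $L^1$. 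With that patched, the rest of your argument -- the $\ell^\infty$-IPD coming from \eqref{eq.expf1}, the bound $\|g^{[1]}\|_{\ell^\infty\hat\otimes_i\ell^\infty}\le 18\sigma\|f\|_{\ell^\infty}$, and the conclusion via Corollary \ref{cor.Iperturb} -- matches the paper.
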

\begin{proof}
First, suppose $f,\wh{f} \in L^1(\R)$ as well, and let $g \coloneqq f \ast \mu_v$. Then $\wh{g} = \wh{f} \, \wh{\mu}_v$ is compactly supported on $\R_+$.
By Theorem \ref{thm.Pellerexp} (really, only \ref{step.3} of its proof), we have
\[
g^{[1]}(\lambda_1,\lambda_2) = i\int_{\R_+}\big((g \ast \mu_u)(\lambda_1) \,e^{-i\lambda_1u} e^{i\lambda_2u} +(g \ast \mu_u)(\lambda_2) \,e^{-i\lambda_2u} e^{i\lambda_1u}\big) \, du,
\]
for all $\lambda_1,\lambda_2 \in \R$.
For arbitrary $f$ as in the statement and $f_n$ as in Lemma \ref{lem.Pellerapprox}, we have that
\[
(f_n \ast \mu_v)^{[1]}(\lambda_1,\lambda_2) = i\int_{\R_+}\big(((f_n \ast \mu_v) \ast \mu_u)(\lambda_1) \,e^{-i\lambda_1u} e^{i\lambda_2u} +((f_n \ast \mu_v)\ast\mu_u)(\lambda_2) \,e^{-i\lambda_2u} e^{i\lambda_1u}\big) \, du.
\]
As in the proof of \ref{step.1} of the proof of Theorem \ref{thm.Pellerexp}, we may take $n \to \infty$ when $\lambda_1 \neq \lambda_2$ to conclude
\[
(f \ast \mu_v)^{[1]}(\lambda_1,\lambda_2) = i\int_{\R_+}\big(((f \ast \mu_v) \ast \mu_u)(\lambda_1) \,e^{-i\lambda_1u} e^{i\lambda_2u} +((f \ast \mu_v)\ast\mu_u)(\lambda_2) \,e^{-i\lambda_2u} e^{i\lambda_1u}\big) \, du.
\]
Since the right hand side is continuous in $(\lambda_1,\lambda_2)$, we conclude this identity holds when $\lambda_1=\lambda_2$ as well.
This is a $\ell^{\infty}$-integral projective decomposition of $(f \ast \mu_v)^{[1]}$.
We conclude from Corollary \ref{cor.Iperturb} that if $H$ is a complex Hilbert space, $a \in C(H)_{\sa}$, and $c \in B(H)_{\sa}$, then
\[
\|(f \ast \mu_v)(a+c)-(f \ast \mu_v)(a)\| \leq \big\|(f\ast \mu_v)^{[1]}\big\|_{\ell^{\infty}(\R,\cB_{\R}) \iotimes \ell^{\infty}(\R,\cB_{\R})}\|c\|,
\]
so that $f \ast \mu_v \in OC(\R)$.
Since $f \ast \mu_v$ is bounded by Proposition \ref{prop.randmu}\ref{item.fastmuinfbound}, we are done.
\end{proof}

\begin{prop}\label{prop.Pellerprecursor}
Fix $\sigma > 0$.
If $k \in \N$, then there is a constant $a_k < \infty$ such that whenever $f \in \ell^{\infty}(\R,\cB_{\R})$ satisfies $\supp \wh{f} \subseteq \big[-\sigma,-\frac{\sigma}{4}\big] \cup \big[\frac{\sigma}{4},\sigma\big]$, we have
\[
\big\|f^{[k]}\big\|_{BOC(\R)^{\iotimes(k+1)}} \leq a_k\sigma^k\|f\|_{L^{\infty}}.
\]
\end{prop}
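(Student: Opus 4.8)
The plan is to combine the explicit expression \eqref{eq.fkexp} from Theorem \ref{thm.Pellerexp} with Proposition \ref{prop.fastmuOC}, exploiting the extra support hypothesis $\supp \wh{f} \subseteq \big[-\sigma,-\frac{\sigma}{4}\big] \cup \big[\frac{\sigma}{4},\sigma\big]$ to control the $u$-integration. First I would reduce to the case $\supp \wh{f} \subseteq \big[\frac{\sigma}{4},\sigma\big]$: write $f = f_+ + f_-$, where $\wh{f_+} = \wh{f}\,1_{[\sigma/4,\sigma]}$ and $\wh{f_-} = \wh{f}\,1_{[-\sigma,-\sigma/4]}$; note $\|f_\pm\|_{L^\infty} \le \|f\|_{L^\infty}$ (since $\wh{f_\pm}$ is obtained from $\wh f$ by multiplication by an indicator, which is bounded by $1$ on the Fourier side — more carefully one checks $f_\pm$ is bounded because $f_+$ is, up to a reflection, of the same type and $f_- = \overline{(\overline{f})_+}$ after using $\wh{\overline{f}}(\xi) = \overline{\wh f(-\xi)}$; alternatively just observe $f_+ = f \ast \mu_{\sigma/4} - (\text{low-frequency part})$ lies in $BOC$ by Proposition \ref{prop.fastmuOC}-type reasoning). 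By symmetry it then suffices to bound $\big\|f_+^{[k]}\big\|_{BOC(\R)^{\iotimes(k+1)}}$, and since $f^{[k]} = f_+^{[k]} + f_-^{[k]}$ and $\|\cdot\|_{BOC(\R)^{\iotimes(k+1)}}$ is a norm (Proposition \ref{prop.BOCIPTP}), this costs only a factor of $2$. So assume henceforth $\supp \wh f \subseteq \big[\frac{\sigma}{4},\sigma\big]$.

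Next I would apply Theorem \ref{thm.Pellerexp} directly: since $\supp \wh f \subseteq [0,\sigma]$, formula \eqref{eq.fkexp} gives
\[
f^{[k]}(\blambda) = i^k\sum_{j=1}^{k+1}\int_{\R_+^k} \Bigg(\prod_{m=1}^{j-1}e^{i\lambda_mu_m}\Bigg) (f \ast \mu_{|\vec{u}|})(\lambda_j)\,e^{-i\lambda_j|\vec{u}|}\Bigg( \prod_{m=j+1}^{k+1}e^{i\lambda_mu_{m-1}}\Bigg) \, d\vec{u}.
\]
For each fixed $j$ and each $\vec u$, the integrand factors as a product over the $k+1$ variables $\lambda_1,\dots,\lambda_{k+1}$: the factors $\lambda_m \mapsto e^{i\lambda_m u_m}$ (or $e^{i\lambda_m u_{m-1}}$) lie in $BOC(\R)$ with $\ell^\infty$-norm $1$ by Lemma \ref{lem.expOC}, and the middle factor $\lambda_j \mapsto (f \ast \mu_{|\vec u|})(\lambda_j)\,e^{-i\lambda_j|\vec u|}$ lies in $BOC(\R)$ by Proposition \ref{prop.fastmuOC} (the extra phase $e^{-i\lambda_j|\vec u|}$ is in $BOC$ by Lemma \ref{lem.expOC} and $BOC$ is a $\ast$-algebra closed under products) with $\ell^\infty$-norm at most $\|f \ast \mu_{|\vec u|}\|_{L^\infty} \le 3\|f\|_{L^\infty}$ by Proposition \ref{prop.randmu}.\ref{item.fastmuinfbound}. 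Thus \eqref{eq.fkexp} \emph{is} (after gluing the $k+1$ terms via a disjoint union of $k+1$ copies of $\R_+^k$ as the measure space $\Sigma$, and writing $\mu_u = \frac{d\mu_u}{d|\mu_u|}\,d|\mu_u|$ if one insists on matching the definition literally) a $BOC$-integral projective decomposition of $f^{[k]}$, yielding
\[
\big\|f^{[k]}\big\|_{BOC(\R)^{\iotimes(k+1)}} \le (k+1)\cdot 3\|f\|_{L^\infty} \cdot m\big(\{\vec u \in \R_+^k : f \ast \mu_{|\vec u|} \not\equiv 0\}\big).
\]

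The crucial point — and the place where the refined support hypothesis is used — is to bound the Lebesgue measure of the effective $\vec u$-region. By Proposition \ref{prop.randmu}.\ref{item.fastmusupp}, $f \ast \mu_{|\vec u|} \equiv 0$ whenever $|\vec u| > \sigma$; but now, because $\supp \wh f \subseteq \big[\frac{\sigma}{4},\sigma\big]$, I also claim $f \ast \mu_{|\vec u|} \equiv 0$ whenever $|\vec u| < \frac{\sigma}{4}$. Indeed $\cF(f \ast \mu_u) = \wh f \cdot \wh{\mu_u}$, and $\wh{\mu_u}(\xi) = \frac{|\xi|-u}{|\xi|}1_{(u,\infty)}(|\xi|)$; if $u < \frac{\sigma}{4}$ this multiplier equals $1 - \frac{u}{|\xi|}$ on $\big[\frac\sigma4,\sigma\big]$, which is \emph{not} identically zero, so this naive version fails — instead the right statement is that $f \ast \mu_u = f - f \ast r_u$, and one must argue more carefully. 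The honest route: for $u < \sigma/4$, $\wh{r_u}(\xi) = \frac{u}{|\xi|}$ on $\supp\wh f$, so $f \ast r_u$ has the same support type, and iterating $f \ast \mu_{|\vec u|}$ is \emph{not} zero for small $|\vec u|$. So the small-$|\vec u|$ region does contribute, and the correct bound is simply $m(\{\vec u \in \R_+^k : |\vec u| \le \sigma\}) = \sigma^k/k!$. That already gives $\big\|f^{[k]}\big\|_{BOC(\R)^{\iotimes(k+1)}} \le \frac{3(k+1)}{k!}\sigma^k\|f\|_{L^\infty}$, and folding in the factor $2$ from the $f_\pm$ splitting gives the claim with $a_k = \frac{6(k+1)}{k!}$. \textbf{The main obstacle} I anticipate is the bookkeeping in packaging \eqref{eq.fkexp} as a genuine $BOC$-IPD in the exact sense of Definition \ref{def.BOCIPTP} — verifying joint measurability in $(\lambda,\vec u)$ of each factor (routine, via Proposition \ref{prop.randmu}.\ref{item.fastmuinfbound}), handling the finitely many $j$-terms by taking $\Sigma$ to be a disjoint union, and confirming that the annoying-looking support subtlety above is a red herring: the bound $\sigma^k/k!$ on the measure of $\{|\vec u| \le \sigma\}$ is all that is actually needed, and the refined frequency support $[\sigma/4,\sigma]$ matters only insofar as it guarantees $\|f_\pm\|_{L^\infty} \le \|f\|_{L^\infty}$ after the real/imaginary-type splitting (which is where one should be slightly careful, using $f_- = \overline{(\bar f)_+}\!\circ(-\,\cdot)$-type identities and boundedness of $f$).
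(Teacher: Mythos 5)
You have identified the right architecture (split $f$ into a positive- and a negative-frequency piece, apply Theorem \ref{thm.Pellerexp} to each, and bound the $\vec u$-integral by the volume of $\{|\vec u|\le \sigma\}$, which is $\sigma^k/k!$), and you are also correct that the small-$|\vec u|$ region contributes and cannot be excluded. But there is a genuine gap exactly where you flagged uncertainty: the claim $\|f_\pm\|_{L^\infty}\le\|f\|_{L^\infty}$ is \emph{false}. The operator ``multiply the Fourier transform by a sharp indicator'' is the Riesz projection (up to translation), which is unbounded on $L^\infty$; ``bounded by $1$ on the Fourier side'' does not translate into an $L^\infty\to L^\infty$ bound. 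Your two fallback arguments do not close this: the identity $f_-=\overline{(\bar f)_+}$ only relates $f_+$ to $f_-$, not either one to $f$; and $f\ast\mu_{\sigma/4}$ is \emph{not} $f_+$ (it carries the multiplier $\frac{|\xi|-\sigma/4}{|\xi|}$, not $1_{[\sigma/4,\sigma]}$), nor does appealing to Proposition \ref{prop.fastmuOC} produce a norm estimate $\|f_+\|_{L^\infty}\lesssim\|f\|_{L^\infty}$ with a $\sigma$-independent constant, which is what you actually need.

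The fix — and this is precisely where the refined annular support $\supp\wh f\subseteq[-\sigma,-\sigma/4]\cup[\sigma/4,\sigma]$ earns its keep — is to replace the sharp cutoff with a smooth one. Choose $\psi\in C_c^\infty(\R)$ with $\psi\equiv 1$ on $[\sigma/4,\sigma]$ and $\supp\psi\subseteq[\sigma/8,2\sigma]$. Because $\wh f$ lives on the annulus, $\psi\wh f = 1_{[\sigma/4,\sigma]}\wh f$, so $f_+ = \wch\psi\ast f$, and Young's inequality gives $\|f_+\|_{L^\infty}\le\|\wch\psi\|_{L^1}\|f\|_{L^\infty}$; moreover $\|\wch\psi\|_{L^1}$ is invariant under the dilation $\psi\mapsto\psi(\cdot/\sigma)$ and hence independent of $\sigma$, which is what makes $a_k$ a universal constant. (Similarly $f_-=\wch{\psi(-\cdot)}\ast f$.) With this in hand, $\supp\wh{f_+}\subseteq[0,2\sigma]$, Theorem \ref{thm.Pellerexp} applies as you intended, and the rest of your computation goes through, with the constant $a_k$ picking up a factor $\|\wch\psi\|_{L^1}+\|\wch{\psi(-\cdot)}\|_{L^1}$ rather than the factor $2$ you wrote. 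This is exactly the paper's argument.
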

\begin{proof}
Let $f$ be as in the statement of the proposition and $\psi_1 \in C_c^{\infty}(\R)$ be a bump function such that $\psi_1 \equiv 1$ on $\big[\frac{\sigma}{4},\sigma\big]$ and $\supp \psi_1 \subseteq \big[\frac{\sigma}{8},2\sigma\big]$.
Writing $\psi_2(\xi) \coloneqq \psi_1(-\xi)$, $\chi_1 \coloneqq \cF^{-1}(\psi_1)$ and $\chi_2 \coloneqq \cF^{-1}(\psi_2)$, we have $f = \chi_1 \ast f +\chi_2 \ast f$ because $\wh{f} = \psi_1\wh{f}+\psi_2\wh{f}$.
But $f_1 \coloneqq \chi_1 \ast f$ satisfies the hypotheses of Theorem \ref{thm.Pellerexp}.
By Lemma \ref{lem.expOC}, Proposition \ref{prop.fastmuOC}, the fact that $BOC(\R)$ is an algebra, and the comments about when the integrand in \eqref{eq.fkexp} vanishes, Theorem \ref{thm.Pellerexp} gives a BOCIPD of $f_1^{[k]}$ from which we may conclude
\begin{align*}
    \big\|f_1^{[k]}\big\|_{BOC(\R)^{\iotimes(k+1)}} & \leq \sum_{j=1}^{k+1}\int_{\{\vec{u} \in \R_+^k : |\vec{u}| \leq \sigma\}} \|f_1 \ast \mu_{|\vec{u}|}\|_{\ell^{\infty}(\R)} \, d\vec{u} \\
    & \leq 3(k+1)\frac{\sigma^k}{k!}\|f_1\|_{L^{\infty}} \leq 3\|\chi_1\|_{L^1}(k+1)\frac{\sigma^k}{k!}\|f\|_{L^{\infty}}
\end{align*}
by the bounds from Proposition \ref{prop.randmu} and Young's Convolution Inequality.
Next, $x \mapsto f_2(-x)$ also satisfies the hypotheses of Theorem \ref{thm.Pellerexp}.
This allows us to conclude
\[
\big\|f_2^{[k]}\big\|_{BOC(\R)^{\iotimes(k+1)}} \leq 3\|\chi_2\|_{L^1}(k+1)\frac{\sigma^k}{k!}\|f\|_{L^{\infty}}
\]
as well.
Thus we may take $a_k \leq \frac{3}{k!}(\|\chi_1\|_{L^1}+\|\chi_2\|_{L^1})(k+1)$ in the statement of proposition.
\end{proof}

We now transfer this result into the desired statement about Besov spaces.
Recall from Definition \ref{def.Besov} that we have fixed $\varphi \in C_c^{\infty}(\R^m)$ such that $0 \leq \varphi \leq 1$ everywhere, $\supp \varphi \subseteq \{\xi \in \R^m : \|\xi\| \leq 2\}$, and $\varphi \equiv 1$ on $\{\xi \in \R^m : \|\xi\| \leq 1\}$.
We also defined $\varphi_j(\xi) \coloneqq \varphi(2^{-j}\xi) - \varphi(2^{-j+1}\xi)$, for all $j \in \Z$ and $\xi \in \R^m$.
It is easy to see that $0 \leq \varphi_j \leq 1$, $\supp \varphi_j \subseteq \{\xi \in \R^m :  2^{j-1} \leq \|\xi\| \leq 2^{j+1}\}$, $\varphi+\sum_{j=1}^n\varphi_j = \varphi(2^{-n}\cdot)$ for $n \in \N$ (so that $\varphi+\sum_{j = 1}^{\infty} \varphi_j \equiv 1$ everywhere), and $\sum_{j = -\infty}^{\infty} \varphi_j = 1_{\R^m \setminus \{0\}}$.
From these bump functions, we get the \textit{Littlewood--Paley sequence/decompositions} of a tempered distribution.
Indeed, if $f \in \mathscr{S}'(\R^m)$, then
\[
f = \wch{\varphi(2^{-n+1}\boldsymbol{\cdot})} \ast f +\sum_{j = n}^{\infty} \wch{\varphi}_j \ast f \numberthis\label{eq.ILPD}
\]
in the weak$^*$ topology of $\mathscr{S}'(\R^m)$, for all $n \in \Z$.
Therefore, the series $\sum_{j = -\infty}^{\infty}\wch{\varphi}_j \ast f$ converges if and only if $(\wch{\varphi(2^n\cdot)} \ast f)_{n \in \N}$ converges, if and only if $(\varphi(2^n\cdot) \wh{f})_{n \in \N}$ converges (all in the weak$^*$ topology).
In particular,
\[
f = \sum_{j = -\infty}^{\infty} \wch{\varphi}_j \ast f \numberthis\label{eq.HLPD}
\]
in the weak$^*$ topology if and only if $\operatorname{w}^*\text{-}\lim_{n \to \infty} \wch{\varphi(2^n\boldsymbol{\cdot})} \ast f = 0 \Leftrightarrow \operatorname{w}^*\text{-}\lim_{n \to \infty} \varphi(2^n\boldsymbol{\cdot}) \wh{f} = 0$.
The identity in \eqref{eq.ILPD} with $n=1$ is called the \textbf{inhomogeneous Littlewood--Paley decomposition} of $f$.
The identity in \eqref{eq.HLPD} (or at the very least, the formal series therein) is called the \textbf{homogeneous Littlewood--Paley decomposition} of $f$.
Sometimes these are also called the \textbf{Calder\'{o}n Reproducing Formulas}.
The proofs boil down to the weak$^*$ continuity of the Fourier transform and the fact that if $\eta \in \mathscr{S}(\R^m)$, then $\varphi(R^{-1}\cdot)\eta \to \eta$ in $\mathscr{S}(\R^m)$ as $R \to \infty$, which is a nice exercise to prove.

Note that if $\sum_{j=-\infty}^{\infty} \wch{\varphi}_j \ast f$ converges (in the weak$^*$ topology), then $P \coloneqq f-\sum_{j=-\infty}^{\infty} \wch{\varphi}_j \ast f \in \mathscr{S}'(\R^m)$ is easily seen to have the property that $\supp \wh{P} \subseteq \{0\}$.
Therefore, $P \in \C[\lambda_1,\ldots,\lambda_m]$ is a polynomial and
\[
f = \sum_{j=-\infty}^{\infty} \wch{\varphi}_j \ast f + P.
\]
This observation will come in handy later.
The most important fact about Besov spaces for us is that the inhomogeneous Littlewood--Paley series of the $k^{\text{th}}$ derivative of a function belonging to $\dot{B}_1^{k,\infty}(\R)$ converges uniformly.
To prove this, we use Bernstein's Inequality.

\begin{lem}[Bernstein's Inequality]\label{lem.Berns}
Suppose $\alpha \in \N_0^m$ and $1 \leq r \leq p \leq \infty$.
There is a constant $b_{\alpha,r,p} <\infty$ such that for all $R > 0$ and $u \in \mathscr{S}'(\R^m)$ with 
$\supp \wh{u} \subseteq \{\xi \in \R^m : \|\xi\| \leq R\}$, we have
\[
\big\|\partial^{\alpha} u \big\|_{L^p} \leq b_{\alpha,r,p} R^{|\alpha|+m(\frac{1}{r}-\frac{1}{p})}\|u\|_{L^r}.
\]
\end{lem}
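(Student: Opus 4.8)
The statement to prove is Bernstein's Inequality (Lemma~\ref{lem.Berns}): for $\alpha \in \N_0^m$, $1 \leq r \leq p \leq \infty$, there is $b_{\alpha,r,p} < \infty$ such that for all $R > 0$ and $u \in \mathscr{S}'(\R^m)$ with $\supp \wh{u} \subseteq \{\|\xi\| \leq R\}$, we have $\|\partial^\alpha u\|_{L^p} \leq b_{\alpha,r,p} R^{|\alpha| + m(1/r - 1/p)} \|u\|_{L^r}$.

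The plan is to reduce to the case $R = 1$ by scaling and then realize $\partial^\alpha u$ as a convolution against a fixed Schwartz-type kernel. First I would fix a cutoff $\chi \in C_c^\infty(\R^m)$ with $\chi \equiv 1$ on $\{\|\xi\| \leq 1\}$, so that for any $u$ with $\supp \wh{u} \subseteq \{\|\xi\| \leq 1\}$ we have $\wh{u} = \chi \wh{u}$, hence $\wh{\partial^\alpha u}(\xi) = (i\xi)^\alpha \chi(\xi)\wh{u}(\xi)$, and therefore $\partial^\alpha u = K_\alpha \ast u$ where $K_\alpha \coloneqq \cF^{-1}((i\xi)^\alpha \chi(\xi)) \in \mathscr{S}(\R^m)$ (it is the inverse Fourier transform of a compactly supported smooth function, so Schwartz — in fact smooth with rapidly decaying derivatives by Paley--Wiener plus the standard decay estimates). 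This convolution identity makes sense because $u$, having compactly supported Fourier transform, is a smooth function of polynomial growth, and convolution with a Schwartz function is well-defined; one also needs $u \in L^r$, which is part of the hypothesis (implicitly $\|u\|_{L^r} < \infty$, else the bound is vacuous). Then Young's convolution inequality with exponents satisfying $1 + 1/p = 1/q + 1/r$, i.e. $1/q = 1 - (1/r - 1/p) \in [0,1]$, gives $\|\partial^\alpha u\|_{L^p} = \|K_\alpha \ast u\|_{L^p} \leq \|K_\alpha\|_{L^q} \|u\|_{L^r}$. Setting $b_{\alpha,r,p} \coloneqq \|K_\alpha\|_{L^q}$ (which depends only on $\alpha$, $r$, $p$, and the fixed choice of $\chi$) handles $R = 1$.

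Next I would handle general $R > 0$ by a dilation argument. Given $u$ with $\supp \wh{u} \subseteq \{\|\xi\| \leq R\}$, set $v \coloneqq u(R^{-1} \cdot)$... actually more convenient: set $v(x) \coloneqq u(Rx)$, so that $\wh{v}(\xi) = R^{-m} \wh{u}(R^{-1}\xi)$ has support in $\{\|\xi\| \leq R^2\}$ — wrong direction. Instead take $v(x) \coloneqq u(x/R)$; then $\wh{v}(\xi) = R^m \wh{u}(R\xi)$ is supported in $\{\|\xi\| \leq 1\}$. Apply the $R=1$ case: $\|\partial^\alpha v\|_{L^p} \leq b_{\alpha,r,p}\|v\|_{L^r}$. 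Now $\partial^\alpha v(x) = R^{-|\alpha|}(\partial^\alpha u)(x/R)$, so $\|\partial^\alpha v\|_{L^p} = R^{-|\alpha|} R^{m/p}\|\partial^\alpha u\|_{L^p}$, and $\|v\|_{L^r} = R^{m/r}\|u\|_{L^r}$. Substituting and solving for $\|\partial^\alpha u\|_{L^p}$ yields $\|\partial^\alpha u\|_{L^p} \leq b_{\alpha,r,p} R^{|\alpha| + m(1/r - 1/p)}\|u\|_{L^r}$, exactly as claimed. The scaling bookkeeping is routine once the exponents are written out carefully.

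The main obstacle — really the only subtle point — is justifying the convolution representation $\partial^\alpha u = K_\alpha \ast u$ rigorously at the level of tempered distributions, i.e. checking that multiplying $\wh{u}$ by the Schwartz function $(i\xi)^\alpha\chi(\xi)$ and taking inverse Fourier transforms genuinely produces the convolution $K_\alpha \ast u$, and that this convolution coincides with the honest $L^p$ function whose norm we are estimating. This is a standard fact (convolution of a tempered distribution with a Schwartz function, combined with the hypothesis $u \in L^r$ so that Young's inequality applies to the function-level convolution), and I would invoke it citing the basic theory of tempered distributions already used freely in the appendix, rather than reprove it. Everything else is bump-function construction, Paley--Wiener to get $K_\alpha \in \mathscr{S}(\R^m)$ (hence $K_\alpha \in L^q$ for every $q$), Young's inequality, and dilation scaling.
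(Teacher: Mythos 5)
Your proof is correct and follows essentially the same approach as the paper: reduce to $R=1$ by a dilation, realize $\partial^\alpha u$ as convolution with a fixed Schwartz kernel built from a compactly supported bump function, and apply Young's inequality. The only cosmetic difference is that you bound $\|\partial^\alpha u\|_{L^p}$ with a single application of Young to the kernel $K_\alpha = \partial^\alpha\wch{\chi}$, whereas the paper splits it into two Young applications (first $\|u\|_{L^p}\lesssim\|u\|_{L^r}$ via $\wch{\varphi}$, then $\|\partial^\alpha u\|_{L^p}\lesssim\|u\|_{L^p}$ via $\partial^\alpha\wch{\varphi}$), which is what lets it isolate the two constant bounds recorded in \eqref{eq.Bernconst}.
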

\begin{proof}
Defining $u_R \coloneqq R^{-m} u(R^{-1}\cdot)$, we see that $\supp \wh{u}_R \subseteq \{\xi \in \R^m : \|\xi\| \leq 1\}$.
Supposing we know the desired inequality when $R=1$, we have $\|\partial^{\alpha} u_R \|_{L^p} \lesssim \|u_R\|_{L^r}$.
Since 
\[
\partial^{\alpha}u_R = R^{-|\alpha|}R^{-m}(\partial^{\alpha}u)(R^{-1}\cdot) \; \text{ and } \; \|u_R\|_{L^q} = R^{m(\frac{1}{q}-1)}\|u\|_{L^q}
\]
for all $q \in [1,\infty]$, we conclude
\[
R^{-|\alpha|+m(\frac{1}{p}-1)} \big\|\partial^{\alpha}u\|_{L^p} =  \big\|\partial^{\alpha} u_R \big\|_{L^p} \lesssim \|u_R\|_{L^r} = R^{m(\frac{1}{r}-1)}\|u\|_{L^r},
\]
whence the desired inequality follows.
Therefore, we can and do assume $R=1$.

Next, we notice there are really two inequalities in the one we would like to prove:
\[
\|u\|_{L^p} \lesssim \|u\|_{L^r} \; \text{ and } \; \big\|\partial^{\alpha} u \big\|_{L^p}  \lesssim \|u\|_{L^p}.
\]
To prove these it is key to notice $u = \wch{\varphi} \ast u$ and $\partial^{\alpha} u = \wch{\varphi} \ast \partial^{\alpha}u = (\partial^{\alpha}\wch{\varphi}) \ast u$, by taking Fourier transforms of both sides and recalling $\varphi \equiv 1$ on $\{\xi \in \R^m : \|\xi\| \leq 1\}$.
Then, by Young's Convolution Inequality,
\[
\|u\|_{L^p} = \|\wch{\varphi} \ast u \|_{L^p} \leq \|\wch{\varphi} \|_{L^q} \|u\|_{L^r},
\]
where $\frac{1}{q} = 1+\frac{1}{p} - \frac{1}{r} \in [0,1]$ (using $1 \leq r \leq p \leq \infty$).
By the same inequality,
\[
\big\|\partial^{\alpha} u \big\|_{L^p} = \big\|(\partial^{\alpha}\wch{\varphi}) \ast u \big\|_{L^p} \leq \big\|\partial^{\alpha} \wch{\varphi}\big\|_{L^1}\|u\|_{L^p}.
\]
This completes the proof.
\end{proof}

We actually learned from the proof that we can take
\[
b_{\alpha,p,p} \leq \big\|\partial^{\alpha} \wch{\varphi}\big\|_{L^1} \; \text{ and } \; b_{\vec{0},r,p} \leq \|\wch{\varphi}\|_{L^q}, \numberthis\label{eq.Bernconst}
\]
where $\frac{1}{q} = 1-\big(\frac{1}{r}-\frac{1}{p}\big)$.
In particular, we can take $b_{\alpha,r,p} \leq \|\partial^{\alpha} \wch{\varphi}\|_{L^1}\|\wch{\varphi} \|_{L^q}$.

\begin{prop}\label{prop.HBesovHLPD}
Fix $s \in \R$, $1 \leq p \leq \infty$, and $f \in \dot{B}_1^{s,p}(\R^m)$.
If $\alpha \in \N_0^m$ and $|\alpha| = s-\frac{m}{p}$, then $\sum_{j=-\infty}^{\infty}\wch{\varphi}_j \ast \partial^{\alpha}f = \sum_{j=-\infty}^{\infty}\partial^{\alpha}(\wch{\varphi}_j \ast f)$ is absolutely uniformly convergent. 
\end{prop}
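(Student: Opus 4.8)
The plan is to estimate each term $\|\partial^\alpha(\wch\varphi_j \ast f)\|_{L^\infty}$ using Bernstein's Inequality (Lemma \ref{lem.Berns}) and show the resulting series is summable. First I would record the key structural observation: since $\supp \wh\varphi_j \subseteq \{\xi : 2^{j-1} \le \|\xi\| \le 2^{j+1}\}$, the function $\wch\varphi_j \ast f$ has Fourier transform supported in the ball of radius $R = 2^{j+1}$, so Lemma \ref{lem.Berns} with $r = p$, $p = \infty$, and the multi-index $\alpha$ applies to $u = \wch\varphi_j \ast f$, giving
\[
\big\|\partial^\alpha(\wch\varphi_j \ast f)\big\|_{L^\infty} \le b_{\alpha,p,\infty}\,(2^{j+1})^{|\alpha| + \frac{m}{p}}\,\|\wch\varphi_j \ast f\|_{L^p}.
\]
(Here I use $\partial^\alpha(\wch\varphi_j \ast f) = \wch\varphi_j \ast \partial^\alpha f$, valid for tempered distributions; so the two series in the statement are literally the same series.)

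Next I would plug in the hypothesis $|\alpha| + \frac{m}{p} = s$, so that the exponent of $2^{j+1}$ is exactly $s$; pulling out the factor $2^s$ gives
\[
\big\|\partial^\alpha(\wch\varphi_j \ast f)\big\|_{L^\infty} \le b_{\alpha,p,\infty}\,2^{s}\,2^{js}\,\|\wch\varphi_j \ast f\|_{L^p}.
\]
Summing over $j \in \Z$ and recognizing that $\sum_{j\in\Z} 2^{js}\|\wch\varphi_j \ast f\|_{L^p} = \big\|\big(2^{js}\|\wch\varphi_j \ast f\|_{L^p}\big)_{j\in\Z}\big\|_{\ell^1(\Z)} = \|f\|_{\dot B_1^{s,p}} < \infty$ (this is exactly the $q=1$ Besov norm from Definition \ref{def.Besov}), I conclude
\[
\sum_{j=-\infty}^{\infty}\big\|\partial^\alpha(\wch\varphi_j \ast f)\big\|_{L^\infty} \le b_{\alpha,p,\infty}\,2^{s}\,\|f\|_{\dot B_1^{s,p}} < \infty,
\]
which is uniform absolute convergence. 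The only remaining point of care is the endpoint $p = \infty$: then $\frac{m}{p} = 0$, the condition reads $|\alpha| = s$, and $r = p = \infty$ is still a legitimate choice in Lemma \ref{lem.Berns} (we use $b_{\alpha,\infty,\infty} \le \|\partial^\alpha\wch\varphi\|_{L^1}$ from \eqref{eq.Bernconst}), so the argument is unchanged.

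I do not anticipate a real obstacle here — the proof is essentially a one-line application of Bernstein's Inequality combined with the definition of the Besov norm. The only mild subtlety worth a sentence in the writeup is justifying $\partial^\alpha(\wch\varphi_j \ast f) = \wch\varphi_j \ast \partial^\alpha f$ and that each summand is a genuine (smooth, bounded) function rather than merely a distribution; this follows from the Paley--Wiener Theorem since $\wch\varphi_j \ast \partial^\alpha f$ has compactly supported Fourier transform, and its $L^\infty$-norm is then controlled as above. I would state the displayed bound on $\sum_j \|\partial^\alpha(\wch\varphi_j \ast f)\|_{L^\infty}$ explicitly, since the constant $b_{\alpha,p,\infty}\,2^s$ (or a clean upper bound for it) will presumably be reused when assembling the constants $c_k$ in Theorem \ref{thm.Peller}.
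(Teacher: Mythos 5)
Your proof is correct and essentially identical to the paper's: both apply Bernstein's Inequality (Lemma \ref{lem.Berns}) with $r=p$, target exponent $\infty$, and multi-index $\alpha$ to $u = \wch{\varphi}_j \ast f$ (whose Fourier transform is supported in the ball of radius $2^{j+1}$), use $|\alpha|+\frac{m}{p}=s$ to extract the factor $2^{js}$, and sum the resulting $\ell^1$ Besov norm. The extra sentence you add about Paley--Wiener and $\partial^\alpha(\wch\varphi_j \ast f)=\wch\varphi_j \ast \partial^\alpha f$ is a sensible clarification that the paper states a bit more tersely.
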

\begin{proof}
Since the Fourier transform of $\wch{\varphi}_j \ast \partial^{\alpha}f$ is supported in $\{\xi \in \R^m : \|\xi\| \leq 2^{j+1}\}$, Bernstein's Inequality (Lemma \ref{lem.Berns}) gives
\begin{align*}
    \sum_{j=-\infty}^{\infty} \big\|\wch{\varphi}_j \ast \partial^{\alpha}f\big\|_{L^{\infty}} & \leq b_{\alpha,p,\infty}\sum_{j=-\infty}^{\infty}(2^{j+1})^{|\alpha|+\frac{m}{p}} \big\|\wch{\varphi}_j \ast f\big\|_{L^p} \\
    & = 2^sb_{\alpha,p,\infty}\sum_{j=-\infty}^{\infty}2^{js} \big\|\wch{\varphi}_j \ast f\big\|_{L^p} = 2^sb_{\alpha,p,\infty}\|f\|_{\dot{B}_1^{s,p}},
\end{align*}
which is finite.
\end{proof}

Let us record a special bound we learned in the proof about our case of interest.
When $(s,p,q) = (k,\infty,1)$ for $k \in \N_0$ and $m=1$, we get from \eqref{eq.Bernconst} that
\[
\sum_{j=-\infty}^{\infty} \big\|(\wch{\varphi}_j \ast f)^{(k)}\big\|_{L^{\infty}} = \sum_{j=-\infty}^{\infty} \big\|\wch{\varphi}_j \ast f^{(k)}\big\|_{L^{\infty}} \leq 2^k\big\|\wch{\varphi}^{(k)}\big\|_{L^1}\|f\|_{\dot{B}_1^{k,\infty}} =: b_k\|f\|_{\dot{B}_1^{k,\infty}} \numberthis\label{eq.seriesbound}
\]
for $f \in \dot{B}_1^{k,\infty}(\R)$.
In particular, there is a polynomial $P_k \in \C[\lambda]$ such that
\[
f^{(k)} = \sum_{j=-\infty}^{\infty} (\wch{\varphi}_j \ast f)^{(k)} + P_k \in C(\R) \cap \ell^{\infty}(\R,\cB_{\R})+\C[\lambda] \numberthis\label{eq.decompoffk}
\]
as tempered distributions.
Thus $f \in C^k(\R)$, i.e., we have $\dot{B}_1^{k,\infty}(\R) \subseteq C^k(\R)$.

\begin{proof}[Proof of Theorem \ref{thm.Peller}]
We know from \eqref{eq.seriesbound} and \eqref{eq.decompoffk} that if $f \in \dot{B}_1^{k,\infty}(\R)$, then
\[
\sum_{j \in \Z} \big\|(\wch{\varphi}_j \ast f)^{(k)} \big\|_{L^{\infty}} < \infty,
\]
and $f^{(k)}$ differs from the bounded continuous function $\sum_{j \in \Z}(\wch{\varphi}_j\ast f)^{(k)}$ by a polynomial $P_k$.
If $f \in PB^k(\R)$, then $f^{(k)}$ is itself bounded, so $P_k$ must also be bounded and therefore constant.
Write $C \in \C$ for this constant.
Now, fix $\blambda \coloneqq (\lambda_1,\ldots,\lambda_{k+1}) \in \R^{k+1}$.
Then Proposition \ref{prop.divdiffCk} (twice), the uniform convergence of the series, and the fact that $\rho_k(\Delta_k) = \frac{1}{k!}$ give
\begin{align*}
    f^{[k]}(\blambda) & = \int_{\Delta_k}f^{(k)}(\boldsymbol{t} \cdot \blambda) \,\rho_k(d\boldsymbol{t}) = \int_{\Delta_k}\Big(C+\sum_{j \in \Z}(\wch{\varphi}_j\ast f)^{(k)}(\boldsymbol{t} \cdot \blambda)\Big) \,\rho_k(d\boldsymbol{t})\\
    & = \frac{C}{k!} + \sum_{j \in \Z}\int_{\Delta_k}\big(\wch{\varphi}_j \ast f\big)^{(k)}(\boldsymbol{t} \cdot \blambda)\, \rho_k(d\boldsymbol{t}) = \frac{C}{k!} + \sum_{j \in \Z}(\wch{\varphi}_j \ast f)^{[k]}(\blambda).
\end{align*}
Next, fix $j \in \Z$.
Since $f \in \dot{B}_1^{k,\infty}(\R)$, we have that $\wch{\varphi}_j \ast f$ satisfies the hypotheses of Proposition \ref{prop.Pellerprecursor} with $\sigma = 2^{j+1}$.
Therefore, completeness of $BOC(\R)^{\iotimes(k+1)}$, Proposition \ref{prop.Pellerprecursor}, and the definition of $\|\cdot\|_{\dot{B}_1^{k,\infty}}$ give
\[
\big\|f^{[k]}\big\|_{BOC(\R)^{\iotimes(k+1)}} \leq \frac{|C|}{k!} + \sum_{j \in \Z} \underbrace{\big\|(\wch{\varphi}_j \ast f)^{[k]}\big\|_{BOC(\R)^{\iotimes(k+1)}}}_{\leq a_k(2^{j+1})^k \|\scalebox{0.7}{\wch{\varphi}}_j \ast f\|_{L^{\infty}}} \leq \frac{|C|}{k!} + 2^ka_k\|f\|_{\dot{B}_1^{k,\infty}}.
\]
Finally, recalling the definition of $C$ and using \eqref{eq.seriesbound} again, we get
\[
|C| \leq \inf_{t \in \R}\big|f^{(k)}(t)\big| + \sum_{j \in \Z}\big\|(\wch{\varphi}_j \ast f)^{(k)}\big\|_{L^{\infty}} \leq \inf_{t \in \R}\big|f^{(k)}(t)\big| + b_k\|f\|_{\dot{B}_1^{k,\infty}}.
\]
It follows that we may take $c_k \leq\frac{b_k}{k!} + 2^ka_k$ in the statement of the theorem.

For the second statement, suppose $f \in PB^1(\R)$.
Then, by the above, there is some $C \in \C$ such that
\[
f' = C+\sum_{j \in \Z} \wch{\varphi}_j \ast f' = C+\sum_{j \in \Z} (\wch{\varphi}_j \ast f)'.\numberthis\label{eq.f'series}
\]
Now, it is easy to see that
\[
\dot{B}_1^{1,\infty}(\R) \cap \dot{B}_1^{k,\infty}(\R) = \bigcap_{\ell=1}^k \dot{B}_1^{\ell,\infty}(\R).
\]
Therefore, if $f \in PB^1(\R) \cap \dot{B}_1^{k,\infty}(\R) \subseteq \bigcap_{\ell=1}^k \dot{B}_1^{\ell,\infty}(\R)$ as well, then $\sum_{j \in \Z} \|(\wch{\varphi}_j \ast f)^{(\ell)} \|_{L^{\infty}} < \infty$ whenever $1 \leq \ell \leq k$.
This ensures that we can differentiate the series in \eqref{eq.f'series} to conclude that
\[
f^{(\ell)} = \sum_{j \in \Z} \big(\wch{\varphi}_j \ast f\big)^{(\ell)}
\]
and thus $f \in PB^{\ell}(\R)$, whenever $1 < \ell \leq k$.
In addition, when $k \geq 2$, the previous paragraph's analysis gives
\[
\big\|f^{[k]}\big\|_{BOC(\R)^{\iotimes(k+1)}} \leq 2^ka_k\|f\|_{\dot{B}_1^{k,\infty}}.
\]
We have therefore proven the desired bound and that $PB^1(\R) \cap \dot{B}^{k,\infty}(\R) = PB^1(\R) \cap \cdots \cap PB^k(\R)$.
\end{proof}
\begin{rem}
Note that if we required $f \in \dot{B}_1^{1,\infty}(\R)$ and $f' = \sum_{j \in \Z} \wch{\varphi}_j \ast f'$, instead of only $f \in PB^1(\R)$, then we would get $\|f^{[1]}\|_{BOC(\R) \iotimes BOC(\R)} \leq 2a_1\|f\|_{\dot{B}_1^{1,\infty}}$ from this proof.
\end{rem}

\begin{ack}
\phantomsection
\addcontentsline{toc}{section}{Acknowledgements}
I acknowledge support from NSF grant DGE 2038238.
I am grateful to Bruce Driver for numerous helpful conversations and valuable guidance about writing.
I would also like to thank Edward McDonald for bringing \cite{doddssub2} to my attention and Junekey Jeon for bringing Lemma \ref{lem.compact} to my attention.
\end{ack}

\small


\begin{thebibliography}{1}
\addcontentsline{toc}{section}{References}
\setlength\itemsep{0em}
\bibitem{aleksandrovOL}
A. B. Aleksandrov and V. V. Peller.
``Operator Lipschitz functions" (Russian).
\textit{Uspekhi Matematicheskikh Nauk} \textbf{71} (2016), 3--106.
English translation: \textit{Russian Mathematical Surveys} \textbf{71} (2016), 605--702.

\bibitem{azamovetal}
N. A. Azamov, A. L. Carey, P. G. Dodds, and F. A. Sukochev. 
``Operator Integrals, Spectral Shift, and
Spectral Flow."
\textit{Canadian Journal of Mathematics} \textbf{61} (2009), 241--263.

\bibitem{bensharp}
C. Bennett and R. Sharpley.
\textit{Interpolation of Operators}.
Pure and Applied Mathematics, vol. 129.
Academic Press Inc., Boston, 1988.

\bibitem{birmansolomyak5}
M. S. Birman and M. Z. Solomyak.
\textit{Spectral Theory of Self-Adjoint Operators in Hilbert Space} (Russian).
Izdat. Leningrad. Univ., Leningrad, 1980.
English translation: Mathematics and its Applications (Soviet Series), vol. 5.
D. Reidel Publishing Company, Dordrecht, 1987.

\bibitem{cohn}
D. L. Cohn.
\textit{Measure Theory}, 2nd ed.
Birkh\"{a}user Advanced Texts.
Springer, New York, NY, 2013.

\bibitem{conwayfunc}
J. B. Conway.
\textit{A Course in Functional Analysis}, 2nd ed.
Graduate Texts in Mathematics, vol. 96.
Springer-Verlag, New York, NY, 1990.

\bibitem{conwayop}
J. B. Conway.
\textit{A Course in Operator Theory}.
Graduate Studies in Mathematics, vol. 21.
American Mathematical Society, Providence, RI, 2000.

\bibitem{cz}
M. M. Czerwi\'{n}ska and A. H. Kaminska.
``Geometric properties of noncommutative symmetric spaces of measurable operators and unitary matrix ideals."
\textit{Commentationes Mathematicae} \textbf{57} (2017), 45--122.

\bibitem{daletskiikrein}
Yu. L. Daletskii and S. G. Krein.
``Integration and differentiation of functions of Hermitian operators and applications to the theory of perturbations" (Russian).
\textit{Voronezh. Gos. Univ. Trudy Sem. Funktsional. Anal.} \textbf{1} (1956), 81--105.
English translation: \textit{Thirteen Papers on Functional Analysis and Partial Differential Equations}, pp. 1--30.
American Mathematical Society Translations: Series 2, vol. 47.
American Mathematical Society, Providence, RI, 1965.

\bibitem{dasilva}
R. C. da Silva.
``Lecture Notes on Noncommutative $L_p$-Spaces." 
Preprint,  \href{https://arxiv.org/abs/1803.02390}{arXiv:1803.02390} [math.OA] (2018).

\bibitem{depagtersukochev}
B. de Pagter and F. A. Sukochev.
``Differentiation of operator functions in non-commutative $L_p$-spaces."
\textit{Journal of Functional Analysis} \textbf{212} (2004), 28--75.

\bibitem{dixmierLp}
J. Dixmier.
``Formes lin\'{e}aires sur un anneau d'op\'{e}rateurs" (French).
\textit{Bulletin de la Soci\'{e}t\'{e} Math\'{e}matique de France} \textbf{81} (1953), 9--39.

\bibitem{dixmier}
J. Dixmier.
\textit{Von Neumann Algebras}.
North-Holland Publishing Company, Amsterdam, 1981.

\bibitem{doddsNKS}
P. G. Dodds and B. de Pagter.
``Normed K\"{o}the spaces: A non-commutative viewpoint."
\textit{Indagationes Mathematicae} \textbf{25} (2014), 206--249.

\bibitem{doddsNKD}
P. G. Dodds, T. K. Dodds, and B. de Pagter.
``Noncommutative K\"{o}the duality."
\textit{Transactions of the American Mathematical Society} \textbf{339} (1993), 717--750.

\bibitem{doddssub2}
P. G. Dodds, T. K. Dodds, F. A. Sukochev, and D. Zanin.
``Arithmetic--Geometric Mean and Related Submajorisation and Norm Inequalities for $\tau$-Measurable Operators: Part II."
\textit{Integral Equations and Operator Theory} \textbf{92} (2020), Article 32.

\bibitem{fackkosaki}
T. Fack and H. Kosaki.
``Generalized $s$-numbers of $\tau$-measurable operators."
\textit{Pacific Journal of Mathematics} \textbf{123} (1986), 269--300.

\bibitem{folland}
G. B. Folland.
\textit{Real Analysis: Modern Techniques and Their Applications}, 2nd ed.
Pure and Applied Mathematics.
John Wiley \& Sons, New York, NY, 1999.

\bibitem{kadisonringrose1}
R. V. Kadison and J. R. Ringrose.
\textit{Fundamentals of Operator Algebras, Volume I: Elementary Theory}.
Graduate Studies in Mathematics, vol. 15.
American Mathematical Society, Providence, RI, 1997.

\bibitem{kaltonsukochev}
N. J. Kalton and F. A. Sukochev.
``Symmetric norms and spaces of operators."
\textit{Journal f\"{u}r die reine und angewandte Mathematik} \textbf{621} (2008), 81--121.

\bibitem{kreininterp}
S. G. Krein, Ju. I. Petunin, and E. M. Semenov.
\textit{Interpolation of Linear Operators}.
Translations of Mathematical Monograph, vol. 54,
American Mathematical Society, Providence, RI, 1982.

\bibitem{lemerdymcdonald}
C. Le Merdy and E. McDonald.
``Necessary and sufficient conditions for $n$-times Fr\'{e}chet differentiability on $\cS^p$, $1 < p < \infty$."
\textit{Proceedings of the American Mathematical Society} \textbf{149} (2021), 3881--3887.

\bibitem{lemerdyskripka}
C. Le Merdy and A. Skripka.
``Higher order differentiability of operator functions in Schatten norms."
\textit{Journal of the Institute of Mathematics of Jussieu} \textbf{19} (2020), 1993--2016.

\bibitem{murrayvonneumann}
F. J. Murray and J. von Neumann.
``On Rings of Operators."
\textit{Annals of Mathematics}, Second Series \textbf{37} (1936), 116--229.

\bibitem{nelson}
E. Nelson.
``Notes on Non-commutative Integration."
\textit{Journal of Functional Analysis} \textbf{15} (1974), 103--116.

\bibitem{nikitopoulosMOI}
E. A. Nikitopoulos.
``Multiple operator integrals in non-separable von Neumann algebras."
\textit{Journal of Operator Theory} \textbf{89} (2023), 361--427.

\bibitem{peetre}
J. Peetre.
\textit{New Thoughts on Besov Spaces}.
Duke University Mathematics Series, vol. 1.
Duke University Press, Durham, NC, 1976.
\pagebreak

\bibitem{peller0}
V. V. Peller.
``Hankel operators in the perturbation theory of unitary and self-adjoint operators" (Russian).
\textit{Funktsional. Anal. i Prilozhen.} \textbf{19} (1985), 37--51.
English translation: \textit{Functional Analysis and Its Applications} \textbf{19} (1985), 111--123.

\bibitem{peller1}
V. V. Peller.
``Multiple operator integrals and higher operator derivatives."
\textit{Journal of Functional Analysis} \textbf{233} (2006), 515--544.

\bibitem{peller2}
V. V. Peller.
``Multiple operator integrals in perturbation theory."
\textit{Bulletin of Mathematical Sciences} \textbf{6} (2016), 15--88.

\bibitem{sawano}
Y. Sawano.
\textit{Theory of Besov Spaces}.
Developments in Mathematics, vol. 56.
Springer, Singapore, 2018.

\bibitem{skripka}
A. Skripka and A. Tomskova.
\textit{Multilinear Operator Integrals: Theory and Applications}.
Lecture Notes in Mathematics, vol. 2250.
Springer, Cham, 2019.

\bibitem{takesaki}
M. Takesaki.
\textit{Theory of Operator Algebras I}.
Springer-Verlag, New York, NY, 1979.

\bibitem{terp}
M. Terp.
``$L^p$ spaces associated with von Neumann algebras."
Notes, Matematisk Institute, Copenhagen University, Copenhagen, 1981.

\bibitem{triebel1}
H. Triebel.
\textit{Theory of Function Spaces}.
Monographs in Mathematics, vol. 78.
Birkh\"{a}user Verlag, Basel, 1983.

\bibitem{vakhania}
N. N. Vakhania, V. I. Tarieladze, and S. A. Chobanyan.
\textit{Probability Distributions on Banach Spaces}.
Mathematics and its Applications (Soviet Series), vol. 14.
D. Reidel Publishing Company, Dordrecht, 1987.

\bibitem{zaanen}
A. C. Zaanen.
\textit{Integration}, 2nd ed.
North-Holland Publishing Company, Amsterdam, 1967.
\end{thebibliography}
\end{document}